\documentclass[11pt,letterpaper]{amsart}

\usepackage[margin=1.08in]{geometry}
\usepackage{amsmath,amssymb,amsthm,mathtools}
\usepackage{microtype}
\usepackage[T1]{fontenc}
\usepackage{libertinus}
\usepackage{enumitem}
\usepackage{booktabs}
\usepackage{tikz}
\usepackage{pdflscape}
\usetikzlibrary{arrows.meta,positioning,calc}
\usepackage{xcolor}
\usepackage{array}
\usepackage{longtable}
\usepackage{hyperref}
\usepackage{aliascnt}
\usepackage[nameinlink,capitalize]{cleveref}

\hypersetup{
  colorlinks=true,
  linkcolor=black,
  citecolor=black,
  urlcolor=black,
  pdftitle={Typical Bohnenblust--Hille Ratios},
  pdfauthor={Daniel M. Pellegrino and Eduardo V. Teixeira}
}
\setlist{itemsep=0.25em,topsep=0.45em}
\numberwithin{equation}{section}

\newcommand{\CC}{\mathbb C}
\newcommand{\NN}{\mathbb N}
\newcommand{\RR}{\mathbb R}
\newcommand{\Nzero}{\mathbb N_0}

\newcommand{\TT}{\mathbb T}
\newcommand{\e}{\mathrm e}

\newaliascnt{proposition}{theorem}
\newtheorem{proposition}[proposition]{Proposition}
\aliascntresetthe{proposition}

\newaliascnt{lemma}{theorem}
\newtheorem{lemma}[lemma]{Lemma}
\aliascntresetthe{lemma}

\newaliascnt{corollary}{theorem}
\newtheorem{corollary}[corollary]{Corollary}
\aliascntresetthe{corollary}

\theoremstyle{definition}
\newaliascnt{definition}{theorem}

\aliascntresetthe{definition}

\theoremstyle{remark}
\newaliascnt{remark}{theorem}

\aliascntresetthe{remark}

\crefname{theorem}{theorem}{theorems}
\Crefname{theorem}{Theorem}{Theorems}
\crefname{proposition}{proposition}{propositions}
\Crefname{proposition}{Proposition}{Propositions}
\crefname{lemma}{lemma}{lemmas}
\Crefname{lemma}{Lemma}{Lemmas}
\crefname{corollary}{corollary}{corollaries}
\Crefname{corollary}{Corollary}{Corollaries}
\crefname{definition}{definition}{definitions}
\Crefname{definition}{Definition}{Definitions}
\crefname{remark}{remark}{remarks}
\Crefname{remark}{Remark}{Remarks}

\newcommand{\E}{\mathbb E}
\newcommand{\Prob}{\mathbb P}
\theoremstyle{plain}
\newtheorem{mainthm}{Theorem}

\crefname{mainthm}{theorem}{theorems}
\Crefname{mainthm}{Theorem}{Theorems}

\title[Typical Bohnenblust--Hille ratios]
{Typical Bohnenblust--Hille Ratios}

\author[D. M. Pellegrino]{Daniel M. Pellegrino}
\address{Departamento de Matem\'atica, Universidade Federal da Para\'iba, Jo\~ao Pessoa, PB, Brazil}
\email{daniel.pellegrino@academico.ufpb.br}

\author[E. V. Teixeira]{Eduardo V. Teixeira}
\address{Department of Mathematics, Oklahoma State University, Stillwater, OK 74078, USA}
\email{eduardo.teixeira@okstate.edu}

\subjclass[2020]{Primary 46G25; Secondary 60G15}
\keywords{Bohnenblust--Hille inequality, complex polynomials,
critical dimension, spherical measure, Gaussian estimates,
Brascamp--Lieb inequality, multilinear forms}
\begin{document}
\raggedbottom

{\color{black}
\begin{abstract}
The polynomial Bohnenblust--Hille inequality controls the coefficient
$\ell_{2m/(m+1)}$-norm of {\color{black}\hypersetup{linkcolor=black,citecolor=black}a complex} $m$-homogeneous polynomial by its supremum
norm, with a constant independent of the dimension. We study the associated
Bohnenblust--Hille ratio from a probabilistic point of view, by placing
normalized surface measure on the Euclidean coefficient sphere. Our results
reveal a sharp contrast between the extremal behavior governing the classical
Bohnenblust--Hille constants and the typical scale seen in coefficient
directions. For arbitrary prescribed monomial supports, the ratio is eventually
at most $1$ almost surely, and it tends to zero in spherical measure exactly
when the number of monomials tends to infinity. On the full complex polynomial
spaces we determine its typical asymptotic scale uniformly in the dimension;
in the critical regime $n_m/m\to1$ this gives
$2\sqrt2/\sqrt{m\log m}$, in contrast with the nonvanishing extremal scale.
{\color{black}\hypersetup{linkcolor=black,citecolor=black}On the full real polynomial spaces, the corresponding ratio tends to zero
in spherical measure exactly when the number of variables tends to infinity.}
\end{abstract}
}

\maketitle
\tableofcontents
\section{Introduction}\label{sec:introduction}

The Bohnenblust--Hille inequality entered analysis through a problem about
Dirichlet series.  Bohr asked for the largest possible width of a vertical
strip in which an ordinary Dirichlet series may converge uniformly without
converging absolutely.  He proved in 1913 that this width is at most $1/2$.
The problem remained open until 1931, when Bohnenblust and Hille proved that
$1/2$ is sharp \cite{BH}; their argument built on Littlewood's $4/3$
inequality \cite{Littlewood} and on dimension-free estimates for the
coefficients of multilinear forms.  The polynomial form of their estimate
became a dimension-free principle for controlling the coefficients of
homogeneous polynomials.  For the historical connection with Dirichlet series
and the Bohr transform, see also \cite{DFOOS,DGMS}.

In polynomial form, the theorem says that for every degree $m$ the
$\ell_{2m/(m+1)}$-norm of the coefficients of a complex $m$-homogeneous
polynomial is bounded by its supremum norm on the polydisc, with a constant
that is independent of the number of variables.  The exponent
$2m/(m+1)$ is optimal.  A particularly visible instance is the
multidimensional Bohr radius.  The
hypercontractive estimate of Defant, Frerick, Ortega-Cerd\`a, Ouna\"ies, and
Seip \cite{DFOOS} gave the correct order $\sqrt{(\log n)/n}$ up to absolute
constants; the subexponential form of the polynomial Bohnenblust--Hille
inequality obtained in \cite{BPS} led to the asymptotic equivalence at that
scale. {\color{black}More recently, polynomial growth of the complex polynomial
Bohnenblust--Hille constants was established in \cite{PT}.}

{\color{black}
The Bohnenblust--Hille inequality has developed into an active line of
research with connections to complex analysis, Dirichlet series, harmonic
analysis, quantum information, learning theory, Boolean and finite-group
analysis, and noncommutative polynomial inequalities.  Current work also
addresses the growth of the constants, exact and multilinear forms,
coefficient summability, restricted supports, and discrete variants
\cite{ADEDGP,BayartSupports,BPS,CNS,DGM,EskenazisIvanisvili,
IvanisviliHamming,MNP,Montanaro,PT,STVBoolean,SupportSensitive,
SVProductCyclic,SVZ,VZ}.
{\color{black}Defant, Garc\'ia, and Maestre \cite{DGM} studied maximum
moduli of unimodular polynomials, including estimates obtained from random
choices of coefficients.}

For support-restricted and support-sensitive inequalities, several different
notions of sparsity and interaction have been considered
\cite{CNS,MNP,SupportSensitive}.  Here we take a different viewpoint: the
support itself is a prescribed set of monomials, and we ask how the
Bohnenblust--Hille ratio is distributed among coefficient directions. For
$P(z)=\sum_{|\alpha|=m}a_\alpha z^\alpha$ and
$q_m=2m/(m+1)$, write
\[
 R_m(P):=\frac{\|(a_\alpha)\|_{q_m}}{\|P\|_\infty}.
\]
Since this ratio is unchanged by scaling the coefficients, we can
recast the problem on the Euclidean coefficient sphere, where
normalized surface measure gives a precise meaning to typical
behavior. Both the ambient dimension and the support may vary with $m$.

{\color{black}A universal threshold emerges at
$1$: along every prescribed sequence of supports, $R_m\le1$ in all sufficiently
large degrees almost surely, and the threshold is sharp
(Theorem~\ref{thm:tail-main}).}

We also determine exactly when the typical ratio vanishes:
$R_m$ tends to zero in spherical measure if and only if the number
of prescribed monomials tends to infinity
(Theorem~\ref{thm:vanishing-main}). For the full complex polynomial space,
we determine the typical scale uniformly in the dimension. For every fixed
$n\ge2$, $\sqrt{\log m}\,R_m$ converges in spherical measure to
$(n-1)^{-1/2}$. More generally, a single asymptotic scale is valid
uniformly in $n$, and on that scale we determine the exact logarithmic rate
of deviations below the typical value
(Theorem~\ref{thm:full-exact-main}).

There is an interesting tension between this typical behavior and the
extremal problem. Theorems~B and~C show that, as the degree grows, the
Bohnenblust--Hille ratio becomes smaller for most coefficient directions,
and in several natural regimes its typical size actually tends to zero.
Although this might suggest decreasing constants, the available extremal
estimates point in the opposite direction. There is no
contradiction: the optimal constant is determined by the largest ratio and
may therefore be governed by an increasingly exceptional part of the
coefficient sphere. This contrast is one of the motivations for separating
the typical and extremal questions.

The real case obeys a different dimensional criterion. On the full real
polynomial spaces, the ratio tends to zero in spherical measure if and only
if the number of variables tends to infinity
(Theorem~\ref{thm:real-main}).

In the critical complex regime $n_m/m\to1$, the extremal behavior is already
known from \cite[Theorem~B]{PT}. On the same full polynomial spaces, the
typical ratio is asymptotic in spherical measure to
$2\sqrt2/\sqrt{m\log m}$
(Theorem~\ref{thm:critical-main}).

Finally, for multilinear forms with a fixed number $n\ge2$ of coordinates
in each factor, the typical ratio has scale $(m\log m)^{-1/2}$, with the
exact limiting constant identified in Theorem~\ref{thm:multilinear-main}.
}

To state the main results precisely, for $k\in\NN$ write
\[
 [k]:=\{1,\ldots,k\},
\]
and let
\[
 \mathcal M_{m,n}:=\{\alpha\in\mathbb N_0^n:|\alpha|=m\},
 \qquad q_m:=\frac{2m}{m+1}.
\]
For a nonempty set $\Lambda\subseteq\mathcal M_{m,n}$ and
$a=(a_\alpha)_{\alpha\in\Lambda}\in\mathbb C^\Lambda$, put
\[
 P_a:\CC^n\to\CC,\qquad
 P_a(z):=\sum_{\alpha\in\Lambda}a_\alpha z^\alpha,
 \qquad
 R_m(a):=\frac{\|a\|_{q_m}}{\|P_a\|_\infty}\quad(a\ne0),
\]
where $\|P_a\|_\infty$ is the supremum on the unit polydisc, and set
$R_m(0):=0$. When the polynomial itself is the convenient object, we also
write $R_m(P_a):=R_m(a)$.  Let
\[
 \mathbb S_\Lambda:=\{a\in\mathbb C^\Lambda:\|a\|_2=1\}
\]
and let $\mu_\Lambda$ be normalized Euclidean surface measure on this sphere.
{\color{black}When $\Lambda=\mathcal M_{m,n}$, we call $\Lambda$ the full support. The corresponding coefficient space
$\mathbb C^{\mathcal M_{m,n}}$, or equivalently the space of all complex
$m$-homogeneous polynomials in $n$ variables,
\[
 P(z)=\sum_{|\alpha|=m}a_\alpha z^\alpha,
\]
is called the full polynomial space of degree $m$ in $n$ variables. Thus
``full'' is always understood degree by degree. We write
\[
 \mathbb S_{m,n}:=\mathbb S_{\mathcal M_{m,n}},
 \qquad
 \mu_{m,n}:=\mu_{\mathcal M_{m,n}},
\]
so $\mu_{m,n}$ is the normalized spherical measure on the coefficient sphere
of this single degree.}
For a sequence of nonempty supports
{\color{black}$(\Lambda_m)_{m\ge2}$, with
$\Lambda_m\subseteq\mathcal M_{m,n_m}$}, put
\[
 {\color{black}\Omega_{(\Lambda_m)}}:=\prod_{m\ge2}\mathbb S_{\Lambda_m},
 \qquad
 {\color{black}\boldsymbol\mu_{(\Lambda_m)}}:=\bigotimes_{m\ge2}\mu_{\Lambda_m}.
\]
\begin{mainthm}\label{thm:tail-main}
There is an absolute constant $C$ such that
\begin{equation}\label{endpoint:endpoint-uniform-bound}
 \mu_\Lambda\{a\in\mathbb S_\Lambda:R_m(a)>1\}
 \le C\frac{\log m}{m^2}
 \qquad(m\ge2),
\end{equation}
uniformly in $n$ and $\varnothing\ne\Lambda\subseteq\mathcal M_{m,n}$.
{\color{black}Moreover, for every sequence of nonempty supports
$(\Lambda_m)_{m\ge2}$, with $\Lambda_m\subseteq\mathcal M_{m,n_m}$,}
\begin{equation}\label{eq:eventual-contractivity-main}
 {\color{black}\boldsymbol\mu_{(\Lambda_m)}}\bigl\{(a^{(m)})\in{\color{black}\Omega_{(\Lambda_m)}}:
 R_m(a^{(m)})\le1\ \text{for all sufficiently large }m\bigr\}=1.
\end{equation}
The threshold $1$ is optimal.
\end{mainthm}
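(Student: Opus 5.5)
The plan is to compare two dimension-free quantities on $\mathbb S_\Lambda$. Write $N:=|\Lambda|$ and $q=q_m$, so that $1/q-1/2=1/(2m)$. By Parseval on $\TT^n$ we always have $\|P_a\|_\infty\ge\|P_a\|_{L^2(\TT^n)}=\|a\|_2=1$. More generally $\|P_a\|_\infty\ge\|P_a\|_{L^p(\TT^n)}$ for every $p\ge2$. The event $\{R_m>1\}$ is therefore contained in $\{\|a\|_q>\|P_a\|_{L^p}\}$ for any admissible choice of $p=p(N,m)$, which we are free to optimize.

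\textbf{Mean scale of the two norms.} We represent $a=g/\|g\|_2$ with $g$ a standard complex Gaussian vector in $\CC^\Lambda$, normalized so that $\E|g_\alpha|^2=1$.
\begin{itemize}
\item For the coefficient norm, $\E|g_\alpha|^q=\Gamma(1+q/2)=\Gamma\bigl(2-\tfrac1{m+1}\bigr)=1-\tfrac{1-\gamma}{m+1}+O(m^{-2})$. Hence typically
\[
 \log\|a\|_q\approx\frac{\log N}{2m}-\frac{c_0}{m},\qquad c_0>0.
\]
\item For the polynomial, $\E\|P_g\|_{L^{2k}}^{2k}$ is a sum over solutions of $\alpha_1+\dots+\alpha_k=\beta_1+\dots+\beta_k$. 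It is at least the diagonal (Gaussian) contribution, of order $k!\,N^k(1-O(k^2/N))$. Taking $p=2k$ with $k$ comparable to $1+\log N/m$, we get $\log\|P_a\|_{L^{2k}}\gtrsim\tfrac12\log(k/\e)$ up to lower-order terms, which dominates $\log N/(2m)$ once $N\ge\e^{Cm}$. In the intermediate range ($N$ large but $\log N\lesssim m$), the choice $k=2$ already gives $\|P_a\|_4^4\approx 2-O(1/N)$. The margin $\tfrac14\log2-\tfrac{\log N}{2m}+\tfrac{c_0}m$ is then positive.
\end{itemize}

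\textbf{Concentration.} Both $a\mapsto\|a\|_q$ and $a\mapsto\|P_a\|_{L^p}$ are Lipschitz on $\mathbb S_\Lambda$. The first has constant $N^{1/(2m)}$; the second has constant $1$, since $\|P_a-P_b\|_{L^p}\le\|P_a-P_b\|_\infty\le\|a-b\|_1$ is too crude and I would instead use $L^p$-boundedness via hypercontractivity restricted to degree-$m$ homogeneous parts. Lévy's inequality then gives deviation probabilities $\exp(-cN\,\delta^2)$, where $\delta\gtrsim1/m$ is the margin from the previous step. This is far below $\log m/m^2$ as soon as $N\ge C m^2\log m$.

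\textbf{Small supports.} The remaining regime, $N\lesssim m^2\log m$, is where I expect the main obstacle, and where the rate $\log m/m^2$ should come from. For $N=1$ we have $R_m\equiv1$. For $N=2$, one can align the phases of two distinct monomials on $\TT^n$, so $\|P_a\|_\infty=\|a\|_1\ge\|a\|_q$ and again $R_m\le1$. In general, $\|a\|_q>1$ forces $a$ to be nearly a spike or to be spread out. I would first handle spread vectors by the $L^4$ comparison above. For spike-like $a$, near some $e_\alpha$ with perturbation $b\perp e_\alpha$ and $\|b\|_2=\varepsilon$, I would expand both sides.
\begin{itemize}
\item On the coefficient side, $\|a\|_q^q=(1-\varepsilon^2)^{q/2}+\|b\|_q^q$.
\item On the polynomial side, evaluating at points where the leading monomial is aligned with the largest cross term gives $\|P_a\|_\infty\ge\sqrt{1-\varepsilon^2}+\|b\|_\infty$-type gains of first order in $\varepsilon$.
\end{itemize}
The coefficient gain of order $\varepsilon^q-\varepsilon^2$ beats this only when $\varepsilon$ lies in a thin window. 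The spherical measure of such windows, summed over $\alpha$ and over the few extra coordinates, should produce the bound $O(\log m/m^2)$. Making this small-$N$ bookkeeping uniform in $n$ and in the geometry of $\Lambda$ is the delicate part.

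\textbf{Conclusion.} Since $\sum_m\log m/m^2<\infty$, the first Borel--Cantelli lemma under the product measure $\boldsymbol\mu_{(\Lambda_m)}$ yields \eqref{eq:eventual-contractivity-main}. Optimality of the threshold $1$ follows from supports with $|\Lambda_m|=1$: there $R_m\equiv1$, so for any $t<1$ the event $\{R_m\le t\}$ is empty in every degree.
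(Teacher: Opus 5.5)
Your handling of $N=1$ and $N=2$, the Borel--Cantelli step and the optimality example agree with the paper. However, the estimate \eqref{endpoint:endpoint-uniform-bound} itself is not proved, and the gap lies exactly where the rate $\log m/m^2$ comes from.

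\textbf{Small supports.} Here you only assert that thin windows ``should produce'' the bound. The first-order gain you rely on is false in general. For three exponents $\alpha-\gamma,\alpha,\alpha+\gamma$ and $a=(u,c,-\bar u)$ with $c>0$, one gets $\|P_a\|_\infty^2=c^2+4|u|^2$, not $(c+2|u|)^2$.

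The missing idea is a deterministic Fourier-gap inequality that is uniform in $n$ and $\Lambda$. Take a linear functional injective on $\Lambda$ and let $\alpha_\pm$ be its maximizer and minimizer. Then $\alpha_+-\alpha_-$ is a difference of two elements of $\Lambda$ in only one way, so $\widehat{|P_a|^2}(\alpha_+-\alpha_-)=a_{\alpha_+}\overline{a_{\alpha_-}}$. Positivity of $\|P_a\|_\infty^2-|P_a|^2$ then gives $\|P_a\|_\infty^2\ge\|a\|_2^2+|a_{\alpha_+}a_{\alpha_-}|$ (Lemma~\ref{endpoint:endpoint-exposed-pair}). Combine this with $\|a\|_{q_m}^2\le N^{1/m}\|a\|_2^2$. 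On $\{\|g\|_2^2\le2N\}$ with $N^{1/m}\le2$, the event $\{R_m(g)>1\}$ forces $|g_{\alpha_+}g_{\alpha_-}|\le x:=2N(N^{1/m}-1)\asymp N\log N/m$. The bound $\Prob\{|g_1g_2|\le x\}\le2x^2\log(\e/x)$ is where $\log m/m^2$ originates.

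To make this uniform in $N$, the resulting factor $N^2(\log N)^2$ must be absorbed. The paper does this by applying the Brascamp--Lieb small-ball bound of Lemma~\ref{endpoint:endpoint-arbitrary-center} to the remaining $N-2$ coordinates, conditionally on $(g_{\alpha_+},g_{\alpha_-})$. This contributes an extra factor $\e^{-cN}$.

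\textbf{Larger supports.} Your argument here also fails as written. For $k\ge2$, the map $a\mapsto\|P_a\|_{2k}$ is not $1$-Lipschitz on $\mathbb S_\Lambda$. With the hypercontractive constant $k^{m/2}$ (Lemma~\ref{lem:coefficient-moment}), L\'evy's inequality gives only $\exp(-cN\delta^2k^{-m})$. This is vacuous for $N<2^{m/2}$, precisely where you use $k=2$.

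A counting bound $\|P_b\|_{2k}\le N^{(k-1)/(2k)}\|b\|_2$ would repair bounded $k$ once $N\gtrsim(\log m)^k$. Two further problems remain.
\begin{enumerate}
\item To have $\sqrt{k/\e}>N^{1/(2m)}$ you need $k\gtrsim\e N^{1/m}$, not $k\sim1+\log N/m$. Then $N^{1/k}\le\exp\bigl(m\log(N^{1/m})/(\e N^{1/m})\bigr)$, which tends to $1$ when $N^{1/m}\gg m$. So the concentration is vacuous for very large supports.
\item The identity $\E\|P_g\|_{2k}^{2k}=k!N^k$ does not control the lower tail of $\|P_g\|_{2k}$ when its fluctuations are not small.
\end{enumerate}

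What is required is a lower bound on the supremum norm itself that holds with overwhelming probability. The paper uses two tools for this.
\begin{itemize}
\item For $2<N^{1/m}<1710$: the dimension-free small-ball estimate $\Prob\{\|P_g\|_\infty\le s\sqrt N\}\le\exp(-N\e^{-s^2})$, obtained from geometric Brascamp--Lieb on the root-of-unity frame (Lemma~\ref{lem:translated-small-ball}).
\item For $N^{1/m}\ge1710$: an exponential packing of weakly correlated torus points combined with Slepian's inequality (Lemmas~\ref{lem:exponential-packing}--\ref{lem:large-support-tail}).
\end{itemize}
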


{\color{black}\hypersetup{linkcolor=black,citecolor=black}For a support consisting of one monomial}, $R_m\equiv1$, so the threshold cannot be lower.

\begin{mainthm}\label{thm:vanishing-main}
{\color{black}Let $(n_m)_{m\ge2}$ be a sequence of positive integers} and let
$\varnothing\ne\Lambda_m\subseteq\mathcal M_{m,n_m}$. Then the following
are equivalent:
\begin{enumerate}[label=\textup{(\roman*)},leftmargin=2.2em]
\item $|\Lambda_m|\longrightarrow\infty$;
\item for every $\varepsilon>0$,
\begin{equation}\label{eq:vanishing-main}
 \mu_{\Lambda_m}\{a\in\mathbb S_{\Lambda_m}:R_m(a)>\varepsilon\}
 \longrightarrow0.
\end{equation}
\end{enumerate}
\end{mainthm}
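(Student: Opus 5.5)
We prove the two implications separately; the substantive one is (i)$\Rightarrow$(ii).

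\emph{(ii)$\Rightarrow$(i).} We argue by contraposition. Suppose $|\Lambda_m|\not\to\infty$. Then along a subsequence $|\Lambda_m|=k$ for some fixed $k$. For every polynomial, $\|P_a\|_\infty\le\|a\|_1\le k^{1/2}\|a\|_2$, and $\|a\|_{q_m}\ge\|a\|_2$ because $q_m\le2$. Hence $R_m(a)\ge k^{-1/2}$ for every $a\in\mathbb S_{\Lambda_m}$. Taking $\varepsilon=k^{-1/2}/2$, the measure in \eqref{eq:vanishing-main} equals $1$ along this subsequence, so (ii) fails.

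\emph{(i)$\Rightarrow$(ii).} Put $N=|\Lambda_m|$. We bound numerator and denominator separately for a typical $a\in\mathbb S_{\Lambda_m}$.

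Numerator. By Hölder, $\|a\|_{q_m}\le N^{1/q_m-1/2}\|a\|_2=N^{1/(2m)}$ deterministically. This is $O(1)$ unless $N$ is exponentially large in $m$.

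Denominator. Since $\|P_a\|_\infty\ge\|P_a\|_{L^2(\TT^n)}=\|a\|_2=1$, we always have $R_m(a)\le N^{1/(2m)}$. So this crude bound is not enough, and the gain must come from typical growth of the sup norm. We lower-bound $\|P_a\|_\infty$ by an $L^p(\TT^n)$ norm, $\|P_a\|_\infty\ge\|P_a\|_{L^4}$. The key is that the sup norm is typically large. We realize $a=g/\|g\|_2$ with $g$ a standard complex Gaussian vector in $\CC^{\Lambda_m}$.

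Plan for the denominator. Rather than an $L^p$ argument, we use a direct test of the sup norm. Write $\|P_g\|_\infty\ge\max_{w}|P_g(w)|$ over a finite set $W\subset\TT^n$ of test points. Each $P_g(w)$ is a complex Gaussian with variance $N$. If we can find $M$ points $w$ with nearly orthogonal evaluation vectors $(w^\alpha)_{\alpha\in\Lambda_m}$, that is, pairwise correlations $N^{-1}|\sum_\alpha w^\alpha\bar v^\alpha|$ small, then Sudakov minoration gives $\E\|P_g\|_\infty\gtrsim\sqrt{N\log M}$. Gaussian concentration (the sup is $\sqrt N$-Lipschitz in $g$) then upgrades this to high probability. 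Random points $w$ uniform on $\TT^n$ have evaluation vectors whose correlations are of order $N^{-1/2}$, since distinct monomials are orthonormal in $L^2(\TT^n)$. Taking $M=N^{c}$ points, a union bound together with Hoeffding shows that all correlations are at most $1/2$ with positive probability. This yields $\|P_a\|_\infty\gtrsim\sqrt{\log N}$ with $\mu_{\Lambda_m}$-probability tending to $1$, because $\|g\|_2\approx\sqrt N$.

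Numerator refinement. The bound $N^{1/(2m)}$ is too weak when $\log N\gg m$, since $N^{1/(2m)}=e^{\log N/(2m)}$ can then exceed $\sqrt{\log N}$. Here we use the Gaussian moment instead. We have $\E\|g\|_{q}^{q}=N\,\E|g_1|^q\approx N$, and $\|g\|_q$ concentrates. So $\|a\|_{q_m}\approx N^{1/q_m-1/2}c_m$ with $c_m\to1$, i.e. of size $N^{1/(2m)}$ again; no improvement is possible in the numerator. The gain must therefore come from the denominator.

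Denominator refinement. We must show $\|P_a\|_\infty\gg N^{1/(2m)}$. Since $N\le\binom{m+n-1}{m}$ while the polynomial lives on $\TT^n$, we take $M$ up to $e^{cn}$ or $e^{cm}$ nearly orthogonal points. Sudakov then gives $\sqrt{\log M}\sim\sqrt{\min(n,\log N)}$, and this still may not beat $N^{1/(2m)}$.

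The main obstacle is exactly the regime $\log N\gtrsim m$, where the $\ell_{q_m}$ norm is as large as $N^{1/(2m)}$. There I would instead compare with the known typical growth of the sup norm of random polynomials. Using Bayart-type estimates for $\E\|P_g\|_\infty$ of order $\sqrt{N\,n\log m}$, and noting $N^{1/(2m)}\le(e(n+m)/m)^{1/2}$, the ratio is at most $\sqrt{(n+m)/(m\,n\log m)}\to0$ whenever $m\to\infty$. This is a non-extremal estimate and should be supplied by the same Gaussian chaos machinery used for the full-space Theorem~\ref{thm:full-exact-main}.
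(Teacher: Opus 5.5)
Your converse direction (ii)$\Rightarrow$(i) is correct and is the paper's argument. The gap is in (i)$\Rightarrow$(ii), in the large-support regime.

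\textbf{What your argument actually gives.} With a small repair, the first half works, but it does not reach large supports. Hoeffding does not apply: the characters $u^\alpha$, $\alpha\in\Lambda_m$, are orthonormal on $\TT^n$ but not independent. Still, the second moment $\int_{\TT^n}|Q|^2\,dm_n=N$ of $Q(u)=\sum_{\alpha\in\Lambda_m}u^\alpha$, together with Chebyshev and a union bound or greedy translation argument, gives $M$ test points with $\log M\asymp\log N$ and pairwise correlations at most $1/2$. Slepian or Sudakov plus concentration then give $\|P_a\|_\infty\gtrsim\sqrt{\log N}$ typically. Hence $R_m\lesssim N^{1/(2m)}/\sqrt{\log N}$, which tends to zero only when $N^{1/m}=o(\log N)$.

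\textbf{Why your fix for larger supports fails.} You invoke ``Bayart-type'' bounds $\E\|P_g\|_\infty\asymp\sqrt{Nn\log m}$, and this fails on two counts. First, a Kahane--Salem--Zygmund-type bound $\E\|P_g\|_\infty\lesssim\sqrt{Nn\log m}$ does hold for every support, but it is an upper bound. Your ratio estimate needs the reverse inequality. Second, that reverse inequality is false for general supports. If $\Lambda_m$ uses only the first two of $n_m$ variables, $\|P_g\|_\infty$ does not depend on $n_m$ at all, while $\sqrt{Nn_m\log m}$ can be arbitrarily larger. The same failure occurs for $\Lambda_m=\{z_1^m,\ldots,z_N^m\}$: there $\|P_g\|_\infty=\|g\|_1\approx\frac{\sqrt\pi}{2}N$, which is smaller than $N\sqrt{\log m}$. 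Nor can you borrow the machinery of Theorem~\ref{thm:full-exact-main}. It rests on the exponent covariance of the full support (Lemma~\ref{full:full-metric}) and the kernel $K_{m,n}$ (Lemma~\ref{full:full-covariance-volume}), so it is specific to $\Lambda=\mathcal M_{m,n}$.

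\textbf{The missing idea.} You need a support-uniform supply of exponentially many weakly correlated points, with $\log M\gtrsim mN^{1/m}$. The paper obtains this from higher moments.
\begin{enumerate}
\item Counting the at most $k^{mk}$ ordered decompositions $\beta=\alpha_1+\cdots+\alpha_k$ and applying Cauchy--Schwarz gives $\|Q\|_{L^{2k}(\TT^n)}\le k^{m/2}\sqrt N$ (Lemma~\ref{lem:coefficient-moment}).
\item With $k\approx N^{1/m}/64$, Markov's inequality gives $m_n\{|Q|>4^{-m}N\}\le e^{-2L}$, where $L\asymp mN^{1/m}$.
\item A greedy choice using translation invariance of Haar measure then produces $\lceil e^L\rceil$ points with pairwise correlations at most $4^{-m}$ (Lemma~\ref{lem:exponential-packing}).
\item Slepian gives $\|P_g\|_\infty\gtrsim\sqrt{NL}$ off a tiny event, so $R_m\lesssim N^{1/(2m)}/\sqrt{mN^{1/m}}\asymp m^{-1/2}$ (Lemma~\ref{lem:large-support-tail}).
\end{enumerate}
Since $mN^{1/m}\ge m+\log N$, this input (even with correlations only $\le1/2$, using your Slepian-plus-concentration step) would close your argument whenever $N^{1/m}$ exceeds an absolute constant. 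When $N^{1/m}$ is bounded, your own $\sqrt{\log N}$ bound suffices. The paper handles that bounded regime differently, via the Brascamp--Lieb small-ball estimate \eqref{eq:double-exp-BL-input}.
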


For the full polynomial space one can go further and identify the exact
first-order scale.

\begin{mainthm}[The full polynomial space]\label{thm:full-exact-main}
Let
\[
 N_{m,n}:=\binom{m+n-1}{m},\qquad
 H_{m,n}:=\frac{n-1}{2}\log\left(m+\frac{m^2}{n}\right).
\]
On the full complex coefficient spheres, the following assertions hold
uniformly for $n\ge2$ as $m\to\infty$.
\begin{enumerate}[label=\textup{(\roman*)},leftmargin=2.2em]
\item
\begin{equation}\label{eq:full-exact-main}
 \frac{\sqrt{H_{m,n}}}{N_{m,n}^{1/(2m)}}\,R_m\longrightarrow1
\end{equation}
in $\mu_{m,n}$-measure.
\item For every fixed $0<y<1$,
\begin{equation}\label{full:full-ratio-rate}
 \frac1{H_{m,n}}\log\mu_{m,n}\left\{
 \frac{\sqrt{H_{m,n}}}{N_{m,n}^{1/(2m)}}R_m<y
 \right\}=-(y^{-2}-1)+\mathrm{o}(1).
\end{equation}
\end{enumerate}
In particular, for every fixed $n\ge2$,
\begin{equation}\label{eq:full-fixed-n-main}
 \sqrt{\log m}\,R_m\longrightarrow\frac1{\sqrt{n-1}}
 \qquad\text{in }\mu_{m,n}\text{-measure}.
\end{equation}
\end{mainthm}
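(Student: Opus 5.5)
The plan is to compare the spherical model with a Gaussian one, and then treat the numerator $\|a\|_{q_m}$ and the denominator $\|P_a\|_\infty$ separately. Write $a=g/\|g\|_2$, where $g=(g_\alpha)_{\alpha\in\mathcal M_{m,n}}$ has i.i.d.\ standard complex Gaussian entries. Since $R_m$ is scale invariant, $R_m(a)=\|g\|_{q_m}/\|P_g\|_\infty$, so the normalization never enters; the only transfer needed is that events defined through $g$ have the same $\mu_{m,n}$-probability. For (i) this is immediate. For the large deviation statement (ii), the rate is only $e^{-\Theta(H_{m,n})}$, and $H_{m,n}=\mathrm{o}(N_{m,n})$ uniformly in $n\ge2$. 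Hence any auxiliary concentration of $\|g\|_2$ or $\|g\|_{q_m}$ fails with probability $e^{-cN_{m,n}}$, which is negligible at the scale $H_{m,n}$.

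\emph{Numerator.} Since $1/q_m-1/2=1/(2m)$, we have
\[
 \|g\|_{q_m}=N^{1/q_m}\Bigl(\tfrac1N\sum|g_\alpha|^{q_m}\Bigr)^{1/q_m},
\]
and the empirical mean concentrates around $\E|g|^{q_m}=\Gamma(1+q_m/2)\to1$, exponentially in $N$. Comparing with $\|g\|_2=N^{1/2}(1+\mathrm{o}(1))$ gives
\[
 \|g\|_{q_m}/\|g\|_2=N_{m,n}^{1/(2m)}(1+\mathrm{o}(1)),
\]
with failure probability $e^{-cN}$. So everything reduces to showing that $\|P_g\|_\infty/\|g\|_2$ equals $\sqrt{H_{m,n}}(1+\mathrm{o}(1))$ in probability, together with the upper-tail estimate
\[
 \log\Prob\bigl\{\|P_g\|_\infty^2>t\,N H_{m,n}\bigr\}=-(t-1)H_{m,n}(1+\mathrm{o}(1)),\qquad t>1 .
\]
With $t=y^{-2}$ this is exactly \eqref{full:full-ratio-rate}.

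\emph{Denominator.} By the maximum principle the supremum is attained on $\TT^n$. By homogeneity one phase can be factored out, so the relevant index set is $\TT^{n-1}$. There $P_g/\sqrt N$ is a complex Gaussian field with unit variance and covariance kernel $h_m(z\bar w)/N$, where $h_m$ is the complete homogeneous symmetric polynomial; consequently $|P_g(z)|^2/N$ is $\mathrm{Exp}(1)$ at each point. The heuristic is $\max\approx\log K$, where $K$ is the number of decorrelated points. I will show $\log K=H_{m,n}(1+\mathrm{o}(1))$ by computing the local correlation scale of $h_m(z\bar w)/N$ near the diagonal. Writing $z_j\bar w_j=e^{i\theta_j}$, the second-order expansion has variance matrix equal to the covariance of $\alpha$ drawn uniformly from $\mathcal M_{m,n}$. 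Its entries are of order $m^2/n^2+m/n$ per coordinate, so the correlation cell has volume about $(m+m^2/n)^{-(n-1)/2}$, up to factors that are $e^{\mathrm{o}(H)}$.

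\emph{Upper bounds.} Take a net of $\TT^{n-1}$ at mesh $\delta$ slightly finer than the correlation scale, say smaller by a factor $\log m$. A union bound over the net gives $K e^{-tH}$. Passing from the net to the full supremum uses a Bernstein-type inequality for trigonometric polynomials: the gradient is controlled by $m\|P\|_\infty$, or better, by a second Gaussian union bound for the derivatives in the natural metric. The extra $\log m$ factors enter only through $\log(1/\delta)$ multiplied by $n-1$, which is $\mathrm{o}(H)$. This yields both $\|P\|_\infty^2\le (1+\varepsilon)NH$ with high probability and the upper bound on the tail exponent.

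\emph{Lower bounds (main obstacle).} Here one needs roughly $K$ nearly independent points. I would pick a separated grid $\{z_k\}$ at spacing about $(\log m)$ times the correlation scale, which still has $e^{H(1-\mathrm{o}(1))}$ points. Then apply a second-moment (Paley--Zygmund) bound to the number of $k$ with $|P(z_k)|^2>sNH$. This gives the typical lower bound with $s<1$, and for $s=t$ it gives the probability $\gtrsim\min(1,Ke^{-tH})$ needed for (ii). The difficulty is an off-diagonal decay bound for $|h_m(z\bar w)|/N$ that is uniform in $n$; it must cover both regimes, $n\lesssim m$ (Gaussian-like decay in $\sqrt m\,\theta$) and $n\gg m$ (decay governed by $m/n$). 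I would obtain it from the generating function $\sum_m h_m(x)s^m=\prod_j(1-sx_j)^{-1}$ via a saddle-point / Cauchy-integral estimate. Granting this decay, the second moment is dominated by the diagonal, and the fixed-$n$ corollary \eqref{eq:full-fixed-n-main} follows because $N_{m,n}^{1/(2m)}\to1$ and $H_{m,n}\sim\frac{n-1}{2}\cdot2\log m$.
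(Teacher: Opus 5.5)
Your Gaussian reduction, your numerator estimate (failure probability $e^{-cN}$, negligible because $N_{m,n}/H_{m,n}\ge(m+1)/(2\log m)$), and your upper bound (a net of cardinality $e^{(1+o(1))H}$ plus control of the oscillation) all match the paper. The genuine gap is the lower bound. You explicitly grant it, and the tool you name for it does not work uniformly in $n$.

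\textbf{Why the suggested decay estimate falls short.} You need $e^{(1-o(1))H}$ points with pairwise $|K_{m,n}|=o(1)$ for every $n\ge2$. The Cauchy-integral bound that can actually be turned into a volume estimate bounds the $t$-integral by its maximum, as in the paper's covariance-volume lemma. It reads $|K_{m,n}(\theta)|\le C\sqrt{m+m^2/n}\,\exp[-\tfrac12\min_tQ_s(\theta+t\mathbf 1)]$, and its prefactor costs $\tfrac12\log(m+m^2/n)=H/(n-1)$ in the exponent.
\begin{itemize}
\item This cost is harmless when $n\to\infty$ but fatal for bounded $n$.
\item For $n=2$ the bound gives only $|K_{m,2}|\lesssim1/|\theta_1-\theta_2|$. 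The truth is the Dirichlet-kernel bound $1/((m+1)|\sin\frac{\theta_1-\theta_2}{2}|)$.
\item For fixed $n$ the resulting packings have size only about $e^{\frac{n-2}{n-1}H}$.
\item The Markov step with exponent $p=n/(2B)\ge1$, where $B\asymp\log m$, also forces $n\gtrsim\log m$.
\end{itemize}
Your description of the regimes is also off: for bounded $n$ the per-coordinate correlation length is about $n/m$, not $1/\sqrt m$, and the decay is polynomial, not Gaussian. Recovering the lost factor would need a genuine saddle-point analysis that exploits the width of the peak in $t$, followed by a volume estimate for the resulting non-product bound. That is exactly the step you have not supplied.

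\textbf{How the paper avoids this.} It uses the covariance-volume lemma, greedy packing and Slepian only for $n\ge(\log m)^2$. In that range all losses are $O(n\log\log m)=o(H)$. For $2\le n<(\log m)^2$ it uses no decorrelation at all. Instead it combines the Brascamp--Lieb small-ball bound $\Prob\{\|G\|_\infty\le u\sqrt N\}\le\exp(-Ne^{-u^2})$ with the fact that $\log N_{m,n}/H_{m,n}\to1$ in that range. This gives:
\begin{itemize}
\item $\Prob\{\|G\|_\infty\le x\sqrt{NH}\}\le\exp[-e^{(1-x^2+o(1))H}]$ for $x<1$;
\item $\Prob\{\|G\|_\infty>x\sqrt{NH}\}\ge1-\exp(-Ne^{-x^2H})=e^{-(x^2-1+o(1))H}$ for $x>1$.
\end{itemize}
This is the missing idea.

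\textbf{Secondary issues.}
\begin{itemize}
\item \emph{The second moment is not diagonal-dominated.} Even with a packing whose correlations are all $o(1)$, it is not dominated by the diagonal. At level $sH$, two evaluations with correlation $\rho$ exceed jointly with probability about $e^{-2sH/(1+\rho)}$, an inflation of roughly $e^{2sH\rho}$. So you would need $H\rho\to0$ on most pairs. What Paley--Zygmund actually gives for $s<1$ is only $\Prob\{Y>0\}\ge e^{-o(H)}$. For $t>1$ it does give $e^{-(t-1+o(1))H}$, which is enough for (ii). To get convergence in probability you must add Gaussian concentration of the $1$-Lipschitz map $g\mapsto\|G_g\|_\infty/\sqrt N$, as the paper does; your proposal omits this step.
\item \emph{Bernstein is not uniform in $n$.} The bound $|\nabla P|\lesssim m\|P\|_\infty$ forces a coordinate mesh $\varepsilon/m$. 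Its entropy $(n-1)\log(m/\varepsilon)$ is about $2H$ once $n\gtrsim m$, so only your chaining alternative works.
\item \emph{Grid mesh.} If your net is a coordinate grid, its mesh must be $\asymp1/(A\log m)$ with $A^2\asymp m+m^2/n$. That is $\sqrt{n-1}$ times finer than the per-coordinate correlation length, as in the paper's grid $\Gamma_0$.
\end{itemize}
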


{\color{black}For real scalars, the full real polynomial space carries the analogous coefficient ratio, with the supremum norm taken over
$[-1,1]^n$.} If
$a=(a_\alpha)_{\alpha\in\mathcal M_{m,n}}\in\mathbb R^{\mathcal M_{m,n}}$, set
\[
 P_a^{\mathbb R}:\mathbb R^n\to\mathbb R,\qquad
 P_a^{\mathbb R}(x):=\sum_{|\alpha|=m}a_\alpha x^\alpha,
 \qquad
 \|P_a^{\mathbb R}\|_{\infty,\mathbb R}
 :=\sup_{x\in[-1,1]^n}|P_a^{\mathbb R}(x)|,
\]
and, for $a\ne0$,
\[
 {\color{black}R_m^{\mathbb R}(a):=
 \frac{\|a\|_{q_m}}{\|P_a^{\mathbb R}\|_{\infty,\mathbb R}}},
\]
{\color{black}and set $R_m^{\mathbb R}(0):=0$.}
We also write $R_m^{\mathbb R}(P_a^{\mathbb R}):=R_m^{\mathbb R}(a)$.
Let
\[
 \mathbb S_{m,n}^{\mathbb R}
 :=\{a\in\mathbb R^{\mathcal M_{m,n}}:\|a\|_2=1\},
\]
and let $\mu_{m,n}^{\mathbb R}$ be normalized Euclidean surface measure on
its Borel $\sigma$-algebra.

\begin{mainthm}[The real case]\label{thm:real-main}
{\color{black}
{\color{black}Let $(n_m)$ be any sequence of positive integers. For each $m$, take the full real support $\mathcal M_{m,n_m}$. Then}
\[
 R_m^{\mathbb R}\longrightarrow0
 \quad\text{in }\mu_{m,n_m}^{\mathbb R}\text{-measure}
 \quad\Longleftrightarrow\quad n_m\longrightarrow\infty.
\]
{\color{black}In contrast, in every fixed dimension the ratio remains of order one in
measure. More precisely,} for every fixed $n\ge1$ and every $\varepsilon>0$ there are
$0<c_{n,\varepsilon}<C_{n,\varepsilon}<\infty$ such that
\begin{equation}\label{eq:real-main-window}
 \mu_{m,n}^{\mathbb R}
 \{a:c_{n,\varepsilon}\le R_m^{\mathbb R}(a)\le C_{n,\varepsilon}\}
 \ge1-\varepsilon
\end{equation}
for all sufficiently large $m$.
}
\end{mainthm}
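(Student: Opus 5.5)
The plan is to pass from the sphere to Gaussians and to treat numerator and denominator separately. Write $a=g/\|g\|_2$, where $g=(g_\alpha)_{\alpha\in\mathcal M_{m,n}}$ is a standard real Gaussian vector and $N=N_{m,n}$. Since $R_m^{\mathbb R}$ is $0$-homogeneous, $R_m^{\mathbb R}(a)=\|g\|_{q_m}/\|P_g^{\mathbb R}\|_{\infty,\mathbb R}$.

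\emph{Numerator.} Because $q_m<2$, the law of large numbers together with $\E|g_\alpha|^{q}$ bounded for $q\in[1,2]$ gives
\[
\|g\|_{q_m}\asymp N^{1/q_m}=N^{1/2}N^{1/(2m)}
\]
with probability tending to one, with absolute constants. The upper bound $\|g\|_{q_m}\le N^{1/q_m-1/2}\|g\|_2$ even holds deterministically. Moreover $N^{1/(2m)}\le\bigl(e(1+n/m)\bigr)^{1/2}$, and for fixed $n$ we have $N^{1/(2m)}\to1$. It therefore suffices to study the Gaussian process
\[
G(x):=N^{-1/2}P_g^{\mathbb R}(x)=N^{-1/2}\sum_\alpha g_\alpha x^\alpha,\qquad x\in[-1,1]^n,
\]
whose variance $N^{-1}\sum_\alpha x^{2\alpha}$ is at most $1$, with equality exactly at the sign vectors.

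\emph{Direction $n_m\to\infty\Rightarrow R_m^{\mathbb R}\to0$.} I restrict $G$ to $\{\pm1\}^n$. For $x,y\in\{\pm1\}^n$ put $\varepsilon=xy$ and $S=\{i:\varepsilon_i=-1\}$. Then
\[
\E G(x)G(y)=\Prob_\alpha\Bigl(\sum_{i\in S}\alpha_i\text{ even}\Bigr)-\Prob_\alpha\Bigl(\sum_{i\in S}\alpha_i\text{ odd}\Bigr),
\]
where $\alpha$ is uniform on $\mathcal M_{m,n}$. Take a Gilbert--Varshamov code of $e^{cn}$ sign vectors whose pairwise difference sets $S$ have size between $n/4$ and $3n/4$. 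For such $S$, the quantity $\sum_{i\in S}\alpha_i$ (a Pólya-urn / Bose--Einstein count) has parity bias bounded by some $\theta<1$ uniformly in $m\ge2$ and $n$. This parity estimate is the step I expect to be the main technical point; I would first try the generating function $[t^m](1-t)^{-(n-|S|)}(1+t)^{-|S|}\big/[t^m](1-t)^{-n}$. The bias bound makes the pairwise $L^2$ distances at least $\delta>0$, so Sudakov minoration gives $\E\sup G\gtrsim\delta\sqrt n$. Borell--TIS concentration, with $\sigma\le1$, gives $\sup G\gtrsim\sqrt n$ with high probability. Hence, with high probability,
\[
R_m^{\mathbb R}\lesssim\frac{N^{1/(2m)}}{\sqrt n}\lesssim\sqrt{\frac{1+n/m}{n}}=\sqrt{\frac1n+\frac1m}\longrightarrow0 .
\]

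\emph{Fixed $n$: the window \eqref{eq:real-main-window}, which also gives the converse.} For the upper bound on the ratio I use $\|P_g^{\mathbb R}\|_\infty\ge|G(\mathbf 1)|N^{1/2}$, where $G(\mathbf 1)\sim N(0,1)$. Hence $|G(\mathbf 1)|\ge c_\varepsilon$ with probability at least $1-\varepsilon/2$, and $R_m^{\mathbb R}\le C N^{1/(2m)}/c_\varepsilon\le C_{n,\varepsilon}$. For the lower bound I need $\sup_{[-1,1]^n}|G|=\mathrm O_n(1)$ in probability, uniformly in $m$. A crude Markov-inequality net would only give $\sqrt{n\log m}$, which is too weak. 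Instead I would use Dudley's entropy bound with the true covariance. Near each of the $2^n$ corners, rescale $x_i=\pm(1-s_i/m)$. One checks that $N^{-1}\sum_\alpha x^{2\alpha}$ decays like $\exp(-c\sum_i s_i)$, uniformly in $m$, because $\alpha/m$ is roughly Dirichlet-distributed. The canonical metric in the $s$-variables is Lipschitz uniformly in $m$, so the entropy integral over the region $\{s\le K\}$ is $\mathrm O_n(1)$. On dyadic shells $s\in[2^jK,2^{j+1}K]$ the variance is at most $e^{-c2^jK}$ while the entropy grows only polynomially in $2^j$, so these shells contribute a summable amount. Away from all corners, any coordinate with $|x_i|\le1-1/\sqrt m$ makes the variance tiny. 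A final union bound therefore gives $\sup|G|\le C_{n,\varepsilon}$ with probability at least $1-\varepsilon/2$, and hence $R_m^{\mathbb R}\ge c\,N^{1/(2m)}/C_{n,\varepsilon}\ge c_{n,\varepsilon}$.

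\emph{Converse.} If $n_m\not\to\infty$, some value $n$ recurs along a subsequence, and the window above shows that $R_m^{\mathbb R}$ does not tend to zero in measure along it.
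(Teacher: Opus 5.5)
\paragraph{Overall.} Your architecture matches the paper's:
\begin{itemize}
\item Gaussian transfer;
\item the bound $\|g\|_{q_m}\le N^{1/(2m)}\|g\|_2$ together with $\|g\|_2\le2\sqrt N$;
\item a Hamming-separated family of $e^{cn}$ sign vectors with covariances bounded away from $1$ (your Sudakov step and the paper's Slepian comparison with an equicorrelated family are interchangeable here);
\item a Dudley bound uniform in $m$ for fixed $n$;
\item the recurring-value subsequence for the converse.
\end{itemize}
Using the single evaluation $G(\mathbf 1)\sim N(0,1)$ for the upper end of the window is a valid simplification of the paper's two-point small-ball bound. It gives a tail $O(v^{-1})$ instead of $O_n(v^{-2})$, which is all the window needs. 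One step, however, is false as written, and one is left open.

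\paragraph{The false step: decay of the variance near a corner.} Write $\sigma^2(x):=N^{-1}\sum_\alpha x^{2\alpha}$. Near a corner this variance does \emph{not} decay like $\exp(-c\sum_i s_i)$. For $n=2$ and $x=(1-s/m,1)$,
\[
 \frac1{m+1}\sum_{k=0}^m x_1^{2k}=\frac{1-x_1^{2(m+1)}}{(m+1)(1-x_1^2)}\longrightarrow\frac{1-e^{-2s}}{2s}.
\]
This decays only like $1/s$, because the law of $\alpha_1/m$ has density bounded below near $0$. So your shell bound $e^{-c2^jK}$ is wrong.

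The argument survives with the correct estimate
\[
 \sigma^2(x)\le\frac{n-1}{m+n-1}\sum_k|x_i|^{2k}\le\frac{n-1}{s_i}\qquad\text{for every } i.
\]
On the shell $\{\max_i s_i\sim2^jK\}$, Dudley then gives $O\bigl((2^jK)^{-1/2}\sqrt{n(j+\log K)}\bigr)$, which is still summable in $j$.

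In fact, the corners, shells and variance decay are unnecessary. Put $V(x)=(x^\alpha)_\alpha$ and use the variable $s=m(1-|x_i|)$. The normalized derivative of $V/\sqrt N$ in $s$ is at most $\sqrt{n-1}\min\{1,\sqrt2\,s^{-3/2}\}$, which is integrable on $[0,\infty)$. Hence each coordinate curve has normalized length $O(\sqrt n)$ uniformly in $m$. The whole cube then has covering numbers $(C_n/\eta)^n$ in the canonical metric, and a single application of Dudley gives $\E\sup|G|=O(\sqrt{n\log(n+1)})$. This is exactly the paper's coordinate-length function $F$ with $L\le C\sqrt{N(n-1)}$.

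\paragraph{The open step: the parity bias.} This follows in two lines from the P\'olya-urn picture you mention, with no coefficient extraction. Uniform $\alpha\in\mathcal M_{m,n}$ is Dirichlet-multinomial with parameters $(1,\ldots,1)$. Put $b=|S|$ and $U\sim\mathrm{Beta}(b,n-b)$. Given $U$, the count $\sum_{i\in S}\alpha_i$ is $\mathrm{Bin}(m,U)$, so the covariance equals $\E(1-2U)^m$. For $m\ge2$ we have $|1-2U|^m\le(1-2U)^2$. With $p=b/n\in[\frac14,\frac34]$, the bias is therefore at most
\[
 (1-2p)^2+\frac{4p(1-p)}{n+1}\le\frac14+\frac1{n+1}.
\]
This is the paper's Beta-integral identity.

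\paragraph{Minor point: the case $n=1$.} Your law-of-large-numbers step for the numerator needs $N\to\infty$, so it does not cover $n=1$. That case is trivial, since $R_m^{\mathbb R}\equiv1$, but it should be dispatched separately.
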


{\color{black}\hypersetup{linkcolor=black,citecolor=black}For complex polynomials, \cite[Theorem~B]{PT} shows that in the regime
$n_m/m\to1$ the extremal constants converge to $2$.
Theorem~\ref{thm:critical-main} describes the typical behavior on the same
coefficient spheres.}

\begin{mainthm}\label{thm:critical-main}
Let $n_m/m\to1$. {\color{black}For each $m$, take the full support $\Lambda_m=\mathcal M_{m,n_m}$.} Then
\begin{equation}\label{eq:critical-typical-main}
 \sqrt{m\log m}\,R_m
 \longrightarrow2\sqrt2
 \qquad\text{in }\mu_{m,n_m}\text{-measure}.
\end{equation}
\end{mainthm}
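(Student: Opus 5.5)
The plan is to deduce Theorem~\ref{thm:critical-main} from part~(i) of Theorem~\ref{thm:full-exact-main}, which holds uniformly in $n\ge2$. The task then reduces to an asymptotic computation of the deterministic normalization $N_{m,n}^{1/(2m)}/\sqrt{H_{m,n}}$ along $n=n_m$ with $n_m/m\to1$. By Theorem~\ref{thm:full-exact-main}(i),
\[
 R_m=\frac{N_{m,n_m}^{1/(2m)}}{\sqrt{H_{m,n_m}}}\,(1+\mathrm o(1))\quad\text{in }\mu_{m,n_m}\text{-measure}.
\]
It therefore suffices to show
\[
 \sqrt{m\log m}\;\frac{N_{m,n_m}^{1/(2m)}}{\sqrt{H_{m,n_m}}}\longrightarrow2\sqrt2 .
\]

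\textbf{Step 1: the logarithmic factor.} With $n=n_m\sim m$, we have $m+m^2/n=m+m(1+\mathrm o(1))=(2+\mathrm o(1))m$. Hence $\log(m+m^2/n)=\log m+\mathrm O(1)$, and
\[
 H_{m,n_m}=\frac{n_m-1}{2}\bigl(\log m+\mathrm O(1)\bigr)=\frac{m\log m}{2}\,(1+\mathrm o(1)).
\]
This gives $\sqrt{m\log m}/\sqrt{H_{m,n_m}}\to\sqrt2$.

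\textbf{Step 2: the binomial factor.} We need $N_{m,n_m}^{1/(2m)}\to2$. Write $N_{m,n}=\binom{m+n-1}{m}$ and set $n=n_m=m(1+\delta_m)$ with $\delta_m\to0$. Stirling's formula, or the entropy bound
\[
 \frac{1}{M}\log\binom{M}{k}=h(k/M)+\mathrm O\!\left(\frac{\log M}{M}\right),
\]
with $M=m+n-1$, $k=m$ and binary entropy $h$, gives
\[
 \frac1m\log N_{m,n_m}=\frac{M}{m}\,h\!\left(\frac mM\right)+\mathrm o(1).
\]
Since $M/m\to2$ and $m/M\to1/2$, continuity of $h$ yields $\frac1m\log N_{m,n_m}\to2\log2$. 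Hence $N_{m,n_m}^{1/(2m)}\to \e^{\log 2}=2$.

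Combining Steps 1 and 2 gives the limit $2\cdot\sqrt2=2\sqrt2$. Convergence in measure of $\sqrt{m\log m}\,R_m$ then follows, because multiplying a quantity converging in measure to $1$ by deterministic constants tending to $2\sqrt2$ preserves convergence in measure.

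The only real subtlety is that Theorem~\ref{thm:full-exact-main} must be applied along a sequence $n=n_m$ that varies with $m$. This is exactly what the uniformity in $n\ge2$ permits: for every $\varepsilon>0$, $\sup_{n\ge2}\mu_{m,n}\{|\cdots-1|>\varepsilon\}\to0$, and specializing the supremum to $n=n_m$ gives the claim. For small $m$ one might have $n_m=1$, but $n_m\ge2$ eventually because $n_m\sim m$. Beyond this, the main computational care lies in Step~2, in checking that the Stirling error terms are $\mathrm o(m)$ uniformly when $\delta_m\to0$ at an arbitrary rate. The explicit entropy estimate, together with continuity of $x\mapsto(1+x)h(1/(1+x))$ at $x=1$, handles this.
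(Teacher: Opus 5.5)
Your proposal is correct and follows the paper's proof: you apply Theorem~\ref{thm:full-exact-main}(i), which is uniform in $n\ge2$, along $n=n_m$, and then compute the deterministic limits $H_{m,n_m}\sim\tfrac12 m\log m$ and $N_{m,n_m}^{1/(2m)}\to2$. Your binary-entropy bound for the binomial coefficient is only a repackaging of the paper's Stirling computation, which gives $\frac1m\log N_{m,n_m}\to 2\log2$.
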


A parallel asymptotic relation holds for multilinear maps with a fixed number of
coordinates in each factor.
Fix $n\ge2$. For
$a=(a_{i_1,\ldots,i_m})\in\CC^{[n]^m}$, let
\[
 T_a:(\CC^n)^m\to\CC,\qquad
 T_a(x^{(1)},\ldots,x^{(m)})
 :=
 \sum_{i_1,\ldots,i_m=1}^n
 a_{i_1,\ldots,i_m}
 x^{(1)}_{i_1}\cdots x^{(m)}_{i_m},
\]
and, for $a\ne0$, define
\begin{equation}\label{eq:multilinear-ratio-intro}
 {\color{black}R_{m,n}^{\mathrm{ML}}(a)
 :=
 \frac{\left(\sum_{i_1,\ldots,i_m=1}^n
 |a_{i_1,\ldots,i_m}|^{q_m}\right)^{1/q_m}}
 {\|T_a\|}}.
\end{equation}
{\color{black}Set $R_{m,n}^{\mathrm{ML}}(0):=0$.}
{\color{black}Here}
\[
 \|T_a\|
 :=
 \sup_{\|x^{(1)}\|_\infty,\ldots,\|x^{(m)}\|_\infty\le1}
 |T_a(x^{(1)},\ldots,x^{(m)})|.
\]
\begin{mainthm}\label{thm:multilinear-main}
{\color{black}Let $\sigma_{m,n}$ denote normalized Euclidean surface measure on the
unit sphere of the multilinear coefficient space $\CC^{n^m}$.} For every fixed $n\ge2$,
\begin{equation}\label{eq:multilinear-main}
 \sqrt{m\log m}\,R_{m,n}^{\mathrm{ML}}
 \longrightarrow \sqrt{\frac{2n}{n-1}}
 \qquad\text{in }\sigma_{m,n}\text{-measure}.
\end{equation}
\end{mainthm}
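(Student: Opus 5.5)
We prove Theorem~\ref{thm:multilinear-main} by estimating numerator and denominator of $R^{\mathrm{ML}}_{m,n}$ separately, on the sphere of $\CC^{N}$ with $N=n^m$. We realize $\sigma_{m,n}$ as $g/\|g\|_2$, where $g=(g_{\mathbf i})$ is a standard complex Gaussian vector. Since $R^{\mathrm{ML}}_{m,n}$ is scale invariant, we may work with $g$ directly.

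\emph{Numerator.} By the law of large numbers, $\sum_{\mathbf i}|g_{\mathbf i}|^{q_m}\approx N\,\E|g|^{q_m}$. As $q_m\to2$, the moment $\E|g|^{q_m}$ tends to $\E|g|^2=1$. The variance of $|g|^{q_m}$ is bounded, so Chebyshev gives concentration with relative error $\mathrm O(N^{-1/2})$. Hence the numerator is $(1+\mathrm o(1))\,N^{1/q_m}=(1+\mathrm o(1))\,n^{(m+1)/2}$ in probability.

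\emph{Denominator.} We must show
\[
\|T_g\|=(1+\mathrm o(1))\,n^{(m+1)/2}\sqrt{\tfrac{n-1}{2n}}\,\sqrt{m\log m}
\]
in probability, uniformly in the sense of convergence in measure. The supremum over the polydisc product equals the supremum over the torus $\TT^{nm}$. Fixing the phase of one coordinate in each factor (multiplying by a unimodular scalar does not change $|T|$), this becomes the supremum of a complex Gaussian process indexed by $(\TT^{n-1})^m$, i.e.\ an effective torus of dimension $d=m(n-1)$. The process has constant variance $\E|T_g(x)|^2=n^m$.

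\emph{Upper bound.} The correlation between $T_g(x)$ and $T_g(y)$ is $\prod_k\langle x^{(k)},y^{(k)}\rangle/n$. Near the diagonal this behaves like $\exp(-\sum_k|\theta^{(k)}-\phi^{(k)}|^2\cdot(\text{const}/n))$. A standard chaining or $\varepsilon$-net bound then applies: the net has roughly $(C\sqrt{m})^{d}$ points, using the Lipschitz bound $|\nabla T|\lesssim n^{m/2}\sqrt{d}$ with high probability. Each point satisfies $\Prob(|T(x)|>t)=e^{-t^2/n^m}$. Together these give
\[
\|T_g\|\le(1+\mathrm o(1))\,n^{m/2}\sqrt{\log(\#\text{net})}=(1+\mathrm o(1))\,n^{m/2}\sqrt{\tfrac{(n-1)}{2}\,m\log m}.
\]
Combined with the numerator, this yields exactly $\sqrt{m\log m}\,R\to\sqrt{2n/(n-1)}$. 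The key computation is that the effective net size is $m^{d/2(1+\mathrm o(1))}$, because the correlation length per coordinate is of order $m^{-1/2}$ (the product of $m$ factors decorrelates on that scale).

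\emph{Lower bound.} We use a second-moment or Sudakov-type argument. We choose about $m^{d/2(1-\delta)}$ points on a grid of spacing $m^{-1/2+\delta'}$ in each angle, so that the pairwise correlations $\prod_k|\langle x^{(k)},y^{(k)}\rangle/n|$ are small. We then show that the maximum of $|T|^2/n^m$ over these points exceeds $(1-\delta)\log(\#)$ with probability tending to one, via Slepian comparison with independent Gaussians or a Paley--Zygmund count of exceedances.

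\emph{Main obstacle.} The main obstacle is this lower bound. Grid points differing in only a few factors remain highly correlated. We therefore need a counting argument on the number of pairs by Hamming-type distance across the $m$ factors, showing that near pairs contribute negligibly to the second moment of the exceedance count. This is analogous to the analysis of the full polynomial space in Theorem~\ref{thm:full-exact-main}, and we would mirror whatever estimate is used there (the exponent $H_{m,n}$ there plays the role of $\tfrac{d}{2}\log m$ here).
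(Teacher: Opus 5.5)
Your architecture and constants agree with the paper's: Gaussian transfer, a numerator of $(1+o(1))\,n^{(m+1)/2}$ by Chebyshev, and a denominator of size $n^{m/2}\sqrt{\tfrac{n-1}{2}\,m\log m}$. However, neither bound on the denominator is actually established.

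\emph{Upper bound.} The step ``$|\nabla T|\lesssim n^{m/2}\sqrt{m(n-1)}$ with high probability'' is false.
\begin{itemize}
\item A single partial derivative already breaks it. Up to a unimodular factor, $\partial_{\theta^{(1)}_1}T$ is an $(m-1)$-linear Gaussian form with $n^{m-1}$ coefficients. Its supremum is therefore of order $n^{(m-1)/2}\sqrt{m\log m}$, a factor $\asymp\sqrt{\log m}$ above your bound.
\item Your chart makes it worse. With the last coordinate of each block equal to $1$, shifting all free angles of all blocks by $t$ replaces $T$ by $e^{imt}$ times $T$ evaluated with every last coordinate rotated by $-t$. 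Along the corresponding unit direction, the derivative of $T$ equals $i\,\tfrac{\sqrt{m(n-1)}}{n}\,T$ plus a field of pointwise variance $n^{m-2}$. Hence $\sup|\nabla T|\gtrsim n^{m/2}\,m\sqrt{\log m}$.
\end{itemize}
With mesh $\asymp m^{-1/2}$ per angle, the displacement to the net has Euclidean length of order one. A one-step Lipschitz estimate therefore loses at least the main term. Moreover, a sharp a priori bound even on the phase-free gradient is a supremum problem of the same difficulty as the one you are solving.

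The paper avoids this as follows. It works with $|X_m|$ on $\mathbb G_n^m=(\TT^n/\TT)^m$ with the phase-invariant metric $\Delta_m$. It aligns block phases of representatives so that $\E|X_m(z)-X_m(w')|^2\le\Delta_m(z,w)^2$. It takes a base net of $\Delta_m$-radius $r_0=1/\log m$, whose entropy is still $\tfrac{n-1}{2}m\log m+O(m\log\log m)$. The oscillation inside its cells is then controlled by dyadic chaining, at cost $O(r_0\sqrt{m\log m})=o(\sqrt{m\log m})$.

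\emph{Lower bound.} Here the proposal stops at the obstacle.
\begin{itemize}
\item A grid of spacing $m^{-1/2+\delta'}$ does not have small pairwise correlations: points differing in one angle have correlation near $1$. So Slepian comparison with independent variables is unavailable.
\item The second-moment count over near pairs is announced but not carried out.
\item Mirroring Theorem~\ref{thm:full-exact-main} does not settle it. For small dimension that proof gets its lower bound from the Brascamp--Lieb small-ball estimate. Here that estimate only reaches the scale $n^{m/2}\sqrt{m\log n}$, because $\log(n^m)=m\log n\ll\tfrac{n-1}{2}m\log m$.
\end{itemize}

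The missing idea is to replace the grid by a packing: a maximal $2\sqrt{\log m}$-separated subset of $(\mathbb G_n^m,\Delta_m)$.
\begin{itemize}
\item Since $|K_m(z,w)|=\prod_r\bigl(1-\tfrac12\delta(z^{(r)},w^{(r)})^2\bigr)\le e^{-\Delta_m(z,w)^2/2}$, all pairwise correlations are at most $m^{-2}$. Slepian comparison against $m^{-1}Z_0+\sqrt{1-m^{-2}}\,Z_j$ then applies directly.
\item The bound $\int e^{-\lambda\delta(u,1)^2}\,d\nu_n\le C_n\lambda^{-(n-1)/2}$ and a Chernoff bound with $\lambda\asymp m/\log m$ give the small-ball estimate $\nu_n^{\otimes m}\{\Delta_m(u,1)\le2\sqrt{\log m}\}\le\exp\bigl(-\tfrac{n-1}{2}m\log m+O(m\log\log m)\bigr)$.
\item Maximality plus this small-ball estimate yields $\log M_m\ge(1-o(1))\tfrac{n-1}{2}m\log m$.
\end{itemize}
So near-independence costs nothing at first order, and no near-pair counting is needed. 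Your second-moment route may well be completable, since grid neighbours with $\Delta_m^2\lesssim\log m$ number only $e^{o(m)}$. As written, however, the step that carries the lower bound is missing.
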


{\color{black}
\clearpage
\thispagestyle{empty}
\begin{center}
{\Large\bfseries Proof architecture\par}
\vspace{0.35cm}
\begin{tikzpicture}[
 x=1cm,y=1cm,
 >=Latex,
 box/.style={draw=black,rounded corners=2pt,line width=.45pt,
   align=center,inner xsep=4pt,inner ysep=5pt,
   font=\fontsize{8.7}{10.5}\selectfont},
 thm/.style={box,line width=.7pt,
   font=\fontsize{9.3}{11.2}\selectfont\bfseries},
 heading/.style={font=\fontsize{10.3}{12.3}\selectfont\bfseries,
   align=center,inner sep=0pt},
 arr/.style={->,line width=.45pt},
 note/.style={font=\fontsize{8.3}{10}\selectfont,align=center,inner sep=0pt}
]
\path[use as bounding box] (0,0) rectangle (15.7,18.55);

\node[heading] at (3.80,18.20) {A. Universal threshold};
\draw[line width=.3pt] (0.25,17.88)--(7.35,17.88);
\node[box,text width=3.05cm] (Amoment) at (1.97,17.22)
 {Coefficient moments\\Lemma~\ref{lem:coefficient-moment}};
\node[box,text width=3.05cm] (Apacking) at (1.97,16.15)
 {Exponential packing\\Lemma~\ref{lem:exponential-packing}};
\node[box,text width=3.05cm] (Agaussian) at (5.72,16.69)
 {Gaussian maxima\\Parseval and transfer\\
  Lemmas~\ref{lem:counting-parseval}, \ref{lem:gaussian-radius-direction},
  \ref{lem:small-correlation-max-sharp}};
\node[box,text width=6.35cm] (Alarge) at (3.80,14.94)
 {Large-support tail\\Lemma~\ref{lem:large-support-tail}};
\node[box,text width=6.35cm] (Aendpoint) at (3.80,13.62)
 {Small-ball and endpoint estimates\\
  Lemmas~\ref{lem:translated-small-ball},
  \ref{endpoint:endpoint-exposed-pair}--\ref{endpoint:endpoint-product}};
\node[thm,text width=3.70cm] (Aresult) at (3.80,12.30)
 {Theorem~\ref{thm:tail-main}\\Exact threshold at $1$};
\draw[arr] (Amoment.south)--(Apacking.north);
\draw[arr] (Apacking.south)--([xshift=-1.72cm]Alarge.north);
\draw[arr] (Agaussian.south)--([xshift=1.72cm]Alarge.north);
\draw[arr] (Alarge.west)--(0.25,14.94)--(0.25,12.30)--(Aresult.west);
\draw[arr] (Aendpoint.south)--(Aresult.north);

\node[heading] at (11.75,18.20) {C and E. Full complex spaces};
\draw[line width=.3pt] (8.15,17.88)--(15.35,17.88);
\node[box,text width=3.05cm] (Cmetric) at (9.91,17.22)
 {Exponent metric\\Lemma~\ref{full:full-metric}};
\node[box,text width=3.05cm] (Cvolume) at (13.59,17.22)
 {Covariance volume\\Lemma~\ref{full:full-covariance-volume}};
\node[box,text width=3.05cm] (Cupper) at (9.91,16.15)
 {Upper estimate\\Lemma~\ref{full:full-upper}};
\node[box,text width=3.05cm] (Clower) at (13.59,16.15)
 {Lower estimate\\Lemma~\ref{full:full-large-lower}};
\node[box,text width=6.35cm] (Cnorm) at (11.75,15.02)
 {Gaussian supremum\\Proposition~\ref{full:full-norm}
  \quad (with Lemma~\ref{endpoint:endpoint-arbitrary-center})};
\node[thm,text width=6.35cm] (Cresult) at (11.75,13.74)
 {Theorem~\ref{thm:full-exact-main}\\Uniform spherical ratio\\and exact lower deviations};
\node[box,text width=3.05cm] (Cfixed) at (9.91,12.05)
 {Fixed dimension\\Consequence of Theorem~\ref{thm:full-exact-main}};
\node[thm,text width=3.05cm] (Eresult) at (13.59,12.05)
 {Theorem~\ref{thm:critical-main}\\Critical regime};
\draw[arr] (Cmetric.south)--(Cupper.north);
\draw[arr] (Cvolume.south)--(Clower.north);
\draw[arr] (Cupper.south)--([xshift=-1.72cm]Cnorm.north);
\draw[arr] (Clower.south)--([xshift=1.72cm]Cnorm.north);
\draw[arr] (Cnorm.south)--(Cresult.north);
\draw[arr] ([xshift=-1.72cm]Cresult.south)--(Cfixed.north);
\draw[arr] ([xshift=1.72cm]Cresult.south)--(Eresult.north);

\node[heading] at (3.80,11.10) {B. Vanishing and Sidon ratios};
\draw[line width=.3pt] (0.25,10.78)--(7.35,10.78);
\node[box,text width=6.35cm] (Binputs) at (3.80,10.10)
 {Support estimates and Gaussian transfer\\
  Lemmas~\ref{lem:counting-parseval}, \ref{lem:gaussian-radius-direction},
  \ref{lem:translated-small-ball}, \ref{lem:large-support-tail}};
\node[thm,text width=4.35cm] (Bresult) at (3.80,8.88)
 {Theorem~\ref{thm:vanishing-main}\\Vanishing criterion};
\node[box,text width=6.35cm] (Bsidon) at (3.80,7.45)
 {Normalized Sidon ratios\\Corollary~\ref{cor:typical-sidon}
  \quad (with Theorem~\ref{thm:tail-main})};
\draw[arr] (Binputs.south)--(Bresult.north);
\draw[arr] (Bresult.south)--(Bsidon.north);

\node[heading] at (11.75,11.10) {F. Multilinear forms};
\draw[line width=.3pt] (8.15,10.78)--(15.35,10.78);
\node[box,text width=3.05cm] (Fpacking) at (9.91,10.10)
 {Projective packing\\Lemma~\ref{lem:projective-packing}};
\node[box,text width=3.05cm] (Fcovering) at (13.59,10.10)
 {Nets and oscillation\\
  Lemmas~\ref{lem:projective-covering}, \ref{lem:dyadic-net-oscillation}};
\node[box,text width=6.35cm] (Fnorm) at (11.75,8.88)
 {Gaussian supremum\\Proposition~\ref{prop:multilinear-gaussian-supremum}};
\node[thm,text width=6.35cm] (Fresult) at (11.75,7.45)
 {Theorem~\ref{thm:multilinear-main}\\Multilinear asymptotic\normalfont
  \\{\fontsize{8.3}{10}\selectfont Spherical transfer:
       Lemma~\ref{lem:gaussian-radius-direction}}};
\draw[arr] (Fpacking.south)--([xshift=-1.72cm]Fnorm.north);
\draw[arr] (Fcovering.south)--([xshift=1.72cm]Fnorm.north);
\draw[arr] (Fnorm.south)--(Fresult.north);

\node[heading] at (7.80,6.08) {D. Full real spaces};
\draw[line width=.3pt] (0.25,5.76)--(15.35,5.76);
\node[box,text width=6.35cm] (Dfixedinputs) at (3.80,5.02)
 {Coefficient and norm estimates\\
  Lemmas~\ref{lem:real-numerator-fixed}, \ref{lem:real-denominator-fixed}};
\node[box,text width=6.35cm] (Dgrowinginputs) at (11.75,5.02)
 {Sign-vector packing and covariance\\
  \eqref{eq:real-hamming-cardinality}, \eqref{eq:real-sign-mixed-bound}};
\node[box,text width=6.35cm] (Dfixed) at (3.80,3.68)
 {Fixed-dimensional window\\Proposition~\ref{prop:real-fixed-tail}};
\node[box,text width=6.35cm] (Dgrowing) at (11.75,3.68)
 {Growing-dimension estimate\\Proposition~\ref{prop:real-growing-dimension}};
\node[thm,text width=6.35cm] (Dresult) at (7.80,2.15)
 {Theorem~\ref{thm:real-main}\\Sharp dimensional criterion};
\draw[arr] (Dfixedinputs.south)--(Dfixed.north);
\draw[arr] (Dgrowinginputs.south)--(Dgrowing.north);
\draw[arr] (Dfixed.south)--([xshift=-2.25cm]Dresult.north);
\draw[arr] (Dgrowing.south)--([xshift=2.25cm]Dresult.north);
\end{tikzpicture}
\end{center}
\clearpage
}

\section{Notation and preliminaries}\label{sec:notation-main}

{\color{black}We record here the conventions and identities used throughout the proofs.} Let
\[
 \TT:=\{z\in\CC:|z|=1\}.
\]
Fix $m,n\in\NN$, recall
\[
 \mathcal M_{m,n}:=\{\alpha\in\Nzero^n:|\alpha|=m\},
 \qquad q_m:=\frac{2m}{m+1},
\]
and let $\varnothing\ne\Lambda\subseteq\mathcal M_{m,n}$.
For $a=(a_\alpha)_{\alpha\in\Lambda}\in {\color{black}\CC^\Lambda}$ and $1\le p<\infty$, set
\begin{equation}\label{eq:supported-coefficient-lp}
 \|a\|_p:=\left(\sum_{\alpha\in\Lambda}|a_\alpha|^p\right)^{1/p}.
\end{equation}
The associated polynomial is
\begin{equation}\label{eq:Pa-map}
 P_a:\CC^n\longrightarrow\CC,
 \qquad P_a(z):=\sum_{\alpha\in\Lambda}a_\alpha z^\alpha.
\end{equation}
We write
\[
 \|P_a\|_\infty:=\sup_{z\in\TT^n}|P_a(z)|,
\]
which equals the supremum on the unit polydisc (apply the one-variable maximum-modulus argument successively in the coordinates). Define
\begin{equation}\label{eq:R-map}
 R_m(a):=
 \begin{cases}
 \|a\|_{q_m}/\|P_a\|_\infty,&a\ne0,\\
 0,&a=0.
 \end{cases}
\end{equation}
Since $R_m(\lambda a)=R_m(a)$ for $\lambda\ne0$, it is enough to work on
\begin{equation}\label{eq:coefficient-sphere}
 \mathbb S_\Lambda:=\{a\in {\color{black}\CC^\Lambda}:\|a\|_2=1\}.
\end{equation}
{\color{black}Identifying $\CC^\Lambda$ with $\RR^{2|\Lambda|}$, let $\sigma_{2|\Lambda|-1}$ be Euclidean
surface measure on $\mathbb S_\Lambda$ and define}
\begin{equation}\label{eq:spherical-measure}
 \mu_\Lambda(A):=
 {\color{black}\frac{\sigma_{2|\Lambda|-1}(A)}{\sigma_{2|\Lambda|-1}(\mathbb S_\Lambda)}}
 \qquad(A\subseteq\mathbb S_\Lambda\text{ Borel}).
\end{equation}

Throughout, $m,n\in\NN$ and $\varnothing\ne\Lambda\subseteq\mathcal M_{m,n}$,
unless a different index set is specified. For a metric space $(X,d)$ and
$r>0$, let $\mathcal N(X,d,r)$ denote the least cardinality of an $r$-net
of $X$.
For vectors $z,w\in\TT^n$, products and conjugation are coordinatewise;
in particular $z\overline w=(z_1\overline w_1,\ldots,z_n\overline w_n)$.
{\color{black}The standard complex Euclidean inner product is taken with the convention
\[
 \langle a,b\rangle:=\sum_{\alpha\in\Lambda}a_\alpha\overline b_\alpha,
\]
so it is linear in the first variable. Adjoints are taken with respect to this
inner product, or to its real part when the spaces are regarded as real
Euclidean spaces.}

Let $m_n$ be normalized Haar measure on $\TT^n$; explicitly,
\[
 \int_{\TT^n}f(z)\,dm_n(z)
 =\frac1{(2\pi)^n}\int_{[0,2\pi]^n}
            f(\e^{it_1},\ldots,\e^{it_n})\,dt_1\cdots dt_n
\]
for every integrable $f:\TT^n\to\CC$.
For a polynomial $P:\CC^n\to\CC$ and $1\le p<\infty$, set
\begin{equation}\label{eq:torus-lp}
 \|P\|_p:=\left(\int_{\TT^n}|P(z)|^p\,dm_n(z)\right)^{1/p}.
\end{equation}

{\color{black}Let $\operatorname{Leb}_{2|\Lambda|}$ denote Lebesgue measure on
${\color{black}\CC^\Lambda}\simeq\RR^{2|\Lambda|}$. {\color{black}A standard complex Gaussian variable is a complex random variable with density $\pi^{-1}\e^{-|z|^2}$ on $\CC$, and a standard complex Gaussian vector in $\CC^\Lambda$ has independent standard complex Gaussian coordinates.} The corresponding Gaussian measure (see, e.g., \cite[Appendix~A]{AubrunSzarek}) is
\begin{equation}\label{eq:gaussian-law}
 \gamma_\Lambda(A):=\pi^{-|\Lambda|}\int_A \e^{-\|a\|_2^2}\,
 d\operatorname{Leb}_{2|\Lambda|}(a)
 \qquad(A\subseteq {\color{black}\CC^\Lambda}\text{ Borel}).
\end{equation}
{\color{black}We write $\Prob$ and $\E$ for probability and expectation with respect to
the relevant standard Gaussian coefficient measure when no ambiguity arises;
integrals with respect to $\gamma_\Lambda$ are written explicitly whenever
the coefficient space needs to be displayed.}}

{\color{black}Gaussian coefficient measure and normalized surface measure on a coefficient sphere are related radially. If a Borel set $E$ is
invariant under multiplication by positive scalars, polar coordinates give
\begin{equation}\label{eq:radial-transfer-principle}
 \gamma_\Lambda(E)=\mu_\Lambda(E\cap\mathbb S_\Lambda).
\end{equation}
Thus every level set of a homogeneous ratio of degree zero has the same
Gaussian and spherical probability. Over $\mathbb R^\Lambda$, let
$\gamma_\Lambda^{\mathbb R}$ denote the standard real Gaussian measure and
let $\mu_\Lambda^{\mathbb R}$ denote normalized Euclidean surface measure on
\[
 \mathbb S_\Lambda^{\mathbb R}
 :=\{a\in\mathbb R^\Lambda:\|a\|_2=1\}.
\]
For every Borel set $E\subseteq\mathbb R^\Lambda$ invariant under
multiplication by positive scalars,
\[
 \gamma_\Lambda^{\mathbb R}(E)
 =
 \mu_\Lambda^{\mathbb R}(E\cap\mathbb S_\Lambda^{\mathbb R}).
\]
}

\section{Coefficient-sphere estimates}\label{sec:auxiliary}

{\color{black}For a fixed support, Parseval and the comparison between $\ell^{q_m}$ and $\ell^2$ give the basic deterministic control of the coefficient ratio.}

\begin{lemma}\label{lem:counting-parseval}
Let $\varnothing\ne\Lambda\subseteq\mathcal M_{m,n}$ and, for
$a=(a_\alpha)_{\alpha\in\Lambda}\in\CC^\Lambda$, write
{\color{black}
\[
 P_a(z):=\sum_{\alpha\in\Lambda}a_\alpha z^\alpha,
 \qquad z\in\CC^n.
\]}
Then
\begin{equation}\label{eq:counting}
 {\color{black}\|a\|_{q_m}\le |\Lambda|^{1/(2m)}\|a\|_2},
 \qquad \|P_a\|_2=\|a\|_2\le\|P_a\|_\infty,
\end{equation}
and, for every $a\ne0$,
\begin{equation}\label{eq:ratio-counting}
 {\color{black}R_m(a)\le |\Lambda|^{1/(2m)}}.
\end{equation}
\end{lemma}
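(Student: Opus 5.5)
The lemma has three routine parts: Hölder's inequality on the finite index set $\Lambda$, orthonormality of monomials on $\TT^n$, and the bound $L^2\le L^\infty$ for a probability measure. Combining them gives \eqref{eq:ratio-counting}.

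\emph{Hölder step.} Since $q_m=2m/(m+1)<2$, apply Hölder to $\sum_{\alpha\in\Lambda}|a_\alpha|^{q_m}\cdot 1$ with exponents $2/q_m$ and its conjugate. This gives
\[
 \|a\|_{q_m}\le |\Lambda|^{1/q_m-1/2}\|a\|_2 .
\]
The exponent is $1/q_m-1/2=(m+1)/(2m)-1/2=1/(2m)$, which is the first inequality in \eqref{eq:counting}.

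\emph{Parseval step.} For $\alpha,\beta\in\mathcal M_{m,n}$ we have $\int_{\TT^n} z^\alpha\overline{z^\beta}\,dm_n=\prod_j\int_\TT z_j^{\alpha_j-\beta_j}\,dm_1$. This equals $1$ if $\alpha=\beta$ and $0$ otherwise, since $\int_\TT z^k\,dm_1=0$ for $k\neq0$. Expanding $|P_a|^2=\sum_{\alpha,\beta}a_\alpha\overline{a_\beta}z^\alpha\overline{z^\beta}$ and integrating term by term (a finite sum) yields $\|P_a\|_2=\|a\|_2$.

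\emph{Comparison step.} Because $m_n$ is a probability measure, $\|P_a\|_2\le\sup_{\TT^n}|P_a|=\|P_a\|_\infty$. By the convention of Section~\ref{sec:notation-main}, this supremum is also the supremum over the polydisc.

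For $a\neq0$ we then have $\|P_a\|_\infty\ge\|a\|_2>0$, so
\[
 R_m(a)=\frac{\|a\|_{q_m}}{\|P_a\|_\infty}\le\frac{|\Lambda|^{1/(2m)}\|a\|_2}{\|a\|_2}=|\Lambda|^{1/(2m)}.
\]
None of these steps presents a real obstacle. The only points needing care are computing the Hölder exponent correctly and noting that the supremum over $\TT^n$ equals the polydisc norm, which the paper has already recorded.
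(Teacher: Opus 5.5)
Your proposal is correct and follows essentially the same route as the paper: Hölder's inequality giving the factor $|\Lambda|^{1/q_m-1/2}=|\Lambda|^{1/(2m)}$, orthogonality of the monomials on $\TT^n$ for Parseval, and $m_n(\TT^n)=1$ for $\|P_a\|_2\le\|P_a\|_\infty$. You then combine these exactly as the paper does. Your remark that $\|P_a\|_\infty\ge\|a\|_2>0$ for $a\ne0$, which guarantees the ratio is well defined, is a small welcome addition.
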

\begin{proof}
Since $q_m<2$ and $1/q_m-1/2=1/(2m)$, H\"older's inequality gives
\[
 \sum_{\alpha\in\Lambda}|a_\alpha|^{q_m}
 \le\left(\sum_{\alpha\in\Lambda}|a_\alpha|^2\right)^{q_m/2}
      |\Lambda|^{1-q_m/2}.
\]
Taking $q_m$th roots proves the first inequality.
{\color{black}For $\alpha,\beta\in\Nzero^n$,
\begin{equation}\label{eq:character-orthogonality}
 \int_{\TT^n}z^\alpha\overline{z^\beta}\,dm_n(z)
 =\prod_{j=1}^n\left(\frac1{2\pi}\int_0^{2\pi}
 \e^{i(\alpha_j-\beta_j)t}\,dt\right)
 =\begin{cases}
 1,&\alpha=\beta,\\
 0,&\alpha\ne\beta.
 \end{cases}
\end{equation}
Indeed, if $\alpha_j\ne\beta_j$, then
\[
 \frac1{2\pi}\int_0^{2\pi}\e^{i(\alpha_j-\beta_j)t}\,dt
 =\frac{\e^{2\pi i(\alpha_j-\beta_j)}-1}
 {2\pi i(\alpha_j-\beta_j)}=0,
\]
whereas the integral equals $1$ when $\alpha_j=\beta_j$. Hence
\begin{align*}
 \|P_a\|_2^2
 &=\int_{\TT^n}
 \left(\sum_{\alpha\in\Lambda}a_\alpha z^\alpha\right)
 \left(\sum_{\beta\in\Lambda}\overline{a_\beta}\,\overline{z^\beta}\right)
 \,dm_n(z)\\
 &=\sum_{\alpha,\beta\in\Lambda}
 a_\alpha\overline{a_\beta}
 \int_{\TT^n}z^\alpha\overline{z^\beta}\,dm_n(z)
 =\sum_{\alpha\in\Lambda}|a_\alpha|^2.
\end{align*}}
The inequality $\|P_a\|_2\le\|P_a\|_\infty$ follows from $m_n(\TT^n)=1$.
For $a\ne0$, combining the two estimates in \eqref{eq:counting} gives
\[
 R_m(a)=\frac{\|a\|_{q_m}}{\|P_a\|_\infty}
 \le |\Lambda|^{1/(2m)}
 \frac{\|a\|_2}{\|P_a\|_\infty}
 \le |\Lambda|^{1/(2m)},
\]
which proves \eqref{eq:ratio-counting}.
\end{proof}

\begin{lemma}\label{lem:coefficient-moment}
{\color{black}For $a=(a_\alpha)_{\alpha\in\Lambda}\in\CC^\Lambda$, write
\[
 P_a(z):=\sum_{\alpha\in\Lambda}a_\alpha z^\alpha,
 \qquad z\in\CC^n.
\]
Then, for every $k\in\NN$ and $a,b\in\CC^\Lambda$,
\begin{equation}\label{eq:coefficient-moment-bound}
 \|P_a\|_{2k}\le k^{m/2}\|a\|_2,
 \qquad
 \bigl|\|P_a\|_{2k}-\|P_b\|_{2k}\bigr|
 \le k^{m/2}\|a-b\|_2.
\end{equation}}
\end{lemma}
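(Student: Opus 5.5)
The plan is to use the standard hypercontractivity (Bonami / Weissler) estimate for homogeneous polynomials on the torus, proved by induction on the number of variables, with the second inequality following from the first by linearity.

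\textbf{The triangle inequality reduction.} Since $a\mapsto P_a$ is linear, $P_a-P_b=P_{a-b}$. The $L^{2k}(m_n)$ norm satisfies the triangle inequality, so
\[
\bigl|\|P_a\|_{2k}-\|P_b\|_{2k}\bigr|\le\|P_{a-b}\|_{2k}.
\]
It therefore suffices to prove $\|P_a\|_{2k}\le k^{m/2}\|a\|_2$. By Parseval (Lemma~\ref{lem:counting-parseval}) this is the same as $\|P\|_{2k}\le k^{m/2}\|P\|_2$ for every $m$-homogeneous $P$.

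\textbf{The one-variable estimate.} For $f(w)=\sum_{j\ge0}c_jw^j$ on $\TT$, Weissler's theorem gives
\[
\|f_r\|_{2k}\le\|f\|_2 \quad\text{for } f_r(w):=f(rw),\ r=k^{-1/2}.
\]
This is the sharp $L^2\to L^{2k}$ hypercontractivity of the Poisson semigroup restricted to analytic functions. For even integer exponent it can also be derived directly from the fact that $\|f^k\|_2$ controls $\|f\|_{2k}$ together with a combinatorial coefficient estimate. Tensorization is the standard Minkowski-integral argument. Suppose the single-variable estimate holds for vector-valued $L^2$ data, which follows because the map is positivity-preserving in the relevant sense, or because the inequality with $2k\ge2$ tensorizes via Minkowski's integral inequality $\|\,\|\cdot\|_{L^2_y}\|_{L^{2k}_x}\le\|\,\|\cdot\|_{L^{2k}_x}\|_{L^2_y}$. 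Then, applying the dilation coordinate by coordinate, we get
\[
\Bigl\|\sum_\alpha r^{|\alpha|}a_\alpha z^\alpha\Bigr\|_{2k}\le\Bigl(\sum_\alpha|a_\alpha|^2\Bigr)^{1/2}.
\]
For $m$-homogeneous $P$ the dilation multiplies every coefficient by $r^m=k^{-m/2}$. Replacing $a$ by $k^{m/2}a$ gives exactly $\|P_a\|_{2k}\le k^{m/2}\|a\|_2$.

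\textbf{Main obstacle.} The step needing care is the one-variable hypercontractive inequality with the sharp radius $k^{-1/2}$, and its tensorization. I would cite Weissler (or Bayart's analytic version, as used in \cite{DFOOS,BPS}) rather than reprove it.

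If a self-contained route is preferred, I would try a direct induction on $n$. Write $P(z',z_n)=\sum_j Q_j(z')z_n^j$ with $Q_j$ homogeneous of degree $m-j$. Apply the one-variable bound in $z_n$ (with weights $k^{j/2}$, obtained from the $r$-dilation statement) and Minkowski's inequality in $L^{k}$ to $|\cdot|^2$. This gives
\[
\|P\|_{2k}^2\le\sum_j k^{j}\|Q_j\|_{2k}^2,
\]
and the induction hypothesis $\|Q_j\|_{2k}^2\le k^{m-j}\|Q_j\|_2^2$ then closes the estimate to $k^m\|P\|_2^2$.
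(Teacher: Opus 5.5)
Your argument is correct, but it takes a different route from the paper's. The paper uses neither hypercontractivity nor an induction on the number of variables. It writes $\|P_a\|_{2k}^{2k}=\|P_a^k\|_2^2=\sum_\beta\bigl|\sum_{\alpha_1+\cdots+\alpha_k=\beta}a_{\alpha_1}\cdots a_{\alpha_k}\bigr|^2$. It then bounds the number of ordered decompositions of $\beta\in\mathcal M_{mk,n}$ into $k$ elements of $\mathcal M_{m,n}$ by $k^{mk}$, by assigning the $mk$ labeled units of $\beta$ to $k$ boxes. One Cauchy--Schwarz finishes the proof, directly in all $n$ variables. The Lipschitz bound is the same triangle-inequality step you use.

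The elementary proof of your one-variable input for even exponents is exactly this computation with $n=1$: the $\binom{j+k-1}{k-1}\le k^j$ compositions of $j$ cancel the factor $r^{2j}=k^{-j}$. So for $q=2k$, your tensorization, or your induction via $\|P\|_{2k}^2\le\sum_j k^j\|Q_j\|_{2k}^2$, is a detour around a count that works just as well in $\mathcal M_{m,n}$.

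What your route buys is conceptual placement and generality. Citing Weissler's full theorem gives $\|P\|_q\le(q/2)^{m/2}\|P\|_2$ for every real $q\ge2$. The paper's argument rests on $|P|^{2k}=|P^k|^2$ and does not give this directly.

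One correction to your justification of tensorization. It should rest on the slice-wise Minkowski argument you also give, not on positivity. That argument is valid for any operators bounded from $L^2$ to $L^{2k}$ with $2\le 2k$, applied to slices that remain analytic polynomials. The Poisson kernel is positive, but the radius $k^{-1/2}$ is admissible only on analytic functions. For general functions the sharp radius is $(2k-1)^{-1/2}$, so positivity is not what makes the inequality tensorize.
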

\begin{proof}
{\color{black}Expanding the $k$th power gives
\begin{align*}
 P_a(z)^k
 &=\left(\sum_{\alpha\in\Lambda}a_\alpha z^\alpha\right)^k
 =\sum_{\alpha_1,\ldots,\alpha_k\in\Lambda}
 a_{\alpha_1}\cdots a_{\alpha_k}
 z^{\alpha_1+\cdots+\alpha_k}\\
 &=\sum_{\beta\in\mathcal M_{mk,n}}
 \left(
 \sum_{\substack{\alpha_1,\ldots,\alpha_k\in\Lambda\\
                   \alpha_1+\cdots+\alpha_k=\beta}}
 a_{\alpha_1}\cdots a_{\alpha_k}
 \right)z^\beta,
\end{align*}
where the inner sum is zero if $\beta$ has no such representation. Hence
\begin{equation}\label{eq:moment-expanded}
 \|P_a\|_{2k}^{2k}
 =\int_{\TT^n}|P_a(z)|^{2k}\,dm_n(z)
 =\|P_a^k\|_2^2
 =\sum_{\beta\in\mathcal M_{mk,n}}
 \left|
 \sum_{\substack{\alpha_1,\ldots,\alpha_k\in\Lambda\\
                   \alpha_1+\cdots+\alpha_k=\beta}}
 a_{\alpha_1}\cdots a_{\alpha_k}
 \right|^2.
\end{equation}

Fix $\beta\in\mathcal M_{mk,n}$. The number of ordered $k$-tuples
$(\alpha_1,\ldots,\alpha_k)\in\mathcal M_{m,n}^k$ satisfying
$\alpha_1+\cdots+\alpha_k=\beta$ is at most $k^{mk}$. Indeed, regard the
$mk$ units represented by $\beta$ as labeled and assign each of them to one
of $k$ boxes. There are $k^{mk}$ assignments. Every decomposition of $\beta$
into $k$ multiindices of degree $m$ is produced by at least one assignment,
by placing exactly $m$ units in each box.

For this fixed $\beta$, Cauchy--Schwarz yields
{\color{black}
\[
 \left|
 \sum_{\substack{\alpha_1,\ldots,\alpha_k\in\Lambda\\
                  \alpha_1+\cdots+\alpha_k=\beta}}
 a_{\alpha_1}\cdots a_{\alpha_k}
 \right|^2
 \le k^{mk}
 \sum_{\substack{\alpha_1,\ldots,\alpha_k\in\Lambda\\
                  \alpha_1+\cdots+\alpha_k=\beta}}
 |a_{\alpha_1}|^2\cdots|a_{\alpha_k}|^2.
\]
Summing this inequality over $\beta$ in \eqref{eq:moment-expanded} gives
\[
 \|P_a\|_{2k}^{2k}
 \le k^{mk}
 \sum_{\alpha_1,\ldots,\alpha_k\in\Lambda}
 |a_{\alpha_1}|^2\cdots|a_{\alpha_k}|^2
 =k^{mk}\left(\sum_{\alpha\in\Lambda}|a_\alpha|^2\right)^k
 =k^{mk}\|a\|_2^{2k}.
\]
}
{\color{black}Taking the $2k$th root proves the first assertion,
\[
 \|P_a\|_{2k}\le k^{m/2}\|a\|_2.
\]
For the second assertion, apply the estimate first to the coefficient
vector $a-b$:
\[
 \|P_{a-b}\|_{2k}\le k^{m/2}\|a-b\|_2.
\]
Since $P_a-P_b=P_{a-b}$, the triangle inequality in $L^{2k}(\TT^n)$ now gives
\[
 \bigl|\|P_a\|_{2k}-\|P_b\|_{2k}\bigr|
 \le\|P_a-P_b\|_{2k}
 =\|P_{a-b}\|_{2k}
 \le k^{m/2}\|a-b\|_2.
\]}}
\end{proof}

{\color{black}
\begin{lemma}\label{lem:gaussian-radius-direction}
{\color{black}Let $g=(g_\alpha)_{\alpha\in\Lambda}$ be a standard complex Gaussian vector in $\CC^\Lambda$.}
\begin{enumerate}[label=\textup{(\alph*)},leftmargin=2.2em]
\item For every $c>1$,
\begin{equation}\label{eq:gaussian-radius}
 \Prob\{\|g\|_2^2>c|\Lambda|\}
 \le \exp[-|\Lambda|(c-1-\log c)].
\end{equation}
\item For every $t\ge0$,
\[
 \Prob\{R_m(g)>t\}
 =\mu_\Lambda\{a\in\mathbb S_\Lambda:R_m(a)>t\}.
\]
\end{enumerate}
\end{lemma}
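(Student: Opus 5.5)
For part (a) I will use the exponential Chernoff bound. Write $N:=|\Lambda|$. Each $|g_\alpha|^2$ is a standard exponential variable, because $\pi^{-1}\e^{-|z|^2}$ in polar coordinates gives $\Prob\{|g_\alpha|^2>s\}=\e^{-s}$. So $\|g\|_2^2$ is a sum of $N$ independent $\mathrm{Exp}(1)$ variables, and
\[
 \E\,\e^{\lambda\|g\|_2^2}=(1-\lambda)^{-N}\qquad(0\le\lambda<1).
\]
Markov's inequality then gives
\[
 \Prob\{\|g\|_2^2>cN\}\le (1-\lambda)^{-N}\e^{-\lambda cN}.
\]
Optimizing at $\lambda=1-1/c$, which lies in $(0,1)$ since $c>1$, the bound becomes
\[
 c^N\e^{-(c-1)N}=\exp[-N(c-1-\log c)],
\]
and this is \eqref{eq:gaussian-radius}. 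There is nothing delicate here beyond this one-line optimization.

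For part (b) I will apply the radial transfer principle \eqref{eq:radial-transfer-principle}. Set $E:=\{a\in\CC^\Lambda:R_m(a)>t\}$.

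\emph{Measurability.} Here $a\mapsto\|a\|_{q_m}$ is continuous. The map $a\mapsto\|P_a\|_\infty$ is a norm on $\CC^\Lambda$: it vanishes only at $a=0$ by Lemma~\ref{lem:counting-parseval}. Hence it is continuous and positive on $\CC^\Lambda\setminus\{0\}$. So $R_m$ is continuous off the origin, and $E$ is Borel.

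\emph{Invariance under positive scalars.} Homogeneity gives $R_m(\lambda a)=R_m(a)$ for $\lambda>0$ and $a\neq0$. The convention $R_m(0)=0$ together with $t\ge0$ keeps $0\notin E$. So $E$ is invariant under multiplication by positive scalars.

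Then \eqref{eq:radial-transfer-principle} yields
\[
 \Prob\{R_m(g)>t\}=\gamma_\Lambda(E)=\mu_\Lambda(E\cap\mathbb S_\Lambda),
\]
which is the claim. If one wants to justify \eqref{eq:radial-transfer-principle} itself, I will use the standard fact that $g/\|g\|_2$ is uniformly distributed on $\mathbb S_\Lambda$. This follows from the rotation invariance of the density $\pi^{-N}\e^{-\|a\|_2^2}$ under the real orthogonal group of $\RR^{2N}$, together with the uniqueness of rotation-invariant probability measures on the sphere. Since $g\ne0$ almost surely and $\{g\in E\}=\{g/\|g\|_2\in E\cap\mathbb S_\Lambda\}$, the identity follows.

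The main point to check is only the Borel and cone structure of $E$ in part (b); part (a) is routine.
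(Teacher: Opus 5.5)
Your proposal is correct and follows the paper's proof. Part (a) is the same Chernoff computation, using the exponential moment generating function and the choice $\lambda=1-1/c$. Part (b) rests on the same degree-zero homogeneity of $R_m$, and your measurability check is a welcome addition.

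The only difference is how the uniformity of $g/\|g\|_2$ is obtained. You derive it from rotation invariance and uniqueness of the invariant probability on the sphere. The paper instead writes Gaussian measure in polar coordinates and checks that the radial factor has total mass one. That explicit factorization also yields independence of $\|g\|_2$ and $g/\|g\|_2$, which the paper uses later (e.g.\ in Lemma~\ref{lem:large-support-tail} and the proof of Theorem~\ref{thm:tail-main}).
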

\begin{proof}
{\color{black}For each $\alpha\in\Lambda$, the coordinate $g_\alpha$ has density
$\pi^{-1}e^{-|z|^2}$ on $\CC$. Therefore, for $u\ge0$,
\[
 \Prob\{|g_\alpha|^2>u\}
 =\pi^{-1}\int_{|z|>\sqrt u}e^{-|z|^2}\,dz
 =e^{-u},
\]
so $|g_\alpha|^2$ is exponential with mean one.} Hence, for $0<v<1$,
\[
 \int_{\CC^\Lambda}e^{v\|a\|_2^2}\,d\gamma_\Lambda(a)
 =\prod_{\alpha\in\Lambda}\int_0^\infty e^{vu}e^{-u}\,du
 =(1-v)^{-|\Lambda|}.
\]
Markov's inequality \cite{BLM}, applied to
$e^{v\|g\|_2^2}$, gives
\[
 \Prob\{\|g\|_2^2>c|\Lambda|\}
 \le e^{-vc|\Lambda|}(1-v)^{-|\Lambda|}.
\]
Taking $v=1-1/c$ yields \eqref{eq:gaussian-radius}.

{\color{black}For (b), put $d:=2|\Lambda|$ and write
$a=\rho\omega$, where $\rho=\|a\|_2>0$ and
$\omega\in\mathbb S_\Lambda$. Under the identification
$\CC^\Lambda\simeq\RR^d$, Euclidean polar coordinates give
\[
 d\operatorname{Leb}_d(a)
 =\rho^{d-1}\,d\rho\,d\sigma_{d-1}(\omega).
\]
Hence, using
$d\mu_\Lambda(\omega)
=d\sigma_{d-1}(\omega)/\sigma_{d-1}(\mathbb S_\Lambda)$, integration with
respect to $\gamma_\Lambda$ has the polar-coordinate form
\[
 d\gamma_\Lambda(\rho\omega)
 =
 \frac{\sigma_{d-1}(\mathbb S_\Lambda)}{\pi^{|\Lambda|}}
 e^{-\rho^2}\rho^{d-1}\,d\rho\,d\mu_\Lambda(\omega).
\]
The radial factor has total mass one, since
\[
 \frac{\sigma_{d-1}(\mathbb S_\Lambda)}{\pi^{|\Lambda|}}
 \int_0^\infty e^{-\rho^2}\rho^{d-1}\,d\rho
 =
 \frac{2\pi^{|\Lambda|}}{\Gamma(|\Lambda|)\pi^{|\Lambda|}}
 \cdot\frac{\Gamma(|\Lambda|)}2
 =1.
\]
Since $R_m(\rho\omega)=R_m(\omega)$ for every $\rho>0$, it follows directly
that
\begin{align*}
 \Prob\{R_m(g)>t\}
 &=
 \frac{\sigma_{d-1}(\mathbb S_\Lambda)}{\pi^{|\Lambda|}}
 \int_0^\infty e^{-\rho^2}\rho^{d-1}\,d\rho
 \int_{\mathbb S_\Lambda}
 \mathbf 1_{\{R_m(\omega)>t\}}\,d\mu_\Lambda(\omega)\\
 &=\mu_\Lambda\{\omega\in\mathbb S_\Lambda:R_m(\omega)>t\},
\end{align*}
which proves (b).}

\end{proof}

{\color{black}We use the following geometric Brascamp--Lieb inequality in
Lemmas~\ref{lem:translated-small-ball} and~\ref{endpoint:endpoint-arbitrary-center}.
The formulation of Bennett, Carbery, Christ and Tao
\cite[Proposition~2.8]{BCCT} {\color{black}\hypersetup{linkcolor=black,citecolor=black}states that if}}
$B_j:\RR^d\to\RR^{d_j}$ satisfy $B_jB_j^*=I_{\RR^{d_j}}$ and positive
weights $c_j$ satisfy
\[
 \sum_j c_j B_j^*B_j=I_{\RR^d},
\]
then, for nonnegative integrable functions
\[
 f_j:\RR^{d_j}\longrightarrow[0,\infty),
\]
one has
\begin{equation}\label{eq:geometric-BL-form}
 \int_{\RR^d}\prod_j f_j(B_jx)^{c_j}\,dx
 \le \prod_j\left(\int_{\RR^{d_j}}f_j(y)\,dy\right)^{c_j}.
\end{equation}
}

\subsection{Uniform estimates}\label{sec:spherical-estimates}

\begin{lemma}
\label{lem:translated-small-ball}
Let $m,n\in\NN$ and $\varnothing\ne\Lambda\subseteq\mathcal M_{m,n}$. For $a=(a_\alpha)_{\alpha\in\Lambda}\in\CC^\Lambda$ define
{\color{black}
\[
 P_a(z):=\sum_{\alpha\in\Lambda}a_\alpha z^\alpha,
 \qquad z\in\CC^n.
\]}
Let $g=(g_\alpha)_{\alpha\in\Lambda}$ have independent standard complex
Gaussian coordinates and set
{\color{black}
\[
 P_g(z):=\sum_{\alpha\in\Lambda}g_\alpha z^\alpha.
\]}
Then, for every $\sigma>0$ and $s>0$,
\begin{equation}\label{eq:translated-small-ball}
 \Prob\{\|P_a+\sigma P_g\|_\infty\le\sigma s\sqrt{|\Lambda|}\}
 \le(1-e^{-s^2})^{|\Lambda|}\le\exp(-|\Lambda|e^{-s^2}).
\end{equation}
\end{lemma}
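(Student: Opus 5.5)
The plan is to realize the Gaussian density as a geometric mean over evaluation points of the torus, and then apply the geometric Brascamp--Lieb inequality \eqref{eq:geometric-BL-form}. Rescaling reduces everything to one-dimensional (complex) small-ball estimates at single points.

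\textbf{Reduction.} Put $N:=|\Lambda|$ and $b:=a/\sigma$. The event in \eqref{eq:translated-small-ball} is
\[
 \{\|P_{b+g}\|_\infty\le s\sqrt N\}.
\]
Writing $x=b+g$, its probability is
\[
 \pi^{-N}\int_{\CC^\Lambda}\mathbf 1\{\|P_x\|_\infty\le s\sqrt N\}\,e^{-\|x-b\|_2^2}\,dx .
\]

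\textbf{Brascamp--Lieb datum.} Fix an integer $K>m$ and let $G\subseteq\TT^n$ be the grid of $K$th roots of unity, so $|G|=K^n$. For $z\in G$ define the real-linear map
\[
 B_z:\CC^\Lambda\to\CC\simeq\RR^2,\qquad B_zx:=N^{-1/2}P_x(z)=N^{-1/2}\sum_{\alpha\in\Lambda}x_\alpha z^\alpha .
\]
\begin{itemize}[leftmargin=2em]
\item Since $|z^\alpha|=1$, the vector $(\overline{z^\alpha})_\alpha/\sqrt N$ is a unit vector. Hence $B_zB_z^*=I_{\RR^2}$, regarding $\CC$ as $\RR^2$ with the real inner product.
\item For $\alpha\neq\beta$ in $\mathcal M_{m,n}$, every coordinate difference satisfies $0<|\alpha_j-\beta_j|\le m<K$ for some $j$. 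Therefore the discrete analogue of \eqref{eq:character-orthogonality} holds exactly: $K^{-n}\sum_{z\in G}z^\alpha\overline{z^\beta}=\delta_{\alpha\beta}$.
\item It follows that $\sum_{z\in G}c\,B_z^*B_z=I$ with the constant weight $c:=N/K^n$. Equivalently, $\|y\|_2^2=c\sum_{z\in G}|B_zy|^2$ for all $y$ (a discrete Parseval identity).
\end{itemize}

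\textbf{Applying the inequality.} Using the Parseval identity with $y=x-b$, and restricting the indicator to grid points, the integrand is dominated by
\[
 \prod_{z\in G}f_z(B_zx)^{c},\qquad f_z(w):=\mathbf 1\{|w|\le s\}\,e^{-|w-B_zb|^2}.
\]
Here I use that $\|P_x\|_\infty\le s\sqrt N$ forces $|B_zx|\le s$ at every $z\in G$. Then \eqref{eq:geometric-BL-form} gives
\[
 \int\cdots\le\prod_{z\in G}\Bigl(\int_{|w|\le s}e^{-|w-B_zb|^2}\,dw\Bigr)^{c}.
\]

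\textbf{Two-dimensional estimate.} By Anderson's inequality (or direct rearrangement for the radial decreasing Gaussian on $\RR^2$), the disc integral is maximized at centre $0$. Its value there is $\pi(1-e^{-s^2})$. Since $c|G|=N$, the product is at most $\pi^N(1-e^{-s^2})^N$. Dividing by $\pi^N$ gives the first inequality, and $1-u\le e^{-u}$ gives the second.

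\textbf{Main obstacle.} The delicate point is checking that the frame $\{B_z\}_{z\in G}$ meets the hypotheses of \eqref{eq:geometric-BL-form} exactly in the real formulation on $\RR^{2N}$. This includes the identity $B_zB_z^*=I_{\RR^2}$ for the real adjoint, which needs the complex structure to be handled correctly, and the exactness of the discrete orthogonality on the $K$-grid. I chose the grid over a continuous family of $z\in\TT^n$ so that no limiting argument for a continuum of Brascamp--Lieb factors is needed.
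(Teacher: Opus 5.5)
Your proposal is correct and follows essentially the same route as the paper: a root-of-unity grid giving the tight frame $c\sum_{z}B_z^*B_z=I$ with $c\,|G|=N$, then the geometric Brascamp--Lieb inequality applied to truncated shifted Gaussians, then the disc bound $\pi(1-e^{-s^2})$. The only difference is cosmetic: you get the disc bound from Anderson's inequality (or rearrangement), whereas the paper proves it directly by a layer-cake computation comparing areas of intersecting discs.
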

\begin{proof}
{\color{black}Put $N:=|\Lambda|$.} {\color{black}Choose an integer $\ell\ge2$ larger than every coordinate of every
multiindex in $\Lambda$, and let
\[
 {\color{black}\xi=e^{2\pi i/\ell}},\qquad
 \mathcal Z=\{({\color{black}\xi}^{j_1},\ldots,{\color{black}\xi}^{j_n}):
 j_1,\ldots,j_n\in\{0,\ldots,\ell-1\}\},\qquad M=\ell^n.
\]
{\color{black}For $z\in\mathcal Z$, define
\[
 B_z:\CC^\Lambda\longrightarrow\CC,\qquad
 B_zx:=\frac1{\sqrt N}\sum_{\alpha\in\Lambda}x_\alpha z^\alpha.
\]}
Its adjoint is
\[
 {\color{black}B_z^*y=\left(\frac{y\overline{z^\alpha}}{\sqrt N}\right)_{\alpha\in\Lambda}},
\]
so $B_zB_z^*=I_\CC$. Moreover, for $\alpha,\beta\in\Lambda$,
\begin{equation}\label{eq:grid-orthogonality}
 \frac1M\sum_{z\in\mathcal Z}z^\beta\overline{z^\alpha}
 =\prod_{j=1}^n\left(\frac1\ell\sum_{u=0}^{\ell-1}
 {\color{black}\xi}^{u(\beta_j-\alpha_j)}\right)
 =\begin{cases}
 1,&\alpha=\beta,\\
 0,&\alpha\ne\beta.
 \end{cases}
\end{equation}
Indeed, $|\beta_j-\alpha_j|<\ell$, so each finite geometric sum is $1$
when $\alpha_j=\beta_j$ and $0$ otherwise. Hence, with $c=N/M$,
\begin{equation}\label{eq:BL-frame}
 c\sum_{z\in\mathcal Z}B_z^*B_z=I_{\CC^\Lambda},
 \qquad \sum_{z\in\mathcal Z}c=N.
\end{equation}
After identifying $\CC^\Lambda$ with $\RR^{2N}$ and $\CC$ with $\RR^2$,
these are precisely the hypotheses of \eqref{eq:geometric-BL-form}.

Set $u=a/\sigma$. For $z\in\mathcal Z$ define
{\color{black}
\[
 f_z(y)=
 \begin{cases}
 e^{-|y-B_zu|^2},&|y|\le s,\\
 0,&|y|>s,
 \end{cases}
 \qquad y\in\CC.
\]}
{\color{black}
For each $z\in\mathcal Z$ and $x\in\CC^\Lambda$,
\[
 f_z(B_zx)^c
 =e^{-c|B_z(x-u)|^2}\,
 \mathbf 1_{\{|B_zx|\le s\}}.
\]
Multiplying these identities over $z\in\mathcal Z$ gives
\begin{equation}\label{eq:BL-product-expanded}
 \prod_{z\in\mathcal Z}f_z(B_zx)^c
 =\exp\!\left(-c\sum_{z\in\mathcal Z}|B_z(x-u)|^2\right)
 \prod_{z\in\mathcal Z}\mathbf 1_{\{|B_zx|\le s\}}.
\end{equation}
The frame identity \eqref{eq:BL-frame}, applied to the vector $x-u$, yields
\[
 c\sum_{z\in\mathcal Z}|B_z(x-u)|^2
 =\left\langle
 c\sum_{z\in\mathcal Z}B_z^*B_z(x-u),x-u
 \right\rangle
 =\|x-u\|_2^2.
\]
Thus the product in \eqref{eq:BL-product-expanded} equals
$e^{-\|x-u\|_2^2}$ when $|B_zx|\le s$ for every $z\in\mathcal Z$.
If $|B_zx|>s$ for at least one $z$, the corresponding indicator is zero,
and hence the whole product is zero.

For $b\in\CC$ and $r\ge0$, set
\[
 D(b,r):=\{y\in\CC:|y-b|\le r\},
\]
and let $\operatorname{Leb}_2$ denote Lebesgue measure on
$\CC\simeq\RR^2$.  {\color{black}For $y\in\CC$ and $0<v\le1$,
\[
 v\le e^{-|y-b|^2}
 \quad\Longleftrightarrow\quad
 |y-b|\le\sqrt{-\log v},
\]
and therefore
\[
 e^{-|y-b|^2}
 =\int_0^1\mathbf 1_{\{v\le e^{-|y-b|^2}\}}\,dv
 =\int_0^1\mathbf 1_{\{|y-b|\le\sqrt{-\log v}\}}\,dv.
\]}
The integrand is nonnegative, so Tonelli's theorem
\cite[Theorem~2.37(a)]{Folland} gives
\begin{align*}
 \int_{D(0,s)}e^{-|y-b|^2}\,dy
 &=\int_{D(0,s)}\int_0^1
 \mathbf 1_{D(b,\sqrt{-\log v})}(y)\,dv\,dy\\
 &=\int_0^1
 \operatorname{Leb}_2\bigl(D(0,s)\cap
 D(b,\sqrt{-\log v})\bigr)\,dv.
\end{align*}
For every $v\in(0,1]$, this intersection is contained in each of the two
discs, and therefore
\[
 \operatorname{Leb}_2\bigl(D(0,s)\cap
 D(b,\sqrt{-\log v})\bigr)
 \le \pi\min\{s^2,-\log v\}.
\]
Consequently,
\begin{align*}
 \int_{D(0,s)}e^{-|y-b|^2}\,dy
 &\le\int_0^1\pi\min\{s^2,-\log v\}\,dv\\
 &=\pi\int_0^{e^{-s^2}} s^2\,dv
   +\pi\int_{e^{-s^2}}^1(-\log v)\,dv\\
 &=\pi s^2e^{-s^2}
   +\pi\Bigl[-v\log v+v\Bigr]_{e^{-s^2}}^1\\
 &=\pi s^2e^{-s^2}
   +\pi\bigl(1-(s^2+1)e^{-s^2}\bigr)\\
 &=\pi(1-e^{-s^2}).
\end{align*}
Thus, for every $b\in\CC$ and $s>0$,
\begin{equation}\label{eq:translated-disc-integral}
 \int_{D(0,s)}e^{-|y-b|^2}\,dy\le\pi(1-e^{-s^2}).
\end{equation}
Taking $b=B_zu$ gives
\[
 \int_\CC f_z(y)\,dy\le\pi(1-e^{-s^2})
 \qquad(z\in\mathcal Z).
\]}
Applying \eqref{eq:geometric-BL-form} gives
\begin{align}
 \Prob\{|B_z(u+g)|\le s\text{ for every }z\in\mathcal Z\}
 &\le\pi^{-N}\prod_{z\in\mathcal Z}
 \left(\int_\CC f_z(y)\,dy\right)^c\notag\\
 &\le\pi^{-N}[\pi(1-e^{-s^2})]^{\sum c}
 =(1-e^{-s^2})^N.\label{eq:BL-measure}
\end{align}
If
\[
 \|P_a+\sigma P_g\|_\infty\le\sigma s\sqrt N,
\]
then, for every $z\in\mathcal Z$,
\[
 |B_z(u+g)|
 =\frac1{\sigma\sqrt N}\,
   |P_a(z)+\sigma P_g(z)|
 \le
 \frac{\|P_a+\sigma P_g\|_\infty}{\sigma\sqrt N}
 \le s.
\]
Hence
\[
 \{\|P_a+\sigma P_g\|_\infty\le\sigma s\sqrt N\}
 \subseteq
 \{|B_z(u+g)|\le s\text{ for every }z\in\mathcal Z\}.
\]
Using \eqref{eq:BL-measure}, we obtain explicitly
\[
 \Prob\{\|P_a+\sigma P_g\|_\infty\le\sigma s\sqrt N\}
 \le (1-e^{-s^2})^N.
\]
Finally, since $1-v\le e^{-v}$ for $0\le v\le1$, with
$v=e^{-s^2}$,
\[
 (1-e^{-s^2})^N
 \le \exp(-Ne^{-s^2}),
\]
which proves both inequalities in
\eqref{eq:translated-small-ball}.}
\end{proof}

{\color{black}Setting $a=0$ and $\sigma=1$ in
Lemma~\ref{lem:translated-small-ball} gives
\begin{equation}\label{eq:double-exp-BL-input}
 \Prob\{\|P_g\|_\infty\le s\sqrt N\}
 \le\exp(-N\e^{-s^2}).
\end{equation}}

\subsection{\textcolor{black}{Packing and Gaussian maxima}}\label{sec:technical-lemmas}

{\color{black}\hypersetup{linkcolor=black,citecolor=black}For large supports, we select many points on the torus where
the Gaussian polynomial has small pairwise correlations. Gaussian
comparison then controls the probability that all these values are small,
giving the estimates needed for Theorems~\ref{thm:tail-main}
and~\ref{thm:vanishing-main}.}

\begin{lemma}
\label{lem:exponential-packing}
{\color{black}Let $\varnothing\ne\Lambda\subseteq\mathcal M_{m,n}$ and assume
$|\Lambda|^{1/m}\ge64$. Put
\begin{equation}\label{eq:packing-parameters}
 \delta:=4^{-m},\qquad
 L:=\frac{(\log4)m|\Lambda|^{1/m}}{256},
 \qquad M:=\lceil \e^L\rceil.
\end{equation}
There are $z_1,\ldots,z_M\in\TT^n$ such that, writing
$z_i=(z_{i,1},\ldots,z_{i,n})$,
\begin{equation}\label{eq:packing-correlations}
 \left|\frac1{|\Lambda|}\sum_{\alpha\in\Lambda}
 \prod_{r=1}^n
 (z_{i,r}\overline{z_{j,r}})^{\alpha_r}\right|\le\delta
 \qquad(1\le i\ne j\le M).
\end{equation}
Equivalently, the product inside the sum is
$(z_i\overline{z_j})^\alpha$, where multiplication is coordinatewise.}
\end{lemma}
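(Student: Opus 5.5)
We choose the points at random and use a union bound (the first-moment/deletion method). Let $w_1,\ldots,w_{2M}$ be independent Haar-distributed points of $\TT^n$. For $i\ne j$, the product $w_i\overline{w_j}$ is again Haar distributed on $\TT^n$, by translation invariance. So every correlation in \eqref{eq:packing-correlations} has the law of
\[
 X(w):=\frac1{|\Lambda|}\sum_{\alpha\in\Lambda}w^\alpha=\frac1{|\Lambda|}P_{\mathbf 1}(w),
 \qquad w\sim m_n,
\]
where $\mathbf 1\in\CC^\Lambda$ is the all-ones vector.

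The key step is a tail bound for $|X|$ from Lemma~\ref{lem:coefficient-moment}. That lemma gives $\|P_{\mathbf 1}\|_{2k}\le k^{m/2}|\Lambda|^{1/2}$, and hence $\|X\|_{2k}\le k^{m/2}|\Lambda|^{-1/2}$. Markov's inequality then yields
\[
 m_n\{|X|>\delta\}\le\bigl(\delta^{-1}k^{m/2}|\Lambda|^{-1/2}\bigr)^{2k}
 =\bigl(16^{m}k^{m}|\Lambda|^{-1}\bigr)^{k}
 =\Bigl(\frac{16k}{|\Lambda|^{1/m}}\Bigr)^{mk}.
\]
We choose $k$ so that $16k/|\Lambda|^{1/m}\le 1/4$, namely $k=\lfloor |\Lambda|^{1/m}/64\rfloor$. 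This $k$ is at least $1$ because $|\Lambda|^{1/m}\ge64$, and it satisfies $k\ge |\Lambda|^{1/m}/128$. The probability is then at most $4^{-mk}\le \exp(-(\log4)m|\Lambda|^{1/m}/128)=e^{-2L}$.

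For the deletion step, the expected number of bad ordered pairs among $2M$ points is at most $(2M)^2e^{-2L}$. This quantity needs to be at most $M$, i.e. $4Me^{-2L}\le1$. Since $M\le e^L+1\le 2e^L$, it suffices that $8e^{-L}\le1$, that is, $L\ge\log 8$. This holds because $L\ge(\log4)\cdot m\cdot64/256\ge(\log 4)/2$ only for $m\ge1$, which is not enough, so the constants need some slack. We can recover it either by choosing $k=\lfloor |\Lambda|^{1/m}/64\rfloor$ and noting the bound is actually $(1/4)^{mk}$ with $k\ge |\Lambda|^{1/m}/128$, or by using the sharper ratio $16k/|\Lambda|^{1/m}\le1/4$ directly, and then handling small $L$ separately. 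If $e^{L}<8$, then $M\le 8$ is tiny. In that case the expected number of bad pairs among $M$ points is at most $M^2e^{-2L}$, which is below $1$ once $e^{L}>M/1$ up to constants, so the bound still closes. Alternatively we spend the factor-$2$ room left in the exponent (the bound $e^{-2L}$ against the needed $e^{-L}$).

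Once the count is under control, we fix a realisation with at most $M$ bad pairs and delete one point from each bad pair. This leaves at least $M$ points with all pairwise correlations at most $\delta$.

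The main obstacle is purely the constant bookkeeping in this last counting step, reconciling $e^{-2L}$ with $M^2$ when $L$ is small. The probabilistic core is Lemma~\ref{lem:coefficient-moment} with $k\asymp|\Lambda|^{1/m}$, and if necessary we adjust the choice of $k$ (e.g. $k=\lceil |\Lambda|^{1/m}/64\rceil-1$ or similar) to gain the needed margin.
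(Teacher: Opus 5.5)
Your probabilistic core is correct and coincides with the paper's. You apply Lemma~\ref{lem:coefficient-moment} to the all-ones vector and use Markov's inequality with $k=\lfloor|\Lambda|^{1/m}/64\rfloor$, which gives $m_n(\mathcal B)\le\e^{-2L}$ for $\mathcal B:=\{w\in\TT^n:|X(w)|>\delta\}$. The gap is the selection step, which you leave open. Your deletion scheme with $2M$ points needs $4M\e^{-2L}\le1$, i.e.\ $L\ge\log8$, i.e.\ $m|\Lambda|^{1/m}\ge384$. This fails for every $m\le5$ when $|\Lambda|^{1/m}=64$, and the lemma is stated for all $m$. None of the suggested repairs closes it:
\begin{itemize}
\item Re-choosing $k$ does not help in general. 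For $m=1$, $|\Lambda|=64$, the best bound the moment method gives is $m_n(\mathcal B)\le1/4$ while $M=2$, so $4M\cdot\frac14=2>1$.
\item The assertion that $M^2\e^{-2L}<1$ is false, since $M=\lceil\e^L\rceil\ge\e^L$.
\item There is no ``factor-$2$ room in the exponent'' for an argument that union-bounds over pairs, because there are about $\e^{2L}/2$ pairs.
\end{itemize}

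The missing observation is the conjugate symmetry $X(\overline w)=\overline{X(w)}$. The pairs $(i,j)$ and $(j,i)$ are bad simultaneously, so only unordered pairs count. Then $M$ independent Haar points already work, with no deletion:
\[
 \Prob\{\text{some pair is bad}\}\le\binom M2\e^{-2L}
 <\frac{(\e^L+1)\e^L}{2}\,\e^{-2L}=\frac{1+\e^{-L}}2<1,
\]
because $M<\e^L+1$ and $M-1<\e^L$.

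The paper instead picks the points greedily. Once $z_1,\ldots,z_j$ are chosen with $j\le M-1<\e^L$, the forbidden set $\{z\in\TT^n:z\overline{z_i}\in\mathcal B\text{ for some }i\le j\}$ has Haar measure at most $j\e^{-2L}<\e^{-L}<1$. So $z_{j+1}$ exists, and conjugate symmetry handles the reversed order. The greedy route needs only control linear in $M$, which is where the factor $2$ in the exponent is genuinely spare. The random route needs the quadratic pair count and closes only barely, with the unordered bookkeeping above. In the paper's application, Lemma~\ref{lem:large-support-tail}, one has $m\ge460$, so your deletion version would suffice there.
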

\begin{proof}
{\color{black}Put $N:=|\Lambda|$.}
Define
\[
 Q:\CC^n\longrightarrow\CC,\qquad Q(z):=\sum_{\alpha\in\Lambda}z^\alpha,
 \qquad
 \mathcal B:=\{z\in\TT^n:|Q(z)|>\delta N\}.
\]
{\color{black}For $k\in\NN$, apply Lemma~\ref{lem:coefficient-moment} to the
coefficient vector $a=(1)_{\alpha\in\Lambda}$. Since $P_a=Q$ and
$\|a\|_2=\sqrt N$,
\[
 \|Q\|_{2k}\le k^{m/2}\sqrt N.
\]
Therefore
\[
 \int_{\TT^n}|Q(z)|^{2k}\,dm_n(z)
 =\|Q\|_{2k}^{2k}
 \le (k^{m/2}\sqrt N)^{2k}
 =k^{mk}N^k.
\]}
Choose $k:=\lfloor N^{1/m}/64\rfloor$. Since $N^{1/m}/64\ge1$,
$N^{1/m}/128\le k\le N^{1/m}/64$. Markov's inequality \cite{BLM} gives
\begin{align}
 m_n(\mathcal B)
 &\le\frac{k^{mk}N^k}{(\delta N)^{2k}}
 =\delta^{-2k}(k/N^{1/m})^{mk}
 =(16k/N^{1/m})^{mk}\notag\\
 &\le4^{-mk}
 \le\exp\left(-\frac{(\log4)mN^{1/m}}{128}\right)
 =\e^{-2L}.\label{eq:packing-bad-set}
\end{align}
For $w\in\TT^n$, the map
\[
 \tau_w:\TT^n\longrightarrow\TT^n,\qquad \tau_w(z):=z\overline w
\]
is measure-preserving. We now construct $z_1,\ldots,z_M$ inductively so that,
for every $1\le j\le M$,
\begin{equation}\label{eq:packing-induction-invariant}
 |Q(z_r\overline{z_s})|\le\delta N
 \qquad(1\le r\ne s\le j).
\end{equation}
For $j=1$ there is nothing to prove, so take
$z_1=(1,\ldots,1)$.

Assume that $z_1,\ldots,z_j$ have been chosen, with $j<M$, and satisfy
\eqref{eq:packing-induction-invariant}. Since each $\tau_{z_i}$ preserves
$m_n$, \eqref{eq:packing-bad-set} gives
\[
 m_n\left(\bigcup_{i=1}^j\tau_{z_i}^{-1}(\mathcal B)\right)
 \le \sum_{i=1}^j m_n\!\left(\tau_{z_i}^{-1}(\mathcal B)\right)
 =j\,m_n(\mathcal B)
 \le j\e^{-2L}
 <1.
\]
Hence there exists
\[
 z_{j+1}\in
 \TT^n\setminus
 \bigcup_{i=1}^j\tau_{z_i}^{-1}(\mathcal B).
\]
For every $1\le i\le j$ this choice means
$z_{j+1}\overline{z_i}\notin\mathcal B$, and therefore
\[
 |Q(z_{j+1}\overline{z_i})|\le\delta N.
\]
Since
\[
 Q(z_i\overline{z_{j+1}})
 =\overline{Q(z_{j+1}\overline{z_i})},
\]
we also have
\[
 |Q(z_i\overline{z_{j+1}})|\le\delta N.
\]
Thus \eqref{eq:packing-induction-invariant} holds with $j+1$ in place of
$j$. By induction it holds for $j=M$. Finally, for $i\ne j$,
\[
 \frac1N Q(z_i\overline{z_j})
 =
 \frac1{|\Lambda|}
 \sum_{\alpha\in\Lambda}(z_i\overline{z_j})^\alpha,
\]
so dividing \eqref{eq:packing-induction-invariant} by $N=|\Lambda|$ gives
\eqref{eq:packing-correlations}.
\end{proof}

We next record a standard consequence of Slepian's comparison principle:
a large family of weakly correlated Gaussian variables is very unlikely to
have an abnormally small maximum. The constants are stated explicitly for
the parameter range used here.

\begin{lemma}\label{lem:small-correlation-max-sharp}
Let $X=(X_1,\ldots,X_M)$ be a real Gaussian vector satisfying
\[
 \E X_i=0,\qquad \E X_i^2=1\quad(1\le i\le M),
 \qquad \E(X_iX_j)\le\delta\quad(i\ne j).
\]
Assume
\[
 0<\delta\le 2^{-10},\qquad L\ge1000,\qquad M\ge \e^L,
 \qquad h\le\frac{127}{100}\sqrt L.
\]
Then
\begin{equation}\label{eq:small-correlation-maximum-sharp}
 \Prob\left\{\max_{1\le i\le M}X_i\le h\right\}
 \le
 \exp\left(-\frac{L}{3200\delta}\right)
 +\exp\left(-\e^{3L/20}\right).
\end{equation}
\end{lemma}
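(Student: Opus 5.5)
We reduce to an equicorrelated Gaussian vector via Slepian's lemma. Let $Y_i:=\sqrt{\delta}\,Z+\sqrt{1-\delta}\,W_i$, where $Z,W_1,\ldots,W_M$ are independent standard real Gaussians. Then $\E Y_i^2=1$ and $\E(Y_iY_j)=\delta\ge\E(X_iX_j)$ for $i\ne j$. Slepian's inequality says that larger correlations make $\{\max Y_i\le h\}$ more likely, so
\[
 \Prob\Bigl\{\max_i X_i\le h\Bigr\}\le\Prob\Bigl\{\max_i Y_i\le h\Bigr\}.
\]
We may also pass to the first $\lceil\e^L\rceil$ indices, since this only enlarges the event. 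Hence we may assume $M=\lceil \e^L\rceil$.

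Next condition on $Z$ and split according to whether $\sqrt\delta Z\ge -t$ or $\sqrt\delta Z<-t$, with a threshold $t=c\sqrt L$ to be fixed.

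\emph{Case $\sqrt\delta Z<-t$.} This has probability at most $\exp(-t^2/(2\delta))$. Taking $t=\sqrt L/40$ gives $t^2/(2\delta)=L/(3200\delta)$, which is exactly the first term of \eqref{eq:small-correlation-maximum-sharp}.

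\emph{Case $\sqrt\delta Z\ge -t$.} The event $\max Y_i\le h$ forces $W_i\le (h+t)/\sqrt{1-\delta}=:h'$ for every $i$. By independence,
\[
 \Prob\Bigl\{\max_i W_i\le h'\Bigr\}=\Phi(h')^M\le\exp\bigl(-M\,\overline\Phi(h')\bigr),
\]
where $\overline\Phi=1-\Phi$.

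It remains to check that $M\,\overline\Phi(h')\ge \e^{3L/20}$. Since $h\le 1.27\sqrt L$ and $t=0.025\sqrt L$, we get $h+t\le1.295\sqrt L$. Because $\delta\le2^{-10}$, we have $1/\sqrt{1-\delta}\le1.0005$, so $h'\le 1.2957\sqrt L$ and $h'^2/2\le 0.8394L$. The Mills-ratio bound
\[
 \overline\Phi(x)\ge\frac{x}{(1+x^2)\sqrt{2\pi}}\,\e^{-x^2/2}
\]
then gives
\[
 \overline\Phi(h')\ge \exp\bigl(-0.8394L-\log(3\sqrt L)\bigr),
\]
and therefore $M\,\overline\Phi(h')\ge\exp(0.1606L-\log(3\sqrt L))$. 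For $L\ge1000$, the term $\log(3\sqrt L)\approx 4.6$ is less than $0.0106L\approx10.6$, so this is at least $\e^{0.15L}=\e^{3L/20}$. If $h'$ is small or negative, $\overline\Phi(h')\ge\overline\Phi(1.2957\sqrt L)$ still holds by monotonicity, so the same bound applies.

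Adding the two cases yields \eqref{eq:small-correlation-maximum-sharp}.

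The main obstacle is the numerical bookkeeping: the three constants $1.27$, $1/40$ and $3/20$ leave only a narrow margin, and it must absorb both the factor $(1-\delta)^{-1/2}$ and the polynomial prefactor in the Mills ratio. The threshold $L\ge1000$ is what makes the prefactor absorbable. The Slepian step itself is standard; the only care needed is that it requires equal variances and componentwise-ordered covariances, both of which hold here.
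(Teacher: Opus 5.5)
Your proof is correct and follows essentially the same route as the paper. Both arguments use Slepian comparison with the equicorrelated vector $\sqrt\delta\,Z_0+\sqrt{1-\delta}\,Z_i$, then a union bound over $\{Z_0<-\sqrt L/(40\sqrt\delta)\}$ and $\{\max_i Z_i\le x\}$, and a Mills-ratio estimate for the second event; the only difference is that the paper fixes $x=\tfrac{13}{10}\sqrt L$ instead of your $h'$. One cosmetic slip: at $x\approx1.2957\sqrt L$ the Mills prefactor is about $1/(3.25\sqrt L)$ rather than $1/(3\sqrt L)$, but your margin of roughly $0.0106L$ in the exponent easily absorbs this.
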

\begin{proof}
{\color{black}
Define
\[
 \varphi(x):=(2\pi)^{-1/2}e^{-x^2/2},
 \qquad
 \overline\Phi(x):=\int_x^\infty\varphi(u)\,du.
\]
An integration by parts gives, for $x>0$,
\begin{equation}\label{eq:Mills-lower}
 \overline\Phi(x)\ge \frac{x}{1+x^2}\varphi(x).
\end{equation}

Let $Z_0,Z_1,\ldots,Z_M$ be independent real Gaussian variables with mean
zero and variance one, and define
\[
 Y_i:=\sqrt\delta\,Z_0+\sqrt{1-\delta}\,Z_i,
 \qquad 1\le i\le M.
\]
{\color{black}By independence,
\[
 \E Y_i=0,\qquad
 \E Y_i^2=\delta+(1-\delta)=1,\qquad
 \E(Y_iY_j)=\delta\quad(i\ne j).
\]
Thus $X$ and $Y=(Y_1,\ldots,Y_M)$ have the same coordinate variances and
\[
 \E(X_iX_j)\le \E(Y_iY_j)\qquad(i\ne j).
\]
The lower-tail form of Slepian's comparison inequality \cite{Slepian} therefore yields}
\begin{equation}\label{eq:slepian-applied}
 \Prob\{\max_iX_i\le h\}
 \le \Prob\{\max_iY_i\le h\}.
\end{equation}

Set
\[
 E_0:=\left\{Z_0< -\frac{\sqrt L}{40\sqrt\delta}\right\},
 \qquad
 E_1:=\left\{\max_{1\le i\le M}Z_i\le\frac{13}{10}\sqrt L\right\}.
\]
\begin{equation}\label{eq:Y-event-inclusion}
 \{\max_iY_i\le h\}\subseteq E_0\cup E_1.
\end{equation}
Indeed, suppose that neither $E_0$ nor $E_1$ occurs. Then there is an index
$i$ such that $Z_i>13\sqrt L/10$, while
$Z_0\ge-\sqrt L/(40\sqrt\delta)$. For that index,
{\color{black}\[
 Y_i>
 \sqrt\delta\left(-\frac{\sqrt L}{40\sqrt\delta}\right)
 +\sqrt{1-\delta}\,\frac{13}{10}\sqrt L
 =\left(\frac{13}{10}\sqrt{1-\delta}-\frac1{40}\right)\sqrt L.
\]}
Since $\delta\le2^{-10}$,
\[
 \sqrt{1-\delta}>1-\delta\ge\frac{1023}{1024},
\]
and hence
\[
 \frac{13}{10}\sqrt{1-\delta}-\frac1{40}
 >\frac{13}{10}\frac{1023}{1024}-\frac1{40}
 >\frac{127}{100}.
\]
Therefore $Y_i>127\sqrt L/100\ge h$, which proves
\eqref{eq:Y-event-inclusion}.

{\color{black}\hypersetup{linkcolor=black,citecolor=black}To estimate $\Prob(E_0)$, let $Z$ be a real Gaussian variable with mean zero
and variance one. Then}
\[
 \E e^{\lambda Z}=e^{\lambda^2/2}\qquad(\lambda\in\mathbb R).
\]
The exponential Markov inequality, equivalently the Cram\'er--Chernoff
method \cite[Section~2.2]{BLM}, gives, for $t>0$ and $\lambda>0$,
\[
 \Prob\{Z\le-t\}
 =\Prob\{e^{-\lambda Z}\ge e^{\lambda t}\}
 \le e^{-\lambda t}\E e^{-\lambda Z}
 =e^{-\lambda t+\lambda^2/2}.
\]
Choosing $\lambda=t$ yields
\begin{equation}\label{eq:gaussian-one-sided-tail}
 \Prob\{Z\le-t\}\le e^{-t^2/2}\qquad(t>0).
\end{equation}
With $t=\sqrt L/(40\sqrt\delta)$, this gives
\begin{equation}\label{eq:common-gaussian-tail-sharp}
 \Prob(E_0)
 \le \exp\left(-\frac{L}{3200\delta}\right).
\end{equation}

For $E_1$, put $x:=13\sqrt L/10$. Independence of
$Z_1,\ldots,Z_M$ gives
\[
 \Prob(E_1)
 =\bigl(1-\overline\Phi(x)\bigr)^M
 \le \exp\bigl(-M\overline\Phi(x)\bigr).
\]
By \eqref{eq:Mills-lower},
\[
 \overline\Phi(x)
 \ge \frac{x}{1+x^2}\frac1{\sqrt{2\pi}}e^{-x^2/2}.
\]
Since $x^2=169L/100\ge1690>4$,
\[
 \frac{x}{1+x^2}\ge\frac4{5x}.
\]
{\color{black}Using $M\ge e^L$,
\[
 M\overline\Phi(x)
 \ge \frac{e^{31L/200}}{(13/8)\sqrt{2\pi L}}.
\]
To compare the right-hand side with $e^{3L/20}$, it is enough to verify
\[
 \frac{e^{L/200}}{\sqrt L}\ge\frac{13}{8}\sqrt{2\pi}.
\]
For $L\ge1000$ the function on the left is increasing, since
\[
 \frac{d}{dL}\left(\frac{e^{L/200}}{\sqrt L}\right)
 =
 \frac{e^{L/200}}{\sqrt L}
 \left(\frac1{200}-\frac1{2L}\right)>0.
\]
At $L=1000$, {\color{black}\hypersetup{linkcolor=black,citecolor=black}the inequality to be proved} is equivalent to
\[
 e^5\ge\frac{13}{8}\sqrt{2000\pi},
\]
which holds. Therefore
$M\overline\Phi(x)\ge e^{3L/20}$, and hence
\begin{equation}\label{eq:max-independent-gaussians}
 \Prob(E_1)\le\exp\bigl(-e^{3L/20}\bigr).
\end{equation}
{\color{black}\hypersetup{linkcolor=black,citecolor=black}Combining these estimates gives}
\begin{align*}
 \Prob\{\max_iX_i\le h\}
 &\overset{\eqref{eq:slepian-applied}}{\le}
   \Prob\{\max_iY_i\le h\}\\
 &\overset{\eqref{eq:Y-event-inclusion}}{\le}
   \Prob(E_0)+\Prob(E_1)\\
 &\overset{\eqref{eq:common-gaussian-tail-sharp},
            \eqref{eq:max-independent-gaussians}}{\le}
   \exp\left(-\frac{L}{3200\delta}\right)
   +\exp\left(-e^{3L/20}\right),
\end{align*}
which is \eqref{eq:small-correlation-maximum-sharp}.}
}
\end{proof}

\begin{lemma}\label{lem:large-support-tail}
{\color{black}Let $\varnothing\ne\Lambda\subseteq\mathcal M_{m,n}$ and let $g$ be
the standard complex Gaussian coefficient vector on $\CC^\Lambda$.
For every $\eta>0$ there is $m_0(\eta)$ such that, whenever
$m\ge m_0(\eta)$ and $|\Lambda|^{1/m}\ge1710$,}
\begin{equation}\label{eq:double-exp-large-support}
 \Prob\{R_m(g)>\eta\}
 \le4\exp\left(-\frac{|\Lambda|^{1/m}4^m}{1710}\right).
\end{equation}
\end{lemma}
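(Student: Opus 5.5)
The plan is to reduce the event $\{R_m(g)>\eta\}$ to two Gaussian events: an abnormally large radius $\|g\|_2$, or an abnormally small supremum $\|P_g\|_\infty$. The first is handled by Lemma~\ref{lem:gaussian-radius-direction}(a). The second is handled by evaluating $P_g$ on the weakly correlated points of Lemma~\ref{lem:exponential-packing} and applying Lemma~\ref{lem:small-correlation-max-sharp}.

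Put $N:=|\Lambda|$. By Lemma~\ref{lem:counting-parseval}, $\|g\|_{q_m}\le N^{1/(2m)}\|g\|_2$. On the event $\{\|g\|_2^2\le 2N\}$ we therefore have
\[
 R_m(g)\le \frac{N^{1/(2m)}\sqrt{2N}}{\|P_g\|_\infty}.
\]
Hence
\[
 \Prob\{R_m(g)>\eta\}\le \Prob\{\|g\|_2^2>2N\}+\Prob\{\|P_g\|_\infty\le \eta^{-1}N^{1/(2m)}\sqrt{2N}\}.
\]
Lemma~\ref{lem:gaussian-radius-direction}(a) with $c=2$ bounds the first term by $\exp[-N(1-\log 2)]$. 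Since $N=(N^{1/m})^m$ with $N^{1/m}\ge1710$, this quantity is far smaller than $\exp(-N^{1/m}4^m/1710)$ once $m$ is moderately large, because $1710^{m-1}\gg 4^m$.

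For the second term, the hypothesis $N^{1/m}\ge1710\ge64$ lets us take $\delta=4^{-m}$, $L=(\log4)mN^{1/m}/256$ and $M=\lceil e^L\rceil$, together with the points $z_1,\ldots,z_M$ from Lemma~\ref{lem:exponential-packing}. Define
\[
 X_i:=\sqrt{2/N}\,\operatorname{Re}P_g(z_i).
\]
Since the $g_\alpha$ are standard complex Gaussians, each $X_i$ is a centered real Gaussian with $\E X_i^2=\frac1N\sum_\alpha|z_i^\alpha|^2=1$. Its covariances are
\[
 \E X_iX_j=\operatorname{Re}\Bigl(\frac1N\sum_{\alpha}(z_i\overline{z_j})^\alpha\Bigr)\le\delta ,
\]
using $\E\operatorname{Re}(g_\alpha w)\operatorname{Re}(g_\alpha w')=\frac12\operatorname{Re}(w\overline{w'})$ and \eqref{eq:packing-correlations}. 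The hypotheses of Lemma~\ref{lem:small-correlation-max-sharp} then hold for large $m$: we have $\delta\le2^{-10}$ once $m\ge5$, and $L\ge(\log4)\cdot1710\,m/256\ge1000$ once $m\gtrsim110$. With $h=\frac{127}{100}\sqrt L$, that lemma shows that, outside an event of probability at most $\exp(-L/(3200\delta))+\exp(-e^{3L/20})$,
\[
 \|P_g\|_\infty\ge\max_i|P_g(z_i)|\ge\sqrt{N/2}\,h\ge c\sqrt{N}\sqrt{m}\,N^{1/(2m)} ,
\]
where $c>0$ is absolute. Comparing with the threshold $\eta^{-1}N^{1/(2m)}\sqrt{2N}$, the inclusion holds as soon as $c\sqrt m>\sqrt2/\eta$. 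This is the choice of $m_0(\eta)$.

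It remains to bound the three exceptional probabilities by $4\exp(-N^{1/m}4^m/1710)$. The term $\exp(-e^{3L/20})$ is doubly exponential in $mN^{1/m}$, so it is negligible for large $m$. The main term is
\[
 \exp\bigl(-L/(3200\delta)\bigr)=\exp\bigl(-(\log4)\,m\,N^{1/m}4^m/819200\bigr).
\]
The factor $m$ absorbs the constant, so this is at most $\exp(-N^{1/m}4^m/1710)$ once $m\ge 819200/(1710\log 4)\approx 346$, which can be built into $m_0(\eta)$.

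I expect the main obstacle to be this constant bookkeeping: making the explicit exponent $1710$ and the prefactor $4$ come out while checking all parameter constraints simultaneously ($L\ge1000$, $\delta\le2^{-10}$, $M\ge e^L$). If the direct comparison fails for some intermediate range, the first remedy I would try is to enlarge $m_0(\eta)$. Failing that, I would rerun Lemma~\ref{lem:exponential-packing} with a different split between $\delta$ and $L$, which only changes absolute constants.
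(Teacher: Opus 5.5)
Your proposal is correct and follows the paper's proof essentially step by step: the radius event $\{\|g\|_2^2\le 2N\}$, the packing points of Lemma~\ref{lem:exponential-packing}, the variables $X_i=\sqrt{2/N}\operatorname{Re}P_g(z_i)$, Lemma~\ref{lem:small-correlation-max-sharp}, and the same threshold $\frac{2}{\eta}\le\frac{127}{1600}\sqrt{m\log 4}$ defining $m_0(\eta)$. The only variation is that you handle the radius by a union bound with Lemma~\ref{lem:gaussian-radius-direction}(a), which is legitimate since $N(1-\log 2)\ge N^{1/m}4^m/1710$, whereas the paper uses radius--direction independence to get $\Prob(A)\le 2\Prob(A\cap H)$.

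One caution: the term $\exp(-\e^{3L/20})$ is not negligible with room to spare. Write $x=N^{1/m}$. The required inequality $\e^{3L/20}\ge x4^m/1710$ reduces at $x=1710$ to $3\cdot1710/5120>1$, which holds only barely; this is exactly where the constant $1710$ comes from. It then extends to all $x\ge1710$ because $\frac{3m\log 4}{5120}-\frac1x>0$, so it holds for every $m$, but it has to be checked rather than asserted.
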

\begin{proof}
{\color{black}Put $N:=|\Lambda|$.}
Let $A:=\{R_m(g)>\eta\}$ and $H:=\{\|g\|_2^2\le2N\}$.
Write
\[
 \rho:=\|g\|_2,\qquad \omega:=\frac{g}{\|g\|_2},
 \qquad g=\rho\omega.
\]
The polar-coordinate formula in the proof of
Lemma~\ref{lem:gaussian-radius-direction} writes integration with respect to
$\gamma_\Lambda$ as a radial integral in $\rho$ followed by integration on
$\mathbb S_\Lambda$ with respect to $\mu_\Lambda$. Consequently, $\rho$ and
$\omega$ are independent. Since $A$ depends only on $\omega$ and $H$ only on
$\rho$, the events $A$ and $H$ are independent.
Since $N^{1/m}\ge1710$, we have $N\ge1710$, and
\[
 \Prob(H^c)\le(2/\e)^N<\tfrac12.
\]
Thus $\Prob(A)\le2\Prob(A\cap H)$. On $A\cap H$, {\color{black}Lemma~\ref{lem:counting-parseval} gives}
\[
 \|P_g\|_\infty
 <\frac{\|g\|_{q_m}}{\eta}
 \le\frac{N^{1/(2m)}\|g\|_2}{\eta}
 \le\frac{\sqrt{2N^{1+1/m}}}{\eta}.
\]
Set
\[
 \delta:=4^{-m},\qquad
 L:=\frac{(\log4)mN^{1/m}}{256},\qquad
 M:=\lceil e^L\rceil,
\]
and choose $z_1,\ldots,z_M\in\TT^n$ from
Lemma~\ref{lem:exponential-packing}. Define
\[
 X_i:=\sqrt{2/N}\,\operatorname{Re} P_g(z_i)\qquad(i\in[M]).
\]
Then
\[
 \E X_i^2=1,\qquad
 \E X_iX_j
 =\operatorname{Re}\left(\frac1N\sum_{\alpha\in\Lambda}
                         (z_i\overline{z_j})^\alpha\right)
 \le\delta\quad(i\ne j).
\]
On $A\cap H$,
\[
 \max_iX_i<\frac{2N^{1/(2m)}}{\eta}.
\]
{\color{black}
The hypothesis on the level $h$ in Lemma~\ref{lem:small-correlation-max-sharp}
is satisfied on $A\cap H$ provided that
\begin{equation}\label{eq:large-support-h-condition}
 \frac{2N^{1/(2m)}}{\eta}
 \le\frac{127}{100}\sqrt L.
\end{equation}
After substituting the value of $L$ and cancelling $N^{1/(2m)}$,
\eqref{eq:large-support-h-condition} is equivalent to
\[
 \frac2\eta\le\frac{127}{1600}\sqrt{m\log4},
\]
or, equivalently,
\[
 m\ge \frac{10\,240\,000}{16\,129\,\eta^2\log4}.
\]
Set
\[
 m_0(\eta):=
 \left\lceil
 \max\left\{460,
 \frac{10\,240\,000}{16\,129\,\eta^2\log4}
 \right\}
 \right\rceil.
\]
For $m\ge m_0(\eta)$, condition
\eqref{eq:large-support-h-condition} holds. In addition, $m\ge460$ and
$N^{1/m}\ge1710$ imply $L>1000$, while $\delta=4^{-m}<2^{-10}$.
Thus all the hypotheses of Lemma~\ref{lem:small-correlation-max-sharp} are
satisfied, and it yields}
\[
 \Prob(A\cap H)
 \le
 \exp\left(-\frac{L}{3200\delta}\right)
 +\exp(-\e^{3L/20}).
\]
{\color{black}The two terms are compared with the target exponent
$N^{1/m}4^m/1710$. For the first one,
\[
 \frac{L}{3200\delta}
 =\frac{m(\log4)N^{1/m}}{819200}\,4^m
 \ge\frac{N^{1/m}4^m}{1710},
\]
because $m\ge460$.

For the second one, it is enough to prove
\[
 \frac{3L}{20}
 \ge m\log4+\log\!\left(\frac{N^{1/m}}{1710}\right),
\]
since exponentiating this inequality gives
\[
 \e^{3L/20}\ge\frac{N^{1/m}4^m}{1710}.
\]
Put $x:=N^{1/m}\ge1710$. After substituting the value of $L$, the estimate
\[
 \e^{3L/20}\ge\frac{N^{1/m}4^m}{1710}
\]
is equivalent to
\[
 \frac{3m(\log4)}{5120}\,x
 \ge m\log4+\log(x/1710).
\]
At $x=1710$ this follows from
\[
 \frac{3\cdot1710}{5120}>1.
\]
Moreover, the derivative of the left-hand side minus the right-hand side is
\[
 \frac{3m\log4}{5120}-\frac1x>0
 \qquad(x\ge1710,\ m\ge460),
\]
so the inequality holds for every $x\ge1710$. Therefore
\[
 \exp\!\left(-\frac{L}{3200\delta}\right)
 +\exp\!\left(-e^{3L/20}\right)
 \le
 2\exp\!\left(-\frac{N^{1/m}4^m}{1710}\right).
\]
Combining this estimate with
$\Prob(A)\le2\Prob(A\cap H)$ proves
\eqref{eq:double-exp-large-support}.}
\end{proof}

\subsection{Endpoint estimate for Theorem A}\label{sec:endpoint-one}

\begin{lemma}\label{endpoint:endpoint-exposed-pair}
{\color{black}Let $\Lambda\subset\mathbb N_0^n$ be a finite set of at least two distinct exponent vectors and,
for $a=(a_\alpha)_{\alpha\in\Lambda}$, let
\[
 P_a(z):=\sum_{\alpha\in\Lambda}a_\alpha z^\alpha.
\]
There are $\alpha_+,\alpha_-\in\Lambda$, depending only on $\Lambda$,
such that}
\begin{equation}\label{endpoint:endpoint-fourier-gap}
 \|P_a\|_\infty^2
 \ge\|a\|_2^2+|a_{\alpha_+}a_{\alpha_-}|.
\end{equation}
\end{lemma}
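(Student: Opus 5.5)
The plan is to choose $\alpha_\pm$ as two extreme points of $\Lambda$ that are exposed by a generic linear functional. Then $|P_a|^2$ is integrated against a nonnegative probability density on $\TT^n$ that isolates exactly one cross term $a_{\alpha_+}\overline{a_{\alpha_-}}$ of the expansion.

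\emph{Choice of the pair.} Since $\Lambda$ is finite, pick $\theta\in\RR^n$ such that the numbers $\langle\theta,\alpha\rangle$, $\alpha\in\Lambda$, are pairwise distinct. This is possible because one only has to avoid finitely many hyperplanes $\{\theta:\langle\theta,\alpha-\beta\rangle=0\}$. Let $\alpha_+$ be the unique maximizer and $\alpha_-$ the unique minimizer of $\langle\theta,\cdot\rangle$ on $\Lambda$. Because $|\Lambda|\ge2$, we have $\alpha_+\neq\alpha_-$, and the pair depends only on $\Lambda$. Put $\gamma:=\alpha_+-\alpha_-\in\mathbb Z^n\setminus\{0\}$.

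The key combinatorial claim is the following. If $\alpha,\beta\in\Lambda$ satisfy $\alpha-\beta=\gamma$, then $\langle\theta,\alpha\rangle-\langle\theta,\beta\rangle$ equals $\max_\Lambda\langle\theta,\cdot\rangle-\min_\Lambda\langle\theta,\cdot\rangle$. By uniqueness of the extremizers this forces $\alpha=\alpha_+$ and $\beta=\alpha_-$. The only solution of $\alpha-\beta=-\gamma$ is likewise $(\alpha,\beta)=(\alpha_-,\alpha_+)$.

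\emph{Weighted averaging.} For $c\in\TT$ set $w_c(z):=1+\operatorname{Re}\bigl(c\,\overline{z^{\gamma}}\bigr)$ on $\TT^n$, where $z^\gamma$ is a Laurent monomial. Then $w_c\ge0$ and $\int w_c\,dm_n=1$, since $\gamma\neq0$. Hence
\[
 \|P_a\|_\infty^2\ge\int_{\TT^n}|P_a|^2 w_c\,dm_n .
\]
Expand $|P_a|^2=\sum_{\alpha,\beta\in\Lambda}a_\alpha\overline{a_\beta}z^{\alpha-\beta}$ and use the character orthogonality \eqref{eq:character-orthogonality}, extended to Laurent exponents. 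The constant part of $w_c$ gives $\|a\|_2^2$, as in Lemma~\ref{lem:counting-parseval}. The term $\tfrac c2\overline{z^\gamma}$ picks out the pairs with $\alpha-\beta=\gamma$, which by the claim is only $a_{\alpha_+}\overline{a_{\alpha_-}}$. The conjugate term picks out its complex conjugate. Altogether,
\[
 \int|P_a|^2w_c\,dm_n=\|a\|_2^2+\operatorname{Re}\bigl(c\,a_{\alpha_+}\overline{a_{\alpha_-}}\bigr).
\]
Choosing $c$ with $c\,a_{\alpha_+}\overline{a_{\alpha_-}}=|a_{\alpha_+}a_{\alpha_-}|$ gives \eqref{endpoint:endpoint-fourier-gap}.

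The only delicate point is the uniqueness claim, that no other pair in $\Lambda$ has difference $\pm\gamma$. It is exactly what genericity of $\theta$ together with extremality provides, so I expect no real obstacle; the rest is bookkeeping of the Fourier expansion.
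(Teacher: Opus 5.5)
Your proposal is correct and follows essentially the paper's proof. The paper fixes the explicit base-$R$ functional $L(x)=x_1+Rx_2+\cdots+R^{n-1}x_n$ where you take a generic $\theta$, and it bounds $|\widehat Q(\beta)|\le\int Q\,dm_n$ for the nonnegative function $Q=\|P_a\|_\infty^2-|P_a|^2$, which is exactly the dual form of your pairing of $|P_a|^2$ against the density $1+\operatorname{Re}\bigl(c\,\overline{z^{\gamma}}\bigr)$.
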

\begin{proof}
{\color{black}
Choose $\alpha_+$ and $\alpha_-$ explicitly. Since $\Lambda$ is
finite, there is an integer $R\ge2$ such that
\[
 0\le \alpha_j<R
 \qquad(\alpha\in\Lambda,\ 1\le j\le n).
\]
Define the real linear functional
\[
 L:\mathbb R^n\longrightarrow\mathbb R,
 \qquad
 L(x_1,\ldots,x_n)
 :=x_1+Rx_2+R^2x_3+\cdots+R^{n-1}x_n.
\]
The functional $L$ is injective on $\Lambda$. Indeed, let $\alpha,\gamma\in\Lambda$ with $\alpha\ne\gamma$, and let
$j$ be the largest index for which $\alpha_j\ne\gamma_j$. If
$L(\alpha)=L(\gamma)$, then
\[
 (\alpha_j-\gamma_j)R^{j-1}
 =-\sum_{r=1}^{j-1}(\alpha_r-\gamma_r)R^{r-1}.
\]
The absolute value of the left-hand side is at least $R^{j-1}$, because
$\alpha_j-\gamma_j$ is a nonzero integer. On the other hand, since
$|\alpha_r-\gamma_r|\le R-1$,
{\color{black}\[
 \left|\sum_{r=1}^{j-1}(\alpha_r-\gamma_r)R^{r-1}\right|
 \le (R-1)\sum_{r=1}^{j-1}R^{r-1}=R^{j-1}-1.
\]}
{\color{black}This is impossible, so $L$ takes distinct values at distinct points of
$\Lambda$. Since $\Lambda$ is finite, $L$ attains its maximum and minimum
there, and the distinctness just proved makes both extremizers unique.}
Denote them by $\alpha_+$ and $\alpha_-$, respectively, and put
\[
 \beta:=\alpha_+-\alpha_-.
\]
Since $\Lambda$ has at least two points, $\alpha_+\ne\alpha_-$ and hence
$\beta\ne0$.

The representation of $\beta$ as a difference of two
points of $\Lambda$ is unique. Suppose
\[
 \beta=\alpha-\gamma,
 \qquad \alpha,\gamma\in\Lambda.
\]
Applying $L$ gives
\begin{equation}\label{eq:extreme-difference-L}
 L(\alpha_+)-L(\alpha_-)=L(\alpha)-L(\gamma).
\end{equation}
By the definitions of $\alpha_+$ and $\alpha_-$,
\[
 L(\alpha)\le L(\alpha_+),
 \qquad
 L(\gamma)\ge L(\alpha_-).
\]
Therefore
\[
 L(\alpha)-L(\gamma)
 \le L(\alpha_+)-L(\alpha_-).
\]
Equality holds by \eqref{eq:extreme-difference-L}. Thus both preceding
inequalities must be equalities:
\[
 L(\alpha)=L(\alpha_+),
 \qquad
 L(\gamma)=L(\alpha_-).
\]
Since $L$ takes distinct values on $\Lambda$,
\[
 \alpha=\alpha_+,
 \qquad
 \gamma=\alpha_-.
\]
Hence
\begin{equation}\label{eq:unique-extreme-difference}
 \alpha-\gamma=\beta,
 \quad \alpha,\gamma\in\Lambda
 \quad\Longrightarrow\quad
 (\alpha,\gamma)=(\alpha_+,\alpha_-).
\end{equation}

{\color{black}Expanding on the torus gives
\[
 |P_a(z)|^2
 =\left(\sum_{\alpha\in\Lambda}a_\alpha z^\alpha\right)
  \left(\sum_{\gamma\in\Lambda}\overline{a_\gamma}\,z^{-\gamma}\right)
 =\sum_{\alpha,\gamma\in\Lambda}
  a_\alpha\overline{a_\gamma}\,z^{\alpha-\gamma}.
\]}
With the convention
\[
 \widehat f(\eta)
 :=\int_{\TT^n}f(z)\overline{z^\eta}\,dm_n(z),
 \qquad \eta\in\mathbb Z^n,
\]
the coefficient at $\beta$ is
\[
 \widehat{|P_a|^2}(\beta)
 =\sum_{\substack{\alpha,\gamma\in\Lambda\\
                    \alpha-\gamma=\beta}}
   a_\alpha\overline{a_\gamma}.
\]
By \eqref{eq:unique-extreme-difference}, this sum has exactly one term, so
\begin{equation}\label{eq:beta-fourier-coefficient}
 \widehat{|P_a|^2}(\beta)
 =a_{\alpha_+}\overline{a_{\alpha_-}}.
\end{equation}

Set
\[
 Q(z):=\|P_a\|_\infty^2-|P_a(z)|^2.
\]
Then $Q(z)\ge0$ for every $z\in\TT^n$. Since $\beta\ne0$, the constant
function $\|P_a\|_\infty^2$ has Fourier coefficient zero at $\beta$.
Thus, by \eqref{eq:beta-fourier-coefficient},
\[
 \widehat Q(\beta)
 =-a_{\alpha_+}\overline{a_{\alpha_-}},
\]
{\color{black}and therefore
\begin{equation}\label{eq:Q-fourier-bound}
 |a_{\alpha_+}a_{\alpha_-}|
 =|\widehat Q(\beta)|
 =\left|\int_{\TT^n}Q(z)\overline{z^\beta}\,dm_n(z)\right|
 \le\int_{\TT^n}Q(z)\,dm_n(z),
\end{equation}
since $|z^\beta|=1$ on $\TT^n$ and $Q\ge0$.

To evaluate the last integral, we use Parseval's identity for the finite
Fourier series $P_a$. This follows directly from the orthogonality of the
characters of $\TT^n$: for $\eta\in\mathbb Z^n$,
\[
 \int_{\TT^n}z^\eta\,dm_n(z)
 =\begin{cases}
 1,&\eta=0,\\
 0,&\eta\ne0.
 \end{cases}
\]
Thus
\[
 \int_{\TT^n}|P_a(z)|^2\,dm_n(z)
 =\sum_{\alpha,\gamma\in\Lambda}
   a_\alpha\overline{a_\gamma}
   \int_{\TT^n}z^{\alpha-\gamma}\,dm_n(z)
 =\sum_{\alpha\in\Lambda}|a_\alpha|^2
 =\|a\|_2^2.
\]
Consequently,
\[
 \int_{\TT^n}Q(z)\,dm_n(z)
 =\|P_a\|_\infty^2-\|a\|_2^2.
\]
Substitution in \eqref{eq:Q-fourier-bound} gives
\[
 |a_{\alpha_+}a_{\alpha_-}|
 \le\|P_a\|_\infty^2-\|a\|_2^2,
\]
which is exactly \eqref{endpoint:endpoint-fourier-gap}.}
}
\end{proof}

\begin{lemma}\label{endpoint:endpoint-arbitrary-center}
{\color{black}Let $\Gamma\subset\mathbb N_0^n$ be a finite nonempty set of distinct exponent vectors,
and let $g=(g_\alpha)_{\alpha\in\Gamma}$ have independent
standard complex Gaussian coordinates. Write
\[
 P_g(z):=\sum_{\alpha\in\Gamma}g_\alpha z^\alpha.
\]
{\color{black}For every continuous map $h:\TT^n\to\CC$ and every
$\sigma,s>0$,}}
\begin{equation}\label{endpoint:endpoint-translated-bound}
 \Prob\{\|h+\sigma P_g\|_\infty\le\sigma s\sqrt{|\Gamma|}\}
 \le(1-e^{-s^2})^{|\Gamma|}\le\exp(-|\Gamma|e^{-s^2}).
\end{equation}
\end{lemma}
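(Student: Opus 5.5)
The plan is to repeat the Brascamp--Lieb argument of Lemma~\ref{lem:translated-small-ball}. The one new point is that the translate $h$ is now an arbitrary continuous function on $\TT^n$ rather than a polynomial $P_a$ with the same support. We never need $h$ to come from a coefficient vector. All we use is its values $h(z)$ at finitely many grid points, and each such value can serve as the centre of a translated Gaussian disc integral.

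Put $N:=|\Gamma|$ and fix an integer $\ell\ge2$ exceeding every coordinate of every $\alpha\in\Gamma$. Let $\mathcal Z$ be the grid of $\ell$-th roots of unity in $\TT^n$, with $M=\ell^n$ points. For $z\in\mathcal Z$ define
\[
 B_z:\CC^\Gamma\to\CC,\qquad B_zx:=N^{-1/2}\sum_{\alpha\in\Gamma}x_\alpha z^\alpha .
\]
Exactly as in \eqref{eq:grid-orthogonality}--\eqref{eq:BL-frame}, we have $B_zB_z^*=I_\CC$ and $c\sum_zB_z^*B_z=I$ with $c=N/M$, so $\sum_z c=N$. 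This part uses only that the exponents in $\Gamma$ are distinct and have coordinates less than $\ell$; homogeneity plays no role.

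Next define centres $b_z:=-h(z)/(\sigma\sqrt N)\in\CC$ and set
\[
 f_z(y):=e^{-|y|^2}\mathbf 1_{\{|y-b_z|\le s\}}.
\]
By the frame identity, $\prod_z f_z(B_zx)^c=e^{-\|x\|_2^2}\prod_z\mathbf 1_{\{|B_zx-b_z|\le s\}}$. On the event $\{\|h+\sigma P_g\|_\infty\le\sigma s\sqrt N\}$ we have, for every $z\in\mathcal Z$,
\[
 |B_zg-b_z|=\frac{|\sigma P_g(z)+h(z)|}{\sigma\sqrt N}\le s .
\]
Hence this event is contained in $\{|B_zg-b_z|\le s\ \forall z\in\mathcal Z\}$. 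The Gaussian probability of the latter set equals $\pi^{-N}\int\prod_zf_z(B_zx)^c\,dx$, and the geometric Brascamp--Lieb inequality \eqref{eq:geometric-BL-form} bounds this by $\pi^{-N}\prod_z(\int_\CC f_z)^c$.

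The only estimate left is $\int_{D(b,s)}e^{-|y|^2}\,dy\le\pi(1-e^{-s^2})$ for arbitrary $b\in\CC$. After the change of variables $y\mapsto y+b$ this is precisely \eqref{eq:translated-disc-integral}, already proved via the layer-cake argument. (Alternatively, a disc of area $\pi s^2$ captures the most Gaussian mass when it is centred at $0$.) Substituting gives the bound $\pi^{-N}[\pi(1-e^{-s^2})]^{\sum_z c}=(1-e^{-s^2})^N$. The final inequality in \eqref{endpoint:endpoint-translated-bound} follows from $1-v\le e^{-v}$ with $v=e^{-s^2}$.

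I do not expect a real obstacle here. The point needing care is placing the translation in the indicator rather than in the Gaussian weight. In the earlier proof the shift $u=a/\sigma$ lived in coefficient space, but a general $h$ need not correspond to any vector in $\CC^\Gamma$. Putting the shift in the indicator as above requires only the pointwise values $h(z)$ on the finite grid $\mathcal Z$, so continuity of $h$ is used only to make the event measurable and the supremum meaningful.
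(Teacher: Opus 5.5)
Your proposal is correct and is essentially the paper's proof. The paper also keeps the Gaussian weight $e^{-|w|^2}$ centred and moves the translate into the indicator via $b_z=h(z)/(\sigma\sqrt{|\Gamma|})$; it then uses the same grid frame identity, applies geometric Brascamp--Lieb, and reduces the disc integral to \eqref{eq:translated-disc-integral} by a change of variables. Your opposite sign for $b_z$ and your choice of $\ell$ exceeding the coordinates (rather than the coordinate differences, as in the paper) make no difference, since the exponents are nonnegative and so $|\alpha_j-\beta_j|<\ell$ either way.
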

\begin{proof}
{\color{black}Put $K:=|\Gamma|$ and choose an integer
\[
 \ell>\max\bigl\{|\alpha_j-\beta_j|:\alpha,\beta\in\Gamma,\ 1\le j\le n\bigr\}.
\]
Set $\xi:=e^{2\pi i/\ell}$ and let}
\[
 \mathcal Z:=\{(\xi^{j_1},\ldots,\xi^{j_n}):
 j_1,\ldots,j_n\in\{0,\ldots,\ell-1\}\},
 \qquad M:=\ell^n.
\]
For $z\in\mathcal Z$, define
\[
 B_z:\CC^\Gamma\to\CC,\qquad
 B_zx:=K^{-1/2}\sum_{\alpha\in\Gamma}x_\alpha z^\alpha,
 \qquad c:=K/M.
\]
Then $B_zB_z^*=I_{\CC}$. {\color{black}\hypersetup{linkcolor=black,citecolor=black}Moreover, for $\alpha,\beta\in\Gamma$,
writing $z=(\xi^{j_1},\ldots,\xi^{j_n})$ and separating the finite sums gives}
{\color{black}
\begin{align*}
 \frac1M\sum_{z\in\mathcal Z}z^\beta\overline{z^\alpha}
 &=\frac1{\ell^n}\sum_{j_1,\ldots,j_n=0}^{\ell-1}
   \prod_{r=1}^n\xi^{j_r(\beta_r-\alpha_r)}\\
 &=\prod_{r=1}^n\left(\frac1\ell\sum_{u=0}^{\ell-1}
   \xi^{u(\beta_r-\alpha_r)}\right).
\end{align*}}If $\alpha_j=\beta_j$, the $j$th factor is $1$. If
$\alpha_j\ne\beta_j$, then
$0<|\beta_j-\alpha_j|<\ell$, and the finite geometric sum is
\[
 \sum_{u=0}^{\ell-1}\xi^{u(\beta_j-\alpha_j)}
 =\frac{1-\xi^{\ell(\beta_j-\alpha_j)}}
        {1-\xi^{\beta_j-\alpha_j}}
 =0.
\]
Thus
\begin{equation}\label{eq:grid-character-orthogonality}
 \frac1M\sum_{z\in\mathcal Z}z^\beta\overline{z^\alpha}
 =\begin{cases}
 1,&\alpha=\beta,\\
 0,&\alpha\ne\beta.
 \end{cases}
\end{equation}
Consequently, for $x=(x_\beta)_{\beta\in\Gamma}$ and $\alpha\in\Gamma$,
\[
 \left(c\sum_{z\in\mathcal Z}B_z^*B_zx\right)_\alpha
 =\sum_{\beta\in\Gamma}x_\beta
 \left(\frac1M\sum_{z\in\mathcal Z}z^\beta\overline{z^\alpha}\right)
 =x_\alpha.
\]
{\color{black}Since this coordinate identity holds for every
$x\in\CC^\Gamma$, it gives
\begin{equation}\label{eq:endpoint-BL-identity}
 c\sum_{z\in\mathcal Z}B_z^*B_z=I_{\CC^\Gamma}.
\end{equation}
Moreover,
\[
 \sum_{z\in\mathcal Z}c=Mc=K.
\]}

{\color{black}
For $z\in\mathcal Z$, put
\[
 b_z:=\frac{h(z)}{\sigma\sqrt K}
\]
and define
\[
 f_z(w):=
 \begin{cases}
 e^{-|w|^2},&|w+b_z|\le s,\\
 0,&|w+b_z|>s.
 \end{cases}
\]
Define also the subset
\[
 E:=\{x\in\CC^\Gamma:|B_zx+b_z|\le s
       \text{ for every }z\in\mathcal Z\}.
\]
For a fixed $x\in\CC^\Gamma$ and a fixed $z\in\mathcal Z$,
\[
 f_z(B_zx)^c
 =e^{-c|B_zx|^2}\,
  \mathbf 1_{\{|B_zx+b_z|\le s\}}.
\]
Multiplying these identities over $z\in\mathcal Z$ gives
\begin{align*}
 \prod_{z\in\mathcal Z}f_z(B_zx)^c
 &=\exp\left(-c\sum_{z\in\mathcal Z}|B_zx|^2\right)
   \prod_{z\in\mathcal Z}
   \mathbf 1_{\{|B_zx+b_z|\le s\}}.
\end{align*}
Applying \eqref{eq:endpoint-BL-identity} to $x$ and taking the inner
product with $x$ yields
\[
 c\sum_{z\in\mathcal Z}|B_zx|^2
 =\left\langle
   c\sum_{z\in\mathcal Z}B_z^*B_zx,x
  \right\rangle
 =\|x\|_2^2.
\]
Therefore
\begin{equation}\label{eq:endpoint-product-indicator}
 \prod_{z\in\mathcal Z}f_z(B_zx)^c
 =e^{-\|x\|_2^2}\mathbf 1_E(x).
\end{equation}
{\color{black}For every $z\in\mathcal Z$, make the change of variables
$u=w+b_z$. Then $|w+b_z|\le s$ becomes $|u|\le s$ and
$|w|=|u-b_z|$, so
\begin{equation}\label{eq:endpoint-disc-integral}
 \int_{\CC}f_z(w)\,dw
 =\int_{|u|\le s}e^{-|u-b_z|^2}\,du
 \le\pi(1-e^{-s^2}).
\end{equation}}
The estimate
\[
 \int_{|u|\le s}e^{-|u-b_z|^2}\,du\le\pi(1-e^{-s^2})
\]
is \eqref{eq:translated-disc-integral} with $b=b_z$.
Using \eqref{eq:endpoint-product-indicator} and then the geometric
Brascamp--Lieb inequality \eqref{eq:geometric-BL-form}, we obtain
\begin{align*}
 \Prob\{g\in E\}
 &=\pi^{-K}\int_{\CC^\Gamma}
   e^{-\|x\|_2^2}\mathbf 1_E(x)\,dx\\
 &=\pi^{-K}\int_{\CC^\Gamma}
   \prod_{z\in\mathcal Z}f_z(B_zx)^c\,dx\\
 &\le\pi^{-K}\prod_{z\in\mathcal Z}
   \left(\int_{\CC}f_z(w)\,dw\right)^c\\
 &\le\pi^{-K}
   [\pi(1-e^{-s^2})]^{cM}\\
 &= (1-e^{-s^2})^K,
\end{align*}
where the last equality uses $cM=K$.

Finally, if
\[
 \|h+\sigma P_g\|_\infty\le\sigma s\sqrt K,
\]
then, for every $z\in\mathcal Z$,
\[
 |h(z)+\sigma P_g(z)|\le\sigma s\sqrt K.
\]
Since $P_g(z)=\sqrt K\,B_zg$, division by $\sigma\sqrt K$ gives
\[
 |B_zg+b_z|\le s.
\]
Thus $g\in E$, and so
\[
 \Prob\{\|h+\sigma P_g\|_\infty\le\sigma s\sqrt K\}
 \le\Prob\{g\in E\}
 \le(1-e^{-s^2})^K.
\]
The bound
\[
 (1-e^{-s^2})^K\le e^{-Ke^{-s^2}}
\]
follows by applying $1-u\le e^{-u}$ with $u=e^{-s^2}$.
}
\end{proof}

\begin{lemma}\label{endpoint:endpoint-product}
For independent standard complex Gaussian variables $g_1,g_2$,
\begin{equation}\label{endpoint:endpoint-product-bound}
 \Prob\{|g_1g_2|\le x\}
 \le2x^2\log(e/x)\qquad(0<x\le1).
\end{equation}
\end{lemma}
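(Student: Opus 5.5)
The plan is to reduce to a one-dimensional integral by conditioning on $|g_1|$. As computed in the proof of Lemma~\ref{lem:gaussian-radius-direction}, $U:=|g_1|^2$ and $V:=|g_2|^2$ are independent exponential variables with mean one. Moreover,
\[
 \{|g_1g_2|\le x\}=\{UV\le x^2\}.
\]
Conditioning on $U=u$ and using $\Prob\{V\le t\}=1-e^{-t}\le\min\{1,t\}$, we get
\[
 \Prob\{UV\le x^2\}
 =\int_0^\infty e^{-u}\bigl(1-e^{-x^2/u}\bigr)\,du
 \le\int_0^\infty e^{-u}\min\{1,x^2/u\}\,du .
\]

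Next I split the last integral at $u=x^2$.

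\begin{itemize}
\item On $[0,x^2]$ the integrand is at most $1$, so this part contributes at most $x^2$.
\item On $[x^2,\infty)$ the integrand is at most $x^2e^{-u}/u$. I split this range again at $u=1$, which is allowed because $x\le1$.
\item On $[x^2,1]$ I bound $e^{-u}\le1$, which gives $x^2\int_{x^2}^1du/u=2x^2\log(1/x)$.
\item On $[1,\infty)$ I bound $1/u\le1$, which gives at most $x^2\int_1^\infty e^{-u}\,du=x^2/e$.
\end{itemize}

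Adding the three pieces gives
\[
 \Prob\{|g_1g_2|\le x\}\le x^2+2x^2\log(1/x)+x^2/e .
\]

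The target is $2x^2\log(e/x)=2x^2+2x^2\log(1/x)$, so it remains to check $1+1/e\le2$, which is immediate. The only point needing care is the range split, which requires $x^2\le1$; this holds by assumption. I do not anticipate a genuine obstacle. The one thing to double-check is that the constant $2$ in front of the logarithm comes out correctly from $\log(1/x^2)=2\log(1/x)$, and it does.
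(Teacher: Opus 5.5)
Your proposal is correct and follows the paper's proof essentially step for step. It uses the same reduction to independent mean-one exponentials $U=|g_1|^2$, $V=|g_2|^2$, the same integral $\int_0^\infty e^{-u}\bigl(1-e^{-x^2/u}\bigr)\,du$, and the same three-range split at $u=x^2$ and $u=1$, with identical bounds on each piece (the paper simply rounds $e^{-1}x^2$ up to $x^2$ before summing, where you use $1+1/e\le 2$ at the end).
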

\begin{proof}
{\color{black}
{\color{black}Define
\[
 U,V:\CC^2\longrightarrow[0,\infty),\qquad
 U(g_1,g_2):=|g_1|^2,\quad V(g_1,g_2):=|g_2|^2.
\]
For $t\ge0$, polar coordinates in $\CC\simeq\RR^2$ give
\[
 \Prob\{U\le t\}
 =\frac1\pi\int_{|z|\le\sqrt t}e^{-|z|^2}\,dz
 =2\int_0^{\sqrt t}r e^{-r^2}\,dr
 =1-e^{-t}.
\]}
Thus $U$ has density $e^{-u}\mathbf 1_{(0,\infty)}(u)$. Since $g_1$ and
$g_2$ are independent standard complex Gaussian variables, $V$ has the same
density and $U,V$ are independent. Hence, using Tonelli's theorem for the nonnegative integrand
\cite[Theorem~2.37(a)]{Folland},
{\color{black}\begin{align*}
 \Prob\{|g_1g_2|\le x\}
 &=\Prob\{UV\le x^2\}
 =\int_0^\infty\int_0^{x^2/u}e^{-u-v}\,dv\,du\\
 &=\int_0^\infty e^{-u}\left(1-e^{-x^2/u}\right)du.
\end{align*}}
The last integral is estimated on three ranges.
\begin{itemize}[leftmargin=1.5em,itemsep=0.25em]
\item {\color{black}On $0<u<x^2$, use $0\le1-e^{-x^2/u}\le1$:}
\begin{equation}\label{eq:product-small-u}
 \int_0^{x^2}e^{-u}\left(1-e^{-x^2/u}\right)du
 \le\int_0^{x^2}1\,du=x^2.
\end{equation}
\item {\color{black}On $x^2\le u\le1$, use $1-e^{-t}\le t$ for $t\ge0$:}
\begin{align}
 \int_{x^2}^{1}e^{-u}\left(1-e^{-x^2/u}\right)du
 &\le x^2\int_{x^2}^{1}\frac{e^{-u}}u\,du\notag\\
 &\le x^2\int_{x^2}^{1}\frac{du}{u}
 =2x^2\log(1/x).
 \label{eq:product-middle-u}
\end{align}
\item {\color{black}On $u\ge1$, use again
$1-e^{-v}\le v$ for $v\ge0$:}
\begin{align}
 \int_{1}^{\infty}e^{-u}\left(1-e^{-x^2/u}\right)du
 &\le x^2\int_1^\infty\frac{e^{-u}}u\,du\notag\\
 &\le x^2\int_1^\infty e^{-u}\,du
 =e^{-1}x^2\le x^2.
 \label{eq:product-large-u}
\end{align}
\end{itemize}
Adding \eqref{eq:product-small-u}, \eqref{eq:product-middle-u}, and
\eqref{eq:product-large-u},
\[
 \Prob\{|g_1g_2|\le x\}
 \le2x^2+2x^2\log(1/x)
 =2x^2\log(e/x),
\]
which proves \eqref{endpoint:endpoint-product-bound}.
}
\end{proof}

\section{Proof of Theorem A}\label{sec:proof-A}

\begin{proof}
By Lemma~\ref{lem:gaussian-radius-direction}(b), the spherical probability in Theorem~\ref{thm:tail-main} equals the corresponding Gaussian probability. Let
$g=(g_\alpha)_{\alpha\in\Lambda}$ be a standard complex Gaussian vector in
$\CC^\Lambda$, and write
\[
 {\color{black}N=|\Lambda|},\qquad S=\|g\|_2^2,\qquad A=\{R_m(g)>1\},\qquad H=\{S\le2N\}.
\]
{\color{black}The event $A$ is invariant under multiplication of $g$ by a positive
scalar, so it depends only on the Gaussian direction; $H$ depends only on
the radius $\|g\|_2$. By the polar decomposition in
Lemma~\ref{lem:gaussian-radius-direction}, these two events are independent.
Moreover, since the coordinates are standard complex Gaussians,
\[
 \int_{\CC^\Lambda}\|g\|_2^2\,d\gamma_\Lambda(g)
 =\sum_{\alpha\in\Lambda}
   \int_{\CC}|z|^2\pi^{-1}e^{-|z|^2}\,dz
 =N.
\]
Markov's inequality therefore gives
\[
 \Prob(H^c)=\Prob\{S>2N\}
 \le\frac1{2N}\int_{\CC^\Lambda}S\,d\gamma_\Lambda=\frac12,
\]
and hence $\Prob(H)\ge1/2$. Independence now yields
\[
 \Prob(A\cap H)=\Prob(A)\Prob(H)\ge\frac12\Prob(A),
\]
or equivalently
\begin{equation}\label{endpoint:endpoint-radius-reduction}
 \Prob(A)\le2\Prob(A\cap H).
\end{equation}}
{\color{black}The proof separates according to the size of the support
$N=|\Lambda|$ and, when $N\ge3$, according to $N^{1/m}$. These cases exhaust
all possibilities.

\smallskip
\noindent\textbf{Case 1: $N=1$.} The ratio equals one.}\par
{\color{black}\smallskip
\noindent\textbf{Case 2: $N=2$.} Write
\[
 P_g(z)=g_\alpha z^\alpha+g_\beta z^\beta,\qquad \alpha\ne\beta.
\]
{\color{black}Choose $r$ with $\alpha_r\ne\beta_r$. Fix the other coordinates on
$\TT$. As $z_r$ runs over $\TT$, the factor
$z^{\alpha-\beta}$ runs over the whole unit circle. We may therefore choose
$z_r$ so that
\[
 g_\alpha z^\alpha\quad\text{and}\quad g_\beta z^\beta
\]
have the same argument. At that point their moduli add, and hence
\[
 \|P_g\|_\infty
 =|g_\alpha|+|g_\beta|
 =\|g\|_1
 \ge\|g\|_{q_m}.
\]}}
Hence assume $N\ge3$.

{\color{black}\smallskip
\noindent\textbf{Case 3: $N\ge3$ and $N^{1/m}\le2$.}}
Set $x=2N(N^{1/m}-1)$.
{\color{black}On $A$, Lemma~\ref{lem:counting-parseval} gives
\[
 \|P_g\|_\infty<\|g\|_{q_m}\le N^{1/(2m)}\|g\|_2
 =N^{1/(2m)}\sqrt S,
\]
so $\|P_g\|_\infty^2<N^{1/m}S$. Lemma~\ref{endpoint:endpoint-exposed-pair}
gives
\[
 S+|g_{\alpha_+}g_{\alpha_-}|
 \le\|P_g\|_\infty^2<N^{1/m}S,
\]
and therefore}
\[
 |g_{\alpha_+}g_{\alpha_-}|<(N^{1/m}-1)S.
\]
On $A\cap H$,
\begin{equation}\label{endpoint:endpoint-two-constraints}
 |g_{\alpha_+}g_{\alpha_-}|\le x,
 \qquad \|P_g\|_\infty\le\sqrt{2N^{1+1/m}}.
\end{equation}
{\color{black}
{\color{black}Separate the two distinguished coordinates by writing
\[
 u:=g_{\alpha_+},\qquad v:=g_{\alpha_-},
\]
and let $g'$ collect the remaining $N-2$ coordinates. Conditional on
$(u,v)$, the function
\[
 h_{u,v}:\TT^n\longrightarrow\CC,\qquad
 h_{u,v}(z):=u z^{\alpha_+}+v z^{\alpha_-},
\]
is fixed, whereas $g'$ still consists of independent standard complex
Gaussian coordinates. Write $\Prob_{g'}$ for probability with respect to
these remaining coordinates. Lemma~\ref{endpoint:endpoint-arbitrary-center}
then applies conditionally, with $\sigma=1$ and
\[
 s^2:=\frac{2N^{1+1/m}}{N-2},
 \qquad
 s\sqrt{N-2}=\sqrt{2N^{1+1/m}}.
\]
It gives}
\[
 \Prob_{g'}\left\{
 \|h_{u,v}+P_{g'}\|_\infty\le\sqrt{2N^{1+1/m}}
 \right\}
 \le
 \exp\left[-(N-2)\exp\left(-\frac{2N^{1+1/m}}{N-2}\right)\right]
 \le e^{-cN},
\]
where $c=\tfrac13e^{-12}$.  Indeed,
$(N-2)/N\ge1/3$ and $2N^{1+1/m}/(N-2)\le12$.
{\color{black}The right-hand side is uniform in $(u,v)$. Integrating this conditional estimate with respect to the standard complex
Gaussian measure $\gamma_{\{\alpha_+,\alpha_-\}}$ on the two distinguished
coordinates gives}
{\color{black}\begin{align*}
 &\Prob\left\{
 |g_{\alpha_+}g_{\alpha_-}|\le x,\ 
 \|P_g\|_\infty\le\sqrt{2N^{1+1/m}}
 \right\}\\
 &=\int_{\{|uv|\le x\}}
   \Prob_{g'}\left\{
   \|h_{u,v}+P_{g'}\|_\infty\le\sqrt{2N^{1+1/m}}
   \right\}
   d\gamma_{\{\alpha_+,\alpha_-\}}(u,v)\\
 &\le e^{-cN}
   \int_{\{|uv|\le x\}}
   d\gamma_{\{\alpha_+,\alpha_-\}}(u,v)
 =e^{-cN}\Prob\{|g_1g_2|\le x\}.
\end{align*}}
Using first \eqref{endpoint:endpoint-radius-reduction}, then the two
necessary conditions in \eqref{endpoint:endpoint-two-constraints}, and
finally the conditional Brascamp--Lieb bound just established, we obtain
\begin{align}
 \Prob(A)
 &\le 2\Prob(A\cap H)\notag\\
 &\le 2\Prob\left\{
 |g_{\alpha_+}g_{\alpha_-}|\le x,\ 
 \|P_g\|_\infty\le\sqrt{2N^{1+1/m}}
 \right\}\notag\\
 &\le 2e^{-cN}\Prob\{|g_1g_2|\le x\}.
 \label{endpoint:endpoint-conditional-bound}
\end{align}
}

{\color{black}Put $u=(\log N)/m$, so that $N^{1/m}=e^u$ and
$0\le u\le\log2$. On this interval,
\[
 u\le e^u-1\le2u.
\]
Since $x=2N(e^u-1)$,}
\begin{equation}\label{endpoint:endpoint-x-range}
 \frac{2N\log N}{m}\le x\le\frac{4N\log N}{m}.
\end{equation}
If $x\le1$, its lower bound gives $\log(e/x)\le C\log m$.
{\color{black}By Lemma~\ref{endpoint:endpoint-product} and the upper bound in
\eqref{endpoint:endpoint-x-range},
\[
 \Prob\{|g_1g_2|\le x\}
 \le2x^2\log(e/x)
 \le C\frac{N^2(\log N)^2}{m^2}\log m.
\]
Substituting this estimate into \eqref{endpoint:endpoint-conditional-bound}
gives}
\[
 \Prob(A)\le
 C e^{-cN}N^2(\log N)^2\frac{\log m}{m^2}
 \le C'\frac{\log m}{m^2}.
\]
If $x>1$, then \eqref{endpoint:endpoint-x-range} implies $m<4N\log N$.
In this case \eqref{endpoint:endpoint-conditional-bound} gives
\[
 \Prob(A)\le2e^{-cN}
 \le\frac{32N^2(\log N)^2e^{-cN}}{m^2}
 \le\frac{C}{m^2}.
\]

{\color{black}\smallskip
\noindent\textbf{Case 4: $2<N^{1/m}<1710$.}}
After enlarging the absolute constant to absorb finitely many degrees, assume
$m\ge1024$. The centered estimate \eqref{eq:double-exp-BL-input}, together
with \eqref{endpoint:endpoint-radius-reduction}, gives
\[
 \Prob(A)\le2\exp(-N e^{-2N^{1/m}}).
\]
Put $u:=N^{1/m}\in(2,1710)$ and $f(u):=m\log u-2u$. Then
$N e^{-2N^{1/m}}=e^{f(u)}$. Since $f$ is concave, its minimum on
$[2,1710]$ is attained at an endpoint. Moreover,
\[
 f(1710)-f(2)=m\log855-3416>0,
\]
so $f(u)\ge f(2)=m\log2-4$. Hence
\[
 \Prob(A)\le2\exp(-e^{-4}2^m).
\]

{\color{black}\smallskip
\noindent\textbf{Case 5: $N^{1/m}\ge1710$.}}
For all sufficiently large $m$, Lemma~\ref{lem:large-support-tail}, with
$\eta=1$, gives
\[
 \Prob(A)
 \le4\exp\left(-\frac{N^{1/m}4^m}{1710}\right)
 \le4\exp(-4^m).
\]

{\color{black}The five cases cover every support. Cases 1 and 2 give
$\Prob(A)=0$, Case 3 gives
$\Prob(A)\le C(\log m)/m^2$, and Cases 4 and 5 give stronger
double-exponential estimates. After increasing $C$ to cover the finitely
many excluded degrees,
\[
 \Prob(A)\le C\frac{\log m}{m^2}
\]
for every nonempty support.}
For $M\ge2$, let
\[
 \mathcal E_M:=
 \left\{(a^{(k)})_{k\ge2}\in{\color{black}\Omega_{(\Lambda_m)}}:
 R_m(a^{(m)})>1\ \text{for some }m\ge M\right\}.
\]
By subadditivity and \eqref{endpoint:endpoint-uniform-bound},
\[
 {\color{black}\boldsymbol\mu_{(\Lambda_m)}}(\mathcal E_M)
 \le C\sum_{m\ge M}\frac{\log m}{m^2}\longrightarrow0.
\]
The events $\mathcal E_M$ decrease with $M$, and
\[
 \{R_m(a^{(m)})>1\text{ for infinitely many }m\}
 =\bigcap_{M\ge2}\mathcal E_M.
\]
By continuity of probability from above,
\[
 {\color{black}\boldsymbol\mu_{(\Lambda_m)}}
 \left(\bigcap_{M\ge2}\mathcal E_M\right)
 =\lim_{M\to\infty}
 {\color{black}\boldsymbol\mu_{(\Lambda_m)}}(\mathcal E_M)=0.
\]
Hence \eqref{eq:eventual-contractivity-main} holds. The monomial example gives optimality.
\end{proof}

\section{Proof of Theorem B}\label{sec:vanishing}

\begin{proof}
Assume first that $|\Lambda_m|\to\infty$. Put
\[
 {\color{black}N_m:=|\Lambda_m|}.
\]
Fix $\varepsilon>0$ and let
$g_m=(g_{m,\alpha})_{\alpha\in\Lambda_m}$ be a standard complex Gaussian
vector in $\CC^{\Lambda_m}$. Lemma~\ref{lem:gaussian-radius-direction}(b)
gives the exact identity
\[
 \mu_{\Lambda_m}\{a\in\mathbb S_{\Lambda_m}:R_m(a)>\varepsilon\}
 =\Prob\{R_m(g_m)>\varepsilon\}.
\]
The probability on the right is estimated as follows.

Since $N_m\to\infty$, we have $N_m\ge3$ for all sufficiently large $m$.
Let
\[
 A_m:=\{R_m(g_m)>\varepsilon\},\qquad
 H_m:=\{\|g_m\|_2^2\le2N_m\}.
\]
Write
\[
 \rho_m:=\|g_m\|_2,\qquad
 \omega_m:=\frac{g_m}{\|g_m\|_2}.
\]
The polar-coordinate formula in the proof of
Lemma~\ref{lem:gaussian-radius-direction} shows that $\rho_m$ and
$\omega_m$ are independent. Since $A_m$ depends only on $\omega_m$ and
$H_m$ only on $\rho_m$, the events $A_m$ and $H_m$ are independent.
Moreover, Lemma~\ref{lem:gaussian-radius-direction}(a) gives
\[
 \Prob(H_m^c)\le(2/\e)^{N_m}<\tfrac12
\]
for all sufficiently large $m$. Hence
\begin{equation}\label{eq:vanishing-radial-reduction}
 \Prob(A_m)\le2\Prob(A_m\cap H_m).
\end{equation}

{\color{black}We split according to whether $N_m^{1/m}<1710$ or
$N_m^{1/m}\ge1710$. In the first regime we use the centered
Brascamp--Lieb estimate \eqref{eq:double-exp-BL-input}; in the second we use
Lemma~\ref{lem:large-support-tail}.

\smallskip
\noindent\textbf{Case 1: $N_m^{1/m}<1710$.}}
On $A_m\cap H_m$, Lemma~\ref{lem:counting-parseval} gives
\[
 \|P_{g_m}\|_\infty
 <\frac{\|g_m\|_{q_m}}{\varepsilon}
 \le\frac{N_m^{1/(2m)}\|g_m\|_2}{\varepsilon}
 \le\frac{\sqrt{2N_m^{1+1/m}}}{\varepsilon}.
\]
Apply \eqref{eq:double-exp-BL-input} with
$s^2=2N_m^{1/m}/\varepsilon^2$. Together with
\eqref{eq:vanishing-radial-reduction}, this yields
{\color{black}
\begin{align}
 \Prob(A_m)
 &\le2\exp\left[-N_m\exp\left(-\frac{2N_m^{1/m}}{\varepsilon^2}\right)\right]
 \le2\exp\left[-N_m\exp\left(-\frac{3420}{\varepsilon^2}\right)\right]
 \longrightarrow0.\label{eq:vanishing-small-T}
\end{align}
}

{\color{black}\smallskip
\noindent\textbf{Case 2: $N_m^{1/m}\ge1710$.}}
For all sufficiently large $m$ we have
$m\ge m_0(\varepsilon)$. Lemma~\ref{lem:large-support-tail}
therefore gives
\begin{equation}\label{eq:vanishing-large-T}
 \Prob(A_m)
 \le4\exp\left(-\frac{N_m^{1/m}4^m}{1710}\right)
 \le4\exp(-4^m)\longrightarrow0.
\end{equation}
{\color{black}These two cases exhaust all possibilities. In the first,
\eqref{eq:vanishing-small-T} tends to zero because $N_m\to\infty$; in the
second, \eqref{eq:vanishing-large-T} gives the stronger bound
$4e^{-4^m}$. Hence \textup{(ii)} follows.}

Conversely, suppose that \textup{(ii)} holds and that
$|\Lambda_m|$ does not tend to infinity. Then there are $K\in\NN$ and a
subsequence $(m_j)$ such that $|\Lambda_{m_j}|\le K$ for every $j$.
{\color{black}For $a\in\mathbb S_{\Lambda_{m_j}}$, since $q_{m_j}\le2$,
\[
 \|a\|_{q_{m_j}}\ge \|a\|_2=1,
 \qquad
 \|P_a\|_\infty\le \|a\|_1
 \le \sqrt{|\Lambda_{m_j}|}\,\|a\|_2
 \le \sqrt K.
\]
Recalling the definition of the ratio, these two estimates give directly
\[
 R_{m_j}(a)
 =\frac{\|a\|_{q_{m_j}}}{\|P_a\|_\infty}
 \ge\frac1{\sqrt K}
 \qquad(a\in\mathbb S_{\Lambda_{m_j}}).
\]
Now choose
\[
 \varepsilon:=\frac1{2\sqrt K}<\frac1{\sqrt K}.
\]
Then every $a\in\mathbb S_{\Lambda_{m_j}}$ satisfies
$R_{m_j}(a)>\varepsilon$, and hence
\[
 \mu_{\Lambda_{m_j}}
 \bigl(\{a\in\mathbb S_{\Lambda_{m_j}}:
 R_{m_j}(a)>\varepsilon\}\bigr)=1
 \qquad\text{for every }j.
\]
This contradicts \textup{(ii)}, which requires
\[
 \mu_{\Lambda_{m_j}}
 \{a\in\mathbb S_{\Lambda_{m_j}}:R_{m_j}(a)>\varepsilon\}
 \longrightarrow0
\]
for every fixed $\varepsilon>0$. Therefore
$|\Lambda_m|\to\infty$.}
\end{proof}

\section{The full polynomial space}\label{sec:full-polynomial}
{\color{black}Theorem~\ref{thm:vanishing-main} gives the qualitative criterion for the typical ratio to
vanish. For the full polynomial spaces, the first-order scale of that decay is determined explicitly. For $m,n\ge2$, set
\begin{equation}\label{full:full-entropy}
 N_{m,n}:=|\mathcal M_{m,n}|=\binom{m+n-1}{m},
 \qquad
 H_{m,n}:=\frac{n-1}{2}\log\left(m+\frac{m^2}{n}\right).
\end{equation}
The quantity $H_{m,n}$ will be the scale governing the Gaussian supremum
that enters the typical ratio. For fixed $n$,
$H_{m,n}\sim(n-1)\log m$, whereas when $n$ is comparable with or larger
than $m$,
\[
 H_{m,n}\sim\frac{n-1}{2}\log m.
\]
Throughout this section we write $N:=N_{m,n}$ and
$q_m:=2m/(m+1)$, and
\[
 G:\CC^n\longrightarrow\CC,\qquad
 G(z):=\sum_{|\alpha|=m}g_\alpha z^\alpha,
\]
where the $g_\alpha$ are independent standard complex Gaussian variables.
The symbol $\|G\|_\infty$ denotes the supremum on the unit polydisc. Unless another range is specified, every $\mathrm{o}(1)$ in this section is uniform in $n$.}

\subsection{The covariance of the exponents}

\begin{lemma}\label{full:full-metric}
{\color{black}Let $\rho_{m,n}$ be normalized counting measure on
$\mathcal M_{m,n}$ and put
\[
 \mathbf 1_n:=(1,\ldots,1)\in\mathbb R^n.
\]
Then:
\begin{enumerate}[label=\textup{(\roman*)},leftmargin=2.2em]
\item
\[
 \int_{\mathcal M_{m,n}}\alpha\,d\rho_{m,n}(\alpha)
 =\frac mn\,\mathbf 1_n,
\]
and, writing $^{\mathsf T}$ for transpose,
\[
 \int_{\mathcal M_{m,n}}
 \left(\alpha-\frac mn\mathbf 1_n\right)
 \left(\alpha-\frac mn\mathbf 1_n\right)^{\mathsf T}
 d\rho_{m,n}(\alpha)
 =
 \frac{m(m+n)}{n(n+1)}
 \left(I_n-\frac1n\mathbf 1_n\mathbf 1_n^{\mathsf T}\right).
\]

\item Let $\gamma_{m,n}$ denote the standard complex Gaussian measure on
$\CC^{\mathcal M_{m,n}}$,
\[
 d\gamma_{m,n}(g)
 :=\pi^{-N_{m,n}}e^{-\|g\|_2^2}\,dg.
\]
There is a continuous map
\[
 F:[0,2\pi]^{n-1}\longrightarrow
 L^2\!\left(\CC^{\mathcal M_{m,n}},\gamma_{m,n}\right),
\]
whose values are centered complex Gaussian random variables, such that
\begin{equation}\label{full:full-metric-estimate}
 \sup_{\theta\in[0,2\pi]^{n-1}}|F(\theta)|
 =\frac{\|G\|_\infty}{\sqrt{N_{m,n}}},
 \qquad
 \int_{\CC^{\mathcal M_{m,n}}}|F(\theta)(g)|^2\,d\gamma_{m,n}(g)=1,
\end{equation}
and, for all $\theta,\phi\in[0,2\pi]^{n-1}$,
\[
 \int_{\CC^{\mathcal M_{m,n}}}
 |F(\theta)(g)-F(\phi)(g)|^2\,d\gamma_{m,n}(g)
 \le
 \frac{m(m+n)}{n(n+1)}\,\|\theta-\phi\|_2^2.
\]
\end{enumerate}}
\end{lemma}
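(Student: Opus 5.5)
For part (i) I would use the exchangeability of the uniform measure on $\mathcal M_{m,n}$. Since $\rho_{m,n}$ is invariant under coordinate permutations and $\sum_j\alpha_j=m$, each coordinate has mean $m/n$. For the covariance, symmetry reduces the matrix to the form $aI_n+b\mathbf 1_n\mathbf 1_n^{\mathsf T}$. The constraint $\sum_j(\alpha_j-m/n)=0$ forces the row sums to vanish, so the matrix is $\kappa(I_n-\frac1n\mathbf 1_n\mathbf 1_n^{\mathsf T})$ with $\kappa=\frac{n}{n-1}\operatorname{Var}(\alpha_1)$. It therefore remains to compute $\operatorname{Var}(\alpha_1)$ under the uniform law on compositions, which is a stars-and-bars (Bose--Einstein) computation. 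The count $\#\{\alpha:\alpha_1=k\}=\binom{m-k+n-2}{n-2}$ gives $\E\alpha_1(\alpha_1-1)$ via the hockey-stick identity: $\E[\alpha_1(\alpha_1-1)]=2m(m-1)/(n(n+1))$. Hence
\[
 \operatorname{Var}(\alpha_1)=\frac{m(m-1)\cdot2}{n(n+1)}+\frac mn-\frac{m^2}{n^2}
 =\frac{m(m+n)(n-1)}{n^2(n+1)}.
\]
Multiplying by $n/(n-1)$ gives $\kappa=m(m+n)/(n(n+1))$, which is the claimed constant. I would verify the factorial-moment identity carefully, since that is the only place arithmetic can go wrong.

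For part (ii) I would exploit homogeneity to remove one angle. For $z=(e^{i t_1},\dots,e^{it_n})$ we have $|G(z)|=|G(e^{i(t_1-t_n)},\dots,e^{i(t_{n-1}-t_n)},1)|$. I would therefore set
\[
 F(\theta)(g):=N^{-1/2}\sum_{|\alpha|=m}g_\alpha e^{i\langle\alpha,(\theta,0)\rangle},\qquad\theta\in[0,2\pi]^{n-1}.
\]
This is a centered complex Gaussian, continuous in $\theta$ into $L^2$, with $\E|F(\theta)|^2=N^{-1}\sum_\alpha 1=1$. Its supremum over $\theta$ equals $\|G\|_\infty/\sqrt N$, because $\|G\|_\infty$ is the supremum over $\TT^n$ and every point of $\TT^n$ reduces to last coordinate $1$ with the same modulus.

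A better choice, matching the centered covariance, is to twist $F$ by a unimodular factor that does not change its modulus. I would use $F(\theta)=N^{-1/2}\sum_\alpha g_\alpha e^{i\langle\alpha-\frac mn\mathbf 1_n,\tilde\theta\rangle}$ with $\tilde\theta=(\theta,0)$. This changes $F$ only by the factor $e^{-i\frac mn\sum\tilde\theta_j}$, which preserves the supremum identity. Then
\[
 \E|F(\theta)-F(\phi)|^2=\int|e^{i\langle\beta,\psi\rangle}-1|^2\,d\rho_{m,n}(\alpha),
\]
where $\beta=\alpha-\frac mn\mathbf 1_n$ and $\psi=\tilde\theta-\tilde\phi$. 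Using $|e^{ix}-1|^2\le x^2$, this is at most $\int\langle\beta,\psi\rangle^2\,d\rho=\psi^{\mathsf T}\Sigma\psi$, with $\Sigma$ the covariance from (i). Since $\Sigma\le\kappa I_n$ in the positive-semidefinite order and $\|\psi\|_2=\|\theta-\phi\|_2$ (the last coordinate is $0$), the bound $\kappa\|\theta-\phi\|_2^2$ follows.

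The main obstacle is recognizing that centering the exponents is needed. Without it the naive increment bound would carry $\E\langle\alpha,\psi\rangle^2$, including the mean term $(m/n)^2(\sum\psi_j)^2$, and would be too large. With centering the estimate reduces to the exact covariance computed in (i).
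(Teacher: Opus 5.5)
Your proof is correct. Part (ii) is the paper's argument: the same centered twist $F(\theta)=N^{-1/2}e^{-i\frac mn\sum_j\theta_j}G(e^{i\theta_1},\ldots,e^{i\theta_{n-1}},1)$, the same bound $|e^{iu}-e^{iv}|\le|u-v|$, and the same use of the covariance from (i) on the vector $(\theta-\phi,0)$, together with $I_n-\frac1n\mathbf 1_n\mathbf 1_n^{\mathsf T}\le I_n$.

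The difference is in part (i). The paper computes the two second moments separately, by differentiating the generating function $\prod_j(1-x_jt)^{-1}$. Differentiating twice in $x_1$ gives $\sum_{|\alpha|=m}\alpha_1(\alpha_1-1)=2\binom{m+n-1}{n+1}$, and the mixed derivative in $x_1,x_2$ gives $\sum_{|\alpha|=m}\alpha_1\alpha_2=\binom{m+n-1}{n+1}$. It then checks the diagonal and off-diagonal entries against the claimed matrix.

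You instead use exchangeability and the constraint $\sum_j(\alpha_j-m/n)=0$. Together these show in advance that the covariance is a multiple of the projection onto $\mathbf 1_n^\perp$. That leaves only the scalar $\operatorname{Var}(\alpha_1)$ to compute, and you get it from the stars-and-bars count $\binom{m-k+n-2}{n-2}$ and the convolution $\sum_k\binom k2\binom{m+n-2-k}{n-2}=\binom{m+n-1}{n+1}$. Strictly this is an upper-index Vandermonde identity rather than the plain hockey-stick, but your value is right.

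Your route halves the computation and explains why the answer must have the form $\kappa\bigl(I_n-\frac1n\mathbf 1_n\mathbf 1_n^{\mathsf T}\bigr)$. The paper's generating-function computation is more mechanical, but it yields the mixed moment directly and would adapt without change to other moment computations. Your division by $n-1$ needs $n\ge2$, which is the standing assumption of that section; for $n=1$ both sides vanish anyway.
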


\begin{proof}
{\color{black}
For (i), normalized counting measure means
\[
 \int_{\mathcal M_{m,n}} f(\alpha)\,d\rho_{m,n}(\alpha)
 =\frac1{N_{m,n}}\sum_{|\alpha|=m}f(\alpha).
\]
Since $\sum_{j=1}^n\alpha_j=m$ for every $\alpha\in\mathcal M_{m,n}$ and
the coordinates have the same counting distribution,
\[
 n\int_{\mathcal M_{m,n}}\alpha_1\,d\rho_{m,n}(\alpha)
 =
 \int_{\mathcal M_{m,n}}\sum_{j=1}^n\alpha_j\,d\rho_{m,n}(\alpha)
 =m.
\]
Thus
\[
 \int_{\mathcal M_{m,n}}\alpha_j\,d\rho_{m,n}(\alpha)=\frac mn
 \qquad(1\le j\le n),
\]
which proves the first identity.

For the quadratic integrals, let
$e_1,\ldots,e_n$ denote the canonical basis of $\mathbb R^n$ and start from
\[
 \sum_{m\ge0}\left(\sum_{|\alpha|=m}x^\alpha\right)t^m
 =\prod_{j=1}^n(1-x_jt)^{-1}.
\]
Differentiating both sides twice with respect to $x_1$ gives
\[
 \frac{\partial^2}{\partial x_1^2}
 \sum_{m\ge0}\sum_{|\alpha|=m}x^\alpha t^m
 =
 \sum_{m\ge0}\sum_{|\alpha|=m}
 \alpha_1(\alpha_1-1)x^{\alpha-2e_1}t^m
\]
and
\[
 \frac{\partial^2}{\partial x_1^2}
 \prod_{j=1}^n(1-x_jt)^{-1}
 =
 2t^2(1-x_1t)^{-3}\prod_{j=2}^n(1-x_jt)^{-1}.
\]
Thus, after setting $x_1=\cdots=x_n=1$,
\[
 \sum_{m\ge0}\left(\sum_{|\alpha|=m}
 \alpha_1(\alpha_1-1)\right)t^m
 =2t^2(1-t)^{-n-2}.
\]
Comparing the coefficient of $t^m$ yields
\[
 \sum_{|\alpha|=m}\alpha_1(\alpha_1-1)
 =2\binom{m+n-1}{n+1}.
\]
For the mixed derivative,
\[
 \frac{\partial^2}{\partial x_1\partial x_2}
 \sum_{m\ge0}\sum_{|\alpha|=m}x^\alpha t^m
 =
 \sum_{m\ge0}\sum_{|\alpha|=m}
 \alpha_1\alpha_2x^{\alpha-e_1-e_2}t^m,
\]
whereas
\[
 \frac{\partial^2}{\partial x_1\partial x_2}
 \prod_{j=1}^n(1-x_jt)^{-1}
 =
 t^2(1-x_1t)^{-2}(1-x_2t)^{-2}
 \prod_{j=3}^n(1-x_jt)^{-1}.
\]
Setting all $x_j=1$ therefore gives
\[
 \sum_{m\ge0}\left(\sum_{|\alpha|=m}\alpha_1\alpha_2\right)t^m
 =t^2(1-t)^{-n-2},
\]
and comparison of the coefficient of $t^m$ gives
\[
 \sum_{|\alpha|=m}\alpha_1\alpha_2
 =\binom{m+n-1}{n+1}.
\]
Dividing by
$N_{m,n}=\binom{m+n-1}{n-1}$,
\[
 \int\alpha_1(\alpha_1-1)\,d\rho_{m,n}
 =\frac{2m(m-1)}{n(n+1)},
 \qquad
 \int\alpha_1\alpha_2\,d\rho_{m,n}
 =\frac{m(m-1)}{n(n+1)}.
\]
Hence
\[
 \int\alpha_1^2\,d\rho_{m,n}
 =\frac{2m(m-1)}{n(n+1)}+\frac mn.
\]
Subtracting $m^2/n^2$ from the diagonal entries and from the off-diagonal
entries gives, respectively,
\[
 \frac{m(m+n)(n-1)}{n^2(n+1)}
 \quad\text{and}\quad
 -\frac{m(m+n)}{n^2(n+1)}.
\]
These are exactly the entries of
\[
 \frac{m(m+n)}{n(n+1)}
 \left(I_n-\frac1n\mathbf 1_n\mathbf 1_n^{\mathsf T}\right).
\]

For (ii), put $d:=n-1$. If
$z=(e^{i\vartheta_1},\ldots,e^{i\vartheta_n})\in\mathbb T^n$, then
homogeneity of degree $m$ gives
\[
 G(z)
 =e^{im\vartheta_n}
 G(e^{i(\vartheta_1-\vartheta_n)},\ldots,
   e^{i(\vartheta_{n-1}-\vartheta_n)},1).
\]
Therefore taking the supremum over $\mathbb T^n$ is equivalent to taking the
supremum over the first $d$ angular differences with the last coordinate
equal to $1$. Define
\[
 F(\theta):=
 \frac1{\sqrt N}
 \exp\left(-\frac{im}{n}\sum_{j=1}^d\theta_j\right)
 G(e^{i\theta_1},\ldots,e^{i\theta_d},1),
 \qquad \theta\in[0,2\pi]^d.
\]
{\color{black}\hypersetup{linkcolor=black,citecolor=black}The exponential factor} has modulus one, so
\[
 \sup_{\theta\in[0,2\pi]^d}|F(\theta)|
 =\frac{\|G\|_\infty}{\sqrt N}.
\]
Expanding $F$,
\[
 F(\theta)
 =\frac1{\sqrt N}\sum_{|\alpha|=m}g_\alpha
 \exp\left(
 i\sum_{j=1}^d\left(\alpha_j-\frac mn\right)\theta_j
 \right).
\]
Since the $g_\alpha$ are independent standard complex Gaussians,
\[
 \int_{\CC^{\mathcal M_{m,n}}}|F(\theta)(g)|^2\,d\gamma_{m,n}(g)
 =\frac1N\sum_{|\alpha|=m}1=1.
\]
Moreover, using $|e^{iu}-e^{iv}|\le|u-v|$,
\begin{align*}
 \int_{\CC^{\mathcal M_{m,n}}}
 |F(\theta)(g)-F(\phi)(g)|^2\,d\gamma_{m,n}(g)
 &\le
 \frac1N\sum_{|\alpha|=m}
 \left|
 \sum_{j=1}^d\left(\alpha_j-\frac mn\right)
 (\theta_j-\phi_j)
 \right|^2\\
 &=
 \int_{\mathcal M_{m,n}}
 \left|
 \sum_{j=1}^d\left(\alpha_j-\frac mn\right)
 (\theta_j-\phi_j)
 \right|^2\,d\rho_{m,n}(\alpha).
\end{align*}
Applying the covariance identity in (i) to the vector
$(\theta_1-\phi_1,\ldots,\theta_d-\phi_d,0)\in\mathbb R^n$ gives
\[
 \int_{\CC^{\mathcal M_{m,n}}}
 |F(\theta)(g)-F(\phi)(g)|^2\,d\gamma_{m,n}(g)
 \le
 \frac{m(m+n)}{n(n+1)}\|\theta-\phi\|_2^2.
\]
This proves (ii).
}
\end{proof}

\subsection{The upper estimate}

\begin{lemma}\label{full:full-upper}
{\color{black}
Let $N_{m,n}$ and $H_{m,n}$ be given by \eqref{full:full-entropy}. Let
$g=(g_\alpha)_{|\alpha|=m}$ have independent standard complex Gaussian
coordinates, let $\gamma_{m,n}$ be the corresponding standard complex
Gaussian measure on $\CC^{\mathcal M_{m,n}}$, and set
\[
 G(z):=\sum_{|\alpha|=m}g_\alpha z^\alpha.
\]
Uniformly for $n\ge2$,
\[
 \int_{\CC^{\mathcal M_{m,n}}}\|G\|_\infty\,d\gamma_{m,n}
 \le(1+\mathrm{o}(1))\sqrt{N_{m,n}H_{m,n}}.
\]
Moreover, for every fixed $x>1$,
\[
 \Prob\{\|G\|_\infty>x\sqrt{N_{m,n}H_{m,n}}\}
 \le\exp\bigl(-(x^2-1+\mathrm{o}(1))H_{m,n}\bigr).
\]
}
\end{lemma}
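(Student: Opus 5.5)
We will prove both bounds through the normalized process $F$ of Lemma~\ref{full:full-metric}(ii). It has $\sup_\theta|F(\theta)|=\|G\|_\infty/\sqrt N$, unit variance, and increments controlled by $\kappa:=m(m+n)/(n(n+1))$. Set $d:=n-1$, so that $H_{m,n}=\tfrac d2\log(m+m^2/n)$. Heuristically, $\log(\kappa n)=\log(m+m^2/n)$ up to a bounded additive error, since $n/(n+1)\in[2/3,1)$. So $H_{m,n}$ is the log-covering number of $[0,2\pi]^d$ at the correlation scale $\kappa^{-1/2}$, measured in units appropriate to complex Gaussians. It equals $\tfrac d2\log\kappa$ plus the volume contribution $\tfrac d2\log n$ that appears when one covers the relevant region efficiently.

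\textbf{Step 1 (net plus chaining).} Fix a small $\varepsilon>0$ and a mesh $r=\varepsilon/\sqrt{\kappa}\cdot(\text{slowly decaying factor})$. Take a net $\mathcal T\subset[0,2\pi]^d$ of mesh $r$ in a suitable norm. The natural choice is the Euclidean ball geometry adapted to the covariance $I-\tfrac1n\mathbf 1\mathbf 1^{\mathsf T}$, with cardinality $\log|\mathcal T|\le(1+\mathrm o(1))H_{m,n}$ uniformly in $n$.

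On the net, use the union bound. Since $|F(\theta)|^2$ is exponential with mean one,
\[
\Prob\{\max_{\mathcal T}|F|>y\}\le|\mathcal T|e^{-y^2},
\]
and integrating gives $\E\max_{\mathcal T}|F|\le\sqrt{\log|\mathcal T|}+\mathrm O(1/\sqrt{\log|\mathcal T|})$.

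For the oscillation $\sup_\theta|F(\theta)-F(\pi(\theta))|$, where $\pi$ denotes the nearest net point, use Dudley's entropy bound on cells of diameter $r$. Alternatively, use a Bernstein-type derivative bound: $F$ is a trigonometric polynomial in $\theta$ with frequencies $\alpha_j-m/n$. With $\E|\nabla F|^2\le\kappa d$, the oscillation has expectation $\mathrm O(\varepsilon\sqrt{H})$. This gives the first assertion after letting $\varepsilon\to0$.

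\textbf{Step 2 (tail).} The tail follows from Gaussian concentration (Borell--TIS). The map $g\mapsto\|G\|_\infty/\sqrt N$ is Lipschitz with constant $\sup_\theta(\E|F(\theta)|^2)^{1/2}=1$, viewing $\CC^{\mathcal M_{m,n}}$ as real Gaussian space with variance $1/2$ per real coordinate. Hence
\[
\Prob\{\|G\|_\infty/\sqrt N>\E+t\}\le e^{-t^2}.
\]
Taking $t=(x-1-\mathrm o(1))\sqrt H$ yields only the exponent $(x-1)^2H$, which is weaker than the claimed $(x^2-1)H$. So the tail must instead be obtained directly from the net argument. Bound
\[
\Prob\{\max_{\mathcal T}|F|>(1-\varepsilon)x\sqrt H\}\le e^{(1+\mathrm o(1))H-(1-\varepsilon)^2x^2H},
\]
and control the oscillation separately. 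The oscillation term is a supremum of a process with variance $\mathrm O(\varepsilon^2)$, so Borell--TIS gives
\[
\Prob\{\text{osc}>\varepsilon x\sqrt H\}\le e^{-c x^2H/\varepsilon^{0}}.
\]
More precisely, that probability is at most $\exp(-(\varepsilon x\sqrt H-\mathrm O(\varepsilon\sqrt H))^2/\mathrm O(\varepsilon^2))$, which is $\exp(-C x^2H)$ with $C$ large once $\varepsilon$ is small. Combining the two bounds and letting $\varepsilon\to0$ gives $\exp(-(x^2-1+\mathrm o(1))H)$.

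\textbf{Main obstacle.} The hard step is the uniform-in-$n$ entropy count $\log|\mathcal T|\le(1+\mathrm o(1))H_{m,n}$. When $n\gtrsim m$, the naive cube count $(2\pi\sqrt{\kappa}/\varepsilon)^d$ gives $\tfrac d2\log\kappa$, which for $n\gg m$ is roughly $\tfrac d2\log(m^2/n^2)$ and can even be negative. This does not match $H=\tfrac d2\log(m+m^2/n)$, and the metric bound from the linearization $|e^{iu}-e^{iv}|\le|u-v|$ breaks down at the correct scale. In that regime I would replace the Euclidean metric by the true canonical metric $\E|F(\theta)-F(\phi)|^2=2-2\operatorname{Re}\rho_{m,n}\text{-average of }e^{i\langle\alpha,\theta-\phi\rangle}$. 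I would then count covering numbers by a volumetric argument on the torus with respect to this metric. The key input is the estimate
\[
m_n\{z:|Q(z)|\ge(1-\varepsilon^2)N\}\approx(m+m^2/n)^{-d/2}
\]
for $Q=\sum_\alpha z^\alpha$. This measure is precisely the volume of a canonical-metric ball, so I would compute it via the local central limit behaviour of the uniform measure on $\mathcal M_{m,n}$, whose covariance is given by Lemma~\ref{full:full-metric}(i).
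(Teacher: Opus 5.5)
Your architecture matches the paper's. You use a product grid on $[0,2\pi]^d$ for the field $F$ of Lemma~\ref{full:full-metric}(ii), a union bound over the grid, and chaining plus Gaussian Lipschitz concentration for the oscillation. You also correctly avoid Borell--TIS for the full supremum in the tail.

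\textbf{The entropy count.} The step you single out as the main obstacle rests on a miscount. Because you defer it to an unproved local-CLT volume estimate, the proposal leaves its key step open. Your ``naive'' grid has side $\varepsilon/\sqrt\kappa$, but such a cube has Euclidean diameter $\sqrt d\,\varepsilon/\sqrt\kappa$. Increments across a cell therefore have standard deviation up to $\sqrt d\,\varepsilon$, not $\varepsilon$. The right coordinate mesh is $\varepsilon/A$ with $A^2:=d\kappa=(m+m^2/n)\tfrac{n-1}{n+1}$. Then $\|\theta-\theta^{(0)}\|_2\le C\sqrt d\,\varepsilon/A$, the increment variance is at most $\kappa\cdot C^2d\varepsilon^2/A^2=C^2\varepsilon^2$, and
\[
 \log|\Gamma_0|\le d\log(CA/\varepsilon)\le H_{m,n}+d\log(C/\varepsilon)=(1+\mathrm o(1))H_{m,n}
\]
uniformly in $n$. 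This holds for fixed $\varepsilon$ or even $\varepsilon=1/\log m$, since $\log(m+m^2/n)\ge\log m$. The missing $\frac d2\log d$ is just the factor $d^{d/2}$ in the cube count, as your own heuristic $H=\frac d2\log\kappa+\frac d2\log n$ predicted.

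Nor does the linearization $|e^{iu}-e^{iv}|\le|u-v|$ ``break down''. It is an upper bound on the canonical metric at every scale, which is the direction needed for covering numbers. Equivalently, it supplies the lower bound on canonical-ball volumes that a volumetric argument would require. So the estimate on $m_n\{|Q|\ge(1-\varepsilon^2)N\}$ is unnecessary. (Also, for $n\gg m$ one has $\kappa\approx m/n$, not $m^2/n^2$.)

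\textbf{The tail bookkeeping.} In Step 2 you use the same $\varepsilon$ for the mesh and for the threshold split. The oscillation $Z$ then has $\E Z\le C\varepsilon\sqrt H$ and Lipschitz constant $C'\varepsilon$, so concentration gives at best $\Prob\{Z>\varepsilon x\sqrt H\}\le\exp(-c(x-C)^2H)$. The constants here do not depend on $\varepsilon$: the $\varepsilon$'s cancel, nothing becomes large as $\varepsilon\to0$, and this term need not be smaller than $\e^{-(x^2-1)H}$. You must decouple the two scales. The paper takes mesh $\delta=1/\log m$ and grid threshold $(x-\sqrt\delta)\sqrt H$. Then $\Prob\{Z>\sqrt{\delta H}\}\le\e^{-cH/\delta}$ is negligible on the scale $H$, while $|\Gamma_0|\e^{-(x-\sqrt\delta)^2H}=\exp(-(x^2-1+\mathrm o(1))H)$.

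With these two corrections your outline coincides with the paper's proof.
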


\begin{proof}
{\color{black}
We apply a discretized chaining argument to the Gaussian field $F$ from
Lemma~\ref{full:full-metric}(ii); compare \cite[Chapter~13]{BLM}. Put
\[
 d:=n-1,\qquad N:=N_{m,n},\qquad H:=H_{m,n},\qquad
 A^2:=d\,\frac{m(m+n)}{n(n+1)}.
\]
The metric estimate in Lemma~\ref{full:full-metric}(ii) says that
\[
 \int_{\CC^{\mathcal M_{m,n}}}
 |F(\theta)(g)-F(\phi)(g)|^2\,d\gamma_{m,n}(g)
 \le \frac{A^2}{d}\|\theta-\phi\|_2^2.
\]
Since $d=n-1$,
\[
 A^2=\left(m+\frac{m^2}{n}\right)\frac{n-1}{n+1}.
\]
For $n\ge2$ this gives
\[
 \frac m3\le A^2\le m^2.
\]
The parameter $A$ is therefore a convenient uniform bound for the metric
scale of the field.

Set $\delta:=1/\log m$. Partition each interval $[0,2\pi]$ into
subintervals of length at most $\delta/A$, and let
$\Gamma_0\subset[0,2\pi]^d$ be the Cartesian product of the resulting
one-dimensional grids. For each $\theta\in[0,2\pi]^d$, choose a nearest
grid point
\[
 \theta^{(0)}\in\Gamma_0.
\]
By construction,
\[
 \|\theta-\theta^{(0)}\|_2
 \le C\sqrt d\,\frac{\delta}{A}.
\] Moreover,
\[
 |\Gamma_0|\le(CA/\delta)^d.
\]
Since
\[
 H=\frac d2\log\left(m+\frac{m^2}{n}\right)
\]
and $A^2$ differs from $m+m^2/n$ only by the factor
$(n-1)/(n+1)$, we obtain
\begin{equation}\label{full:full-grid-size}
 \log|\Gamma_0|
 \le d\log(CA/\delta)
 =H+O(d\log\log m)
 =(1+\mathrm{o}(1))H.
\end{equation}

The grid controls the values of $F$ at finitely many points; it remains to
control the error between a point and its nearest grid point. Define
\[
 Z:=\sup_{\theta\in[0,2\pi]^d}
 |F(\theta)-F(\theta^{(0)})|.
\]
The oscillation satisfies
\begin{equation}\label{full:full-grid-oscillation}
 \int Z\,d\gamma_{m,n}\le C\delta\sqrt H,
 \qquad
 \gamma_{m,n}\left\{
 Z>\int Z\,d\gamma_{m,n}+t
 \right\}\le e^{-c t^2/\delta^2}.
\end{equation}

For the integral estimate, for every $k\ge1$, let $\Gamma_k$ be a product
grid with coordinate mesh at most $\delta2^{-k}/A$. For each $\theta$ choose
a nearest point $\theta^{(k)}\in\Gamma_k$. Then
\[
 \|\theta^{(k)}-\theta^{(k-1)}\|_2
 \le
 \|\theta^{(k)}-\theta\|_2+\|\theta-\theta^{(k-1)}\|_2
 \le C\sqrt d\,\frac{\delta2^{-k}}A,
\]
after adjusting the absolute constant $C$.
Hence the metric estimate gives, with the constants displayed,
\begin{align*}
 &\int_{\CC^{\mathcal M_{m,n}}}
 \left|F(\theta^{(k)})(g)-F(\theta^{(k-1)})(g)\right|^2
 \,d\gamma_{m,n}(g)\\
 &\qquad\le
 \frac{A^2}{d}\,
 \|\theta^{(k)}-\theta^{(k-1)}\|_2^2\\
 &\qquad\le
 \frac{A^2}{d}
 \left(C\sqrt d\,\frac{\delta2^{-k}}A\right)^2
 =C^2\delta^2\,4^{-k}.
\end{align*}
As $\theta$ varies, at most $|\Gamma_k||\Gamma_{k-1}|$ different pairs
$(\theta^{(k)},\theta^{(k-1)})$ occur. Since
\[
 |\Gamma_k|\le(CA2^k/\delta)^d,
\]
the logarithm of the number of such pairs is bounded by
\[
 C d\,[\log(CA/\delta)+k].
\]

If $Y_1,\ldots,Y_M$ are centered {\color{black}\hypersetup{linkcolor=black,citecolor=black}circularly symmetric} complex Gaussian variables satisfying
\[
 \E|Y_j|^2\le s^2,
\]
then the Gaussian tail bound and the union bound give
\[
 \Prob\{\max_{j\le M}|Y_j|>u\}
 \le M e^{-u^2/s^2}.
\]
{\color{black}\hypersetup{linkcolor=black,citecolor=black}Splitting the tail integral at}
$u_0=s\sqrt{\log M}$ gives
\[
 \E\max_{j\le M}|Y_j|
 \le s\bigl(\sqrt{\log M}+C\bigr).
\]
Applying this estimate to the increments at level $k$ yields
\[
 \int
 \max_\theta|F(\theta^{(k)})-F(\theta^{(k-1)})|
 \,d\gamma_{m,n}
 \le
 C\delta2^{-k}
 \sqrt{d[\log(CA/\delta)+k]}.
\]

For each fixed $\theta$, the mesh tends to zero, so
$\theta^{(k)}\to\theta$. By continuity of $F$,
$F(\theta^{(k)})\to F(\theta)$, and therefore
\[
 F(\theta)-F(\theta^{(0)})
 =
 \sum_{k\ge1}
 \left(F(\theta^{(k)})-F(\theta^{(k-1)})\right).
\]
Taking absolute values, then the supremum over $\theta$, and finally
integrating, we obtain
\begin{align*}
 \int Z\,d\gamma_{m,n}
 &\le
 C\delta\sum_{k\ge1}2^{-k}
 \sqrt{d[\log(CA/\delta)+k]}\\
 &\le
 C\delta\sqrt{d\log(CA/\delta)}
 \le C\delta\sqrt H.
\end{align*}
Hence the first part of \eqref{full:full-grid-oscillation} holds.

For the concentration estimate, consider first the initial increment. For every $\theta$,
\begin{align*}
 &\int_{\CC^{\mathcal M_{m,n}}}
 |F(\theta)(g)-F(\theta^{(0)})(g)|^2\,d\gamma_{m,n}(g)\\
 &\qquad\le
 \frac{A^2}{d}\|\theta-\theta^{(0)}\|_2^2\\
 &\qquad\le
 \frac{A^2}{d}
 \left(C\sqrt d\,\frac{\delta}{A}\right)^2
 =C^2\delta^2.
\end{align*}
To identify the Lipschitz constant, write the increment as a linear
functional of the coefficient vector:
\[
 F(\theta)(g)-F(\theta^{(0)})(g)
 =\sum_{|\alpha|=m}c_\alpha(\theta)\,g_\alpha.
\]
For standard complex Gaussian coefficients, independence and
$\int_{\CC}|z|^2\pi^{-1}e^{-|z|^2}\,dz=1$ give
\[
 \int_{\CC^{\mathcal M_{m,n}}}
 \left|\sum_{|\alpha|=m}c_\alpha(\theta)g_\alpha\right|^2
 \,d\gamma_{m,n}(g)
 =\sum_{|\alpha|=m}|c_\alpha(\theta)|^2.
\]
Hence
\[
 \sum_{|\alpha|=m}|c_\alpha(\theta)|^2
 \le C^2\delta^2.
\]
Hence, for coefficient vectors $g,h$,
\begin{align*}
 &\left|
 \bigl(F(\theta)(g)-F(\theta^{(0)})(g)\bigr)
 -
 \bigl(F(\theta)(h)-F(\theta^{(0)})(h)\bigr)
 \right|\\
 &\qquad\le
 \left(\sum_{|\alpha|=m}|c_\alpha(\theta)|^2\right)^{1/2}
 \|g-h\|_2
 \le C\delta\,\|g-h\|_2.
\end{align*}
Taking absolute values does not increase the Lipschitz constant, since
$||z|-|w||\le|z-w|$. Taking the supremum over $\theta$ also preserves the
common bound. Thus
\[
 |Z(g)-Z(h)|\le C\delta\,\|g-h\|_2.
\]
The Gaussian Lipschitz concentration inequality
\cite[Section~5.4]{BLM} now gives, with an absolute constant $c>0$,
\[
 \gamma_{m,n}\left\{
 Z>\int Z\,d\gamma_{m,n}+t
 \right\}
 \le\exp\left(-\frac{c\,t^2}{\delta^2}\right).
\]
Thus the probability estimate in
\eqref{full:full-grid-oscillation} holds, after changing the absolute
constant $c$ if necessary.

For the supremum, since
\[
 \sup_{\theta}|F(\theta)|
 \le\max_{\theta\in\Gamma_0}|F(\theta)|+Z,
\]
and each $F(\theta)$ is a standard complex Gaussian variable,
\[
 \gamma_{m,n}\left\{
 \max_{\theta\in\Gamma_0}|F(\theta)|>u
 \right\}
 \le|\Gamma_0|e^{-u^2}.
\]
Integrating this bound and using
\eqref{full:full-grid-size} and
\eqref{full:full-grid-oscillation},
\[
 \int\sup_\theta|F(\theta)|\,d\gamma_{m,n}
 \le\sqrt{\log|\Gamma_0|}+C+\int Z\,d\gamma_{m,n}
 \le(1+\mathrm{o}(1))\sqrt H.
\]
Since
$\|G\|_\infty=\sqrt N\,\sup_\theta|F(\theta)|$, we obtain
\[
 \int_{\CC^{\mathcal M_{m,n}}}\|G\|_\infty\,d\gamma_{m,n}
 \le(1+\mathrm{o}(1))\sqrt{NH}.
\]

For fixed $x>1$, apply
$\sup_\theta|F(\theta)|\le\max_{\theta\in\Gamma_0}|F(\theta)|+Z$
with grid threshold $(x-\sqrt\delta)\sqrt H$ and use
\eqref{full:full-grid-oscillation} for the remaining error. Then
\begin{align*}
 \Prob\{\sup_\theta|F(\theta)|>x\sqrt H\}
 &\le
 |\Gamma_0|e^{-(x-\sqrt\delta)^2H}
 +e^{-cH/\delta}\\
 &\le
 \exp\bigl(-(x^2-1+\mathrm{o}(1))H\bigr),
\end{align*}
uniformly in $n$. Multiplying the threshold by $\sqrt N$ gives
\[
 \Prob\{\|G\|_\infty>x\sqrt{NH}\}
 \le\exp\bigl(-(x^2-1+\mathrm{o}(1))H\bigr),
\]
which is the asserted upper-tail estimate.
}
\end{proof}

\subsection{A covariance estimate in large dimension}

\begin{lemma}\label{full:full-covariance-volume}
{\color{black}Define the normalized covariance kernel}
\[
 K_{m,n}(\theta):=\frac1{N_{m,n}}\sum_{|\alpha|=m}
 \e^{i\langle\alpha,\theta\rangle}\qquad(\theta\in\mathbb T^n).
\]
If $n\ge(\log m)^2$, then
\begin{equation}\label{full:full-bad-volume}
 m_n\left(\left\{\theta\in\TT^n:
 |K_{m,n}(\theta)|>\frac1{\log m}\right\}\right)
 \le\exp\bigl(-(1-\mathrm{o}(1))H_{m,n}\bigr),
\end{equation}
and the error is uniform
in this range of dimensions.
\end{lemma}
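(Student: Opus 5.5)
The plan is the moment method already used in Lemma~\ref{lem:exponential-packing}, with the number of moments taken of order $n$. Write $Q(z):=\sum_{|\alpha|=m}z^\alpha$, so that $K_{m,n}(\theta)=Q(\e^{i\theta})/N_{m,n}$. For every $k\in\NN$, Markov's inequality gives
\[
 m_n\bigl\{|K_{m,n}|>1/\log m\bigr\}\le(\log m)^{2k}\int_{\TT^n}|K_{m,n}|^{2k}\,dm_n .
\]
Exactly as in \eqref{eq:moment-expanded},
\[
 \int_{\TT^n}|K_{m,n}|^{2k}\,dm_n=N^{-2k}\sum_{\beta\in\mathcal M_{km,n}}r_k(\beta)^2,
\]
where $r_k(\beta)$ is the number of ordered $k$-tuples in $\mathcal M_{m,n}$ with sum $\beta$. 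This is the probability that two independent sums of $k$ uniform samples from $\mathcal M_{m,n}$ coincide. Since $\sum_\beta r_k(\beta)=N^k$, it is at most $N^{-k}\max_\beta r_k(\beta)$.

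The target is $\exp(-H_{m,n})$ up to a factor $\exp(\mathrm o(H_{m,n}))$. The heuristic comes from Lemma~\ref{full:full-metric}(i): each sample has covariance $\sigma^2(I_n-\frac1n\mathbf 1\mathbf 1^{\mathsf T})$ with $\sigma^2=\frac{m(m+n)}{n(n+1)}$. A local limit theorem in the $d=n-1$ free directions then gives a collision probability about
\[
 (4\pi k\sigma^2)^{-d/2}=\exp\Bigl(-\tfrac d2\log(4\pi k\sigma^2)\Bigr).
\]
Since $n\sigma^2\asymp m+m^2/n$, choosing $k\asymp n$ gives exponent $\tfrac d2\log(m+m^2/n)+O(d)=(1+\mathrm o(1))H_{m,n}$. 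The Markov loss $(\log m)^{2k}=\exp(O(n\log\log m))$ is $\exp(\mathrm o(H_{m,n}))$ because $\log(m+m^2/n)\ge\log m\gg\log\log m$. So I take $k:=n$, possibly $k:=\lceil \varepsilon n\rceil$ to absorb constants. The hypothesis $n\ge(\log m)^2$ ensures that $O(d)$ and $O(d\log\log m)$ errors are $\mathrm o(H_{m,n})$ uniformly.

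The main step, and the expected obstacle, is a rigorous non-asymptotic upper bound
\[
 \max_\beta r_k(\beta)\le N^k\exp\Bigl(-\tfrac d2\log(k\sigma^2)+O(d)\Bigr),
\]
uniform in $n\ge(\log m)^2$ and including the regime $m\ll n$, where $\sigma^2<1$ and lattice effects are not negligible. First I would write $r_k(\beta)$ as the coefficient of $x^\beta$ in $h_m(x)^k$, where $h_m(x)=[t^m]\prod_j(1-x_jt)^{-1}$. Then I would represent it by Cauchy's formula on the torus and the contour $|t|=\rho$, giving a Fourier integral of $|K_{m,n}|^k$-type. I would bound this integral by a Gaussian approximation near $\theta\in\RR\mathbf 1$, the trivial phase direction from homogeneity. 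Away from that line I would use the product structure $\prod_j|1-\rho\e^{i\theta_j}|^{-1}$ to get exponential decay in each coordinate separately. That yields a factor $\lesssim(k\sigma^2)^{-1/2}$ per free coordinate, possibly with an absolute constant.

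As a fallback, I would use the stars-and-bars description: $r_k(\beta)$ counts ways to distribute the multiset $\beta$ into $k$ boxes of size $m$. For balanced $\beta$, compare $r_k(\beta)$ with the total count $N^k$ divided by the number $\asymp N_{km,n}$ of typical $\beta$, corrected by a polynomial factor of size $\exp(O(d))$. Since $\log N_{km,n}-k\log N$ contains exactly the missing $\tfrac d2\log(k\sigma^2)$ term by Stirling, this comparison would also give the bound.
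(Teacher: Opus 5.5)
Your reduction is sound: Markov at order $2k$, and $\int_{\TT^n}|K_{m,n}|^{2k}\,dm_n\le N^{-k}\max_\beta r_k(\beta)$. The heuristic exponent is also right. But the argument rests on the bound $\max_\beta r_k(\beta)\le N^k\exp(-\frac d2\log(k\sigma^2)+O(d))$ at $k\asymp n$, which you do not prove and which your tools do not give. That bound is a local limit theorem in dimension $n-1$ with multiplicative accuracy $\e^{O(n)}$, uniform in $\beta$ and in $n\ge(\log m)^2$; equivalently, a sharp count of $k\times n$ contingency tables with row sums $m$ and column sums $\beta$.

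Here is why your sketched tools fall short:
\begin{itemize}
\item The Cauchy formula on the unit torus yields only $\max_\beta r_k(\beta)\le N^k\int|K_{m,n}|^k\,dm_n$. So the detour through $r_k$ merely trades the $2k$th moment for the $k$th.
\item The product-structure bound you invoke is $|K_{m,n}(\theta)|\le\frac{r^{-m}(1-r)^{-n}}{N}\max_t\prod_j|1-r\e^{i(\theta_j+t)}|^{-1}$ with $r=m/(m+n)$. Its prefactor is $\frac{r^{-m}(1-r)^{-n}}{N}\asymp\sqrt X$, where $X:=m+m^2/n$. This is the value of the bound at $\theta=0$, where $K_{m,n}=1$, so the loss is intrinsic. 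With $s^2=X/n$ and coordinatewise integration (after Jensen in $t$, or a phase grid), the most it gives is
\[
 \int_{\TT^n}|K_{m,n}|^{2k}\,dm_n
 \le(C\sqrt X)^{2k}\Bigl(C\sqrt{\tfrac{n}{kX}}\Bigr)^{n}
 =\exp\Bigl(-\bigl(\tfrac n2-k\bigr)\log X+\tfrac n2\log\tfrac nk+O(n+k)\Bigr).
\]
This exceeds $1$ at $k=n$, and at $k=\varepsilon n$ it yields only $\exp(-(1-2\varepsilon+\mathrm o(1))H_{m,n})$. Removing the $\sqrt X$ loss near $\RR\mathbf 1$ would need a genuine saddle-point analysis in dimension $n$, which your sketch does not contain.
\item Your fallback is wrong. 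The $k$-fold sums concentrate on about $(C\sqrt{k\sigma^2})^{d}$ values, not on $\asymp N_{km,n}$ of them. Hence $N^k/N_{km,n}$ is a \emph{lower} bound for $\max_\beta r_k(\beta)$. For $k=n$ it equals $N^n m^{-n}\e^{O(n)}$, which falls short of your own heuristic $N^nX^{-n/2}\e^{O(n)}$ by a factor $(mn/(m+n))^{n/2}$, not $\e^{O(d)}$.
\end{itemize}

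The missing idea is that no local limit theorem is needed. The event $\{|K_{m,n}|>1/\log m\}$ is extreme enough that the lossy bound suffices, once the effective moment has order $n/\log X$ rather than $n$. This is how the paper proceeds:
\begin{itemize}
\item The contour bound shows that $|K_{m,n}(\theta)|>1/\log m$ forces $\min_tQ_s(\theta+t\mathbf 1)\le B:=\log X+O(\log\log m)$, where $Q_s(\phi)=\sum_j\log(1+4s^2\sin^2\frac{\phi_j}{2})$.
\item The phase $t$ is discretized on $O(\sqrt{m(m+n)})$ points, since $t\mapsto Q_s(\theta+t\mathbf 1)$ is $ns$-Lipschitz.
\item A Chernoff bound with $p=n/(2B)$ under product Haar measure, using $\frac1{2\pi}\int_0^{2\pi}(1+4s^2\sin^2\frac t2)^{-p}\,dt\le C/(s\sqrt p)$, gives $C\sqrt{m(m+n)}\,(C\sqrt{B/X})^n$.
\end{itemize}
The only loss, $B^{n/2}=\e^{O(n\log\log m)}$, is $\mathrm o(H_{m,n})$. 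Within your own framework the same result follows from the displayed moment bound with $k=\lceil n/(2\log X)\rceil$. Then $(C\sqrt X)^{2k}=\e^{O(n)}$, the Markov factor $(\log m)^{2k}$ is negligible, and you get $\exp(-\frac n2\log X+O(n\log\log m))=\exp(-(1-\mathrm o(1))H_{m,n})$.
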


\begin{proof}
{\color{black}Write
\[
N:=N_{m,n},\qquad H:=H_{m,n},\qquad \rho:=\frac1{\log m}.
\]
For $u=(u_1,\ldots,u_n)\in\CC^n$ and $k\ge0$, let
\[
h_k(u):=\sum_{|\alpha|=k}u^\alpha.
\]
For the coefficient sum defining the kernel, summing over all
degrees and then over all multiindices,
\[
\sum_{k\ge0}h_k(u)\zeta^k
=\sum_{\alpha\in\mathbb N_0^n}(u\zeta)^\alpha
=\prod_{j=1}^n\left(\sum_{\ell\ge0}(u_j\zeta)^\ell\right)
=\prod_{j=1}^n(1-u_j\zeta)^{-1},
\]
for $|\zeta|$ sufficiently small. Thus $h_m(u)$ is the coefficient of
$\zeta^m$ in the last product. Extract this coefficient on the
circle of radius
\[
r:=\frac{m}{m+n},
\qquad
s^2:=\frac{r}{(1-r)^2}=\frac{m+m^2/n}{n}.
\]
The normalization in Cauchy's formula satisfies the following estimates.
Since
\[
\binom{m+n}{m}=\frac{m+n}{n}\,N,
\]
the standard two-sided form of Stirling's formula \cite[Section~5.11]{NIST}, applied to the three
factorials in the binomial coefficient, gives
\begin{equation}\label{full:full-stirling-binomial}
 \frac{m+n}{n}\,N
 =\binom{m+n}{m}
 \ge
 c\sqrt{\frac{m+n}{mn}}\,
 \frac{(m+n)^{m+n}}{m^m n^n}.
\end{equation}
Solving \eqref{full:full-stirling-binomial} for $1/N$ gives
\begin{equation}\label{full:full-stirling-normalization}
 \frac1N
 \le
 C\sqrt{\frac{m(m+n)}n}\,
 \frac{m^m n^n}{(m+n)^{m+n}}.
\end{equation}
On the other hand, the choice $r=m/(m+n)$ gives
\begin{equation}\label{full:full-r-factor}
 r^{-m}(1-r)^{-n}
 =\frac{(m+n)^{m+n}}{m^m n^n}.
\end{equation}
Multiplying \eqref{full:full-stirling-normalization} and
\eqref{full:full-r-factor}, we obtain
\begin{equation}\label{full:full-cauchy-factor}
 \frac{r^{-m}(1-r)^{-n}}{N}
 \le C\sqrt{\frac{m(m+n)}n}
 =C\sqrt{m+\frac{m^2}{n}}.
\end{equation}
}

{\color{black}Cauchy's coefficient formula on $|\zeta|=r$ now gives
\[
 h_m(u)=\frac{r^{-m}}{2\pi}\int_0^{2\pi}
 e^{-imt}\prod_{j=1}^n(1-r u_j e^{it})^{-1}\,dt,
\]
and hence
\[
 |h_m(u)|
 \le \frac{r^{-m}}{2\pi}\int_0^{2\pi}
       \prod_{j=1}^n|1-r u_j e^{it}|^{-1}\,dt
 \le r^{-m}\max_{t\in[0,2\pi]}
       \prod_{j=1}^n|1-r u_j e^{it}|^{-1}.
\]
For $u_j=e^{i\phi_j}$,
\[
 |1-r e^{i\phi_j}|^2
 =(1-r)^2+4r\sin^2\frac{\phi_j}{2}
 =(1-r)^2\left(1+4s^2\sin^2\frac{\phi_j}{2}\right),
\]
where $s^2=r/(1-r)^2$. Define
\[
 \mathbf1:=(1,\ldots,1)\in\mathbb R^n,\qquad
 Q_s(\phi):=\sum_{j=1}^n
 \log\left(1+4s^2\sin^2\frac{\phi_j}{2}\right).
\]
For $\theta\in\mathbb T^n$, the Cauchy estimate gives
\[
 |K_{m,n}(\theta)|
 \le
 \frac{r^{-m}}{N}
 \max_{t\in[0,2\pi]}
 \prod_{j=1}^n|1-r e^{i(\theta_j+t)}|^{-1}.
\]
Applying
\[
 |1-r e^{i\psi}|^{-1}
 =(1-r)^{-1}
 \left(1+4s^2\sin^2\frac{\psi}{2}\right)^{-1/2}
\]
with $\psi=\theta_j+t$ to each factor gives
\[
 \prod_{j=1}^n|1-r e^{i(\theta_j+t)}|^{-1}
 =(1-r)^{-n}
 \exp\left[-\frac12Q_s(\theta+t\mathbf1)\right].
\]
Hence
\[
 \max_{t\in[0,2\pi]}
 \prod_{j=1}^n|1-r e^{i(\theta_j+t)}|^{-1}
 =(1-r)^{-n}
 \exp\left[-\frac12\min_{t\in[0,2\pi]}
 Q_s(\theta+t\mathbf1)\right].
\]
{\color{black}\hypersetup{linkcolor=black,citecolor=black}Using \eqref{full:full-cauchy-factor}, we obtain}
\[
 |K_{m,n}(\theta)|
 \le C\sqrt{m+\frac{m^2}{n}}\,
 \exp\left[-\frac12\min_{t\in[0,2\pi]}
 Q_s(\theta+t\mathbf1)\right].
\]}
Thus $|K_{m,n}(\theta)|>\rho$ implies
\begin{equation}\label{full:full-phase-minimum}
 \min_{t\in[0,2\pi]} Q_s(\theta+t\mathbf1)\le B_0,
 \qquad
 B_0:=\log\!\left(m+\frac{m^2}{n}\right)+2\log(C/\rho).
\end{equation}
{\color{black}For fixed $\theta$, discretize the minimizing phase by writing
\[
 q_j(t):=
 \log\left(1+4s^2\sin^2\frac{\theta_j+t}{2}\right),
 \qquad
 Q_s(\theta+t\mathbf1)=\sum_{j=1}^n q_j(t).
\]
Differentiating,
\[
 q_j'(t)
 =
 \frac{2s^2\sin(\theta_j+t)}
 {1+4s^2\sin^2((\theta_j+t)/2)}.
\]
Put
\[
 u:=2s\sin\frac{\theta_j+t}{2}.
\]
Since
\[
 |\sin(\theta_j+t)|
 \le2\left|\sin\frac{\theta_j+t}{2}\right|,
\]
we obtain
\[
 |q_j'(t)|
 \le\frac{2s|u|}{1+u^2}
 \le s,
\]
because $2|u|\le1+u^2$. Consequently,
\begin{equation}\label{full:full-phase-lipschitz}
 \left|\frac{d}{dt}Q_s(\theta+t\mathbf1)\right|
 \le\sum_{j=1}^n|q_j'(t)|
 \le ns.
\end{equation}
Thus $t\mapsto Q_s(\theta+t\mathbf1)$ is $ns$-Lipschitz.

Choose a grid $\mathcal T\subset[0,2\pi]$ with spacing at most
$1/(ns)$. It may be chosen with
\[
 |\mathcal T|\le Cns+1\le C\sqrt{m(m+n)}.
\]
If $t_*$ minimizes $Q_s(\theta+t\mathbf1)$, choose
$t_\ell\in\mathcal T$ with $|t_\ell-t_*|\le1/(ns)$. Then, by \eqref{full:full-phase-lipschitz},
\[
 Q_s(\theta+t_\ell\mathbf1)
 \le Q_s(\theta+t_*\mathbf1)+ns\,|t_\ell-t_*|
 \le \min_tQ_s(\theta+t\mathbf1)+1.
\]
Combining this with \eqref{full:full-phase-minimum}, whenever
$|K_{m,n}(\theta)|>\rho$ one of the grid phases satisfies
\begin{equation}\label{full:full-phase-grid}
 Q_s(\theta+t_\ell\mathbf1)\le B_0+1.
\end{equation}
}

Put $B=B_0+1$ and $p=n/(2B)$. Since $B=O(\log m)$ and
$n\ge(\log m)^2$, we have $p\ge1$ for all sufficiently large $m$.
Normalized Lebesgue measure on the circle gives
\begin{equation}\label{full:full-one-dimensional-integral}
\begin{aligned}
 \frac1{2\pi}\int_0^{2\pi}
 \left(1+4s^2\sin^2\frac t2\right)^{-p}\,dt
 &\le C\int_0^\infty(1+cs^2t^2)^{-p}\,dt\\
 &\le\frac{C}{s\sqrt p}.
\end{aligned}
\end{equation}
The first inequality uses $\sin(t/2)\ge t/\pi$ on $[0,\pi]$;
the second follows from
\[
 \int_0^\infty(1+t^2)^{-p}\,dt
 =\frac{\sqrt\pi\,\Gamma(p-1/2)}{2\Gamma(p)}
 \le\frac C{\sqrt p},\qquad p\ge1.
\]
Applying Markov's inequality \cite{BLM} with product Haar measure, using
\eqref{full:full-one-dimensional-integral} in each coordinate and then the
phase grid from \eqref{full:full-phase-grid}, gives
\begin{align*}
 m_n\{\theta\in\TT^n:|K_{m,n}(\theta)|>\rho\}
 &\le C\sqrt{m(m+n)}\,
       \e^{pB}\left(\frac C{s\sqrt p}\right)^n\\
 &\le C\sqrt{m(m+n)}
       \left(\frac{C\sqrt B}{\sqrt{m+m^2/n}}\right)^n.
\end{align*}
Taking logarithms, the right side has logarithm at most
\[
 -\frac n2\log\!\left(m+\frac{m^2}{n}\right)
 +\frac n2\log B+Cn+O(\log m+\log n).
\]
Here $\log B=O(\log\log m)$,
$\log(m+m^2/n)\ge\log m$, and $n\ge(\log m)^2$. All positive terms are
\[
 \mathrm{o}\!\left(n\log\!\left(m+\frac{m^2}{n}\right)\right)
\]
uniformly in this range. Since $n/(n-1)\to1$ uniformly as well,
\eqref{full:full-bad-volume} follows.
\end{proof}

\begin{lemma}\label{full:full-large-lower}
{\color{black}Let $N_{m,n}$ and $H_{m,n}$ be given by \eqref{full:full-entropy}. Let
$g=(g_\alpha)_{|\alpha|=m}$ have independent standard complex Gaussian
coordinates, let $\gamma_{m,n}$ be the corresponding standard complex
Gaussian measure on $\CC^{\mathcal M_{m,n}}$, and set
\[
 G(z):=\sum_{|\alpha|=m}g_\alpha z^\alpha.
\]
If $n\ge(\log m)^2$, then
\[
 \int_{\CC^{\mathcal M_{m,n}}}\|G\|_\infty\,d\gamma_{m,n}
 \ge(1-\mathrm{o}(1))\sqrt{N_{m,n}H_{m,n}}.
\]
For every fixed $x>1$,
\[
 \Prob\{\|G\|_\infty>x\sqrt{N_{m,n}H_{m,n}}\}
 \ge\exp\bigl(-(x^2-1+\mathrm{o}(1))H_{m,n}\bigr).
\]
The error is uniform in this range of dimensions.
}
\end{lemma}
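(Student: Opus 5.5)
We argue through the packing-plus-Slepian route of Lemmas~\ref{lem:exponential-packing} and~\ref{lem:small-correlation-max-sharp}, with Lemma~\ref{full:full-covariance-volume} as the packing input. Write $N:=N_{m,n}$, $H:=H_{m,n}$ and $\rho:=1/\log m$. For $\theta\in\TT^n$, the normalized field $F_\theta:=G(\e^{i\theta})/\sqrt N$ is a standard complex Gaussian, and
\[
 \E\bigl(F_\theta\overline{F_\phi}\bigr)=K_{m,n}(\theta-\phi).
\]

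\textbf{Step 1: a packing.} Let $\mathcal B:=\{|K_{m,n}|>\rho\}$. By Lemma~\ref{full:full-covariance-volume}, $m_n(\mathcal B)\le \e^{-(1-\mathrm{o}(1))H}$. We run the greedy construction from the proof of Lemma~\ref{lem:exponential-packing}, using the translations $\tau_w$ and the symmetry $K(-\theta)=\overline{K(\theta)}$. This produces points $\theta_1,\dots,\theta_M$ with
\[
 M\ge \e^{(1-\varepsilon_m)H},\qquad \varepsilon_m\to0 \text{ uniformly in } n\ge(\log m)^2,
\]
and $|K_{m,n}(\theta_i-\theta_j)|\le\rho$ for $i\ne j$.

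\textbf{Step 2: the lower bound on the expectation.} Put $X_i:=\sqrt2\,\operatorname{Re}F_{\theta_i}$. These are real centered Gaussians with unit variance and $\E X_iX_j\le\rho$. Compare with the equicorrelated family $Y_i=\sqrt\rho Z_0+\sqrt{1-\rho}Z_i$ via Slepian, as in the proof of Lemma~\ref{lem:small-correlation-max-sharp}; we now want the sharp constant rather than $127/100$. With $L:=(1-\varepsilon_m)H$, the independent part gives
\[
 \max_i Z_i\ge(1-\eta)\sqrt{2L}
\]
except with probability $\exp(-\e^{cL\eta})$. The common part satisfies $\sqrt\rho\,|Z_0|\le\eta\sqrt L$ except with probability $\e^{-\eta^2L/(2\rho)}$, and $L/\rho\to\infty$. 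Hence, for each fixed $\eta>0$,
\[
 \max_iX_i\ge(1-2\eta)\sqrt{2H}
\]
with probability $1-\mathrm{o}(1)$. Since $\|G\|_\infty/\sqrt N\ge \max_i|F_{\theta_i}|\ge \max_iX_i/\sqrt2$, we get $\sup|F|\ge(1-2\eta)\sqrt H$ with probability tending to one. Integrating and letting $\eta\to0$ yields the bound on the expectation.

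\textbf{Step 3: the lower tail bound at fixed $x>1$.} Here we use a second-moment (Paley--Zygmund) count. Let
\[
 S:=\#\{i\le M:\ |F_{\theta_i}|>x\sqrt H\}.
\]
Then $\E S=M\e^{-x^2H}$. Take a sub-packing of size $M'=\e^{(x^2+\varepsilon)H}$ if possible, or else work with the full $M$. For pairs with $|K|\le\rho$, the joint tail of a bivariate complex Gaussian satisfies
\[
 \Prob\bigl(|F_i|>u,\ |F_j|>u\bigr)\le \e^{-2u^2/(1+\rho)}\cdot \mathrm{poly}.
\]
With $u^2=x^2H$ and $\rho H=H/\log m$, this is $\e^{-2x^2H(1-\rho)}$, and the factor $\e^{2x^2\rho H}$ is not negligible. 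So Paley--Zygmund is better applied to a cruder event. Keep only $M_0=\lceil \e^{\varepsilon H}\rceil\ll \e^{x^2H}$ of the points. Then
\[
 \E S^2\le \E S+M_0^2\,\e^{-2x^2H(1-\rho)}\mathrm{poly},
\]
which gives
\[
 \Prob(S\ge1)\ge \frac{(\E S)^2}{\E S^2}\ge \frac{M_0\e^{-x^2H}}{1+M_0\e^{-x^2H+2x^2\rho H}\cdot\mathrm{poly}}\approx M_0\e^{-x^2H}.
\]
So we obtain $\exp(-(x^2-\varepsilon)H)$, which is stronger than needed. The target $\exp(-(x^2-1+\mathrm{o}(1))H)$ requires using all $M\approx\e^{H}$ points. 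In that case the pair correction $\e^{2x^2\rho H}=\e^{2x^2H/\log m}$ enters, and $H/\log m$ is of order $n$, so this factor is not $\e^{\mathrm{o}(H)}$.

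\textbf{Main obstacle.} The fix is to sharpen the packing. Lemma~\ref{full:full-covariance-volume} only delivers the threshold $\rho=1/\log m$, while the second-moment argument needs correlations $\rho=\mathrm{o}(1)$ with $\rho H$ small compared with the deviation gap, which fails. The alternative I would try first is to use the packing only to reach level $\sqrt H$, with Slepian at the scale of Step 2, and to obtain the extra $x^2-1$ by a shift: tilt the coefficient vector in the direction of the single kernel $\overline{z_0^\alpha}$ via Cameron--Martin. Shifting $g$ by $t\,(\overline{z_0^\alpha})_\alpha/\sqrt N$ with $t=(x-1+\eta)\sqrt H$ costs a density factor of $\e^{-t^2}$ on a typical set. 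It raises $|F_{\theta_0}|$ by $t$ while leaving the typical level $\sqrt H$ of the rest intact. This yields only $\exp(-(x-1)^2H)$, however, so the correct route is a refined second moment on the event $\{|F_{\theta_i}|>x\sqrt H\}$. Here one exploits that the $\e^{2x^2\rho H}$ factor arises only from pairs at small but nonnegligible correlation. This is handled by restricting to pairs with $|K|\le\rho'$ for $\rho'=\mathrm{o}(1/\log m)$, obtained by rerunning Lemma~\ref{full:full-covariance-volume} with a smaller threshold; its proof allows $\rho$ to be any $\e^{-\mathrm{o}(n)}$, since $2\log(C/\rho)$ is absorbed in $B_0$.
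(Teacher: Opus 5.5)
Steps 1 and 2 are fine and coincide with the paper's argument: the greedy packing from Lemma~\ref{full:full-covariance-volume}, followed by Slepian against $\sqrt\rho\,Z_0+\sqrt{1-\rho}\,Z_i$.

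The gap is in Step 3. You never prove the lower bound for the upper tail, because you stop at an obstacle that does not exist. With $\rho=1/\log m$, the factor $\e^{2x^2\rho H}$ \emph{is} $\e^{\mathrm{o}(H)}$: its exponent is $2x^2H/\log m=\mathrm{o}(H)$, uniformly in $n$. This quantity being of order $n$ is no obstruction, because $H\ge\frac{n-1}{2}\log m$. The target $\exp(-(x^2-1+\mathrm{o}(1))H)$ tolerates exactly such factors.

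Consequently, your second-moment count already works with all $M=\e^{(1-\varepsilon_m)H}$ points. With $u^2=x^2H$ you have $\E S=M\e^{-x^2H}=\exp(-(x^2-1+\mathrm{o}(1))H)$. Your bivariate bound holds with constant $(1+\rho)/(1-\rho)\le2$, so
\[
 \sum_{i\ne j}\Prob\{|F_{\theta_i}|>u,\ |F_{\theta_j}|>u\}
 \le2(\E S)^2\e^{2x^2\rho H}
 =\E S\cdot\exp\bigl(-(x^2-1+\mathrm{o}(1))H\bigr)
 =\mathrm{o}(\E S),
\]
because $x>1$. Hence $\Prob(S\ge1)\ge(\E S)^2/\E S^2\ge(1-\mathrm{o}(1))\E S$, which is the claim.

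Several side remarks also point the wrong way.
\begin{enumerate}
\item The bound $\e^{-(x^2-\varepsilon)H}$ obtained from $\e^{\varepsilon H}$ points is weaker than $\e^{-(x^2-1)H}$, not stronger.
\item A lower bound $\exp(-(x-1)^2H)$ would contradict the upper tail in Lemma~\ref{full:full-upper}, since $(x-1)^2<x^2-1$. The error is that at a fixed $\theta_0$ the value $|F_{\theta_0}|$ is of order one, not $\sqrt H$, so a shift by $t$ there does not reach level $x\sqrt H$.
\item Extending Lemma~\ref{full:full-covariance-volume} to every $\rho=\e^{-\mathrm{o}(n)}$ is overstated. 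Its last step needs $\log B=\mathrm{o}(\log(m+m^2/n))$.
\end{enumerate}
None of this is needed.

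The paper avoids second moments entirely. Slepian's inequality also gives $\Prob\{\max_jX_j>t\}\ge\Prob\{\max_jY_j>t\}$. On the event $\{\xi\ge0\}$, which has probability $1/2$ and is independent of the $\xi_j$, one has $Y_j\ge\sqrt{1-\rho}\,\xi_j$. Set $p_x=\Prob\{\xi_1>x\sqrt{2H/(1-\rho)}\}$. Then $\Prob\{\max_j\xi_j>x\sqrt{2H/(1-\rho)}\}\ge1-\e^{-Lp_x}\ge Lp_x/2$, so the tail is at least $\frac14Lp_x$. The inflation by $1/(1-\rho)$ costs only $x^2\rho H/(1-\rho)=\mathrm{o}(H)$ in the exponent, for the same reason as above.
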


\begin{proof}
{\color{black}
By Lemma~\ref{full:full-covariance-volume}, we first extract a large family
of weakly correlated torus points and then apply Slepian's inequality
\cite{Slepian} to the corresponding Gaussian evaluations; compare
\cite[Section~13.2]{BLM}. Write
\[
 N:=N_{m,n},\qquad H:=H_{m,n},\qquad \rho:=\frac1{\log m}.
\]
Consider many points of the torus whose pairwise correlations are
small. Define the high-correlation set
\[
 \mathcal B:=\{\theta\in\TT^n:|K_{m,n}(\theta)|>\rho\},
 \qquad v:=m_n(\mathcal B).
\]
By
Lemma~\ref{full:full-covariance-volume},
\[
 v\le \exp\bigl(-(1-\mathrm{o}(1))H\bigr).
\]

We choose the points recursively. Start with an arbitrary
$\theta^{(1)}\in\mathbb T^n$. Suppose
$\theta^{(1)},\ldots,\theta^{(j)}$ have already been chosen so that every
pairwise difference lies outside $\mathcal B$. At stage $j+1$, a candidate
$\theta\in\TT^n$ is inadmissible precisely when
$\theta-\theta^{(k)}\in\mathcal B$ for at least one $1\le k\le j$.
Thus all inadmissible points lie in
\[
 \bigcup_{k=1}^j\bigl(\theta^{(k)}+\mathcal B\bigr).
\]
Translation invariance of Haar measure gives
\[
 m_n\left(\bigcup_{k=1}^j(\theta^{(k)}+\mathcal B)\right)
 \le\sum_{k=1}^j m_n(\theta^{(k)}+\mathcal B)
 =jv.
\]
If
\[
 j<\left\lfloor\frac1{2v}\right\rfloor,
\]
then $jv\le1/2<1$, so
$\bigcup_{k=1}^j(\theta^{(k)}+\mathcal B)$ cannot fill $\TT^n$. Hence another
admissible point $\theta^{(j+1)}$ can be chosen. Iterating gives at least
$\lfloor1/(2v)\rfloor$ points with
\[
 |K_{m,n}(\theta^{(j)}-\theta^{(k)})|\le\rho
 \qquad(j\ne k).
\]
From Lemma~\ref{full:full-covariance-volume}, there is a nonnegative
sequence $\varepsilon_m\to0$, independent of $n$ in the present range, such
that
\[
 v\le \exp\bigl(-(1-\varepsilon_m)H\bigr).
\]
Set
\[
 \delta_m:=\varepsilon_m+\frac1{\log m}
\]
and choose
\begin{equation}\label{eq:full-large-lower-L}
 L:=\left\lfloor
 \exp\bigl((1-\delta_m)H\bigr)
 \right\rfloor.
\end{equation}
Since $n\ge(\log m)^2$,
\[
 H=\frac{n-1}{2}\log\left(m+\frac{m^2}{n}\right)
 \ge \frac{(\log m)^2-1}{2}\log m,
\]
and hence
\[
 \frac{H}{\log m}\longrightarrow\infty
\]
uniformly in this range. Therefore
\[
 2Lv
 \le
 2\exp\bigl(-(\delta_m-\varepsilon_m)H\bigr)
 =
 2\exp\left(-\frac{H}{\log m}\right)
 \longrightarrow0.
\]
Thus $L\le(2v)^{-1}$ for all sufficiently large $m$, and the recursive
construction yields $\theta^{(1)},\ldots,\theta^{(L)}$ with
\[
 |K_{m,n}(\theta^{(j)}-\theta^{(k)})|\le\rho\qquad(j\ne k).
\]
Since $\delta_m\to0$ and $H\to\infty$ uniformly,
\[
 \log L=(1+\mathrm{o}(1))H.
\]

From these phases define the real Gaussian variables
\[
 X_j:=\sqrt{\frac2N}\operatorname{Re}
 G(e^{i\theta^{(j)}_1},\ldots,e^{i\theta^{(j)}_n}),
 \qquad 1\le j\le L.
\]
Each $X_j$ is centered and
\[
 \int_{\CC^{\mathcal M_{m,n}}}X_j^2\,d\gamma_{m,n}=1.
\]
For $j\ne k$, expanding the evaluations and integrating the Gaussian
coefficients gives
\begin{align*}
 \int_{\CC^{\mathcal M_{m,n}}}X_jX_k\,d\gamma_{m,n}
 &=\frac1N\operatorname{Re}
   \sum_{|\alpha|=m}
   e^{i\langle\alpha,\theta^{(j)}-\theta^{(k)}\rangle}\\
 &=\operatorname{Re}
   K_{m,n}(\theta^{(j)}-\theta^{(k)}).
\end{align*}
Consequently,
\[
 \int_{\CC^{\mathcal M_{m,n}}}X_jX_k\,d\gamma_{m,n}
 \le |K_{m,n}(\theta^{(j)}-\theta^{(k)})|
 \le\rho\qquad(j\ne k).
\]
Thus the construction has produced a large real Gaussian family with
diagonal integrals equal to $1$ and off-diagonal integrals at most $\rho$.
}
{\color{black}Let
\[
 Y_j:=\sqrt\rho\,\xi+\sqrt{1-\rho}\,\xi_j,
\]
where $\xi,\xi_1,\ldots,\xi_L$ are independent standard real Gaussians.
Direct calculation gives
\[
 \E Y_j^2=1,
 \qquad
 \E(Y_jY_k)=\rho\quad(j\ne k).
\]
Thus the diagonal integrals of $(X_j)$ and $(Y_j)$ agree, while the
off-diagonal integrals of $(X_j)$ are no larger. Slepian's comparison
inequality \cite{Slepian} therefore yields
\[
 \int\max_{j\le L}X_j\,d\gamma_{m,n}
 \ge
 \E\max_{j\le L}Y_j.
\]
Since the common variable $\xi$ is added to every coordinate,
\[
 \max_{j\le L}Y_j
 =\sqrt\rho\,\xi+\sqrt{1-\rho}\max_{j\le L}\xi_j.
\]
Taking expectations and using $\E\xi=0$ gives
\[
 \E\max_{j\le L}Y_j
 =
 \sqrt{1-\rho}\,\E\max_{j\le L}\xi_j.
\]
For independent standard real Gaussians,
\[
 \E\max_{j\le L}\xi_j
 =(1+\mathrm{o}(1))\sqrt{2\log L};
\]
see \cite[Section~2.5 and Exercise~2.17]{BLM}.
On the other hand, pointwise in the coefficient vector,
\[
 \|G\|_\infty
 \ge \max_{j\le L}|G(e^{i\theta^{(j)}})|
 \ge \max_{j\le L}\operatorname{Re}G(e^{i\theta^{(j)}})
 =\sqrt{\frac N2}\max_{j\le L}X_j.
\]
Integrating this inequality and using
\[
 \int\max_{j\le L}X_j\,d\gamma_{m,n}
 \ge\sqrt{1-\rho}\,\E\max_{j\le L}\xi_j,
\]
we obtain
\begin{align*}
 \frac1{\sqrt N}
 \int_{\CC^{\mathcal M_{m,n}}}\|G\|_\infty\,d\gamma_{m,n}
 &\ge\frac1{\sqrt2}
   \int\max_{j\le L}X_j\,d\gamma_{m,n}\\
 &\ge\sqrt{\frac{1-\rho}{2}}\,
   \E\max_{j\le L}\xi_j\\
 &\ge(1-\mathrm{o}(1))\sqrt H,
\end{align*}
because $\log L=(1+\mathrm{o}(1))H$ and $\rho\to0$.}

{\color{black}
For the probability estimate, Slepian's inequality gives
\[
 \Prob\{\max_jX_j>t\}\ge\Prob\{\max_jY_j>t\}.
\]
On the event $\{\xi\ge0\}$,
\[
 Y_j=\sqrt\rho\,\xi+\sqrt{1-\rho}\,\xi_j
 \ge\sqrt{1-\rho}\,\xi_j.
\]
Hence, with $t=x\sqrt{2H}$,
\begin{align*}
 \Prob\{\|G\|_\infty>x\sqrt{NH}\}
 &\ge\Prob\{\max_jY_j>x\sqrt{2H}\}\\
 &\ge\Prob\left\{\xi\ge0,
       \max_j\xi_j>x\sqrt{\frac{2H}{1-\rho}}\right\}\\
 &=\frac12\Prob\left\{
       \max_j\xi_j>x\sqrt{\frac{2H}{1-\rho}}\right\}.
\end{align*}
Put
\[
 t_x:=x\sqrt{\frac{2H}{1-\rho}},
 \qquad
 p_x:=\Prob\{\xi_1>t_x\}.
\]
For a standard real Gaussian variable $\xi_1$,
\[
 p_x=\frac1{\sqrt{2\pi}}\int_{t_x}^{\infty}e^{-u^2/2}\,du.
\]
Integration by parts gives, for $t>0$,
\[
 \frac1{\sqrt{2\pi}}\frac{t}{1+t^2}e^{-t^2/2}
 \le \Prob\{\xi_1>t\}
 \le \frac1{\sqrt{2\pi}t}e^{-t^2/2}.
\]
Therefore
\[
 \log p_x
 =-\frac{x^2}{1-\rho}H+O(\log H).
\]
Since $\log L=(1+o(1))H$, $\rho=o(1)$, and $\log H=o(H)$,
\[
 \log(Lp_x)=-(x^2-1+o(1))H,
\]
so
\[
 Lp_x=\exp\bigl(-(x^2-1+o(1))H\bigr)\longrightarrow0.
\]
{\color{black}Finally, independence gives
\[
 \Prob\{\max_{j\le L}\xi_j\le t_x\}=(1-p_x)^L,
\]
and hence
\[
 \Prob\{\max_{j\le L}\xi_j>t_x\}=1-(1-p_x)^L.
\]
Since $Lp_x\to0$, for all sufficiently large $m$ we have $Lp_x\le1/2$.
Using
\[
 (1-p_x)^L\le e^{-Lp_x}
\]
and the elementary inequality $1-e^{-u}\ge u/2$ for $0\le u\le1$,
\[
 \Prob\{\max_{j\le L}\xi_j>t_x\}
 \ge1-e^{-Lp_x}
 \ge\frac{Lp_x}{2}.
\]
Therefore
\begin{align*}
 \Prob\{\|G\|_\infty>x\sqrt{NH}\}
 &\ge\frac12\Prob\{\max_{j\le L}\xi_j>t_x\}\\
 &\ge\frac14\,Lp_x\\
 &=\exp\bigl(-(x^2-1+\mathrm{o}(1))H\bigr),
\end{align*}
which is the asserted lower bound for the upper tail.}
}
\end{proof}

\subsection{Uniform Gaussian supremum asymptotics}

\begin{proposition}\label{full:full-norm}
{\color{black}Let $N_{m,n}$ and $H_{m,n}$ be given by \eqref{full:full-entropy}. Let
$g=(g_\alpha)_{|\alpha|=m}$ have independent standard complex Gaussian
coordinates, let $\gamma_{m,n}$ be the corresponding standard complex
Gaussian measure on $\CC^{\mathcal M_{m,n}}$, and set
\[
 G(z):=\sum_{|\alpha|=m}g_\alpha z^\alpha.
\]
As $m\to\infty$,
\begin{equation}\label{full:full-mean}
 \sup_{n\ge2}\left|
 {\color{black}
 \frac{\displaystyle\int_{\CC^{\mathcal M_{m,n}}}\|G\|_\infty\,d\gamma_{m,n}}
 {\sqrt{N_{m,n}H_{m,n}}}}-1
 \right|\longrightarrow0.
\end{equation}
Moreover,
\[
 \frac{\|G\|_\infty}{\sqrt{N_{m,n}H_{m,n}}}
 \longrightarrow1
\]
in Gaussian measure, uniformly in $n\ge2$. For every fixed $x>1$,
\begin{equation}\label{full:full-upper-rate}
 \frac{1}{H_{m,n}}
 \log\Prob\left\{\|G\|_\infty>x\sqrt{N_{m,n}H_{m,n}}\right\}
 =-(x^2-1)+\mathrm{o}(1).
\end{equation}
}
\end{proposition}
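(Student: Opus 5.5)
The plan is to assemble the proposition from the two one-sided estimates already proved, Lemma~\ref{full:full-upper} and Lemma~\ref{full:full-large-lower}. The one gap is the lower estimate in the range $2\le n<(\log m)^2$, which is not covered by Lemma~\ref{full:full-large-lower}.

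\textbf{Assembly.} The upper half of \eqref{full:full-mean} and the upper bound in \eqref{full:full-upper-rate} are exactly Lemma~\ref{full:full-upper}, uniformly in $n\ge2$. Write $X:=\|G\|_\infty/\sqrt{N}=\sup_\theta|F(\theta)|$, with $F$ as in Lemma~\ref{full:full-metric}. Each $F(\theta)(g)$ is a linear functional of $g$ whose coefficient vector has Euclidean norm one. Hence $g\mapsto X$ is $1$-Lipschitz on $\CC^{\mathcal M_{m,n}}\simeq\RR^{2N}$, and the Gaussian Lipschitz concentration inequality \cite[Section~5.4]{BLM} gives
\[
 \Prob\{|X-\E X|>t\}\le 2e^{-ct^2}.
\]
Since $H_{m,n}\to\infty$ uniformly in $n\ge2$ (indeed $H_{m,n}\ge\tfrac12\log m$), once \eqref{full:full-mean} is proved the convergence $X/\sqrt{H}\to1$ in measure follows, uniformly in $n$. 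Convergence in measure itself only needs the two-sided mean estimate. The lower bound in \eqref{full:full-upper-rate} is the second assertion of Lemma~\ref{full:full-large-lower} whenever $n\ge(\log m)^2$. So everything reduces to proving, for $2\le n<(\log m)^2$, the two lower estimates of Lemma~\ref{full:full-large-lower}.

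\textbf{Strategy for the remaining range.} In the proof of Lemma~\ref{full:full-large-lower}, the only input that used $n\ge(\log m)^2$ was the bad-volume bound of Lemma~\ref{full:full-covariance-volume}. The rest of that proof uses only $v\le e^{-(1-\mathrm{o}(1))H}$ and $H/\log m\to\infty$. The packing construction, Slepian's comparison with the equicorrelated family, and the Gaussian maximum asymptotics then go through verbatim.

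The condition $H/\log m\to\infty$ fails for bounded $n$. I would fix this by choosing $\rho:=1/\log\log m$ and $L:=\lfloor e^{(1-\delta_m)H}\rfloor$ with $\delta_m\to0$ more slowly than $\varepsilon_m+(\log\log m)/H$. This requires only the following: bad volume $v\le e^{-(1-\varepsilon_m)H}$ with a threshold $\rho\to0$.

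\textbf{Main obstacle: the bad-volume bound for small $n$.} I would keep the Cauchy-formula reduction of Lemma~\ref{full:full-covariance-volume} unchanged; it is valid for every $n$. It shows that $|K_{m,n}(\theta)|>\rho$ forces $Q_s(\theta+t_\ell\mathbf 1)\le B$ for some $t_\ell$ in a grid of size $O(\sqrt{m(m+n)})$, where $B=\log(m+m^2/n)+O(\log(1/\rho))$. The Markov step with exponent $p=n/(2B)$ is what needs $n$ large, so I would replace it by a direct volume computation.

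For a fixed grid phase, the set $\{\phi:\sum_j\log(1+4s^2\sin^2(\phi_j/2))\le B\}$ is contained in a union over $b\in\RR_{\ge0}^n$ with $\sum_j b_j\le B$ of boxes $|\phi_j|\le C e^{b_j/2}/s$. Discretize the $b_j$ in unit steps. There are at most $\binom{\lceil B\rceil+n}{n}$ choices, each box has measure at most $(C/s)^n e^{B/2}$, and $e^{B/2}=O\bigl(\sqrt{m+m^2/n}\,/\rho\bigr)$.

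Combining this with the grid factor and $s^{-n}=(m+m^2/n)^{-n/2}\,n^{n/2}\cdot(\text{bounded})$, the logarithm of the bound is
\[
 -\frac n2\log\Bigl(m+\frac{m^2}{n}\Bigr)+O\bigl(n\log(B+n)+\log m+\log(1/\rho)\bigr).
\]
For $n<(\log m)^2$ we have $n\log(B+n)=O(n\log\log m)=\mathrm{o}(H)$ and $\log m=O(H)$. The term $\log m$ is not $\mathrm{o}(H)$ when $n=2$, and this is the delicate point. I would absorb it by using the exact exponent $-\tfrac n2$ against $H=\tfrac{n-1}{2}\log(\cdot)$. The surplus $\tfrac12\log(m+m^2/n)\ge\tfrac12\log m$ must dominate $\log\sqrt{m(m+n)}+\log(1/\rho)$ up to $\mathrm{o}(H)$. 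If the grid factor makes this marginal, I would first try to remove it by exploiting the homogeneity reduction: restrict to $\theta_n=0$ as in Lemma~\ref{full:full-metric}(ii), so that the phase $t$ becomes a genuine coordinate rather than an extra union bound. This is the step I expect to require the most care. Once it is in hand, the mean and tail lower bounds follow exactly as in Lemma~\ref{full:full-large-lower}, completing \eqref{full:full-mean} and \eqref{full:full-upper-rate}.
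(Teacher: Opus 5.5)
Your reduction to the range $2\le n<(\log m)^2$ is correct and matches the paper: the upper bounds come from Lemma~\ref{full:full-upper}, the lower bounds for $n\ge(\log m)^2$ from Lemma~\ref{full:full-large-lower}, and convergence in measure from Lipschitz concentration once the two-sided mean estimate holds.

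\textbf{The gap.} The bad-volume step for bounded $n$ fails. This is not a bookkeeping problem that the phase grid or the restriction $\theta_n=0$ could repair; the single-circle Cauchy estimate is itself too lossy in that range. It overestimates $|K_{m,n}|$ by a factor of order $\sqrt{m+m^2/n}$: at $\theta=0$ it gives $C\sqrt{m+m^2/n}$, whereas $K_{m,n}(0)=1$.

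Concretely, $\log(1+4s^2)\le\log\bigl(3(m+m^2/n)\bigr)$, while $B_0\ge\log(m+m^2/n)+2\log(C/\rho)$. Hence $Q_s(\phi)\le B_0$ holds whenever one coordinate $\phi_j$ is arbitrary and $|\phi_k|\le n^{-1/2}/s$ for all $k\ne j$. The Cauchy reduction lets you control only the set $\{\theta:\min_tQ_s(\theta+t\mathbf 1)\le B_0\}$, and this set is large:
\begin{itemize}
\item for $n=2$ it is all of $\TT^2$;
\item for fixed $n\ge3$ its measure is at least $c_n s^{-(n-2)}=\exp\bigl(-(1-\tfrac1{n-1})H_{m,n}+O_n(1)\bigr)$.
\end{itemize}
So no volume computation built on this reduction can give $v\le e^{-(1-\mathrm{o}(1))H}$ for bounded $n$. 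The loss $\tfrac12\log(m+m^2/n)\approx H/(n-1)$ is negligible only when $n\to\infty$, which is exactly why Lemma~\ref{full:full-covariance-volume} is restricted to large $n$. A packing proof for bounded $n$ would need a genuinely sharper estimate of $K_{m,n}$ than the single-circle Cauchy bound, and your proposal does not supply one.

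\textbf{The missing idea.} In the range $2\le n<(\log m)^2$, which the paper handles separately, no packing is needed. Lemma~\ref{endpoint:endpoint-arbitrary-center} with $h=0$ and $\sigma=1$ gives
\[
 \Prob\{\|G\|_\infty\le u\sqrt N\}\le\exp(-Ne^{-u^2}).
\]
In this range $\log N_{m,n}=(1+\mathrm{o}(1))H_{m,n}$ uniformly, since both equal $(n-1)\log m\,\bigl(1+O(\log\log m/\log m)\bigr)$. Now take $u=x\sqrt H$.
\begin{itemize}
\item For $x<1$ this yields $\Prob\{\|G\|_\infty\le x\sqrt{NH}\}\le\exp[-\exp((1-x^2+\mathrm{o}(1))H)]$. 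Hence the normalized mean is at least $x\,\Prob\{\|G\|_\infty>x\sqrt{NH}\}=x-\mathrm{o}(1)$; letting $x\uparrow1$ gives the missing lower half of \eqref{full:full-mean}.
\item For $x>1$ it yields $\Prob\{\|G\|_\infty>x\sqrt{NH}\}\ge1-\exp(-Ne^{-x^2H})=\exp(-(x^2-1+\mathrm{o}(1))H)$, because $Ne^{-x^2H}\to0$. This is the missing lower bound in \eqref{full:full-upper-rate}.
\end{itemize}
The Brascamp--Lieb small-ball estimate behaves like having $N$ independent evaluations, and $\log N\sim H$ is exactly the entropy needed here.
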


\begin{proof}[Proof of \cref{full:full-norm}]
{\color{black}Write
\[
 N:=N_{m,n},\qquad H:=H_{m,n},\qquad d:=n-1.
\]
Lemma~\ref{full:full-upper} gives, uniformly for all $n\ge2$,
\[
 \int_{\CC^{\mathcal M_{m,n}}}\|G\|_\infty\,d\gamma_{m,n}
 \le(1+\mathrm{o}(1))\sqrt{NH}
\]
and, for every fixed $x>1$,
\[
 \Prob\{\|G\|_\infty>x\sqrt{NH}\}
 \le\exp\bigl(-(x^2-1+\mathrm{o}(1))H\bigr).
\]
When $n\ge(\log m)^2$, Lemma~\ref{full:full-large-lower} gives the matching
lower mean estimate and the matching lower bound for the upper tail. Hence the
mean asymptotic and the upper-tail rate already hold in that range.

For $2\le n<(\log m)^2$,
Lemma~\ref{endpoint:endpoint-arbitrary-center}, applied with zero center,
gives
\begin{equation}\label{full:full-bl-use}
 \Prob\{\|G\|_\infty\le u\sqrt N\}
 \le(1-\e^{-u^2})^N
 \le\exp(-N\e^{-u^2}).
\end{equation}
{\color{black}To compare $N$ and $H$, write
\[
 N=\binom{m+d}{d}
   =\prod_{j=1}^d\frac{m+j}{j}.
\]
Hence
\begin{align*}
 \log N
 &=\sum_{j=1}^d\log(m+j)-\log(d!)\\
 &=d\log m+\sum_{j=1}^d\log\left(1+\frac jm\right)-\log(d!).
\end{align*}
Since $d<(\log m)^2$,
\[
 0\le\sum_{j=1}^d\log\left(1+\frac jm\right)
 \le\frac1m\sum_{j=1}^d j
 =O(d^2/m),
\]
while Stirling's formula \cite[Section~5.11]{NIST} gives $\log(d!)=d\log d-d+O(\log(d+1))$. Therefore
\[
 \log N
 =d\log m\left(
 1+O\left(\frac{\log\log m}{\log m}\right)
 \right).
\]
From the definition of $H$,
\[
 H
 =\frac d2\log\left(m+\frac{m^2}{d+1}\right)
 =d\log m\left(
 1+O\left(\frac{\log\log m}{\log m}\right)
 \right),
\]
uniformly for $2\le n<(\log m)^2$. Consequently
\[
 \frac{\log N}{H}\longrightarrow1
\]
uniformly in this range.}

Fix $0<x<1$ and take $u=x\sqrt H$ in \eqref{full:full-bl-use}. Then
\begin{equation}\label{full:full-small-lower-tail}
 \Prob\{\|G\|_\infty\le x\sqrt{NH}\}
 \le\exp\left[-\exp\bigl((1-x^2+\mathrm{o}(1))H\bigr)\right],
\end{equation}
so the normalized supremum cannot stay below any fixed $x<1$ with
non-negligible probability. In particular,
\[
 \frac{\displaystyle\int\|G\|_\infty\,d\gamma_{m,n}}{\sqrt{NH}}
 \ge x\,\Prob\{\|G\|_\infty>x\sqrt{NH}\}=x-o(1).
\]
Letting $x\uparrow1$ and combining this with the upper mean estimate from
Lemma~\ref{full:full-upper} proves \eqref{full:full-mean}.

For the lower bound on the upper tail, fix $x>1$ and again use
\eqref{full:full-bl-use}, now with $u=x\sqrt H$. Since
$N\e^{-x^2H}=\exp(-(x^2-1+\mathrm{o}(1))H)\to0$, {\color{black}the elementary relation $1-e^{-z}=z(1+\mathrm{o}(1))$ as $z\downarrow0$ gives}
\begin{align*}
 \Prob\{\|G\|_\infty>x\sqrt{NH}\}
 &\ge1-\exp(-N\e^{-x^2H})\\
 &={\color{black}N\e^{-x^2H}(1+\mathrm{o}(1))}\\
 &=\exp\bigl(-(x^2-1+\mathrm{o}(1))H\bigr).
\end{align*}
{\color{black}Together with the upper estimate from
Lemma~\ref{full:full-upper}, this gives
\[
 \frac1H\log\Prob\{\|G\|_\infty>x\sqrt{NH}\}
 =-(x^2-1)+\mathrm{o}(1)
\]
also in the range $2\le n<(\log m)^2$.

{\color{black}\hypersetup{linkcolor=black,citecolor=black}To prove convergence in Gaussian measure, let $g,h$ be two coefficient
vectors and write $G_g,G_h$ for the corresponding polynomials. Then}
\begin{align*}
 \bigl|\|G_g\|_\infty-\|G_h\|_\infty\bigr|
 &\le\|G_g-G_h\|_\infty\\
 &=\|G_{g-h}\|_\infty\\
 &\le\sum_{|\alpha|=m}|g_\alpha-h_\alpha|\\
 &\le\sqrt N\,\|g-h\|_2.
\end{align*}
Thus $g\mapsto\|G_g\|_\infty/\sqrt N$ is $1$-Lipschitz for the complex
Euclidean norm. If the standard complex Gaussian vector is written as
$(X+iY)/\sqrt2$ with $(X,Y)$ a standard real Gaussian vector, the same map
has Lipschitz constant $1/\sqrt2$ in the underlying real coordinates.
Gaussian concentration \cite[Section~5.4]{BLM} therefore gives
\[
 \Prob\left\{\left|
 \frac{\|G\|_\infty}{\sqrt N}
 -\frac1{\sqrt N}\int\|G\|_\infty\,d\gamma_{m,n}
 \right|>t\right\}
 \le2e^{-t^2}.
\]}
{\color{black}Taking $t=\varepsilon\sqrt H$ gives
\[
 \Prob\left\{\left|
 \frac{\|G\|_\infty}{\sqrt{NH}}-
 \frac{\displaystyle\int\|G\|_\infty\,d\gamma_{m,n}}{\sqrt{NH}}
 \right|>\varepsilon\right\}
 \le2e^{-\varepsilon^2H}\longrightarrow0,
\]
uniformly for $n\ge2$, since $H\to\infty$ uniformly. Together with
\eqref{full:full-mean}, this proves the asserted convergence in Gaussian
measure.}}
\end{proof}

\subsection{Proof of Theorem C}\label{sec:proof-C}

\begin{proof}
{\color{black}Set
\[
 N:=N_{m,n},\qquad H:=H_{m,n},
\]
and let $g=(g_\alpha)_{|\alpha|=m}$ be the standard complex Gaussian
coefficient vector, with
\[
 G(z):=\sum_{|\alpha|=m}g_\alpha z^\alpha.
\]
Put
\[
 U:=\frac{\|g\|_{q_m}}{N^{1/q_m}},\qquad
 V:=\frac{\|G\|_\infty}{\sqrt{NH}}.
\]
Then
\[
 \frac{\sqrt H}{N^{1/(2m)}}R_m(g)=\frac UV,
\]
because $1/q_m=1/2+1/(2m)$.

{\color{black}To control $U$, fix $1\le q\le2$ and set
\[
 \mu_q:=
 \int_{\CC}|z|^q\pi^{-1}e^{-|z|^2}\,dz
 =\Gamma(1+q/2),
 \qquad
 Z_\alpha:=|g_\alpha|^q-\mu_q.
\]
Since $|z|^q\le1+|z|^2$ for $1\le q\le2$, the {\color{black}\hypersetup{linkcolor=black,citecolor=black}estimates}
\[
 \int_{\CC}\bigl||z|^q-\mu_q\bigr|^k
 \pi^{-1}e^{-|z|^2}\,dz
 \le C^k k!,
 \qquad k\ge2,
\]
hold with an absolute constant uniformly in $q$.}
Hence there are absolute constants $c,C>0$ such that
\[
 \log\int_{\CC}e^{t(|z|^q-\mu_q)}
 \pi^{-1}e^{-|z|^2}\,dz
 \le Ct^2\qquad(|t|\le c).
\]
For the independent sum $\sum_\alpha Z_\alpha$ this gives
\[
 \log\int_{\CC^{\mathcal M_{m,n}}}
 \exp\left(t\sum_\alpha Z_\alpha\right)\,d\gamma_{m,n}
 \le CNt^2.
\]
Choosing $t$ proportional to $\varepsilon$ in the exponential Markov
inequality, and applying the same estimate to $-Z_\alpha$, yields
\[
 \Prob\left\{
 \left|\frac1N\sum_\alpha |g_\alpha|^q-\mu_q\right|>\varepsilon
 \right\}
 \le2\exp(-c\varepsilon^2N),\qquad 0<\varepsilon<1.
\]
{\color{black}\hypersetup{linkcolor=black,citecolor=black}The values $\mu_q$ remain in a fixed compact subinterval of $(0,\infty)$.
For $1\le q\le2$, the map $s\mapsto s^{1/q}$ is concave.} If $x\ge\mu_q$,
the mean value theorem gives
\[
 x^{1/q}-\mu_q^{1/q}
 \le \frac1q\,\mu_q^{1/q-1}(x-\mu_q).
\]
{\color{black}\hypersetup{linkcolor=black,citecolor=black}If $0\le x\le\mu_q$, concavity and the value at zero give
\[
 x^{1/q}\ge\frac{x}{\mu_q}\mu_q^{1/q},
\]
and therefore
\[
 \mu_q^{1/q}-x^{1/q}
 \le\mu_q^{1/q-1}(\mu_q-x).
\]
Hence}
\[
 |x^{1/q}-\mu_q^{1/q}|
 \le \mu_q^{1/q-1}|x-\mu_q|
 \le C|x-\mu_q|,
 \qquad x\ge0,\quad 1\le q\le2,
\]
with an absolute constant $C$. Applying this estimate to the empirical mean
gives the same exponential bound, up to absolute constants, after taking
the $q$th root. Thus, for $q=q_m$,
\begin{equation}\label{full:full-coefficients}
 \Prob\left\{\left|
 U-\Gamma(1+q_m/2)^{1/q_m}\right|>\varepsilon\right\}
 \le2\exp(-c\varepsilon^2N),\qquad 0<\varepsilon<1.
\end{equation}
Because $q_m\to2$, the center in \eqref{full:full-coefficients} tends to
one; and since $N\ge m+1$, it follows that $U\to1$ in Gaussian measure
uniformly in $n$. Proposition~\ref{full:full-norm} gives $V\to1$ in Gaussian
measure uniformly in $n$. Hence $U/V\to1$, proving
\eqref{eq:full-exact-main} under Gaussian measure.

For the lower-deviation rate, note that
\[
 N\ge\frac{n(m+1)}2,\qquad H\le n\log m,
 \qquad \frac NH\ge\frac{m+1}{2\log m}\longrightarrow\infty.
\]
Here $N/n$ is increasing in $n$ and equals $(m+1)/2$ at $n=2$.
Fix $y\in(0,1)$ and choose $\eta>0$ so small that $(1-\eta)/y>1$.
Then
\begin{align*}
 \Prob\{V>(1+\eta)/y\}-\Prob\{U>1+\eta\}
 &\le\Prob\{U/V<y\}\\
 &\le\Prob\{V>(1-\eta)/y\}+\Prob\{U<1-\eta\}.
\end{align*}
By \eqref{full:full-coefficients}, the two coefficient errors are
$\exp(-cN)$ and are therefore negligible on the scale $H$, since
$N/H\to\infty$. Applying \eqref{full:full-upper-rate} to the two $V$-terms
and then letting $\eta\downarrow0$ gives
\eqref{full:full-ratio-rate} under Gaussian measure.

Both assertions concern homogeneous degree-zero functions of the coefficient
vector. By Lemma~\ref{lem:gaussian-radius-direction}, their Gaussian and
spherical probabilities coincide, proving the two uniform assertions.}

\medskip
To obtain \eqref{eq:full-fixed-n-main}, fix $n\ge2$. Then
\eqref{full:full-entropy} gives
\[
 H_{m,n}
 =\frac{n-1}{2}\log\left(m+\frac{m^2}{n}\right)
 =(n-1)\log m+O_n(1),
\]
and hence $H_{m,n}/\log m\to n-1$. Moreover,
\[
 \frac1{2m}\log N_{m,n}
 =\frac1{2m}\log\binom{m+n-1}{m}\longrightarrow0,
\]
because $N_{m,n}$ has polynomial growth in $m$ when $n$ is fixed. Thus
$N_{m,n}^{1/(2m)}\to1$, and consequently
\[
 \frac{N_{m,n}^{1/(2m)}\sqrt{\log m}}{\sqrt{H_{m,n}}}
 \longrightarrow\frac1{\sqrt{n-1}}.
\]
Multiplying the convergence in Theorem~\ref{thm:full-exact-main}\textup{(i)}
by this deterministic factor gives
\[
 \sqrt{\log m}\,R_m\longrightarrow\frac1{\sqrt{n-1}}
 \qquad\text{in }\mu_{m,n}\text{-measure},
\]
which is \eqref{eq:full-fixed-n-main}.
\end{proof}

\section{The real case}\label{sec:real-comparison}

The real and complex polynomial inequalities have different extremal
asymptotics.  If $D_m^{\mathbb R}$ denotes the optimal real polynomial
Bohnenblust--Hille constant, then
\begin{equation}\label{eq:real-extremal-rate}
 \limsup_{m\to\infty}(D_m^{\mathbb R})^{1/m}=2
\end{equation}
by \cite{CamposRealPolynomial}.  The spherical behavior also depends on the scalar field and, in the real case, on the ambient dimension.

\subsection{The threshold $1$ fails over the reals}

{\color{black}The next proposition gives the simplest obstruction to the complex threshold from Theorem~\ref{thm:tail-main}: already on a two-monomial real support, a positive proportion of coefficient directions have ratio strictly larger than $1$.}

Let $\sigma_1$ denote normalized arclength measure on
$S^1\subset\mathbb R^2$.

\begin{proposition}\label{prop:real-two-monomial}
Let $m$ be even and
\[
 P_{a,b}(x,y)=ax^m+by^m,
 \qquad (a,b)\in S^1.
\]
Then
\begin{equation}\label{eq:real-two-monomial-one}
 \sigma_1\{(a,b):R_m^{\mathbb R}(P_{a,b})>1\}\ge\frac12.
\end{equation}
Moreover, for every $1<\theta<\sqrt2$,
\begin{equation}\label{eq:real-two-monomial-theta}
 \liminf_{\substack{m\to\infty\\ m\ \mathrm{even}}}
 \sigma_1\{(a,b):R_m^{\mathbb R}(P_{a,b})>\theta\}>0.
\end{equation}
\end{proposition}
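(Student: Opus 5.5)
The plan is to compute $\|P_{a,b}\|_{\infty,\mathbb R}$ explicitly and then read off the ratio as a function of $(a,b)$ on $S^1$. Since $m$ is even, $x^m$ and $y^m$ range independently over $[0,1]$ as $(x,y)$ ranges over $[-1,1]^2$. Hence $P_{a,b}(x,y)=as+bt$ with $(s,t)\in[0,1]^2$, and the supremum of the modulus of this linear function is attained at a vertex of the square. This gives
\[
 \|P_{a,b}\|_{\infty,\mathbb R}=\max\{|a|,|b|,|a+b|\}.
\]
If $ab\ge0$, this equals $|a|+|b|=\|(a,b)\|_1$. If $ab<0$, it equals $\max\{|a|,|b|\}=\|(a,b)\|_\infty$. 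The numerator is $\|(a,b)\|_{q_m}$ with $q_m=2m/(m+1)\in[4/3,2)$.

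For \eqref{eq:real-two-monomial-one}, restrict to the two open quadrants where $ab<0$; these have $\sigma_1$-measure exactly $1/2$. Off the axes both coordinates are nonzero, so $\|(a,b)\|_{q_m}>\|(a,b)\|_\infty$. Hence $R_m^{\mathbb R}(P_{a,b})>1$ on these quadrants, which gives the bound $\ge 1/2$. The axes have measure zero, so they do not matter.

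For \eqref{eq:real-two-monomial-theta}, take $(a,b)=(\cos\phi,-\sin\phi)$ with $\phi\in(0,\pi/2)$ and define
\[
 r_m(\phi)=\frac{(|\cos\phi|^{q_m}+|\sin\phi|^{q_m})^{1/q_m}}{\max\{|\cos\phi|,|\sin\phi|\}}.
\]
As $m\to\infty$ we have $q_m\to2$, so $r_m(\phi)\to 1/\max\{|\cos\phi|,|\sin\phi|\}$. This convergence is uniform in $\phi$, because $(u,v)\mapsto\|(u,v)\|_q$ is jointly continuous in $q\in[4/3,2]$ on the compact circle and the denominator is at least $1/\sqrt2$. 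The limit function equals $\sqrt2$ at $\phi=\pi/4$. So for $1<\theta<\sqrt2$ the set $I_\theta=\{\phi:1/\max\{|\cos\phi|,|\sin\phi|\}>\theta'\}$, with $\theta<\theta'<\sqrt2$, is a nondegenerate interval around $\pi/4$. By uniform convergence, $r_m>\theta$ on $I_\theta$ for all large even $m$. The same holds in the second mixed-sign quadrant by symmetry, so the liminf is at least $2\sigma_1(I_\theta)>0$.

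A monotonicity shortcut avoids the uniformity argument: $\|\cdot\|_{q}\ge\|\cdot\|_2$ for $q\le2$, so $r_m(\phi)\ge 1/\max\{|\cos\phi|,|\sin\phi|\}$ for every $m$. The set is therefore bounded below by a fixed positive measure, independent of $m$.

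The main point to check carefully is the norm computation, specifically that the evenness of $m$ makes the image of $[-1,1]^2$ under $(x,y)\mapsto(x^m,y^m)$ exactly $[0,1]^2$. The rest is elementary.
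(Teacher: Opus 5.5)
Your proof is correct. The norm computation $\|P_{a,b}\|_{\infty,\mathbb R}=\max\{|a|,|b|,|a+b|\}$ and the argument for \eqref{eq:real-two-monomial-one} are exactly the paper's. Your first argument for \eqref{eq:real-two-monomial-theta} is also the paper's route: uniform convergence of the ratio to $1/\max\{|a|,|b|\}$ near the antidiagonal points, and then an arc on which the limit exceeds some $\theta'>\theta$.

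Your monotonicity shortcut, however, is a genuinely simpler argument that the paper does not use. Since $\|(a,b)\|_{q_m}\ge\|(a,b)\|_2=1$ on $S^1$, you get $R_m^{\mathbb R}(P_{a,b})\ge 1/\max\{|a|,|b|\}$ on $\{ab<0\}$ for every even $m$. Hence $\{R_m^{\mathbb R}>\theta\}$ contains a fixed pair of open arcs, independent of $m$. Their total $\sigma_1$-measure is $\tfrac12-\tfrac2\pi\arccos(1/\theta)$, which is positive exactly when $\theta<\sqrt2$.

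This version needs no limit and no continuity in $q$. It gives the lower bound for every even $m\ge2$ rather than only asymptotically, with an explicit constant. The paper's limiting argument additionally identifies the limit profile of the ratio on the mixed-sign arcs, but the stated $\liminf$ bound does not need that. With the shortcut in hand, you could drop the uniform-convergence paragraph entirely.
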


\begin{proof}
Since $m$ is even, $x^m,y^m\in[0,1]$ on $[-1,1]^2$, and hence
\begin{equation}\label{eq:real-two-monomial-norm}
 \|P_{a,b}\|_{\infty,\mathbb R}
 =\max\{|a|,|b|,|a+b|\}.
\end{equation}
{\color{black}On the two open arcs where $ab<0$ one has
$|a+b|\le\max\{|a|,|b|\}$. Apart from the four points with one coordinate
zero, \eqref{eq:real-two-monomial-norm} therefore gives
\begin{equation}\label{eq:real-two-monomial-ratio}
 R_m^{\mathbb R}(P_{a,b})
 =\frac{(|a|^{q_m}+|b|^{q_m})^{1/q_m}}
        {\max\{|a|,|b|\}}>1
 \qquad(ab<0).
\end{equation}
The two arcs have total $\sigma_1$-measure $1/2$, which proves
\eqref{eq:real-two-monomial-one}.

For the stronger assertion \eqref{eq:real-two-monomial-theta},
$q_m\to2$ and $a^2+b^2=1$ on $S^1$. Hence
\eqref{eq:real-two-monomial-ratio} gives, for every $(a,b)$ with $ab<0$,
\begin{equation}\label{eq:real-two-monomial-limit}
 R_m^{\mathbb R}(P_{a,b})
 \longrightarrow
 \frac{(a^2+b^2)^{1/2}}{\max\{|a|,|b|\}}
 =\frac1{\max\{|a|,|b|\}}.
\end{equation}
At
\[
 p_+:=(2^{-1/2},-2^{-1/2}),
 \qquad
 p_-:=-p_+,
\]
the limit in \eqref{eq:real-two-monomial-limit} equals $\sqrt2$.

Fix $1<\theta<\sqrt2$. Choose $\eta>0$ so that
$\theta+\eta<\sqrt2$. By continuity of
\[
 (a,b)\longmapsto\frac1{\max\{|a|,|b|\}}
\]
on the open set $\{(a,b)\in S^1:ab<0\}$, there are open arcs
$I_+$ and $I_-$ containing $p_+$ and $p_-$, respectively, such that
\[
 \frac1{\max\{|a|,|b|\}}>\theta+\eta
 \qquad\bigl((a,b)\in I_+\cup I_-\bigr).
\]
Shrinking the arcs if necessary, their closures are compact and remain
inside $\{ab<0\}$. The convergence in
\eqref{eq:real-two-monomial-limit} is uniform on these compact arcs, since
the numerator depends continuously on $(a,b,q)$ for $q$ near $2$ and the
denominator is bounded away from zero. Hence, for all sufficiently large
even $m$,
\[
 R_m^{\mathbb R}(P_{a,b})>\theta
 \qquad\bigl((a,b)\in I_+\cup I_-\bigr).
\]
Consequently,
\[
 \sigma_1\{(a,b):R_m^{\mathbb R}(P_{a,b})>\theta\}
 \ge \sigma_1(I_+\cup I_-)>0
\]
for all sufficiently large even $m$, and
\eqref{eq:real-two-monomial-theta} follows.}
\end{proof}

Thus the complex endpoint statement in Theorem~\ref{thm:tail-main} has no
real analogue even on a support of cardinality two.

\subsection{The full real polynomial space in fixed dimension}

{\color{black}Fix $n\ge2$, put $N=N_{m,n}$, and let
$\gamma_{m,n}^{\mathbb R}$ be standard Gaussian measure on
$\mathbb R^{\mathcal M_{m,n}}$.  For $g=(g_\alpha)_{|\alpha|=m}$ set
\[
 P_g^{\mathbb R}(x)=\sum_{|\alpha|=m}g_\alpha x^\alpha,
 \qquad A_m(g)=\|g\|_{q_m},\qquad
 B_m(g)=\|P_g^{\mathbb R}\|_{\infty,\mathbb R}.
\]
{\color{black}Since $A_m/B_m$ is homogeneous of degree zero, its level
sets are radial. By \eqref{eq:radial-transfer-principle}, their
$\gamma_{m,n}^{\mathbb R}$-probabilities are exactly the corresponding
$\mu_{m,n}^{\mathbb R}$-probabilities of $R_m^{\mathbb R}$.}

\begin{lemma}\label{lem:real-numerator-fixed}
{\color{black}Fix $n\ge2$ and let
$g=(g_\alpha)_{|\alpha|=m}$ be a standard real Gaussian vector. Then
\[
 \frac{\|g\|_{q_m}}{\sqrt{N_{m,n}}}\longrightarrow1
 \qquad\text{in Gaussian measure}.
\]}
\end{lemma}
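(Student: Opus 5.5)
The plan is to imitate the complex-coefficient argument that produced \eqref{full:full-coefficients}, now for real Gaussian coordinates. Put $N:=N_{m,n}$, $q:=q_m$, and
\[
 \nu_q:=\E|\xi|^q=\frac{2^{q/2}\Gamma((q+1)/2)}{\sqrt\pi},
\]
where $\xi$ is a standard real Gaussian variable. Since $\E\xi^2=1$, we have $\nu_2=1$. The map $q\mapsto\nu_q$ is continuous on $[1,2]$, so $\nu_{q_m}\to1$ as $m\to\infty$. It also follows that $\nu_q$ stays in a compact subinterval of $(0,\infty)$ for $1\le q\le2$.

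First I will prove a law of large numbers for the empirical mean $\frac1N\sum_\alpha|g_\alpha|^{q}$. For $1\le q\le 2$ the bound $|x|^q\le 1+x^2$ holds. Hence the centered variables $Z_\alpha:=|g_\alpha|^q-\nu_q$ satisfy $\E Z_\alpha^2\le C$ uniformly in $q$. The quickest route is Chebyshev's inequality:
\[
 \Prob\Bigl\{\Bigl|\frac1N\sum_\alpha|g_\alpha|^q-\nu_q\Bigr|>\varepsilon\Bigr\}\le\frac{C}{N\varepsilon^2}.
\]
This suffices because $N=\binom{m+n-1}{m}\ge m+1\to\infty$ for fixed $n\ge2$. (Alternatively, one can reproduce the sub-exponential Bernstein bound used in the complex case, since $|x|^q$ has moments bounded by $C^kk!$; but only convergence in measure is needed here.)

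Second, I will transfer this to the $q$-th root. The complex proof recorded the estimate
\[
 |x^{1/q}-\nu_q^{1/q}|\le\nu_q^{1/q-1}|x-\nu_q|\le C|x-\nu_q|,\qquad x\ge0,\quad 1\le q\le 2,
\]
which follows from concavity of $s\mapsto s^{1/q}$ together with $\nu_q$ lying in a compact set. Applying it to $x=\frac1N\sum_\alpha|g_\alpha|^q$ gives $\|g\|_q/N^{1/q}\to\nu_{q_m}^{1/q_m}\to1$ in Gaussian measure.

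Finally, the statement is normalized by $\sqrt N$ rather than $N^{1/q_m}$. Since $1/q_m=1/2+1/(2m)$, we have
\[
 \frac{\|g\|_{q_m}}{\sqrt N}=N^{1/(2m)}\cdot\frac{\|g\|_{q_m}}{N^{1/q_m}}.
\]
For fixed $n$, $N$ grows polynomially in $m$, so $\log N/(2m)\to0$ and $N^{1/(2m)}\to1$. This deterministic factor does not affect convergence in measure, which completes the argument.

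The only step needing any care is the uniformity in $q$ of the constants: the variance bound, and the fact that $\nu_q$ stays bounded away from $0$. Both follow from continuity of $\nu_q$ on the compact interval $[1,2]$. There is no genuine obstacle here.
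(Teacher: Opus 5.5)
Your proposal is correct and follows essentially the same route as the paper: it uses the factorization $\|g\|_{q_m}/\sqrt N=N^{1/(2m)}\bigl(\frac1N\sum_\alpha|g_\alpha|^{q_m}\bigr)^{1/q_m}$, a second-moment bound uniform in $q_m$ with Chebyshev's inequality and $N\to\infty$, and the limit $N^{1/(2m)}\to1$. The only minor difference is that you pass to the $q_m$th root through the concavity--Lipschitz estimate from the complex case, whereas the paper just invokes continuity of $(x,q)\mapsto x^{1/q}$ near $(1,2)$.
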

\begin{proof}
{\color{black}Put
\[
 N:=N_{m,n},\qquad
 A_m(g):=\|g\|_{q_m}
 =\left(\sum_{|\alpha|=m}|g_\alpha|^{q_m}\right)^{1/q_m}.
\]
Then
\begin{equation}\label{eq:real-numerator-factorization}
 \frac{A_m(g)}{\sqrt N}
 =
 \left(\frac1N\sum_{|\alpha|=m}|g_\alpha|^{q_m}\right)^{1/q_m}
 N^{1/q_m-1/2}.
\end{equation}
We treat the two factors separately.

For a standard real Gaussian variable with density
$(2\pi)^{-1/2}e^{-x^2/2}$, set
\begin{equation}\label{eq:real-gaussian-q-integral}
 c_m:=
 \frac1{\sqrt{2\pi}}\int_{\mathbb R}|x|^{q_m}e^{-x^2/2}\,dx.
\end{equation}
Since $q_m\to2$ and $1\le q_m<2$, dominated convergence gives
\[
 c_m\longrightarrow
 \frac1{\sqrt{2\pi}}\int_{\mathbb R}x^2e^{-x^2/2}\,dx=1.
\]
Moreover,
\[
 \frac1{\sqrt{2\pi}}\int_{\mathbb R}
 \bigl(|x|^{q_m}-c_m\bigr)^2e^{-x^2/2}\,dx
 \le C
\]
with an absolute constant $C$: indeed
$|x|^{2q_m}\le1+|x|^4$ and the Gaussian integral of $1+|x|^4$ is finite.

Define
\[
 S_m(g):=\frac1N\sum_{|\alpha|=m}|g_\alpha|^{q_m}.
\]
Independence of the coordinates gives
\[
 \int_{\mathbb R^{\mathcal M_{m,n}}}
 \bigl(S_m(g)-c_m\bigr)^2\,d\gamma_{m,n}^{\mathbb R}(g)
 =
 \frac1N\,
 \frac1{\sqrt{2\pi}}\int_{\mathbb R}
 \bigl(|x|^{q_m}-c_m\bigr)^2e^{-x^2/2}\,dx
 \le\frac CN.
\]
Therefore Chebyshev's inequality and the second-moment bound give, for every
$\varepsilon>0$,
\begin{equation}\label{eq:real-numerator-chebyshev}
 \gamma_{m,n}^{\mathbb R}
 \{|S_m-c_m|>\varepsilon\}
 \le\frac{C}{N\varepsilon^2}
 \longrightarrow0,
\end{equation}
because $N=N_{m,n}\to\infty$ for fixed $n\ge2$. Together with
$c_m\to1$, this proves
\[
 S_m\longrightarrow1
 \qquad\text{in Gaussian measure}.
\]
Since $q_m\to2$, continuity of $(x,q)\mapsto x^{1/q}$ in a neighborhood of
$(1,2)$ now gives
\begin{equation}\label{eq:real-numerator-first-factor}
 S_m^{1/q_m}\longrightarrow1
 \qquad\text{in Gaussian measure}.
\end{equation}

For the deterministic second factor in
\eqref{eq:real-numerator-factorization},
\[
 \frac1{q_m}-\frac12=\frac1{2m},
\]
so
\begin{equation}\label{eq:real-numerator-second-factor}
 N^{1/q_m-1/2}
 =N^{1/(2m)}
 =\exp\left(\frac{\log N}{2m}\right)
 \longrightarrow1.
\end{equation}
Indeed, for fixed $n$,
\[
 N=\binom{m+n-1}{n-1}\le(m+n-1)^{n-1},
\]
and hence
\[
 0\le\frac{\log N}{2m}
 \le\frac{(n-1)\log(m+n-1)}{2m}\longrightarrow0.
\]
Finally, combining
\eqref{eq:real-numerator-factorization},
\eqref{eq:real-numerator-first-factor}, and
\eqref{eq:real-numerator-second-factor} proves the lemma.}
\end{proof}

\begin{lemma}\label{lem:real-denominator-fixed}
Put $N_{m,n}:=\binom{m+n-1}{m}$ and let
\[
 P_g^{\mathbb R}(x):=\sum_{|\alpha|=m}g_\alpha x^\alpha,
 \qquad B_m(g):=\|P_g^{\mathbb R}\|_{\infty,\mathbb R},
\]
where $g=(g_\alpha)_{|\alpha|=m}$ is a standard real Gaussian vector.
There is an absolute constant $C>0$ such that
\[
 \mathbb E B_m(g)\le C\sqrt{N_{m,n}n\log(n+1)}.
\]
For each fixed $n\ge2$ there is $C_n<\infty$ such that
\[
 \Prob\{B_m(g)\le u\sqrt{N_{m,n}}\}\le C_nu^2
 \qquad(0<u\le1,\ m\ge2).
\]
\end{lemma}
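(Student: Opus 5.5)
The statement has two parts, and I would prove them separately. The first is an $m$-independent bound on $\E B_m$, obtained by chaining on a reparametrized cube. The second is a small-ball estimate, which follows from a two-point Gaussian density bound.

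\emph{Mean bound.} Split $[-1,1]^n$ into the $2^n$ closed orthants indexed by $\varepsilon\in\{\pm1\}^n$. On the orthant of $\varepsilon$ write $x_j=\varepsilon_j e^{-t_j}$ with $t\in[0,\infty]^n$. This reduces the problem to bounding $\E\sup_t|X_\varepsilon(t)|$, where
\[
 X_\varepsilon(t):=\sum_{|\alpha|=m}\varepsilon^\alpha g_\alpha e^{-\langle\alpha,t\rangle}.
\]
The variance of $X_\varepsilon(t)$ is $\sum_\alpha e^{-2\langle\alpha,t\rangle}\le N$. For increments, I would use $|e^{-a}-e^{-b}|\le|a-b|\,e^{-\min(a,b)}$ together with the second moments of $\alpha$ from Lemma~\ref{full:full-metric}(i). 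In the rescaled variables $s_j=1-e^{-t_j}\in[0,1]$ (equivalently $x_j$ itself near $\pm1$), this should give an estimate of the form
\[
 \E|X_\varepsilon(t)-X_\varepsilon(t')|^2\le C N\,\omega(\|s-s'\|),
\]
with a modulus $\omega$ that does not depend on $m$. The aim is a covering number $\mathcal N\bigl([0,1]^n,d,r\sqrt N\bigr)\le (C/r)^{Cn}$ for the canonical metric $d$. Dudley's entropy integral then gives $\E\sup_t|X_\varepsilon|\le C\sqrt{Nn}$ for each orthant, and a Gaussian maximal inequality (each orthant supremum is Lipschitz in $g$ with constant $\sqrt N$) handles the maximum over the $2^n$ orthants. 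Altogether this yields
\[
 \E B_m\le C\sqrt{N}\bigl(\sqrt n+\sqrt{\log 2^n}\bigr)\le C\sqrt{Nn\log(n+1)}.
\]

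\emph{Main obstacle.} The difficulty lies in the metric estimate. Naive Lipschitz bounds in $x$ grow with the degree $m$. The hope is that the derivative factor $\alpha_j$ is compensated by the decay of $e^{-\langle\alpha,t\rangle}$ once $t_j\gtrsim1/m$, and that the region $t_j\lesssim1/m$ has $m$-independent entropy at the relevant scale. I would first try to verify this by covering the $t_j\le1/m$ and $t_j>1/m$ regimes dyadically and bounding $\sum_\alpha\alpha_j^2e^{-2\langle\alpha,t\rangle}$ through the generating function $\prod_j(1-x_j^2\zeta)^{-1}$, as in the proof of Lemma~\ref{full:full-covariance-volume}. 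If this fails, a cruder entropy bound with an extra factor would still leave the fixed-$n$ consequences intact.

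\emph{Small ball.} Since $B_m(g)\ge\max\{|P_g^{\mathbb R}(\mathbf 1_n)|,|P_g^{\mathbb R}(-1,1,\ldots,1)|\}$, it suffices to bound the probability that both values are at most $u\sqrt N$. These two values form a centered Gaussian pair, each with variance $N$, and their correlation is
\[
 \rho_m:=\frac1N\sum_{|\alpha|=m}(-1)^{\alpha_1}.
\]
This sum is the coefficient of $t^m$ in $(1+t)^{-1}(1-t)^{-(n-1)}$, which is $O(m^{n-2})$ (bounded when $n=2$), whereas $N\asymp_n m^{n-1}$. Hence $\rho_m\to0$. For the finitely many remaining $m$ we still have $|\rho_m|<1$, since not all signs $(-1)^{\alpha_1}$ agree. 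Therefore $\sup_m|\rho_m|\le\rho_n<1$. The joint density of the normalized pair is then at most $(2\pi\sqrt{1-\rho_n^2})^{-1}$, and integrating over the square $[-u,u]^2$ gives the bound $C_nu^2$.
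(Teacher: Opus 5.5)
Your small-ball half is correct and is the paper's argument: same two evaluation points, same correlation $\rho_m$, same density bound. The paper gets $\sup_m|\rho_m|<1$ from the representation $\rho_m=(n-1)\int_0^1(1-2u)^m(1-u)^{n-2}\,du$, which gives the explicit bound $1-4(n-1)/(n(n+1))$; your generating-function asymptotics plus the finitely many small $m$ work equally well.

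The gap is in the mean bound, exactly at the step you call the main obstacle. The $m$-uniform entropy estimate is never proved, and the form you propose for it is false. The canonical metric varies on the scale $1/m$ near the faces $|x_j|=1$. Already for $n=2$, comparing $t=(1/m,0)$ with $t'=(0,0)$ in the positive orthant gives
\[
 \E|X(t)-X(t')|^2=\sum_{k=0}^m\bigl(1-e^{-k/m}\bigr)^2\ge c\,m\ge c'N
\]
with absolute $c,c'>0$, although $\|s-s'\|\approx1/m$. So there is no modulus $\omega$, independent of $m$ and vanishing at $0$, in the variables $s$ (or $t$). The moments of Lemma~\ref{full:full-metric}(i) cannot supply one, since they are unweighted and of size $m^2/n^2$.

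Your fallback does not help either. Write $V(x):=(x^\alpha)_{|\alpha|=m}$. Any loss that grows with $m$ breaks Proposition~\ref{prop:real-fixed-tail}, whose Markov step needs $\E B_m\le C_n\sqrt N$ uniformly in $m$; an example is the $\sqrt{\log m}$ that the naive bound $\|\partial_jV\|_2\le m\sqrt N$ produces through Dudley. An $m$-independent loss, on the other hand, is exactly what remains to be proved.

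The missing idea is to bound the total length of each coordinate curve of $V$, rather than a pointwise modulus. Bound the other coordinates by $1$ and put $B:=\binom{m+n-2}{n-2}$. Then, for $|x_j|=t$,
\[
 \|\partial_jV(x)\|_2^2\le S_{m,n}(t):=\sum_{k=1}^mk^2t^{2k-2}\binom{m-k+n-2}{n-2}\le B\min\bigl\{m^3,\,2(1-t)^{-3}\bigr\}.
\]
Hence every coordinate curve over $[0,1]$ has length at most
\[
 L:=\int_0^1\sqrt{S_{m,n}(t)}\,dt\le C\sqrt{mB}\le C\sqrt{N(n-1)}.
\]
The derivative is large only on a window of width $1/m$, and there it contributes the same order as everywhere else.

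Now reparametrize by arc length, $F(t):=\operatorname{sgn}(t)\int_0^{|t|}\sqrt{S_{m,n}(u)}\,du$. This gives $d(x,y)\le\sum_j|F(x_j)-F(y_j)|$. A product grid of mesh $\eta/n$ in the $F$-coordinates then yields $\mathcal N([-1,1]^n,d,\eta)\le(2+2nL/\eta)^n$. At $\eta=r\sqrt N$ this is at most $(2+Cn^{3/2}/r)^n$, uniformly in $m$. Dudley's integral then gives $C\sqrt{Nn\log(n+1)}$. The factor $n^{3/2}$ is what produces the $\log(n+1)$ in the statement, so you do not need the sharper $(C/r)^{Cn}$. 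No orthant splitting is needed either, since $F$ handles both signs.

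If you pursue your dyadic covering of the $t_j$-regimes instead, you must merge blocks at coarse scales. A separate point in each of the roughly $\log m$ blocks puts a factor $(\log m)^n$ into every covering number, and hence an $m$-dependent $\sqrt{Nn\log\log m}$ into the Dudley bound. The arc-length parametrization does this merging automatically.
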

\begin{proof}
{\color{black}
Write $N:=N_{m,n}$.

\medskip

For $x=(x_1,\ldots,x_n)\in[-1,1]^n$, define
\[
 V(x):=(x^\alpha)_{|\alpha|=m}\in\mathbb R^{\mathcal M_{m,n}}.
\]
Then
\[
 P_g^{\mathbb R}(x)=\langle g,V(x)\rangle,
\]
and the canonical metric is
\begin{equation}\label{eq:real-canonical-metric}
 d(x,y)^2
 :=
 \int_{\mathbb R^{\mathcal M_{m,n}}}
 |P_g^{\mathbb R}(x)-P_g^{\mathbb R}(y)|^2
 \,d\gamma_{m,n}^{\mathbb R}(g)
 =\|V(x)-V(y)\|_2^2.
\end{equation}
{\color{black}\hypersetup{linkcolor=black,citecolor=black}Thus $V$ maps} the parameter cube into Euclidean coefficient
space, and Euclidean lengths of curves under $V$ control distances in
$d$.

Fix a coordinate $j$ and keep all coordinates except $x_j=t$ fixed. Since
the $\alpha$-coordinate of $V$ is $x^\alpha$, {\color{black}\hypersetup{linkcolor=black,citecolor=black}the derivative of $V$} with respect
to $t$ is the vector
\[
 \frac{\partial V}{\partial x_j}(x)
 =
 \bigl(\alpha_j t^{\alpha_j-1}
       \prod_{\ell\ne j}x_\ell^{\alpha_\ell}\bigr)_{|\alpha|=m}.
\]
Its squared Euclidean norm satisfies
\begin{align}
 \left\|\frac{\partial V}{\partial x_j}(x)\right\|_2^2
 &=
 \sum_{|\alpha|=m}
 \alpha_j^2t^{2\alpha_j-2}
 \prod_{\ell\ne j}|x_\ell|^{2\alpha_\ell}\notag\\
 &\le
 \sum_{k=1}^m k^2t^{2k-2}
 \binom{m-k+n-2}{n-2}
 =:S_{m,n}(t).
 \label{eq:real-coordinate-derivative}
\end{align}
Here the binomial coefficient counts the choices of the remaining
$n-1$ entries of $\alpha$ after fixing $\alpha_j=k$.

Put
\[
 B:=\binom{m+n-2}{n-2}.
\]
Since
$\binom{m-k+n-2}{n-2}\le B$,
\begin{equation}\label{eq:real-S-bound}
 S_{m,n}(t)
 \le B\sum_{k=1}^m k^2t^{2k-2}
 \le B\min\{m^3,\,2(1-t)^{-3}\}.
\end{equation}
The last estimate follows respectively from
$\sum_{k=1}^m k^2\le m^3$ and from the differentiated geometric series
$\sum_{k\ge1}k^2t^{k-1}=(1+t)/(1-t)^3$, applied to $t^2\le t$ for
$0\le t<1$.

For a continuously differentiable curve
$\Gamma:[a,b]\to\mathbb R^N$, the fundamental theorem of calculus and
the triangle inequality give
\[
 \|\Gamma(b)-\Gamma(a)\|_2
 \le\int_a^b\|\Gamma'(t)\|_2\,dt.
\]
Applied to the coordinate curve $t\mapsto V(x_1,\ldots,t,\ldots,x_n)$,
\eqref{eq:real-coordinate-derivative} shows that its total length from
$0$ to $1$ is at most
\[
 L:=\int_0^1\sqrt{S_{m,n}(t)}\,dt.
\]
Using \eqref{eq:real-S-bound} and splitting the integral at $1-1/m$,
\begin{align}
 L
 &\le C\sqrt B\left(
 \int_0^{1-1/m}(1-t)^{-3/2}\,dt
 +m^{3/2}\int_{1-1/m}^1dt\right)\notag\\
 &\le C\sqrt{mB}
 =C\sqrt{N\,\frac{m(n-1)}{m+n-1}}
 \le C\sqrt{N(n-1)}.
 \label{eq:real-coordinate-length}
\end{align}

Define the increasing coordinate-length function
\[
 F(t):=\operatorname{sgn}(t)
 \int_0^{|t|}\sqrt{S_{m,n}(s)}\,ds,
 \qquad -1\le t\le1.
\]
To pass from $x$ to $y$, change the coordinates one at a time. Applying the
curve-length estimate to each coordinate and then the triangle inequality
in \eqref{eq:real-canonical-metric} gives
\begin{equation}\label{eq:real-coordinate-metric}
 d(x,y)\le\sum_{j=1}^n|F(x_j)-F(y_j)|.
\end{equation}

The interval $F([-1,1])$ has length at most $2L$. Choose a one-dimensional
grid in $F([-1,1])$ of mesh at most $\eta/n$ and take its $n$-fold Cartesian
product. By \eqref{eq:real-coordinate-metric}, pulling this product grid back through $F$ coordinatewise gives an
$\eta$-net for $[-1,1]^n$ in the metric $d$. Its cardinality is at most
\begin{equation}\label{eq:real-canonical-covering}
 \mathcal N([-1,1]^n,d,\eta)
 \le\left(2+\frac{2nL}{\eta}\right)^n.
\end{equation}
Let $\mathcal A_\eta$ be the corresponding $\eta$-net of $V([-1,1]^n)$. Then
$\mathcal A_\eta\cup(-\mathcal A_\eta)$ is an $\eta$-net of
$V([-1,1]^n)\cup -V([-1,1]^n)$, with cardinality at most twice that in
\eqref{eq:real-canonical-covering}. Moreover,
\[
 B_m(g)=\sup_{v\in V([-1,1]^n)\cup -V([-1,1]^n)}\langle g,v\rangle.
\]
Since $\|V(x)\|_2\le\sqrt N$, the diameter of the signed set is at most
$2\sqrt N$. Dudley's entropy estimate
\cite[Corollary~13.2]{BLM}, together with
\eqref{eq:real-coordinate-length} and
\eqref{eq:real-canonical-covering}, gives
\begin{align*}
 \int B_m(g)\,d\gamma_{m,n}^{\mathbb R}(g)
 &\le C\int_0^{2\sqrt N}
 \sqrt{\log2+n\log\left(2+\frac{2nL}{\eta}\right)}\,d\eta\\
 &\le C\sqrt N\int_0^2
 \sqrt{\log2+n\log\left(2+\frac{Cn^{3/2}}t\right)}\,dt\\
 &\le C\sqrt{Nn\log(n+1)}.
\end{align*}
For the last step, use
\[
 \log\left(2+\frac{Cn^{3/2}}t\right)
 \le C\log(n+1)+\log(1+t^{-1}),
 \qquad0<t\le2,
\]
and the integrability of
$\sqrt{\log(1+t^{-1})}$ at $0$.

\medskip

Two points of the cube suffice. Set
\[
 X:=\frac{P_g^{\mathbb R}(1,\ldots,1)}{\sqrt N},
 \qquad
 Y:=\frac{P_g^{\mathbb R}(-1,1,\ldots,1)}{\sqrt N}.
\]
Both are real linear functionals of the Gaussian coefficient vector.
Directly from the independence and normalization of the coefficients,
\[
 \int_{\mathbb R^{\mathcal M_{m,n}}}X\,d\gamma_{m,n}^{\mathbb R}
 =
 \int_{\mathbb R^{\mathcal M_{m,n}}}Y\,d\gamma_{m,n}^{\mathbb R}=0,
\]
and
\[
 \int_{\mathbb R^{\mathcal M_{m,n}}}X^2\,d\gamma_{m,n}^{\mathbb R}
 =
 \int_{\mathbb R^{\mathcal M_{m,n}}}Y^2\,d\gamma_{m,n}^{\mathbb R}=1.
\]
The mixed integral is
\begin{equation}\label{eq:real-two-point-correlation}
 \rho_m:=
 \int XY\,d\gamma_{m,n}^{\mathbb R}
 =
 \frac1N\sum_{|\alpha|=m}(-1)^{\alpha_1}.
\end{equation}

Group the sum according to $k=\alpha_1$:
\[
 \rho_m
 =
 \frac1N\sum_{k=0}^m(-1)^k
 \binom{m-k+n-2}{n-2}.
\]
For $0\le k\le m$, the identity
\[
 \frac1N\binom{m-k+n-2}{n-2}
 =
 (n-1)\binom mk
 \int_0^1
 u^k(1-u)^{m-k+n-2}\,du
\]
follows from
\[
 \int_0^1u^k(1-u)^{m-k+n-2}\,du
 =
 \frac{k!\,(m-k+n-2)!}{(m+n-1)!}.
\]
Substitution into the finite sum and interchange of sum and integral give
\begin{align*}
 \rho_m
 &=
 (n-1)\int_0^1(1-u)^{n-2}
 \sum_{k=0}^m
 \binom mk(-u)^k(1-u)^{m-k}\,du\\
 &=
 (n-1)\int_0^1(1-2u)^m(1-u)^{n-2}\,du.
\end{align*}
Thus
\begin{equation}\label{eq:real-two-point-correlation-integral}
 \rho_m
 =
 (n-1)\int_0^1(1-2u)^m(1-u)^{n-2}\,du.
\end{equation}

For $m\ge2$,
\begin{align*}
 |\rho_m|
 &\le(n-1)\int_0^1|1-2u|^m(1-u)^{n-2}\,du\\
 &\le(n-1)\int_0^1(1-2u)^2(1-u)^{n-2}\,du\\
 &=1-\frac{4(n-1)}{n(n+1)}
 =:\rho_n<1.
\end{align*}
Thus $|\rho_m|\le\rho_n<1$ uniformly in $m$ for fixed $n$. The two
linear functionals $(X,Y)$ therefore have covariance matrix
\[
 \begin{pmatrix}1&\rho_m\\ \rho_m&1\end{pmatrix},
\]
whose determinant satisfies
$1-\rho_m^2\ge1-\rho_n^2>0$. Hence their joint density is
\[
 f_{X,Y}(x,y)
 =
 \frac1{2\pi\sqrt{1-\rho_m^2}}
 \exp\left(
 -\frac{x^2-2\rho_mxy+y^2}{2(1-\rho_m^2)}
 \right),
\]
and, since the exponential factor is at most $1$,
\[
 f_{X,Y}(x,y)
 \le\frac1{2\pi\sqrt{1-\rho_n^2}}
 =:C_n.
\]
If $B_m(g)\le u\sqrt N$, then in particular $|X|\le u$ and $|Y|\le u$.
Consequently,
\[
 \gamma_{m,n}^{\mathbb R}\{B_m(g)\le u\sqrt N\}
 \le\int_{[-u,u]^2}f_{X,Y}(x,y)\,dx\,dy
 \le4C_nu^2,
\]
which proves the second assertion.
}
\end{proof}

\begin{proposition}\label{prop:real-fixed-tail}
{\color{black}For every fixed $n\ge2$ there is $C_n<\infty$ such that
\[
 \left.
 \begin{aligned}
 \limsup_{m\to\infty}\mu_{m,n}^{\mathbb R}
 \{R_m^{\mathbb R}<u\}&\le C_nu, &&0<u\le1,\\
 \limsup_{m\to\infty}\mu_{m,n}^{\mathbb R}
 \{R_m^{\mathbb R}>v\}&\le C_nv^{-2}, &&v\ge1.
 \end{aligned}
 \right\}.
\]}
\end{proposition}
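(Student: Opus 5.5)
We pass to Gaussian measure and write $R_m^{\mathbb R}=A_m/B_m$. Since $R_m^{\mathbb R}$ is homogeneous of degree zero, \eqref{eq:radial-transfer-principle} makes the $\mu_{m,n}^{\mathbb R}$-probabilities of its level sets equal to the $\gamma_{m,n}^{\mathbb R}$-probabilities of the corresponding sets. Put
\[
 \widetilde A_m:=A_m/\sqrt N,\qquad \widetilde B_m:=B_m/\sqrt N,
\]
so that $R_m^{\mathbb R}=\widetilde A_m/\widetilde B_m$. By Lemma~\ref{lem:real-numerator-fixed}, $\widetilde A_m\to1$ in Gaussian measure for fixed $n$. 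Both tails therefore reduce to tail bounds for $\widetilde B_m$, with an error $\Prob\{|\widetilde A_m-1|>1/2\}\to0$ that vanishes in the $\limsup$.

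\emph{Upper tail.} Fix $v\ge1$. On the event $\{\widetilde A_m\le 3/2\}$, the inequality $R_m^{\mathbb R}>v$ forces $\widetilde B_m<3/(2v)$. Take $u:=3/(2v)$. If $u\le1$, the small-ball estimate of Lemma~\ref{lem:real-denominator-fixed} gives
\[
 \Prob\{B_m\le u\sqrt N\}\le C_nu^2=\tfrac94C_nv^{-2}.
\]
If $u>1$, then $v<3/2$, and the bound $C_nv^{-2}$ is trivial once $C_n\ge 9/4$. Adding the vanishing error from $\widetilde A_m$ and taking $\limsup_{m\to\infty}$ yields the second line, after enlarging $C_n$.

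\emph{Lower tail.} Fix $0<u\le1$. On the event $\{\widetilde A_m\ge1/2\}$, the inequality $R_m^{\mathbb R}<u$ forces $\widetilde B_m>1/(2u)$. Markov's inequality together with the mean bound of Lemma~\ref{lem:real-denominator-fixed} gives
\[
 \Prob\{\widetilde B_m>1/(2u)\}\le 2u\,\E\widetilde B_m\le 2Cu\sqrt{n\log(n+1)}.
\]
This is $C_nu$ uniformly in $m$. Adding the vanishing error and taking $\limsup$ gives the first line.

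There is no serious obstacle, since both tails follow from Lemma~\ref{lem:real-denominator-fixed}. The only point needing care is the handling of the random numerator. We avoid a separate tail estimate for $A_m$ by intersecting with $\{1/2\le\widetilde A_m\le3/2\}$, whose complement has probability tending to zero by Lemma~\ref{lem:real-numerator-fixed}. That term is exactly absorbed by the $\limsup$ in the statement.
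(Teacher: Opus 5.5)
Your proof is correct and follows the paper's argument: the radial transfer, an intersection with a good event for the normalized numerator from Lemma~\ref{lem:real-numerator-fixed}, Markov's inequality with the mean bound of Lemma~\ref{lem:real-denominator-fixed} for the lower tail, the small-ball bound for the upper tail, and absorption of small $v$ by enlarging $C_n$. The only difference is cosmetic: you use the numerator thresholds $1/2\le\widetilde A_m\le3/2$, while the paper uses $A_m\ge\tfrac12\sqrt N$ and $A_m\le2\sqrt N$.
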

\begin{proof}
{\color{black}
By \eqref{eq:radial-transfer-principle}, it is enough to work with Gaussian
coefficients. Put
\[
 N:=N_{m,n},\qquad
 A_m(g):=\|g\|_{q_m},\qquad
 B_m(g):=\|P_g^{\mathbb R}\|_{\infty,\mathbb R}.
\]

Fix $0<u\le1$. If $A_m\ge\frac12\sqrt N$ and $A_m/B_m<u$, then
$B_m>\sqrt N/(2u)$. Hence
\begin{equation}\label{eq:real-fixed-lower-split}
 \gamma_{m,n}^{\mathbb R}\{A_m/B_m<u\}
 \le
 \gamma_{m,n}^{\mathbb R}\{A_m<\tfrac12\sqrt N\}
 +\gamma_{m,n}^{\mathbb R}\{B_m>\tfrac{\sqrt N}{2u}\}.
\end{equation}
The first term is $o(1)$ by Lemma~\ref{lem:real-numerator-fixed}. For the
second, Markov's inequality gives
\[
 \gamma_{m,n}^{\mathbb R}\{B_m>s\}
 \le\frac1s\int B_m\,d\gamma_{m,n}^{\mathbb R}.
\]
With $s=\sqrt N/(2u)$, Lemma~\ref{lem:real-denominator-fixed} yields
\[
 \gamma_{m,n}^{\mathbb R}\{B_m>\tfrac{\sqrt N}{2u}\}
 \le\frac{2u}{\sqrt N}\int B_m\,d\gamma_{m,n}^{\mathbb R}
 \le C_nu.
\]
Substitution in \eqref{eq:real-fixed-lower-split} gives
\[
 \limsup_{m\to\infty}
 \mu_{m,n}^{\mathbb R}\{R_m^{\mathbb R}<u\}
 \le C_nu.
\]

\medskip
For the upper tail, let $v\ge2$. If $A_m\le2\sqrt N$ and $A_m/B_m>v$, then
$B_m<2\sqrt N/v$. Therefore
\begin{equation}\label{eq:real-fixed-upper-split}
 \gamma_{m,n}^{\mathbb R}\{A_m/B_m>v\}
 \le
 \gamma_{m,n}^{\mathbb R}\{A_m>2\sqrt N\}
 +\gamma_{m,n}^{\mathbb R}\{B_m<2\sqrt N/v\}.
\end{equation}
The first term is $o(1)$ by Lemma~\ref{lem:real-numerator-fixed}; the second
is at most $C_nv^{-2}$ by Lemma~\ref{lem:real-denominator-fixed}, applied
with $u=2/v$. Hence
\[
 \limsup_{m\to\infty}
 \mu_{m,n}^{\mathbb R}\{R_m^{\mathbb R}>v\}
 \le C_nv^{-2}
\]
for $v\ge2$; increasing $C_n$ covers $1\le v<2$.
}
\end{proof}
}

\subsection{Growing dimension}\label{sec:real-growing-dimension}

A Hamming-separated family of sign vectors produces many evaluations with
uniformly controlled covariances. Slepian's inequality then gives the lower
bound for the real supremum that is needed when the dimension grows.

\begin{proposition}\label{prop:real-growing-dimension}
There are absolute constants $C,c>0$ such that, for $m\ge2$ and $n\ge4$,
\begin{equation}\label{eq:real-growing-tail}
 \mu_{m,n}^{\mathbb R}\left\{a:
 R_m^{\mathbb R}(a)>C\sqrt{\frac1m+\frac1n}\right\}
 \le \exp(-cn)+\exp(-cN_{m,n}).
\end{equation}
\end{proposition}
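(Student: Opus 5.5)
We pass to Gaussian coefficients via \eqref{eq:radial-transfer-principle} and bound numerator and denominator separately. Let $N:=N_{m,n}$, $A_m(g)=\|g\|_{q_m}$ and $B_m(g)=\|P_g^{\mathbb R}\|_{\infty,\mathbb R}$.

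\emph{Numerator.} Lemma~\ref{lem:counting-parseval} gives $A_m\le N^{1/(2m)}\|g\|_2$. The real analogue of Lemma~\ref{lem:gaussian-radius-direction}(a) gives $\|g\|_2^2\le 2N$ outside an event of probability $e^{-cN}$. It remains to bound $N^{1/(2m)}$ by an absolute constant times something compatible with the target scale. Here $N^{1/m}=\binom{m+n-1}{m}^{1/m}\le e(m+n)/m\le e(1+n/m)$. So on the good event,
\[
 A_m\le C\sqrt N\,(1+n/m)^{1/2}.
\]

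\emph{Denominator.} The goal is $B_m\ge c\sqrt{N}\sqrt{n+m}/\sqrt{\max(\cdot)}$; more precisely we want $B_m\ge c\sqrt{N n}$ when $n\le m$, and $B_m\ge c\sqrt{N m}$ when $n\ge m$. We combine this with the numerator bound via
\[
 \frac{(1+n/m)^{1/2}}{\sqrt{\min(n,m)}}\asymp\sqrt{\tfrac1m+\tfrac1n}.
\]
To get it, take a Hamming-separated family of sign vectors $\varepsilon^{(1)},\dots,\varepsilon^{(M)}\in\{-1,1\}^n$ with $M\ge e^{c_0n}$ and pairwise Hamming distances in $[n/4,3n/4]$ (Gilbert--Varshamov / random choice). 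Set $X_j:=P_g^{\mathbb R}(\varepsilon^{(j)})/\sqrt N$. These are standard real Gaussians with correlation
\[
 \frac1N\sum_{|\alpha|=m}(\varepsilon^{(j)}\varepsilon^{(k)})^\alpha .
\]
Grouping by the $k$ coordinates where the signs differ, the same beta-integral trick as in \eqref{eq:real-two-point-correlation-integral} turns this correlation into
\[
 \rho=(n-1)\int_0^1 (1-2u)^m\,\text{-type terms},
\]
more precisely $\frac1N h_m(\pm1)$. We expect a bound $|\rho|\le C\max\{(1-2k/n)^m$-like$\}$, small when $k\asymp n$; an absolute bound $\rho\le \delta<1$ together with $\rho$ of order $n^{-1}+e^{-cm}$ is what we need. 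Applying Slepian against the equicorrelated model as in Lemma~\ref{lem:small-correlation-max-sharp} yields $\max_j X_j\ge c\sqrt{\log M}\ge c\sqrt n$ except with probability $e^{-cn}$, provided the correlation is at most a small absolute constant. If we only get correlation $\le\delta$ for absolute $\delta<1$, the common component costs a factor $\sqrt{1-\delta}$ and a tail $e^{-cn}$, which is still fine. This handles $n\lesssim m$. When $n\gg m$, restrict to sign vectors with only about $m$ coordinates equal to $-1$ (or use sign patterns on a block of $\asymp m$ coordinates). This gives about $e^{cm}$ points with small correlation and $B_m\ge c\sqrt{Nm}$ off probability $e^{-cm}$. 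Then either $e^{-cm}\le e^{-cN}$-type absorption fails, or we can instead use more points: choose $M\ge e^{c n}$ sign vectors each differing in at least $\min(n,m)$-proportional positions, to keep the error $e^{-cn}$.

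\emph{Main obstacle.} The main obstacle is the correlation estimate for $n\ge m$. Here
\[
 \frac1N\sum_\alpha(-1)^{\alpha\cdot\mathbf 1_S}=(n-1)\int_0^1(1-2u)^{\cdot}\cdots
\]
must be computed for $|S|=k$. The general form is
\[
 \rho_k=\frac{\binom{m+n-1}{m}^{-1}}{2\pi i}\oint\frac{dz}{z^{m+1}(1+z)^{k}(1-z)^{n-k}},
\]
and we expect it to be at most about $\max(|1-2k/n|^m,\ m/n)$-scale. We would attempt this via the Cauchy-integral approach of Lemma~\ref{full:full-covariance-volume} at radius $r=m/(m+n)$, bounding $\prod|1\pm r|$ ratios by $\bigl((1-r)/(1+r)\bigr)^{k}$ with $k\asymp n$. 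This gives $e^{-ckm/(m+n)}\le e^{-cm}$ when $n\ge m$ — which is small enough to apply Slepian with $\sqrt{\log M}\asymp\sqrt n$. But the correct lower scale is then only $\sqrt{\min}$; we would tune $M=e^{c\min(m,n)}$ and accept the exponential error $e^{-c\min(m,n)}$, checking whether it can be upgraded to $e^{-cn}$ via the full packing count.
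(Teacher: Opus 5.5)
\textbf{There is a genuine gap in the regime $n\ge m$.} Your numerator bound $A_m\le C\sqrt N\,(1+n/m)^{1/2}$, valid off an event of probability $e^{-cN}$, is correct. The trouble is the case split for the denominator, which rests on a false asymptotic. For $n\ge m$,
\[
(1+n/m)^{1/2}/\sqrt{\min(n,m)}=\sqrt{m+n}/m ,
\]
and this exceeds $\sqrt{1/m+1/n}=\sqrt{(m+n)/(mn)}$ by the factor $\sqrt{n/m}$. Two consequences follow. First, a denominator bound $B_m\ge c\sqrt{Nm}$ for $n\gg m$ only gives $R_m^{\mathbb R}\lesssim\sqrt n/m$, which is not the claimed scale. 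Second, your fallback $M=e^{c\min(m,n)}$ leaves an exceptional probability $e^{-cm}$, which is not dominated by $e^{-cn}+e^{-cN_{m,n}}$ when $m\ll n$. So the regime $n\ge m$ is not proved, and the Cauchy-integral ``main obstacle'' is a detour.

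\textbf{What is missing.} The needed bound, which the paper proves, is $B_m\ge c\sqrt{Nn}$ with probability at least $1-e^{-cn}$, uniformly in $m\ge2$. With it,
\[
R_m^{\mathbb R}\le CN^{1/(2m)}/\sqrt n\le C\sqrt{e(m+n)/(mn)}=C\sqrt{e(1/m+1/n)},
\]
with no case distinction at all. This uniform bound comes from the Hamming family you already proposed. If $\varepsilon^{(i)}$ and $\varepsilon^{(j)}$ differ in $b$ coordinates, the beta-integral computation gives
\[
\int X_iX_j\,d\gamma_{m,n}^{\mathbb R}=\frac{\Gamma(n)}{\Gamma(b)\Gamma(n-b)}\int_0^1(1-2u)^m u^{b-1}(1-u)^{n-b-1}\,du .
\]
Since $|1-2u|^m\le(1-2u)^2$ for $m\ge2$, the absolute value of this is at most
\[
(1-2p)^2+\frac{4p(1-p)}{n+1}\le\frac14+\frac1{n+1}<\frac12,\qquad p=b/n\in[1/4,3/4].
\]
No $m$-dependent correlation decay is required. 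This absolute bound $1/2$ already gives $\max_jX_j\ge c\sqrt{\log M}\ge c\sqrt n$ off an event of probability $e^{-cn}$, in every regime. It follows from Slepian's inequality against the equicorrelated model together with Gaussian concentration of the $1$-Lipschitz maximum.

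You had this argument for $n\lesssim m$, except that you only conjectured the correlation bound rather than proving it. The missing observation is that the same argument is both valid and sufficient for all $n$.
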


\begin{proof}
{\color{black}
Put $N:=N_{m,n}$ and
\[
 P_g^{\mathbb R}(x):=\sum_{|\alpha|=m}g_\alpha x^\alpha.
\]
We first work with Gaussian coefficients. The event in
\eqref{eq:real-growing-tail} is invariant under positive scalar multiplication,
so \eqref{eq:radial-transfer-principle} transfers the resulting probability
estimate to the coefficient sphere.

For $\varepsilon,\eta\in\{-1,1\}^n$, define
\[
 d_H(\varepsilon,\eta)
 :=|\{j\in[n]:\varepsilon_j\ne\eta_j\}|.
\]
Choose
$\varepsilon^{(1)},\ldots,\varepsilon^{(M)}$ such that
\begin{equation}\label{eq:real-hamming-separation}
 \frac n4\le d_H(\varepsilon^{(i)},\varepsilon^{(j)})
 \le\frac{3n}{4}\qquad(i\ne j)
\end{equation}
and $M\ge e^{c_1n}$.

Choose the vectors greedily. After selecting $\varepsilon$, exclude the
Hamming balls of radius $\lfloor n/4\rfloor$ centered at $\varepsilon$ and
$-\varepsilon$. If a later vector $\eta$ avoids both balls, then
$d_H(\eta,\varepsilon)\ge n/4$ and
\[
 d_H(\eta,-\varepsilon)
 =n-d_H(\eta,\varepsilon)\ge n/4,
\]
which gives the upper bound in \eqref{eq:real-hamming-separation}.

To estimate the size of one excluded ball, put $t=1/4$ and
$x=t/(1-t)=1/3$. Since $x^k\ge x^{tn}$ for $k\le tn$,
\begin{equation}\label{eq:real-hamming-ball}
 x^{tn}\sum_{k=0}^{\lfloor tn\rfloor}\binom nk
 \le\sum_{k=0}^n\binom nkx^k=(1+x)^n.
\end{equation}
Thus
\[
 \sum_{k=0}^{\lfloor n/4\rfloor}\binom nk
 \le \exp\{n h(1/4)\},
 \qquad
 h(t):=-t\log t-(1-t)\log(1-t).
\]
Since $h(1/4)<\log2$, put
\[
 c_0:=\log2-h(1/4)>0.
\]
Each excluded ball then contains at most $2^ne^{-c_0n}$ sign vectors, so
the greedy construction selects at least
\[
 M\ge\frac12e^{c_0n}.
\]
In particular, for $n\ge n_0:=\lceil2\log2/c_0\rceil$,
\[
 M\ge e^{(c_0/2)n}.
\]
For each of the finitely many integers $4\le n<n_0$, two sign vectors at
Hamming distance $\lfloor n/2\rfloor$ satisfy
\eqref{eq:real-hamming-separation}. Hence, after choosing
\[
 c_1:=
 \min\left\{\frac{c_0}{2},
 \frac{\log2}{n_0}\right\}>0,
\]
one has
\begin{equation}\label{eq:real-hamming-cardinality}
 M\ge e^{c_1n}\qquad(n\ge4).
\end{equation}

Define
\[
 X_j(g):=\frac{P_g^{\mathbb R}(\varepsilon^{(j)})}{\sqrt N}.
\]
Then
\[
 \int X_j^2\,d\gamma_{m,n}^{\mathbb R}=1.
\]
If two chosen vectors differ on $S\subset[n]$, $b:=|S|$, independence of
the coefficients gives
\begin{equation}\label{eq:real-sign-covariance}
 \int X_iX_j\,d\gamma_{m,n}^{\mathbb R}
 =\frac1N\sum_{|\alpha|=m}(-1)^{\sum_{r\in S}\alpha_r}.
\end{equation}
Grouping the multiindices according to
$k=\sum_{r\in S}\alpha_r$ gives
\[
 \int X_iX_j\,d\gamma_{m,n}^{\mathbb R}
 =
 \frac1N\sum_{k=0}^m(-1)^k
 \binom{k+b-1}{b-1}
 \binom{m-k+n-b-1}{n-b-1}.
\]
For $0\le k\le m$,
\begin{align*}
 &\frac1N
 \binom{k+b-1}{b-1}
 \binom{m-k+n-b-1}{n-b-1}\\
 &\qquad=
 \frac{\Gamma(n)}{\Gamma(b)\Gamma(n-b)}
 \binom mk
 \int_0^1
 u^{k+b-1}(1-u)^{m-k+n-b-1}\,du.
\end{align*}
Indeed, the integral equals
\[
 \frac{\Gamma(k+b)\Gamma(m-k+n-b)}
 {\Gamma(m+n)},
\]
and substitution of the factorial formulas for the three binomial
coefficients gives the displayed identity. Summing in $k$ and using
\[
 \sum_{k=0}^m\binom mk(-u)^k(1-u)^{m-k}
 =(1-2u)^m
\]
gives
\begin{equation}\label{eq:real-sign-integral}
 \int X_iX_j\,d\gamma_{m,n}^{\mathbb R}
 =
 \frac{\Gamma(n)}{\Gamma(b)\Gamma(n-b)}
 \int_0^1(1-2u)^m u^{b-1}(1-u)^{n-b-1}\,du.
\end{equation} By
\eqref{eq:real-hamming-separation}, $p:=b/n\in[1/4,3/4]$. Since $m\ge2$,
the absolute value of \eqref{eq:real-sign-integral} is bounded by the same
integral with $(1-2u)^2$. Direct integration gives
\[
 (1-2p)^2+\frac{4p(1-p)}{n+1}
 \le\frac14+\frac1{n+1}<\frac12.
\]
Thus
\begin{equation}\label{eq:real-sign-mixed-bound}
 \int X_iX_j\,d\gamma_{m,n}^{\mathbb R}\le\frac12
 \qquad(i\ne j).
\end{equation}

Let $Z_0,Z_1,\ldots,Z_M$ be independent standard real Gaussian variables
and put $Y_j:=2^{-1/2}(Z_0+Z_j)$. Then
\[
 \E Y_j^2=1,\qquad
 \E(Y_iY_j)=\frac12\quad(i\ne j).
\]
By \eqref{eq:real-sign-mixed-bound}, Slepian's inequality \cite{Slepian}
gives
\[
 \int\max_jX_j\,d\gamma_{m,n}^{\mathbb R}
 \ge\frac1{\sqrt2}\,\E\max_jZ_j
 \ge c\sqrt{\log M}\ge c'\sqrt n,
\]
because \eqref{eq:real-hamming-cardinality} gives
$\log M\ge c_1n$.

Each $X_j$ is a linear functional with Euclidean norm one, so
$g\mapsto\max_jX_j(g)$ is $1$-Lipschitz. Gaussian concentration
{\color{black}\cite[Section~5.4]{BLM}} therefore gives absolute constants $c_2,c_3>0$ such that
\begin{equation}\label{eq:real-sign-lower}
 \gamma_{m,n}^{\mathbb R}
 \{\,\|P_g^{\mathbb R}\|_{\infty,\mathbb R}<c_2\sqrt{Nn}\,\}
 \le e^{-c_3n}.
\end{equation}

The exponential Markov inequality applied to $\|g\|_2^2$ gives
\begin{equation}\label{eq:real-gaussian-norm-tail}
 \gamma_{m,n}^{\mathbb R}\{\|g\|_2>2\sqrt N\}\le e^{-c_4N}
\end{equation}
for an absolute $c_4>0$. Since $q_m<2$,
\begin{equation}\label{eq:real-q2-comparison}
 \|g\|_{q_m}
 \le N^{1/q_m-1/2}\|g\|_2
 =N^{1/(2m)}\|g\|_2.
\end{equation}
Outside the exceptional sets in
\eqref{eq:real-sign-lower} and \eqref{eq:real-gaussian-norm-tail},
\[
 R_m^{\mathbb R}(g)
 \le C\frac{N^{1/(2m)}}{\sqrt n}.
\]
Finally,
\[
 N=\binom{m+n-1}{m}
 \le\left(\frac{e(m+n-1)}m\right)^m,
\]
so
\begin{equation}\label{eq:real-growing-final-scale}
 \frac{N^{1/(2m)}}{\sqrt n}
 \le\sqrt{e\left(\frac1m+\frac1n\right)}.
\end{equation}
Equations \eqref{eq:real-sign-lower},
\eqref{eq:real-gaussian-norm-tail}, and
\eqref{eq:real-growing-final-scale} give the Gaussian estimate.
The event is homogeneous of degree zero, so
\eqref{eq:radial-transfer-principle} gives
\eqref{eq:real-growing-tail}.
}
\end{proof}

\subsection{Proof of Theorem D}\label{sec:proof-D-real}

\begin{proof}
{\color{black}
The fixed- and growing-dimensional regimes are treated separately.

\medskip
\noindent\textbf{Case 1. Fixed dimension.}
Fix $n\ge2$ and $\varepsilon>0$. By
Proposition~\ref{prop:real-fixed-tail}, for $0<u\le1$ and $v\ge1$,
\[
 \limsup_{m\to\infty}\mu_{m,n}^{\mathbb R}\{R_m^{\mathbb R}<u\}
 \le C_nu,
 \qquad
 \limsup_{m\to\infty}\mu_{m,n}^{\mathbb R}\{R_m^{\mathbb R}>v\}
 \le C_nv^{-2}.
\]
Choose $u=u(n,\varepsilon)>0$ and $v=v(n,\varepsilon)>1$ so that
\[
 C_nu<\frac{\varepsilon}{2},
 \qquad
 C_nv^{-2}<\frac{\varepsilon}{2}.
\]
Then
{\color{black}\hypersetup{linkcolor=black,citecolor=black}\[
 \liminf_{m\to\infty}
 \mu_{m,n}^{\mathbb R}
 \{u\le R_m^{\mathbb R}\le v\}
 \ge1-C_nu-C_nv^{-2}>1-\varepsilon.
\]
Consequently, \eqref{eq:real-main-window} holds for all sufficiently
large $m$.} For $n=1$ there is only one monomial,
so $R_m^{\mathbb R}\equiv1$.

\medskip
\noindent\textbf{Case 2. Growing dimension.}
Suppose $n_m\to\infty$. Since also $m\to\infty$,
\[
 \sqrt{\frac1m+\frac1{n_m}}\longrightarrow0.
\]
Proposition~\ref{prop:real-growing-dimension} therefore implies that, for
every $\eta>0$,
\[
 \mu_{m,n_m}^{\mathbb R}\{R_m^{\mathbb R}>\eta\}\longrightarrow0.
\]
Hence $R_m^{\mathbb R}\to0$ in
$\mu_{m,n_m}^{\mathbb R}$-measure.

Conversely, suppose that $n_m$ does not tend to infinity. Then there are
$K\in\mathbb N$ and an infinite subsequence $(m_j)$ such that
$n_{m_j}\le K$ for every $j$. Since
$\{1,\ldots,K\}$ is finite, some value $n_0$ occurs along a further infinite
subsequence. Passing to this further subsequence and relabelling it again as
$(m_j)$, we have $n_{m_j}=n_0$ for every $j$. Apply Case~1 with $n=n_0$
and $\varepsilon=1/2$. There is
$c_{n_0,1/2}>0$ such that, for all sufficiently large indices in that
subsequence,
\[
 \mu_{m_j,n_0}^{\mathbb R}
 \{R_{m_j}^{\mathbb R}\ge c_{n_0,1/2}\}\ge\frac12.
\]
Thus convergence to zero in measure is impossible on that subsequence.
}
\end{proof}

{\color{black}\hypersetup{linkcolor=black,citecolor=black}Theorems~\ref{thm:vanishing-main}
and~\ref{thm:real-main} give different criteria for the typical ratio to
vanish. For complex polynomials, the ratio tends to zero in spherical
measure exactly when the number of prescribed monomials tends to infinity.
For the full real polynomial spaces, this occurs exactly when the ambient
dimension tends to infinity.
Proposition~\ref{prop:real-growing-dimension} gives a quantitative bound
in the latter case.}

\section{Proof of Theorem E}\label{sec:critical}

\begin{proof}
{\color{black}
Theorem~\ref{thm:full-exact-main}\textup{(i)} gives
\begin{equation}\label{eq:critical-uniform-input}
 \frac{\sqrt{H_{m,n_m}}}{N_{m,n_m}^{1/(2m)}}R_m
 \longrightarrow1
 \qquad\text{in }\mu_{m,n_m}\text{-measure}.
\end{equation}
For the deterministic normalization, put
$\lambda_m:=n_m/m$. By assumption, $\lambda_m\to1$. Stirling's formula \cite[Section~5.11]{NIST}
\[
 \log(k!)=k\log k-k+O(\log k)
\]
applied to
\[
 N_{m,n_m}=\binom{m+n_m-1}{m}
\]
gives the following expansion. Since $n_m/m\to1$, replacing $m+n_m-1$
by $m+n_m$ and $n_m-1$ by $n_m$ changes $\log N_{m,n_m}$ by $o(m)$:

\begin{align*}
 \frac1m\log N_{m,n_m}
 &=
 \frac{m+n_m}{m}\log(m+n_m)
 -\log m-\frac{n_m}{m}\log n_m+o(1)\\
 &=
 (1+\lambda_m)\log(1+\lambda_m)
 -\lambda_m\log\lambda_m+o(1).
\end{align*}
Therefore
\begin{equation}\label{eq:critical-N-limit}
 N_{m,n_m}^{1/(2m)}
 \longrightarrow
 \exp\!\left(\frac12\cdot2\log2\right)=2.
\end{equation}

For $H_{m,n_m}$,
\begin{align*}
 \frac{H_{m,n_m}}{m\log m}
 &=
 \frac{n_m-1}{2m}\,
 \frac{\log\!\left(m+m^2/n_m\right)}{\log m}\\
 &=
 \frac{n_m-1}{2m}\,
 \left[
 1+\frac{\log(1+m/n_m)}{\log m}
 \right]
 \longrightarrow\frac12.
\end{align*}
Hence
\begin{equation}\label{eq:critical-H-limit}
 \frac{\sqrt{m\log m}}{\sqrt{H_{m,n_m}}}
 \longrightarrow\sqrt2.
\end{equation}
Multiplying \eqref{eq:critical-uniform-input} by the deterministic factors
in \eqref{eq:critical-N-limit} and \eqref{eq:critical-H-limit} yields
\[
 \sqrt{m\log m}\,R_m
 \longrightarrow2\sqrt2
 \qquad\text{in }\mu_{m,n_m}\text{-measure}.
\]
}
\end{proof}

\section{A multilinear counterpart}\label{sec:multilinear}

Fix $n\ge2$ throughout this section and put $d:=n-1$.
Let
\[
 \mathbb G_n:=\TT^n/\TT,
\]
where $\TT$ acts diagonally. For $z,w\in\TT^n$ define
\begin{equation}\label{eq:projective-distance}
 \delta([z],[w])^2
 :=
 \frac1n\min_{\lambda\in\TT}\|z-\lambda w\|_2^2
 =
 2\left(1-\left|\frac1n\sum_{j=1}^n z_j\overline{w_j}\right|\right).
\end{equation}
The second identity follows by maximizing
$\Re\bigl(\lambda\sum_jz_j\overline{w_j}\bigr)$ over $\lambda\in\TT$.
On $\mathbb G_n^m$ put
\begin{equation}\label{eq:product-projective-distance}
 \Delta_m(z,w)^2:=\sum_{r=1}^m
 \delta(z^{(r)},w^{(r)})^2.
\end{equation}

\begin{lemma}\label{lem:projective-packing}
Fix $n\ge2$, put $d:=n-1$, and equip $\mathbb G_n^m$ with the metric $\Delta_m$
from \eqref{eq:product-projective-distance}. For all sufficiently large $m$,
there are points $z_1,\ldots,z_{M_m}\in\mathbb G_n^m$ such that
\begin{equation}\label{eq:projective-packing-correlation}
 \Delta_m(z_i,z_j)\ge2\sqrt{\log m}
 \qquad(i\ne j),
\end{equation}
and
\begin{equation}\label{eq:projective-packing-size}
 \log M_m
 \ge
 \frac d2\,m\log m-\frac d2\,m\log\log m-O_n(m).
\end{equation}
\end{lemma}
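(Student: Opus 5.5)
The plan is a volume (Gilbert--Varshamov) packing argument on the compact group $\mathbb G_n^m$. Let $\nu$ be its normalized Haar measure, i.e.\ the pushforward of $m_n^{\otimes m}$ under the quotient map, and set $r:=2\sqrt{\log m}$. The metric $\Delta_m$ is invariant under coordinatewise multiplication, so the ball $B(w,r)$ has the same $\nu$-measure $v_m$ for every center $w$. We choose points greedily. As long as $j<1/v_m$, the balls $B(z_1,r),\ldots,B(z_j,r)$ cannot cover $\mathbb G_n^m$, so a new point $z_{j+1}$ with $\Delta_m(z_{j+1},z_i)\ge r$ for all $i\le j$ exists. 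This gives \eqref{eq:projective-packing-correlation} with $M_m\ge 1/v_m$. It therefore suffices to prove
\[
 \log\frac1{v_m}\ge\frac d2\,m\log m-\frac d2\,m\log\log m-O_n(m).
\]

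To bound $v_m$ we use the exponential Markov inequality with the product structure, centering the ball at the identity class. For $\lambda>0$,
\[
 v_m=\nu\{\Delta_m^2\le r^2\}\le e^{\lambda r^2}\Bigl(\int_{\mathbb G_n}e^{-\lambda\delta([z],[\mathbf 1_n])^2}\,d\nu_1([z])\Bigr)^m,
\]
where $\mathbf 1_n=(1,\ldots,1)$ is the identity of $\TT^n$ and $\nu_1$ is Haar measure on $\mathbb G_n$. The main step is the one-factor Laplace estimate
\[
 \int_{\mathbb G_n}e^{-\lambda\delta^2}\,d\nu_1\le (C_n/\sqrt\lambda)^{d}\qquad(\lambda\ge1).
\]

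For the one-factor estimate, lift to $\TT^n$ and use the representative $z=(e^{i\theta_1},\ldots,e^{i\theta_d},1)$. Haar measure on $\mathbb G_n$ then corresponds to uniform measure on $\theta\in[0,2\pi]^d$. The identity
\[
 \Bigl|\sum_j z_j\Bigr|^2=n^2-4\sum_{j<k}\sin^2\frac{\theta_j-\theta_k}2,\qquad \theta_n:=0,
\]
gives $1-|\overline z|\ge\tfrac12(1-|\overline z|^2)$, where $\overline z$ is the mean of the coordinates. Keeping only the pairs $(k,n)$ yields
\[
 \delta^2\ge\frac4{n^2}\sum_{k\le d}\sin^2\frac{\theta_k}2.
\]
The integral therefore factorizes into $d$ one-dimensional integrals. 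Each satisfies $\frac1{2\pi}\int_0^{2\pi}e^{-(4\lambda/n^2)\sin^2(\theta/2)}\,d\theta\le Cn/\sqrt\lambda$, using $\sin(\theta/2)\ge\theta/\pi$ on $[0,\pi]$ and a Gaussian integral.

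Finally we optimize, choosing $\lambda:=dm/(2r^2)=dm/(8\log m)$, which is at least $1$ for large $m$. Then
\[
 \log v_m\le \frac{dm}2+m\,d\log C_n-\frac{dm}2\log\frac{dm}{8\log m}
 =-\frac d2\,m\log m+\frac d2\,m\log\log m+O_n(m).
\]
Combined with $M_m\ge1/v_m$, this gives \eqref{eq:projective-packing-size}.

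The only delicate point is the lower bound of $\delta^2$ by the separable sum $\frac4{n^2}\sum_k\sin^2(\theta_k/2)$, since it is what makes the Laplace integral factor. Its constant depends only on $n$ and so is absorbed into $O_n(m)$. The exponent $d/2$ per factor is exact because the quotient by $\TT$ removes exactly one dimension.
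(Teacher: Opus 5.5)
Your proof is correct, and its overall architecture matches the paper's. Both use Haar-measure packing on $\mathbb G_n^m$: the paper takes a maximal $2\sqrt{\log m}$-separated set and uses that the balls then cover, which gives the same bound $M_m\ge 1/v_m$ as your greedy construction. Both then apply the exponential Markov inequality over the $m$ independent factors and choose the same $\lambda=dm/(8\log m)$.

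The genuine difference is in how the one-factor Laplace bound is obtained.
\begin{itemize}
\item The paper argues locally. In the chart $u=(e^{it_1},\ldots,e^{it_d},1)$ it compares $\delta(u,1)^2$ with $\sum_j t_j^2$ near the identity. From this it deduces the small-ball estimate $\nu\{\delta(u,1)\le r\}\le C_nr^d$, and converts that into $\int e^{-\lambda\delta^2}\le C_n'\lambda^{-d/2}$ by a Stieltjes integration by parts.
\item You argue globally. The exact identity $|\sum_j z_j|^2=n^2-4\sum_{j<k}\sin^2\frac{\theta_j-\theta_k}{2}$ gives the separable minorant $\delta^2\ge 1-|\bar z|^2\ge \frac{4}{n^2}\sum_{k\le d}\sin^2\frac{\theta_k}{2}$. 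The integral then factors into $d$ one-dimensional Gaussian-type integrals.
\end{itemize}

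Your route is shorter and fully explicit: it gives $(Cn/\sqrt\lambda)^d$ with an absolute constant $C$, and needs no chart, compactness, or integration-by-parts step. The paper's route is less computational and more robust, since it uses only that $\delta$ is locally comparable to a Euclidean distance on a $d$-dimensional compact group. The same local comparison is also reused for the covering estimate in the next lemma.
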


\begin{proof}
{\color{black}
Let $\nu_n$ be normalized Haar measure on $\mathbb G_n$.

\medskip

A class near the identity has a representative
\[
 u=(e^{it_1},\ldots,e^{it_{n-1}},1),
 \qquad |t_j|<\pi,
\]
and we put $t_n:=0$. From \eqref{eq:projective-distance},
\[
 \delta(u,1)^2
 =\frac1n\min_{\theta\in\mathbb R}
 \sum_{j=1}^n|e^{it_j}-e^{i\theta}|^2.
\]
For $t$ and $\theta$ in a fixed neighbourhood of zero, the quantities
$|e^{it_j}-e^{i\theta}|$ and $|t_j-\theta|$ are comparable, with constants
depending only on $n$.
Moreover,
\[
 \min_{\theta\in\mathbb R}\sum_{j=1}^n|t_j-\theta|^2
 =
 \sum_{j=1}^n|t_j-\bar t|^2,
 \qquad
 \bar t:=\frac1n\sum_{j=1}^nt_j.
\]
On the section $t_n=0$ this quadratic form is positive definite. Hence
there are constants $0<c_n<C_n<\infty$ and $r_n\in(0,1)$ such that
\[
 c_n\sum_{j=1}^{n-1}t_j^2
 \le\delta(u,1)^2
 \le C_n\sum_{j=1}^{n-1}t_j^2
\]
whenever $\delta(u,1)\le r_n$. Haar measure in these coordinates is a
constant multiple of Lebesgue measure, and therefore
\[
 \nu_n\{u:\delta(u,1)\le r\}\le C_nr^d,
 \qquad 0<r\le r_n.
\]
For $r_n<r\le1$, the trivial bound
\[
 \nu_n\{u:\delta(u,1)\le r\}\le1
 \le r_n^{-d}r^d
\]
has the same form after increasing the constant. Thus
\begin{equation}\label{eq:projective-small-ball}
 \nu_n\{u:\delta(u,1)\le r\}\le C_nr^d,
 \qquad 0<r\le1.
\end{equation}

\medskip

Set
\[
 F(r):=\nu_n\{u:\delta(u,1)\le r\},
 \qquad0\le r\le2.
\]
For $\lambda\ge1$, integration by parts for the nondecreasing function $F$
gives
\begin{align}
 \int_{\mathbb G_n}e^{-\lambda\delta(u,1)^2}\,d\nu_n(u)
 &=\int_{[0,2]}e^{-\lambda r^2}\,dF(r)\notag\\
 &=e^{-4\lambda}
 +2\lambda\int_0^2re^{-\lambda r^2}F(r)\,dr.\label{eq:projective-ibp}
\end{align}
Using \eqref{eq:projective-small-ball} on $[0,1]$ and $F(r)\le1$ on
$[1,2]$,
\begin{align*}
 \int_{\mathbb G_n}e^{-\lambda\delta(u,1)^2}\,d\nu_n(u)
 &\le e^{-4\lambda}
 +2C_n\lambda\int_0^1r^{d+1}e^{-\lambda r^2}\,dr
 +2\lambda\int_1^2re^{-\lambda r^2}\,dr.
\end{align*}
In the middle integral put $s=\sqrt\lambda\,r$. The resulting integral is
bounded independently of $\lambda$, while the two remaining terms are
exponentially small. Thus
\begin{equation}\label{eq:projective-laplace}
 \int_{\mathbb G_n}e^{-\lambda\delta(u,1)^2}\,d\nu_n(u)
 \le C_n'\lambda^{-d/2}.
\end{equation}

\medskip

Let $\nu_n^{\otimes m}$ be product Haar measure on $\mathbb G_n^m$. For
$\lambda\ge1$, on the event
\[
 \sum_{r=1}^m\delta(u_r,1)^2\le4\log m
\]
one has
\[
 e^{-\lambda\sum_r\delta(u_r,1)^2}\ge e^{-4\lambda\log m}.
\]
Markov's inequality therefore gives
\begin{align}
 &\nu_n^{\otimes m}
 \left\{u:\sum_{r=1}^m\delta(u_r,1)^2\le4\log m\right\}\notag\\
 &\qquad\le
 e^{4\lambda\log m}
 \prod_{r=1}^m
 \int_{\mathbb G_n}e^{-\lambda\delta(u_r,1)^2}\,d\nu_n(u_r)\notag\\
 &\qquad\overset{\eqref{eq:projective-laplace}}{\le}
 e^{4\lambda\log m}
 \left(C_n'\lambda^{-d/2}\right)^m.
 \label{eq:projective-ball-measure}
\end{align}
Choose
\[
 \lambda:=\frac{dm}{8\log m}.
\]
For large $m$ this is at least $1$. Taking logarithms in
\eqref{eq:projective-ball-measure} gives
\begin{equation}\label{eq:projective-ball-volume}
 \log\nu_n^{\otimes m}
 \{u:\Delta_m(u,1)\le2\sqrt{\log m}\}
 \le
 -\frac d2m\log m+\frac d2m\log\log m+O_n(m).
\end{equation}

\medskip

Choose a maximal
$2\sqrt{\log m}$-separated set
$\{z_1,\ldots,z_{M_m}\}\subset\mathbb G_n^m$. By maximality, the balls
\[
 B_{\Delta_m}(z_j,2\sqrt{\log m}),\qquad1\le j\le M_m,
\]
cover $\mathbb G_n^m$. Translation invariance gives all these balls the
same measure, hence
\[
 1\le
 M_m\,\nu_n^{\otimes m}
 \{u:\Delta_m(u,1)\le2\sqrt{\log m}\}.
\]
Using \eqref{eq:projective-ball-volume} and taking logarithms yields
\eqref{eq:projective-packing-size}; the separation property is precisely
\eqref{eq:projective-packing-correlation}.
}
\end{proof}

\begin{lemma}\label{lem:projective-covering}
Fix $n\ge2$, put $d:=n-1$, and let $\Delta_m$ be the metric in
\eqref{eq:product-projective-distance}. For $0<r\le1$,
\begin{equation}\label{eq:projective-covering-number}
 \mathcal N(\mathbb G_n^m,\Delta_m,r)
 \le
 \left(\frac{C_n\sqrt m}{r}\right)^{dm}.
\end{equation}
\end{lemma}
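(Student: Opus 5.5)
We reduce the covering bound to a single-factor covering estimate and then take products. The key steps are as follows.

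\textbf{Single-factor covering.} First we show that for every $0<s\le1$,
\[
 \mathcal N(\mathbb G_n,\delta,s)\le (C_n/s)^{d}.
\]
Every class in $\mathbb G_n$ has a representative with last coordinate $1$, that is, $u=(e^{it_1},\ldots,e^{it_d},1)$ with $t\in[0,2\pi]^d$. Taking $\lambda=1$ in \eqref{eq:projective-distance} gives
\[
 \delta([u],[u'])^2\le\frac1n\|u-u'\|_2^2\le\frac1n\|t-t'\|_2^2\le\|t-t'\|_\infty^2 ,
\]
using $|e^{ia}-e^{ib}|\le|a-b|$. So the map $t\mapsto[u]$ is $1$-Lipschitz from $([0,2\pi]^d,\|\cdot\|_\infty)$ onto $(\mathbb G_n,\delta)$. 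A cubic grid of mesh $s$ in $[0,2\pi]^d$ has at most $(2\pi/s+1)^d\le(C/s)^d$ points, since $s\le1$. Its image is therefore an $s$-net of $\mathbb G_n$.

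\textbf{Product step.} Set $s:=r/\sqrt m\le1$ and let $\mathcal A\subset\mathbb G_n$ be an $s$-net of size at most $(C\sqrt m/r)^d$. Take $\mathcal A^m\subset\mathbb G_n^m$. Given any $z=(z^{(1)},\ldots,z^{(m)})$, choose $w^{(r)}\in\mathcal A$ with $\delta(z^{(r)},w^{(r)})\le s$ for each coordinate. Then by \eqref{eq:product-projective-distance},
\[
 \Delta_m(z,w)^2\le m s^2=r^2 .
\]
Hence $\mathcal A^m$ is an $r$-net. Its cardinality is at most $(C\sqrt m/r)^{dm}$, which is \eqref{eq:projective-covering-number} with $C_n:=C$ (in fact $C=2\pi+1$ works).

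\textbf{Obstacles.} There is essentially no serious obstacle. The only points to watch are the following.
\begin{itemize}
 \item The Lipschitz estimate must be carried out with the quotient metric; taking $\lambda=1$ in the minimum suffices, since it only makes $\delta$ smaller than the representative distance.
 \item We require $r\le1$ so that $s\le1$ and the ``$+1$'' in the grid count is absorbed into the constant.
 \item The covering number is defined through nets whose points lie in $\mathbb G_n^m$ itself. This holds here because the grid points are genuine classes.
\end{itemize}
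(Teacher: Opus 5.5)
Your proof is correct, and it takes a more economical route than the paper's.

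The paper works locally. In the chart $u=(e^{it_1},\ldots,e^{it_{n-1}},1)$ it proves a two-sided comparison $c_n\sum_j t_j^2\le\delta(u,1)^2\le C_n\sum_j t_j^2$ near the identity, using positive definiteness of $\sum_j|t_j-\bar t|^2$ on the section $t_n=0$. It transports this comparison to every point by translation, covers $\mathbb G_n$ by finitely many such charts using compactness, and counts a grid in each chart to get nets of size $C_n\rho^{-d}$. The product step with $\rho=r/\sqrt m$ is then identical to yours.

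You instead observe that covering needs only the one-sided upper bound, and that this bound holds globally by taking $\lambda=1$ in \eqref{eq:projective-distance}. So $t\mapsto[u]$ is $1$-Lipschitz from $([0,2\pi]^d,\|\cdot\|_\infty)$ onto $(\mathbb G_n,\delta)$. This removes the charts and the compactness argument, and it gives the explicit constant $2\pi+1$, independent of $n$.

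The paper's two-sided comparison is not wasted overall: its lower bound $\delta^2\ge c_n\sum_j t_j^2$ is what drives the small-ball volume estimate in Lemma~\ref{lem:projective-packing}. For the covering lemma itself, though, your argument is the cleaner one.
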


\begin{proof}
{\color{black}
For a class near the identity of $\mathbb G_n$, choose the representative
\[
 u=(e^{it_1},\ldots,e^{it_{n-1}},1),
 \qquad |t_j|<\pi,
\]
and put $t_n:=0$. {\color{black}\hypersetup{linkcolor=black,citecolor=black}For $t$ sufficiently small, a phase minimizing
$\sum_{j=1}^n|e^{it_j}-e^{i\theta}|^2$ can be chosen near zero.
In this range, $|e^{it_j}-e^{i\theta}|$ is comparable to $|t_j-\theta|$,
uniformly in $j$, with constants depending only on $n$.}
Therefore $\delta(u,1)^2$ is comparable, with constants depending only on
$n$, to
\[
 \min_{\theta\in\mathbb R}\sum_{j=1}^n|t_j-\theta|^2
 =\sum_{j=1}^n|t_j-\bar t|^2,
 \qquad \bar t:=\frac1n\sum_{j=1}^nt_j.
\]
On the section $t_n=0$ this quadratic form is positive definite, so there
are constants $0<c_n<C_n<\infty$ such that
\[
 c_n\sum_{j=1}^{n-1}t_j^2
 \le
 \delta(u,1)^2
 \le
 C_n\sum_{j=1}^{n-1}t_j^2.
\]
By translation, the same comparison holds in a neighbourhood of every
point of $\mathbb G_n$, with constants depending only on $n$.
Since $\mathbb G_n$ is compact, finitely many such neighbourhoods cover it.
Consequently, for $0<\rho\le\rho_n$, each chart admits a $\rho$-net with at
most $C_n\rho^{-d}$ points. Taking the union over the finite covering gives
\[
 |\mathcal N_\rho|
 \le C_n\rho^{-d}.
\]
For $\rho_n<\rho\le1$, one fixed finite $\rho_n$-net is automatically a
$\rho$-net, and
\[
 |\mathcal N_{\rho_n}|
 \le C_n\rho_n^{-d}
 \le C_n'\rho^{-d}.
\]
After increasing the constant,
\begin{equation}\label{eq:projective-one-block-net}
 |\mathcal N_\rho|
 \le\left(\frac{C_n}{\rho}\right)^d,
 \qquad 0<\rho\le1.
\end{equation}

Set
\[
 \rho:=\frac r{\sqrt m}.
\]
For
$z=(z^{(1)},\ldots,z^{(m)})\in\mathbb G_n^m$, choose
$w^{(j)}\in\mathcal N_\rho$ with
$\delta(z^{(j)},w^{(j)})\le\rho$. Then
\[
 \Delta_m(z,w)^2
 =\sum_{j=1}^m\delta(z^{(j)},w^{(j)})^2
 \le m\rho^2=r^2.
\]
Thus $\mathcal N_\rho^m$ is an $r$-net of $\mathbb G_n^m$, and
\[
 \mathcal N(\mathbb G_n^m,\Delta_m,r)
 \le|\mathcal N_\rho|^m
 \overset{\eqref{eq:projective-one-block-net}}{\le}
 \left(\frac{C_n\sqrt m}{r}\right)^{dm}.
\]
}
\end{proof}

Let $g=(g_{i_1,\ldots,i_m})$ have independent standard complex Gaussian
coordinates, and let $\gamma_{m,n}^{\mathrm{ML}}$ denote the corresponding
standard complex Gaussian measure on $\CC^{[n]^m}$. Set
\begin{equation}\label{eq:multilinear-gaussian-process}
 X_m(z)
 :=
 n^{-m/2}
 \sum_{i_1,\ldots,i_m=1}^n
 g_{i_1,\ldots,i_m}
 z^{(1)}_{i_1}\cdots z^{(m)}_{i_m},
 \qquad z\in(\TT^n)^m.
\end{equation}
Then
\[
 \int |X_m(z)|^2\,d\gamma_{m,n}^{\mathrm{ML}}=1,
\]
and
\begin{equation}\label{eq:multilinear-covariance}
 K_m(z,w)
 :=
 \int X_m(z)\overline{X_m(w)}\,d\gamma_{m,n}^{\mathrm{ML}}
 =
 \prod_{r=1}^m
 \left(\frac1n\sum_{j=1}^n
 z_j^{(r)}\overline{w_j^{(r)}}\right).
\end{equation}
If the $r$th block is multiplied by $\lambda_r\in\TT$, then
\[
 X_m(\lambda_1 z^{(1)},\ldots,\lambda_m z^{(m)})
 =(\lambda_1\cdots\lambda_m)X_m(z),
\]
so its modulus is unchanged. Thus $|X_m|$ descends to
$\mathbb G_n^m$. {\color{black}\hypersetup{linkcolor=black,citecolor=black}In particular, when representatives of two points in $\mathbb G_n^m$
are compared, their block phases may be chosen independently without
changing either modulus.} This freedom allows each factor
in \eqref{eq:multilinear-covariance} to be chosen real and nonnegative.

\begin{lemma}\label{lem:dyadic-net-oscillation}
Fix $n\ge2$ and let $X_m$ be the Gaussian process defined in
\eqref{eq:multilinear-gaussian-process}. For $m\ge3$, let $r_0=1/\log m$, and let
$\mathcal N_0$ be an $r_0$-net supplied by
Lemma~\ref{lem:projective-covering}. Then
\begin{equation}\label{eq:dyadic-net-oscillation}
 \Prob\left\{
 \sup_{z\in\mathbb G_n^m}|X_m(z)|>
 \max_{w\in\mathcal N_0}|X_m(w)|
 +C_n r_0\sqrt{m\log m}\right\}\longrightarrow0.
\end{equation}
\end{lemma}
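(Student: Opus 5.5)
The plan is to copy the chaining argument in the proof of Lemma~\ref{full:full-upper}, now on $\mathbb G_n^m$ with the metric $\Delta_m$, and to control the oscillation
\[
 Z:=\sup_{z\in\mathbb G_n^m}\bigl(|X_m(z)|-|X_m(\pi_0 z)|\bigr),
\]
where $\pi_k z$ denotes a nearest point of a net $\mathcal N_k$. The first step is to compare the canonical metric with $\Delta_m$. Since $|X_m|$ is defined on $\mathbb G_n^m$, for two classes $z,w$ I choose representatives whose block phases make every factor $\frac1n\sum_j z^{(r)}_j\overline{w^{(r)}_j}$ real and nonnegative, namely equal to $1-\delta(z^{(r)},w^{(r)})^2/2$. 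Then
\[
 \E|X_m(z)-X_m(w)|^2=2-2\prod_r\bigl(1-\tfrac12\delta_r^2\bigr)\le\sum_r\delta_r^2=\Delta_m(z,w)^2,
\]
using $1-\prod(1-a_r)\le\sum a_r$ for $a_r\in[0,1]$. So along a chain the moduli satisfy $\bigl||X_m(z)|-|X_m(w)|\bigr|\le|X_m(z')-X_m(w')|$ for suitable representatives $z',w'$, and each such difference is a complex Gaussian with variance at most $\Delta_m(z,w)^2$.

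The next step is the chaining itself. For $k\ge0$ let $\mathcal N_k$ be an $r_02^{-k}$-net from Lemma~\ref{lem:projective-covering}, so that
\[
 \log|\mathcal N_k|\le dm\log\bigl(C_n\sqrt m\,2^k\log m\bigr)\le C_n m(\log m+k).
\]
Links $(\pi_kz,\pi_{k-1}z)$ lie at $\Delta_m$-distance at most $3r_02^{-k}$, and there are at most $|\mathcal N_k||\mathcal N_{k-1}|$ of them. I will apply the union-bound estimate for maxima of complex Gaussians from the proof of Lemma~\ref{full:full-upper} to the chosen representative differences. This gives
\[
 \E\max_z\bigl||X_m(\pi_kz)|-|X_m(\pi_{k-1}z)|\bigr|\le Cr_02^{-k}\sqrt{m(\log m+k)}.
\]
Summing over $k$ gives $\E Z\le C_n r_0\sqrt{m\log m}$, where the telescoping series converges to $|X_m(z)|$ by continuity of $X_m$. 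Markov's inequality then gives only a constant probability, so I will also need concentration. The map $g\mapsto Z(g)$ is Lipschitz with constant at most $\sup_z\sum_k\|\text{link coefficient vector}\|_2\le Cr_0$. Gaussian concentration \cite[Section~5.4]{BLM} then gives
\[
 \Prob\{Z>\E Z+t\}\le e^{-ct^2/r_0^2}.
\]
Taking $t=r_0\sqrt{m\log m}$ makes this $e^{-cm\log m}\to0$.

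Finally, $\sup|X_m|\le\max_{\mathcal N_0}|X_m|+Z$, and this gives \eqref{eq:dyadic-net-oscillation} after enlarging $C_n$. I expect the main obstacle to be the phase bookkeeping: the link differences must be taken between representatives with aligned phases, yet these are fixed choices per pair of net points, so the union bound over pairs is legitimate. In addition, Lipschitz control of $Z$ in $g$ needs the absolute-value inequality $\bigl||a|-|b|\bigr|\le|a-\lambda b|$ for each phase $\lambda\in\TT$, which does hold.
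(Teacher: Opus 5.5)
\paragraph{A false Lipschitz claim in the concentration step.} One step does not hold as written: the assertion that $g\mapsto Z(g)$ is $Cr_0$-Lipschitz. The inequality $\bigl||a|-|b|\bigr|\le|a-\lambda b|$ controls the size of $f_z(g):=|X_m(z)(g)|-|X_m(\pi_0z)(g)|$. It does not control the increments of $f_z$ in $g$, because a difference of two moduli is not a linear functional.

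Concretely, let $z',w'$ be aligned representatives of $z$ and $w=\pi_0z\ne z$. Pick $g$ with $X_m(z')(g)=\varepsilon>0$ and $X_m(w')(g)=-\varepsilon$; this is possible because the two functionals are not proportional when $|K_m(z,w)|<1$. For the unit direction $h$ with $X_m(z')(h)=1$ one has $X_m(w')(h)=|K_m(z,w)|$. Hence $f_z(g+th)-f_z(g)=t\bigl(1+|K_m(z,w)|\bigr)\approx 2t$ for small $t>0$, however small $\Delta_m(z,w)$ is.

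So the functions whose supremum is $Z$ are only $2$-Lipschitz, and nothing in your argument gives the constant $Cr_0$ for $Z$ itself. What is $Cr_0$-Lipschitz is your chaining majorant, and the repair uses it directly. Your computation shows $Z\le W:=\sum_{k\ge1}\max_{(v,w)}|X_m(v')-X_m(w')|$, with the maximum over level-$k$ links and representatives fixed per link. $W$ is a sum of maxima of moduli of linear functionals of norm at most $3r_02^{-k}$, so it is $3r_0$-Lipschitz. Your chaining also gives $\E W\le C_nr_0\sqrt{m\log m}$, so concentration applied to $W$ yields the bound you wanted.

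For this lemma even the trivial constant suffices. Since $Z$ is $2$-Lipschitz, $\Prob\{Z>\E Z+t\}\le e^{-ct^2}$, and $t=r_0\sqrt{m\log m}=\sqrt{m/\log m}\to\infty$.

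\paragraph{Comparison with the paper's proof.} With this fix your argument is correct. It follows the template of Lemma~\ref{full:full-upper} (mean bound plus Gaussian concentration) rather than the paper's proof. The paper never estimates $\E Z$ and uses no concentration inequality. It takes $r_k$-nets with parent maps $p_k:\mathcal N_{k+1}\to\mathcal N_k$ and assigns to each level-$k$ edge the threshold $2r_ku_k$, where $u_k^2=2dm\log(C_n\sqrt m/r_{k+1})+2(k+1)\log2+2\log m$. A single union bound over all edges at all scales shows that every threshold holds outside an event of probability at most $m^{-2}\sum_{k\ge0}4^{-(k+1)}$. Telescoping along the chain on this event then gives the oscillation bound directly.

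Because only tail bounds for fixed edges are needed there, the pointwise inequality $\bigl||X_m(v)|-|X_m(w)|\bigr|\le|X_m(v)-X_m(w')|$ is enough, and the Lipschitz question never arises. The paper's route is shorter and self-contained. Yours additionally gives the mean bound $\E Z\le C_nr_0\sqrt{m\log m}$ and, via $W$, a failure probability $e^{-cm\log m}$ instead of $O(m^{-2})$.
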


\begin{proof}
{\color{black}
Put
\[
 r_k:=2^{-k}r_0,\qquad k\ge0.
\]

\medskip

Suppose $\Delta_m(z,w)\le r$. Choose the block phases of a representative
$w'$ so that every factor in \eqref{eq:multilinear-covariance} is real and
nonnegative. Then $|X_m(w')|=|X_m(w)|$, and
\begin{align}
 \int|X_m(z)-X_m(w')|^2\,d\gamma_{m,n}^{\mathrm{ML}}
 &=2\bigl(1-|K_m(z,w)|\bigr)\notag\\
 &\le \Delta_m(z,w)^2.
 \label{eq:multilinear-increment-variance}
\end{align}
Indeed, since $0\le\delta(z^{(j)},w^{(j)})^2/2\le1$,
\[
 |K_m(z,w)|
 =\prod_{j=1}^m\left(1-\frac{\delta(z^{(j)},w^{(j)})^2}{2}\right)
 \ge
 1-\frac12\sum_{j=1}^m\delta(z^{(j)},w^{(j)})^2
 =
 1-\frac12\Delta_m(z,w)^2,
\]
which gives \eqref{eq:multilinear-increment-variance}.

\medskip

For each $k\ge0$, choose an $r_k$-net $\mathcal N_k$ satisfying
\eqref{eq:projective-covering-number}. For every
$v\in\mathcal N_{k+1}$ choose $p_k(v)\in\mathcal N_k$ with
\[
 \Delta_m(v,p_k(v))
 \le r_k+r_{k+1}\le2r_k.
\]
Let
\[
 \mathcal E_k:=\{(v,p_k(v)):v\in\mathcal N_{k+1}\}.
\]
Then
\begin{equation}\label{eq:dyadic-edge-count}
 |\mathcal E_k|
 \le
 \left(\frac{C_n\sqrt m}{r_{k+1}}\right)^{dm}.
\end{equation}
For $(v,w)\in\mathcal E_k$,
\eqref{eq:multilinear-increment-variance} gives
\[
 \int|X_m(v)-X_m(w')|^2\,d\gamma_{m,n}^{\mathrm{ML}}\le4r_k^2.
\]
A centered {\color{black}\hypersetup{linkcolor=black,citecolor=black}circularly symmetric} complex Gaussian variable $Z$ with
$\E|Z|^2\le4r_k^2$ satisfies
\[
 \Prob\{|Z|>2r_ku\}\le e^{-u^2}.
\]
Since
$||X_m(v)|-|X_m(w)||\le|X_m(v)-X_m(w')|$,
\begin{equation}\label{eq:dyadic-edge-tail}
 \Prob\left\{
 \big||X_m(v)|-|X_m(w)|\big|>2r_ku
 \right\}
 \le e^{-u^2}.
\end{equation}

Choose
\[
 u_k^2:=
 2dm\log\!\left(\frac{C_n\sqrt m}{r_{k+1}}\right)
 +2(k+1)\log2+2\log m.
\]
Using \eqref{eq:dyadic-edge-count} and
\eqref{eq:dyadic-edge-tail}, the union bound gives
\begin{align*}
 &\Prob\left\{
 \exists k\ge0,\ (v,w)\in\mathcal E_k:
 \big||X_m(v)|-|X_m(w)|\big|>2r_ku_k
 \right\}\\
 &\qquad\le
 \sum_{k\ge0}|\mathcal E_k|e^{-u_k^2}
 \le
 m^{-2}\sum_{k\ge0}2^{-2(k+1)}
 \longrightarrow0.
\end{align*}

\medskip

On the event on which all inequalities
$\big||X_m(v)|-|X_m(w)|\big|\le2r_ku_k$ hold simultaneously for every
$k\ge0$ and every $(v,w)\in\mathcal E_k$, fix
$z\in\mathbb G_n^m$. For $K\ge1$, choose
$v_K\in\mathcal N_K$ with
$\Delta_m(z,v_K)\le r_K$, and define recursively
\[
 v_k:=p_k(v_{k+1}),\qquad k=K-1,\ldots,0.
\]
Then
\[
 \big||X_m(v_K)|-|X_m(v_0)|\big|
 \le2\sum_{k=0}^{K-1}r_ku_k.
\]
Because $r_K\to0$ and $|X_m|$ is continuous, letting $K\to\infty$ gives
\begin{equation}\label{eq:dyadic-telescope}
 |X_m(z)|
 \le
 \max_{w\in\mathcal N_0}|X_m(w)|
 +2\sum_{k\ge0}r_ku_k.
\end{equation}
From the definition of $u_k$ and
$r_{k+1}=2^{-k-1}/\log m$,
\[
 u_k\le
 C_n\sqrt{m(\log m+\log\log m+k+1)}.
\]
Therefore
\begin{align*}
 \sum_{k\ge0}r_ku_k
 &\le
 C_nr_0\sqrt{m\log m}
 \sum_{k\ge0}2^{-k}
 \sqrt{1+\frac{\log\log m+k+1}{\log m}}\\
 &\le C_n'r_0\sqrt{m\log m}.
\end{align*}
Substitution in \eqref{eq:dyadic-telescope} proves
\eqref{eq:dyadic-net-oscillation}.
}
\end{proof}

Projective packing gives many weakly correlated multilinear evaluations,
which provides the lower bound through Slepian's inequality. A multiscale net
and Gaussian increment estimates give the matching upper bound.

\begin{proposition}\label{prop:multilinear-gaussian-supremum}
Let $d=n-1$. For every $\varepsilon>0$,
\begin{equation}\label{eq:multilinear-gaussian-supremum-limit}
 \Prob\left\{
 \left|\frac{\sup_z|X_m(z)|}{\sqrt{\frac d2\,m\log m}}-1\right|>
 \varepsilon\right\}\longrightarrow0.
\end{equation}
\end{proposition}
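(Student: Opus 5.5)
Write $S_m:=\sup_{z\in\mathbb G_n^m}|X_m(z)|$ and $\kappa_m:=\sqrt{\frac d2\,m\log m}$. The plan is to prove a matching upper bound $S_m\le(1+\varepsilon)\kappa_m$ and lower bound $S_m\ge(1-\varepsilon)\kappa_m$, each with probability tending to one. The lower bound comes from projective packing together with Slepian's inequality. The upper bound comes from the dyadic net of Lemma~\ref{lem:dyadic-net-oscillation} together with a union bound over $\mathcal N_0$.

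\emph{Upper bound.} Take $r_0=1/\log m$ and let $\mathcal N_0$ be the net of Lemma~\ref{lem:projective-covering}. Then
\[
 \log|\mathcal N_0|\le dm\log(C_n\sqrt m\log m)=\frac d2\,m\log m+O_n(m\log\log m)=(1+\mathrm{o}(1))\kappa_m^2 .
\]
Each $X_m(w)$ is a standard circularly symmetric complex Gaussian, so $\Prob\{|X_m(w)|>u\}=e^{-u^2}$. The union bound then gives
\[
 \Prob\bigl\{\max_{\mathcal N_0}|X_m|>(1+\varepsilon/2)\kappa_m\bigr\}\le|\mathcal N_0|\,e^{-(1+\varepsilon/2)^2\kappa_m^2}\to0 .
\]
Lemma~\ref{lem:dyadic-net-oscillation} bounds the oscillation correction by $C_nr_0\sqrt{m\log m}=C_n\sqrt m/\sqrt{\log m}=\mathrm{o}(\kappa_m)$. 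Hence $S_m\le(1+\varepsilon)\kappa_m$ with probability tending to one.

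\emph{Lower bound.} Take the points $z_1,\dots,z_M$ of Lemma~\ref{lem:projective-packing}, with $\Delta_m(z_i,z_j)\ge2\sqrt{\log m}$ and $\log M\ge(1-\mathrm{o}(1))\frac d2 m\log m$. By \eqref{eq:multilinear-covariance} and $1-x\le e^{-x}$,
\[
 |K_m(z_i,z_j)|=\prod_r\Bigl(1-\tfrac12\delta(z_i^{(r)},z_j^{(r)})^2\Bigr)\le e^{-\Delta_m^2/2}\le m^{-2}=:\rho .
\]
Set $Y_i:=\sqrt2\,\operatorname{Re}X_m(z_i)$, using any fixed representatives. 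These are real centered Gaussians with unit variance and $\E Y_iY_j=\operatorname{Re}K_m(z_i,z_j)\le\rho$. Compare them by Slepian's inequality with $\sqrt\rho\,\xi+\sqrt{1-\rho}\,\xi_i$, exactly as in the proof of Lemma~\ref{full:full-large-lower}. This gives
\[
 \Prob\{\max_iY_i\le t\}\le\Prob\bigl\{\sqrt\rho\,\xi+\sqrt{1-\rho}\max_i\xi_i\le t\bigr\}.
\]
Choose $t=(1-\varepsilon)\sqrt{2\log M}$. With probability tending to one, $\max_i\xi_i\ge(1-\varepsilon/2)\sqrt{2\log M}$, by the elementary computation $(1-\overline\Phi(t))^M\to0$. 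Also $|\sqrt\rho\,\xi|\le1$ with probability tending to one. So $\max_iY_i>t$ with probability tending to one. Since $S_m\ge\max_i|X_m(z_i)|\ge\max_iY_i/\sqrt2$ and $\sqrt{\log M}\ge(1-\mathrm{o}(1))\kappa_m$, we get $S_m\ge(1-2\varepsilon)\kappa_m$ with probability tending to one.

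\emph{Main obstacle.} The delicate point is the entropy bookkeeping in the upper bound. The net size must match $\frac d2 m\log m$ to first order. This works only because Lemma~\ref{lem:projective-covering} has the factor $\sqrt m/r$ and $r_0$ is merely $1/\log m$, so the $\log\log m$ losses are $\mathrm{o}(\log m)$. One must also check that Lemma~\ref{lem:dyadic-net-oscillation}'s correction is genuinely of lower order; this holds because $r_0\to0$. On the lower side, the packing exponent in \eqref{eq:projective-packing-size} has exactly the right leading term. The only subtlety there is ensuring the correlations are small enough, and $m^{-2}$ is ample.
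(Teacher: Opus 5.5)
Your proposal is correct and follows the paper's proof. The upper bound is the paper's argument verbatim: a union bound over the $r_0=1/\log m$ net of Lemma~\ref{lem:projective-covering}, using $\log|\mathcal N_0|=(1+\mathrm{o}(1))\frac d2 m\log m$, plus the oscillation bound of Lemma~\ref{lem:dyadic-net-oscillation}. The lower bound uses the same packing, the same bound $|K_m|\le m^{-2}$ and the same Slepian comparison; the only difference is that you apply Slepian in its lower-tail form and estimate the independent maximum directly, whereas the paper compares expectations, uses $\E\max_j Z_j=(1+\mathrm{o}(1))\sqrt{2\log M_m}$, and then applies Gaussian concentration to the $1$-Lipschitz maximum, which also gives the quantitative bound $e^{-c_\varepsilon m\log m}$.
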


\begin{proof}
{\color{black}
From \eqref{eq:projective-distance} and
\eqref{eq:multilinear-covariance},
\begin{equation}\label{eq:covariance-decay}
 |K_m(z,w)|
 =
 \prod_{r=1}^m
 \left(1-\frac{\delta(z^{(r)},w^{(r)})^2}{2}\right)
 \le
 \exp\left(-\frac{\Delta_m(z,w)^2}{2}\right).
\end{equation}

\medskip

{\color{black}\hypersetup{linkcolor=black,citecolor=black}Choose the points $z_1,\ldots,z_{M_m}$ from
Lemma~\ref{lem:projective-packing}, fix arbitrary representatives
in $(\mathbb T^n)^m$, and set}
\[
 Y_j:=\sqrt2\,\operatorname{Re}X_m(z_j).
\]
For the standard complex Gaussian coefficients,
\[
 \int X_m(z)X_m(w)\,d\gamma_{m,n}^{\mathrm{ML}}=0,
 \qquad
 \int X_m(z)\overline{X_m(w)}\,d\gamma_{m,n}^{\mathrm{ML}}=K_m(z,w).
\]
Hence
\[
 \int Y_j^2\,d\gamma_{m,n}^{\mathrm{ML}}=1,
 \qquad
 \int Y_iY_j\,d\gamma_{m,n}^{\mathrm{ML}}
 =\operatorname{Re}K_m(z_i,z_j).
\]
By \eqref{eq:projective-packing-correlation} and
\eqref{eq:covariance-decay},
\begin{equation}\label{eq:multilinear-packed-correlation}
 \int Y_iY_j\,d\gamma_{m,n}^{\mathrm{ML}}
 \le |K_m(z_i,z_j)|
 \le m^{-2}
 \qquad(i\ne j).
\end{equation}

Let $Z_0,Z_1,\ldots,Z_{M_m}$ be independent standard real Gaussian
variables and define
\[
 W_j:=m^{-1}Z_0+\sqrt{1-m^{-2}}\,Z_j.
\]
Then
\[
 \E W_j^2=1,
 \qquad
 \E(W_iW_j)=m^{-2}\quad(i\ne j).
\]
Slepian's inequality \cite{Slepian}, together with
\eqref{eq:multilinear-packed-correlation}, gives
\[
 \int\max_jY_j\,d\gamma_{m,n}^{\mathrm{ML}}
 \ge
 \sqrt{1-m^{-2}}\,\E\max_jZ_j.
\]
For independent standard real Gaussians,
\[
 \E\max_{1\le j\le M_m}Z_j
 =(1+o(1))\sqrt{2\log M_m};
\]
see \cite[Section~2.5 and Exercise~2.17]{BLM}. Using
\eqref{eq:projective-packing-size},
\begin{equation}\label{eq:multilinear-packed-mean}
 \int\max_jY_j\,d\gamma_{m,n}^{\mathrm{ML}}
 \ge(1-o(1))\sqrt{dm\log m}.
\end{equation}

{\color{black}\hypersetup{linkcolor=black,citecolor=black}Write $g=(\xi+i\eta)/\sqrt2$, where $\xi$ and $\eta$ are independent
standard real Gaussian vectors. As functions of $(\xi,\eta)$, the
variables $Y_j$ are real linear functionals of Euclidean norm $1$;
hence their maximum is $1$-Lipschitz. Fix $\varepsilon>0$.}
For large $m$, \eqref{eq:multilinear-packed-mean} implies
\[
 \int\max_jY_j\,d\gamma_{m,n}^{\mathrm{ML}}
 \ge(1-\varepsilon/2)\sqrt{dm\log m}.
\]
Gaussian concentration \cite[Section~5.4]{BLM} then gives
\[
 \Prob\left\{
 \max_jY_j<(1-\varepsilon)\sqrt{dm\log m}
 \right\}
 \le e^{-c_\varepsilon m\log m}.
\]
Since
$|X_m(z_j)|\ge Y_j/\sqrt2$,
\begin{equation}\label{eq:multilinear-supremum-lower}
 \Prob\left\{
 \sup_z|X_m(z)|
 <
 (1-\varepsilon)\sqrt{\frac d2\,m\log m}
 \right\}
 \longrightarrow0.
\end{equation}

\medskip

Put
\[
 r_0:=\frac1{\log m}
\]
and choose an $r_0$-net $\mathcal N_0$ from
Lemma~\ref{lem:projective-covering}. Then
\begin{align}
 \log|\mathcal N_0|
 &\le
 dm\log\left(C_n\sqrt m\,\log m\right)\notag\\
 &=
 \frac d2m\log m+dm\log\log m+O_n(m).
 \label{eq:base-net-entropy}
\end{align}
At every fixed $z$,
\[
 \Prob\{|X_m(z)|>u\}=e^{-u^2}.
\]
Hence the union bound gives
\begin{equation}\label{eq:base-net-union}
 \Prob\left\{
 \max_{z\in\mathcal N_0}|X_m(z)|>u
 \right\}
 \le|\mathcal N_0|e^{-u^2}.
\end{equation}
Take
\[
 u=(1+\varepsilon/2)
 \sqrt{\frac d2\,m\log m}.
\]
Using \eqref{eq:base-net-entropy} in
\eqref{eq:base-net-union}, the exponent is
\[
 -\left(\varepsilon+O(\varepsilon^2)\right)
 \frac d2\,m\log m
 +dm\log\log m+O_n(m),
\]
which tends to $-\infty$. Therefore
\begin{equation}\label{eq:base-net-maximum}
 \Prob\left\{
 \max_{z\in\mathcal N_0}|X_m(z)|
 >
 (1+\varepsilon/2)
 \sqrt{\frac d2\,m\log m}
 \right\}
 \longrightarrow0.
\end{equation}
Moreover,
\[
 C_nr_0\sqrt{m\log m}
 =C_n\sqrt{\frac m{\log m}}
 =o(\sqrt{m\log m}).
\]
Thus, for large $m$, the oscillation term in
Lemma~\ref{lem:dyadic-net-oscillation} is at most
\[
 \frac{\varepsilon}{2}
 \sqrt{\frac d2\,m\log m}.
\]
Combining this with \eqref{eq:base-net-maximum} gives
\[
 \Prob\left\{
 \sup_z|X_m(z)|
 >
 (1+\varepsilon)
 \sqrt{\frac d2\,m\log m}
 \right\}
 \longrightarrow0.
\]
Together with \eqref{eq:multilinear-supremum-lower}, this proves
\eqref{eq:multilinear-gaussian-supremum-limit}.
}
\end{proof}

\subsection{Proof of Theorem F}\label{sec:proof-F}

\begin{proof}
{\color{black}
Put
\[
 d:=n-1,\qquad N:=n^m
\]
and let
$g=(g_{i_1,\ldots,i_m})$ have independent standard complex Gaussian
coordinates. Write
\[
 \rho:=\|g\|_2,\qquad \omega:=\frac{g}{\|g\|_2}.
\]
Polar coordinates in $\CC^{[n]^m}$ show that $\omega$ is distributed
according to the normalized surface measure $\sigma_{m,n}$ and is independent
of $\rho$. Since $R_{m,n}^{\mathrm{ML}}(\rho\omega)
=R_{m,n}^{\mathrm{ML}}(\omega)$ for $\rho>0$, every level set of
$R_{m,n}^{\mathrm{ML}}$ has the same Gaussian and spherical probability.
It is therefore enough to work with Gaussian coefficients.

\medskip
For the numerator, set
\[
 S_m(g):=
 \frac1N\sum_{i_1,\ldots,i_m=1}^n
 |g_{i_1,\ldots,i_m}|^{q_m},
\]
and
\[
 c_m:=
 \frac1\pi\int_{\mathbb C}|z|^{q_m}e^{-|z|^2}\,dz
 =
 \Gamma\!\left(1+\frac{q_m}{2}\right).
\]
Since $q_m\to2$, dominated convergence gives $c_m\to1$. Also,
$|z|^{2q_m}\le1+|z|^4$, so
\[
 \frac1\pi\int_{\mathbb C}
 \bigl(|z|^{q_m}-c_m\bigr)^2e^{-|z|^2}\,dz
 \le C
\]
uniformly in $m$. Independence yields
\[
 \int|S_m-c_m|^2\,d\gamma_{m,n}^{\mathrm{ML}}
 \le\frac CN.
\]
Therefore, for every $\varepsilon>0$,
\[
 \Prob\{|S_m-c_m|>\varepsilon\}
 \le\frac{C}{N\varepsilon^2}
 \longrightarrow0.
\]
Thus
\[
 S_m^{1/q_m}\longrightarrow1
 \qquad\text{in Gaussian measure}.
\]
Since
\[
 \frac1{q_m}-\frac12=\frac1{2m}
\]
and $N=n^m$,
\[
 N^{1/q_m-1/2}=n^{1/2}.
\]
Consequently,
\begin{equation}\label{eq:multilinear-numerator-limit}
 \frac{\|g\|_{q_m}}{\sqrt N}
 =
 N^{1/q_m-1/2}S_m^{1/q_m}
 \longrightarrow\sqrt n
 \qquad\text{in Gaussian measure}.
\end{equation}

\medskip
For the denominator, multilinearity allows the supremum to be taken over the torus in each block:
\[
 \|T_g\|
 =
 \max_{z^{(1)},\ldots,z^{(m)}\in\mathbb T^n}
 |T_g(z^{(1)},\ldots,z^{(m)})|.
\]
From \eqref{eq:multilinear-gaussian-process},
\[
 T_g(z^{(1)},\ldots,z^{(m)})
 =\sqrt N\,X_m(z),
\]
and therefore
\begin{equation}\label{eq:multilinear-denominator-identity}
 \frac{\|T_g\|}{\sqrt N}
 =
 \sup_z|X_m(z)|.
\end{equation}
Proposition~\ref{prop:multilinear-gaussian-supremum} gives
\[
 \frac{\|T_g\|}
 {\sqrt N\sqrt{\frac d2\,m\log m}}
 \longrightarrow1
 \qquad\text{in Gaussian measure}.
\]
Combining this with
\eqref{eq:multilinear-numerator-limit} and
$d=n-1$,
\[
 \sqrt{m\log m}\,R_{m,n}^{\mathrm{ML}}(g)
 \longrightarrow
 \frac{\sqrt n}{\sqrt{d/2}}
 =
 \sqrt{\frac{2n}{n-1}}.
\]
The Gaussian convergence therefore transfers to
$\sigma_{m,n}$-measure, proving \eqref{eq:multilinear-main}.
}
\end{proof}

\section{A Sidon consequence}\label{sec:sidon-application}

For background on Sidon sets and quantitative Sidon constants, see for instance \cite{DGMS,HY}.  For $\varnothing\ne\Lambda\subseteq\mathcal M_{m,n}$, put
$N:=|\Lambda|$.  The individual Sidon ratio of
$P_a(z)=\sum_{\alpha\in\Lambda}a_\alpha z^\alpha$ is
\begin{equation}\label{eq:sidon-ratio}
 \operatorname{Sid}_\Lambda(a):=
 \frac{\|a\|_1}{\|P_a\|_\infty}
 \qquad(a\in {\color{black}\CC^\Lambda}\setminus\{0\});
\end{equation}
the Sidon constant of $\Lambda$ is the supremum of
\eqref{eq:sidon-ratio} over $a\ne0$.  The passage from the
Bohnenblust--Hille coefficient norm to $\ell_1$ uses the H\"older factor
$N^{(m-1)/(2m)}$.  Define
\begin{equation}\label{eq:normalized-sidon-ratio}
 \mathsf S_{m,\Lambda}(a):=
 N^{-(m-1)/(2m)}\operatorname{Sid}_\Lambda(a).
\end{equation}

\begin{corollary}\label{cor:typical-sidon}
For every $a\in {\color{black}\CC^\Lambda}\setminus\{0\}$,
\begin{equation}\label{eq:sidon-sandwich}
 N^{-(m-1)/(2m)}
 \le \mathsf S_{m,\Lambda}(a)
 \le R_m(a).
\end{equation}
Consequently, for every fixed $t\ge1$,
\begin{equation}\label{eq:sidon-uniform-tail}
 \lim_{m\to\infty}
 \sup_{n\in\NN}
 \sup_{\varnothing\ne\Lambda\subseteq\mathcal M_{m,n}}
 \mu_\Lambda\bigl(
 \{a\in\mathbb S_\Lambda:\mathsf S_{m,\Lambda}(a)>t\}
 \bigr)=0.
\end{equation}
Moreover, for every sequence
$\varnothing\ne\Lambda_m\subseteq\mathcal M_{m,n_m}$,
\begin{equation}\label{eq:sidon-vanishing-iff}
 \mathsf S_{m,\Lambda_m}\longrightarrow0
 \ \text{in }\mu_{\Lambda_m}\text{-measure}
 \qquad\Longleftrightarrow\qquad
 |\Lambda_m|\longrightarrow\infty.
\end{equation}
\end{corollary}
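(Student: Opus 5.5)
The sandwich \eqref{eq:sidon-sandwich} is purely deterministic. For the upper bound, H\"older's inequality with exponents $q_m$ and its conjugate gives
\[
\|a\|_1\le N^{1-1/q_m}\|a\|_{q_m}.
\]
Since $1-1/q_m=(m-1)/(2m)$, dividing by $N^{(m-1)/(2m)}\|P_a\|_\infty$ yields $\mathsf S_{m,\Lambda}(a)\le R_m(a)$. For the lower bound, the triangle inequality on $\TT^n$ gives $\|P_a\|_\infty\le\|a\|_1$. Hence $\operatorname{Sid}_\Lambda(a)\ge1$, and multiplying by $N^{-(m-1)/(2m)}$ gives the left inequality.

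For \eqref{eq:sidon-uniform-tail}, fix $t\ge1$. By the upper bound in \eqref{eq:sidon-sandwich},
\[
\{\mathsf S_{m,\Lambda}>t\}\subseteq\{R_m>t\}\subseteq\{R_m>1\}.
\]
The endpoint estimate \eqref{endpoint:endpoint-uniform-bound} of Theorem~\ref{thm:tail-main} bounds $\mu_\Lambda\{R_m>1\}$ by $C\log m/m^2$, uniformly in $n$ and in $\Lambda$. This bound tends to $0$, which proves \eqref{eq:sidon-uniform-tail}.

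For \eqref{eq:sidon-vanishing-iff}, the forward direction runs as follows. If $|\Lambda_m|\to\infty$, Theorem~\ref{thm:vanishing-main} gives $R_m\to0$ in $\mu_{\Lambda_m}$-measure. Since $0<\mathsf S_{m,\Lambda_m}\le R_m$, the same convergence holds for $\mathsf S_{m,\Lambda_m}$.

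For the converse, suppose $|\Lambda_{m_j}|\le K$ along a subsequence. The lower bound in \eqref{eq:sidon-sandwich} then gives, pointwise on the sphere,
\[
\mathsf S_{m_j,\Lambda_{m_j}}\ge K^{-(m_j-1)/(2m_j)}\ge K^{-1/2}.
\]
Taking $\varepsilon=K^{-1/2}/2$, the set $\{\mathsf S>\varepsilon\}$ has full measure for every $j$, which contradicts convergence to zero in measure.

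No step is a real obstacle. The only point needing care is matching the H\"older exponent $1-1/q_m$ with the normalization $(m-1)/(2m)$; the rest is inherited from Theorems~\ref{thm:tail-main} and~\ref{thm:vanishing-main}.
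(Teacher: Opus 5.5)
Your proposal is correct and follows the paper's proof essentially step for step. Both use H\"older with exponent $1-1/q_m=(m-1)/(2m)$ and $\|P_a\|_\infty\le\|a\|_1$ for the sandwich, the inclusion $\{\mathsf S_{m,\Lambda}>t\}\subseteq\{R_m>1\}$ with the uniform bound of Theorem~\ref{thm:tail-main} for the tail, and Theorem~\ref{thm:vanishing-main} together with the pointwise bound $\mathsf S_{m_j,\Lambda_{m_j}}\ge K^{-1/2}$ on a bounded-support subsequence for the equivalence.
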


\begin{proof}
{\color{black}
Put $N:=|\Lambda|$.

Since
\[
 \frac1{q_m}=\frac{m+1}{2m},
\]
H\"older's inequality gives
\[
 \|a\|_1
 \le
 N^{1-1/q_m}\|a\|_{q_m}
 =
 N^{(m-1)/(2m)}\|a\|_{q_m}.
\]
After division by
$N^{(m-1)/(2m)}\|P_a\|_\infty$,
\[
 \mathsf S_{m,\Lambda}(a)\le R_m(a).
\]
On the other hand,
\[
 \|P_a\|_\infty
 \le\sum_{\alpha\in\Lambda}|a_\alpha|
 =\|a\|_1,
\]
and hence
\[
 \mathsf S_{m,\Lambda}(a)
 \ge N^{-(m-1)/(2m)}.
\]
Hence \eqref{eq:sidon-sandwich} holds.

For $t\ge1$,
\[
 \{\mathsf S_{m,\Lambda}>t\}
 \subseteq
 \{R_m>t\}
 \subseteq
 \{R_m>1\},
\]
so Theorem~\ref{thm:tail-main} gives
\eqref{eq:sidon-uniform-tail}.

For \eqref{eq:sidon-vanishing-iff}, first suppose
$|\Lambda_m|\to\infty$. Then
Theorem~\ref{thm:vanishing-main} gives
$R_m\to0$ in $\mu_{\Lambda_m}$-measure. Since
$0\le\mathsf S_{m,\Lambda_m}\le R_m$,
the same is true for $\mathsf S_{m,\Lambda_m}$.

Conversely, suppose $|\Lambda_m|$ does not tend to infinity. Then there are
$K\in\mathbb N$ and a subsequence $(m_j)$ such that
$|\Lambda_{m_j}|\le K$ for every $j$. From \eqref{eq:sidon-sandwich},
\[
 \mathsf S_{m_j,\Lambda_{m_j}}(a)
 \ge
 |\Lambda_{m_j}|^{-(m_j-1)/(2m_j)}
 \ge K^{-1/2}
 \qquad(a\in\mathbb S_{\Lambda_{m_j}}),
\]
because $(m_j-1)/(2m_j)\le1/2$. Hence the normalized Sidon ratio cannot
converge to zero in measure along this subsequence.
}
\end{proof}

\section*{Acknowledgments}

\subsection*{Funding} 

D. Pellegrino is supported by Grants No.~406457/2023-9
(CNPq/MCTI N\textsuperscript{o}~10/2023) and No.~403964/2024-5
(MCTI/CNPq N\textsuperscript{o}~16/2024). He is also supported by Grant
No.~305807/2025-0 from the Conselho Nacional de Desenvolvimento
Cient\'ifico e Tecnol\'ogico (CNPq, Brazil).
E. Teixeira gratefully acknowledges support from the
Grayce B. Kerr Chair at Oklahoma State University.

This work was conducted in part within the DARPA ExpMath project
\emph{A Human-Centered Framework for AI-Mathematician Collaboration
in Research-Level Mathematics}
(Agreement No.~HR0011262E029).
The views and conclusions expressed here are those of the authors
and should not be interpreted as representing the official policies
of the Department of Defense or the U.S.\ Government.

\subsection*{AI assistance disclosure}

The authors conceived the research program, formulated its central ideas, and directed the successive development of its statements and arguments. ChatGPT 5.6 Sol (OpenAI) was used as a collaborative research tool to stress-test ideas and support exploratory analysis, including the examination of parameter ranges and the refinement of estimates. It also assisted with consistency checks, the organization of arguments, literature searches, and routine \LaTeX{} typesetting.

The authors retain intellectual authorship of the work. They wrote and revised the manuscript with this assistance and independently verified all mathematical statements and proofs. They reviewed the references and take full responsibility for the correctness, originality, and integrity of the work.

\subsection*{Data availability}
No datasets were generated or analyzed during the current study.

\subsection*{Competing interests}
The authors declare no competing interests.



\begin{thebibliography}{99}

\bibitem{ADEDGP}
S.~Arunachalam, A.~Dutt, F.~Escudero Guti\'errez, and C.~Palazuelos,
\emph{A cb-Bohnenblust--Hille inequality with constant one and its applications in learning theory},
Math. Ann. \textbf{392} (2025), 3367--3396.
\href{https://doi.org/10.1007/s00208-025-03142-5}{doi:10.1007/s00208-025-03142-5}.

\bibitem{AubrunSzarek}
G.~Aubrun and S.~J.~Szarek,
\emph{Alice and Bob Meet Banach: The Interface of Asymptotic Geometric Analysis and Quantum Information Theory},
Mathematical Surveys and Monographs, vol.~223,
American Mathematical Society, Providence, RI, 2017.

\bibitem{BayartSupports}
F.~Bayart,
\emph{Summability of the coefficients of a multilinear form},
J. Eur. Math. Soc. \textbf{24} (2022), no.~4, 1161--1188.
\href{https://doi.org/10.4171/JEMS/1109}{doi:10.4171/JEMS/1109}.

\bibitem{BPS}
F.~Bayart, D.~Pellegrino, and J.~B.~Seoane-Sep\'ulveda,
\emph{The Bohr radius of the $n$-dimensional polydisk is equivalent
to $\sqrt{(\log n)/n}$},
Adv. Math. \textbf{264} (2014), 726--746.
\href{https://doi.org/10.1016/j.aim.2014.07.029}
{doi:10.1016/j.aim.2014.07.029}.

\bibitem{BCCT}
J.~Bennett, A.~Carbery, M.~Christ, and T.~Tao,
\emph{The Brascamp--Lieb inequalities: finiteness, structure and extremals},
Geom. Funct. Anal. \textbf{17} (2008), no.~5, 1343--1415.
\href{https://doi.org/10.1007/s00039-007-0619-6}
{doi:10.1007/s00039-007-0619-6}.

\bibitem{BH}
H.~F.~Bohnenblust and E.~Hille,
\emph{On the absolute convergence of Dirichlet series},
Ann.\ of Math. (2) \textbf{32} (1931), no.~3, 600--622.
\href{https://doi.org/10.2307/1968255}{doi:10.2307/1968255}.

\bibitem{BLM}
S.~Boucheron, G.~Lugosi, and P.~Massart,
\emph{Concentration Inequalities: A Nonasymptotic Theory of Independence},
Oxford University Press, Oxford, 2013.
\href{https://doi.org/10.1093/acprof:oso/9780199535255.001.0001}
{doi:10.1093/acprof:oso/9780199535255.001.0001}.

\bibitem{CamposRealPolynomial}
J.~R.~Campos, P.~Jim\'enez-Rodr\'iguez, G.~A.~Mu\~noz-Fern\'andez,
D.~Pellegrino, and J.~B.~Seoane-Sep\'ulveda,
\emph{On the real polynomial Bohnenblust--Hille inequality},
Linear Algebra Appl. \textbf{465} (2015), 391--400.
\href{https://doi.org/10.1016/j.laa.2014.09.040}
{doi:10.1016/j.laa.2014.09.040}.

\bibitem{CNS}
N.~Caro-Montoya, D.~N\'u\~nez-Alarc\'on, and D.~Serrano-Rodr\'iguez,
\emph{Asymptotic contractivity of Bohnenblust--Hille constants with bounded monomial support},
Bull. Braz. Math. Soc. New Series \textbf{57} (2026), Art.~40.
\href{https://doi.org/10.1007/s00574-026-00527-1}
{doi:10.1007/s00574-026-00527-1}.

\bibitem{DFOOS}
A.~Defant, L.~Frerick, J.~Ortega-Cerd\`a, M.~Ouna\"ies, and K.~Seip,
\emph{The Bohnenblust--Hille inequality for homogeneous polynomials
is hypercontractive},
Ann.\ of Math. (2) \textbf{174} (2011), no.~1, 485--497.
\href{https://doi.org/10.4007/annals.2011.174.1.13}
{doi:10.4007/annals.2011.174.1.13}.

\bibitem{SupportSensitive}
A.~Defant, D.~Galicer, M.~Mansilla, M.~Masty\l o, and S.~Muro,
\emph{Support-sensitive Bohnenblust--Hille inequalities and local invariants
on Hamming schemes}, preprint, 2026.
\href{https://arxiv.org/abs/2607.05594}{arXiv:2607.05594}.

\bibitem{DGM}
A.~Defant, D.~Garc\'ia, and M.~Maestre,
\emph{Maximum moduli of unimodular polynomials},
J. Korean Math. Soc. \textbf{41} (2004), no.~1, 209--229.
\href{https://doi.org/10.4134/JKMS.2004.41.1.209}
{doi:10.4134/JKMS.2004.41.1.209}.

\bibitem{DGMS}
A.~Defant, D.~Garc\'ia, M.~Maestre, and P.~Sevilla-Peris,
\emph{Dirichlet Series and Holomorphic Functions in High Dimensions},
New Mathematical Monographs, vol.~37,
Cambridge University Press, Cambridge, 2019.
\href{https://doi.org/10.1017/9781108691611}
{doi:10.1017/9781108691611}.

\bibitem{EskenazisIvanisvili}
A.~Eskenazis and P.~Ivanisvili,
\emph{Learning low-degree functions from a logarithmic number of random queries},
Proceedings of the 54th Annual ACM SIGACT Symposium on Theory of Computing
(STOC 2022), 203--207.
\href{https://arxiv.org/abs/2109.10162}{arXiv:2109.10162}.

\bibitem{Folland}
G.~B.~Folland,
\emph{Real Analysis: Modern Techniques and Their Applications}, second edition,
Pure and Applied Mathematics (New York),
John Wiley \& Sons, New York, 1999.

\bibitem{HY}
K.~E.~Hare and R.~(Xu)~Yang,
\emph{Sidon sets are proportionally Sidon with small Sidon constants},
Canad. Math. Bull. \textbf{62} (2019), no.~4, 798--809.
\href{https://doi.org/10.4153/S0008439518000620}
{doi:10.4153/S0008439518000620}.

\bibitem{IvanisviliHamming}
P.~Ivanisvili,
\emph{Polynomial growth of Bohnenblust--Hille constants on the Hamming cube},
preprint, 2026.
\href{https://arxiv.org/abs/2609.12427}{arXiv:2609.12427}.

\bibitem{Littlewood}
J.~E.~Littlewood,
\emph{On bounded bilinear forms in an infinite number of variables},
Quart. J. Math. Oxford Ser. \textbf{1} (1930), 164--174.
\href{https://doi.org/10.1093/qmath/os-1.1.164}{doi:10.1093/qmath/os-1.1.164}.

\bibitem{MNP}
M.~Maia, T.~Nogueira, and D.~Pellegrino,
\emph{The Bohnenblust--Hille inequality for polynomials whose monomials
have a uniformly bounded number of variables},
Integral Equations Operator Theory \textbf{88} (2017), no.~1, 143--149.
\href{https://doi.org/10.1007/s00020-017-2372-z}{doi:10.1007/s00020-017-2372-z}.

\bibitem{Montanaro}
A.~Montanaro,
\emph{Some applications of hypercontractive inequalities in quantum information theory},
J. Math. Phys. \textbf{53} (2012), 122206.
\href{https://doi.org/10.1063/1.4769269}{doi:10.1063/1.4769269}.

\bibitem{NIST}
F.~W.~J.~Olver, D.~W.~Lozier, R.~F.~Boisvert, and C.~W.~Clark (eds.),
\emph{NIST Handbook of Mathematical Functions},
Cambridge University Press, New York, 2010.
\href{https://dlmf.nist.gov/5.11}{DLMF, Section~5.11}.

\bibitem{PT}
D.~M.~Pellegrino and E.~V.~Teixeira,
\emph{Polynomial growth of complex polynomial Bohnenblust--Hille constants},
preprint, 2026.
\href{https://arxiv.org/abs/2608.16584}{arXiv:2608.16584}.

\bibitem{Slepian}
D.~Slepian,
\emph{The one-sided barrier problem for Gaussian noise},
Bell System Tech. J. \textbf{41} (1962), no.~2, 463--501.
\href{https://doi.org/10.1002/j.1538-7305.1962.tb02419.x}
{doi:10.1002/j.1538-7305.1962.tb02419.x}.

\bibitem{STVBoolean}
J.~Slote, C.-K.~Tseng, and A.~Volberg,
\emph{An $m^{2.943}$ Bohnenblust--Hille bound on the Boolean cube},
preprint, 2026.
\href{https://arxiv.org/abs/2609.21144}{arXiv:2609.21144}.

\bibitem{SVProductCyclic}
J.~Slote and A.~Volberg,
\emph{Polynomial Bohnenblust--Hille bounds for product of cyclic groups},
preprint, 2026.
\href{https://arxiv.org/abs/2609.07758}{arXiv:2609.07758}.

\bibitem{SVZ}
J.~Slote, A.~Volberg, and H.~Zhang,
\emph{Bohnenblust--Hille inequality for cyclic groups},
Adv. Math. \textbf{452} (2024), Paper No.~109824.
\href{https://doi.org/10.1016/j.aim.2024.109824}{doi:10.1016/j.aim.2024.109824}.

\bibitem{VZ}
A.~Volberg and H.~Zhang,
\emph{Noncommutative Bohnenblust--Hille inequalities},
Math. Ann. \textbf{389} (2024), 1657--1676.
\href{https://doi.org/10.1007/s00208-023-02680-0}{doi:10.1007/s00208-023-02680-0}.

\end{thebibliography}
\end{document}